\theoremstyle{plain}
\newtheorem{theorem}{Theorem}
\newtheorem{lemma}[theorem]{Lemma}
\newtheorem{corollary}[theorem]{Corollary}
\newtheorem{proposition}[theorem]{Proposition}
\newtheorem{definition}[theorem]{Definition}
\newtheorem{postulate}{Postulate}
\newtheorem{thesis}{Thesis}
\theoremstyle{definition}
\newcommand{\abs}[1]{\left\lvert#1\right\rvert}
\DeclareMathOperator{\Dom}{dom}
\newcommand{\rest}[2]{#1\!\!\restriction_{#2}}
\newcommand{\reste}[2]{#1\restriction_{#2}}
\newcommand{\osg}[1]{\left[#1\right]^{\prec}}
\newcommand{\N}{\mathbb{N}}
\newcommand{\Q}{\mathbb{Q}}
\newcommand{\R}{\mathbb{R}}
\newcommand{\XI}{\{0,1\}^\infty}
\newcommand{\Bm}[2]{\lambda_{#1}\left(#2\right)}
\newcommand{\PS}{\mathbb{P}}%
\newcommand{\charaps}[2]{C\!\left(#1,#2\right)}
\newcommand{\chara}[2]{\mathrm{C}_{#1}\left(#2\right)}
\newcommand{\cond}[2]{\mathrm{Filtered}_{#1}\left(#2\right)}
\newcommand{\noi}{\noindent}
\begin{document}

%%%%%%%%%%%%%%%%%%%%%%%%%%%%%%%%%%%%%%%%%%%%%%%%%%%%%%%%%%%%%%%%%%%%%%%%%%%

\begin{center}
{\Large \textbf{An operational characterization of \\
the notion of probability by algorithmic randomness II:
\vspace{1.7mm}\\
Discrete probability spaces}}
\end{center}

\vspace{0mm}

\begin{center}
Kohtaro Tadaki
\end{center}

\vspace{-5mm}

\begin{center}
Department of Computer Science, College of Engineering, Chubu University\\
1200 Matsumoto-cho, Kasugai-shi, Aichi 487-8501, Japan\\
E-mail: \textsf{tadaki@isc.chubu.ac.jp}\\
\url{http://www2.odn.ne.jp/tadaki/}
\end{center}

\vspace{-2mm}

\begin{quotation}
\noi\textbf{Abstract.}
The notion of probability plays an important role in almost all areas of science and technology.
In modern mathematics, however, probability theory means nothing other than measure theory,
and the operational characterization of the notion of probability is not established yet.
In this paper, based on the toolkit of algorithmic randomness we present
an operational characterization of the notion of probability, called an \emph{ensemble},
for
general
discrete probability spaces
whose sample space is countably infinite.
Algorithmic randomness, also known as algorithmic information theory,
is a field of mathematics which enables us to consider the randomness of an individual infinite sequence.
We use
an extension of
Martin-L\"of randomness with respect to
a generalized
Bernoulli measure
over
the
Baire space,
in order
to present the operational characterization.
In our former work~[K. Tadaki, arXiv:1611.06201],
we developed an operational characterization of the notion of probability
for an arbitrary finite probability space, i.e., a probability space whose sample space is
a finite set.
We then gave a natural operational characterization of the notion of conditional probability
in terms of ensemble for a finite probability space,
and gave equivalent characterizations of the notion of independence between two events based on it.
Furthermore,
we gave equivalent characterizations of the notion of independence of
an arbitrary number of events/random variables in terms of ensembles for finite probability spaces.
In particular, we showed that the independence between events/random variables is equivalent
to the independence in the sense of van Lambalgen's Theorem,
in the case where the underlying finite probability space is computable.
In this paper, we show that we can certainly extend
these results
over
general
discrete probability spaces whose sample space is countably infinite.
\end{quotation}

\begin{quotation}
\noi\textit{Key words\/}:
probability,
algorithmic randomness,
operational characterization,
discrete probability space,
Baire space,
Martin-L\"of randomness,
Bernoulli measure,
conditional probability,
independence,
van Lambalgen's Theorem
\end{quotation}

%%%%%%%%%%%%%%%%%%%%%%%%%%%%%%%%%%%%%%%%%%%%%%%%%%%%%%%%%%%%%%%%%%%%%%%%%%%

\newpage
\tableofcontents 
\newpage

%%%%%%%%%%%%%%%%%%%%%%%%%%%%%%%%%%%%%%%%%%%%%%%%%%%%%%%%%%%%%%%%%%%%%%%%%%%
\section{Introduction}

The notion of probability plays an important role in almost all areas of science and technology.
In modern mathematics, however, probability theory means nothing other than \emph{measure theory},
and the operational characterization of the notion of probability is not established yet.
In our former work~\cite{T14,T15,T16arXiv},
based on the toolkit of \emph{algorithmic randomness},
we presented an operational characterization of the notion of probability
for a finite probability space,
i.e., a probability space whose sample space is a finite set.

Algorithmic randomness
is a field of mathematics which enables us to consider the randomness of an individual infinite sequence.
In the former work~\cite{T14,T15,T16arXiv}
we used the notion of
\emph{Martin-L\"of randomness with respect to Bernoulli measure}
to present the operational characterization
for a finite probability space.

To clarify our motivation
and standpoint,
and the meaning of the operational characterization,
let us consider
a familiar example of a probabilistic phenomenon.
We
here
consider the repeated throwings of a fair die.
In this probabilistic phenomenon,
as throwings progressed,
a specific infinite sequence such as
\begin{equation*}
  3,5,6,3,4,2,2,3,6,1,5,3,5,4,1,\dotsc\dotsc\dotsc
\end{equation*}
is being generated,
where each number is the outcome of the corresponding throwing of the die.
Then the following
naive
question
may
arise naturally. 

\begin{quote}
\textbf{Question}:
What property should this infinite sequence
satisfy as a
probabilistic
phenomenon?
\end{quote}

In our former work~\cite{T14,T15,T16arXiv}
we tried to answer this question
for finite probability spaces in general,
including
the throwing of a fair die.
In the former work
we
characterized
the notion of probability
as
an infinite sequence of outcomes in a probabilistic phenomenon of a \emph{specific mathematical property}.
We called such an infinite sequence of outcomes
the \emph{operational characterization of the notion of probability}.
As the specific mathematical property,
in the work~\cite{T14,T15,T16arXiv}
we adopted
the notion of \emph{Martin-L\"of randomness with respect to Bernoulli measure},
a notion in algorithmic randomness.

In the work~\cite{T15,T16arXiv}
we put forward
this proposal
as a thesis (see Thesis~\ref{thesis} in Section~\ref{FPS} below),
in particular, for \emph{finite probability spaces in general}.
We
then
checked
the validity of
the thesis
based on
\emph{our intuitive understanding of the notion of probability}.
Furthermore,
we characterized \emph{equivalently}
the basic notions in probability theory in terms of the operational characterization.
Namely, we equivalently characterized the notion of the \emph{independence} of random variables/events
in terms of the operational characterization,
and represented the notion of \emph{conditional probability}
in terms of the operational characterization in a natural way.
The existence of these equivalent characterizations confirms further the validity of the thesis.
See Tadaki~\cite{T16arXiv}
for the detail of
our framework \cite{T14,T15,T16arXiv},
which was
developed especially for finite probability spaces in general.

The results above are about finite probability spaces.
In this paper, we show that we can certainly extend
the results above
over
\emph{general discrete probability spaces
whose sample space is countably infinite}.

%%%%%%%%%%%%%%%%%%%%%%%%%%%%%%%%%%%%%%%%%%%%%%%%%%%%%%%
\subsection{Historical background}

In the past century,
there was a comprehensive attempt to provide
an operational characterization of the notion of probability.
Namely, von Mises
developed
a mathematical theory of repetitive events which was aimed at reformulating
the theory of probability and statistics
based on an operational characterization of the notion of probability \cite{vM57,vM64}.
In a series of his comprehensive works which began in 1919,
von Mises developed this theory and, in particular,
introduced the notion of \emph{collective} as a mathematical idealization of a long sequence of
outcomes of experiments or observations repeated under a set of invariable conditions,
such as the repeated tossings of a coin or of a pair of dice.

The collective plays a role as an operational characterization of the notion of probability,
and is an infinite sequence of sample points in the sample space of a probability space.
As the randomness property of the collective, von Mises assumes that
all ``reasonable'' infinite subsequences of a collective satisfy the law of large numbers
with the identical limit value,
where the subsequences are selected using ``acceptable selection rules.''
Wald \cite{Wa36,Wa37} later showed that for any countable collection of selection rules,
there are sequences
which
are collectives in the sense of von Mises.
However, at the time it was unclear exactly what types of selection rules should be acceptable.
There seemed to von Mises to be no canonical choice.

Later, with the development of computability theory and the introduction of
generally accepted precise mathematical definitions of the notions of algorithm and computable function,
Church \cite{Ch40}
suggested
that a selection rule be considered acceptable if and only if it is computable.
In 1939, however, Ville \cite{Vi39} revealed the defect of the notion of collective.
Namely, he showed that for any countable collection of selection rules, there is a sequence that
is random in the sense of von Mises but has properties that make it clearly nonrandom.
In the first place,
the collective has an \emph{intrinsic defect} that
it cannot exclude the possibility that an event with probability zero may occur.
(For the development of the theory of collectives from the point of view of the definition of randomness,
see Downey and Hirschfeldt
\cite{DH10}.)

In 1966, Martin-L\"of \cite{M66} introduced the definition of random sequences,
which is called \emph{Martin-L\"of randomness} nowadays, and
plays a central role in the recent development of algorithmic randomness.
At the same time,
he introduced the notion of \emph{Martin-L\"of randomness with respect to Bernoulli measure} \cite{M66}.
He then pointed out that this notion overcomes the defect of the collective in the sense of von Mises, and
this can be regarded precisely as the collective which von Mises wanted to define.
However, he did not develop probability theory
based on Martin-L\"of random sequence with respect to Bernoulli measure.

Algorithmic randomness is a field of mathematics which studies the definitions of random sequences and
their property
(see \cite{N09,DH10} for the recent developments of
the field).
However, the recent research on algorithmic randomness would seem only interested in
the notions of randomness themselves and their
interrelation,
and not seem to have
made an attempt
to develop probability theory based on Martin-L\"of randomness with respect to Bernoulli measure in an operational manner so far.

%%%%%%%%%%%%%%%%%%%%%%%%%%%%%%%%%%%%%%%%%%%%%%%%%%%%%%%
\subsection{Contribution of the paper}

The subject of this paper is to make such an attempt for
\emph{general discrete probability spaces whose sample space is countably infinite},
as a sequel to our former work~\cite{T14,T15,T16arXiv} where we developed
a framework for an operational characterization of the notion of probability
for general finite probability spaces.
In the former work we did this,
precisely
based on Martin-L\"of randomness with respect to Bernoulli measure.
In contrast,
in this paper we present an operational characterization of the notion of probability
for general discrete probability spaces,
based on an \emph{extension} of Martin-L\"of randomness with respect to
a \emph{generalized} Bernoulli measure over the \emph{Baire space}.
Thus,
the core mathematical concept of this paper is
a Martin-L\"of random infinite sequence over the sample space of a discrete probability space,
with respect to a generalized Bernoulli measure on the Baire space.
In this paper
we call
it
an \emph{ensemble}, instead of collective for distinction.
The name ``ensemble'' comes from physics, in particular, from quantum mechanics and statistical mechanics.
We propose to identify it with
an infinite sequence of outcomes
resulting from
the infinitely repeated trials in a probabilistic phenomenon
described by
the discrete probability space.
We show that
the
ensemble has enough properties to regard it as an operational characterization of
the notion of probability
for a discrete probability space,
from the point of view of our intuitive understanding of the notion of probability.

Actually,
in a similar manner to our former work~\cite{T14,T15,T16arXiv} for finite probability spaces, 
in this paper
we
can
give a natural operational characterization of the notion of conditional probability
in terms of ensemble
for a discrete probability space,
and give equivalent characterizations of the notion of independence between two events based on it.
Furthermore,
we
can
give equivalent characterizations of the notion of independence of
an arbitrary number of events/random variables in terms of ensembles.
In particular, we
can
show that the independence
of
events/random variables is equivalent
to the independence in the sense of van Lambalgen's Theorem \cite{vL87},
in the case where the underlying
discrete
probability space is \emph{computable}.

From the operational point of view, we
must
be able to determine \emph{effectively}
whether each outcome of a trial is in the sample space of  the underlying discrete probability space,
or not.
Thus,
from that point of view,
we
must
only
consider
discrete probability spaces whose
sample spaces are \emph{recursive} infinite sets.
For mathematical generality,
however,
in this paper
we make a weaker assumption
about the sample spaces.
Namely, we assume that
the sample spaces
of discrete probability spaces
which we consider in this paper
are simply \emph{recursively enumerable} infinite sets.
We think this recursive enumerability of the sample space to be sufficiently general for our purpose.
On the other hand,
we emphasize that
a discrete probability space \emph{itself}
which we consider in this paper is not required to be computable at all
(except for in the results related to van Lambalgen's Theorem).
Therefore
the generalized Bernoulli measure which we consider
in this paper is
not necessarily computable
while the measures considered in
the field of
algorithmic randomness so far are \emph{usually} computable.
Thus, the
central
results in this paper hold for any
discrete
probability space
whose sample space is
a recursively enumerable infinite set.

Modern probability theory originated from the \emph{axiomatic approach} to probability theory,
introduced by Kolmogorov~\cite{K50} in 1933,
where the probability theory is precisely \emph{measure theory}.
One of the important roles of modern probability theory is, of course, in its applications to
the general areas of science and technology.
As we have already pointed out,
however,
an operational characterization of the notion of probability is still missing in modern probability theory.
Thus, when we apply the results of modern probability theory,
we have no choice but to
make such applications \emph{thoroughly based on our intuition without formal means}.

The aim of this paper,
as well as of our former work~\cite{T14,T15,T16arXiv},
is to try to fill in this gap between modern probability theory and its applications.
We present the operational characterization of the notion of probability as
a \emph{rigorous interface} between theory and practice,
without appealing to our intuition for filling in the gap.
Anyway,
in our framework
we \emph{keep} modern probability theory \emph{in its original form} without any modifications,
and propose the operational characterization of the notion of probability
as an \emph{additional mathematical structure} to it,
which
provides modern probability theory
with more comprehensive and rigorous 
opportunities for applications.

%%%%%%%%%%%%%%%%%%%%%%%%%%%%%%%%%%%%%%%%%%%%%%%%%%%%%%%
\subsection{Organization of the paper}

The paper is organized as follows.
We begin in Section~\ref{preliminaries} with some preliminaries to measure theory,
computability theory, and algorithmic randomness.
In Section~\ref{FPS},
we review the fundamental framework of the operational characterization of the notion of probability
for a finite probability space,
which was
introduced and developed by our former work~\cite{T14,T15,T16arXiv}.

We start our investigation
to provide an operational characterization of the notion of probability
for a discrete probability space in Section~\ref{Measure-theory-on-Baire-space}.
We there develop measure theory on
the
Baire space.
Although
the
Baire space is not compact and therefore it is hard to handle,
we can certainly develop measure theory on it.
In Section~\ref{DPS}, we introduce the notion of discrete probability space
for which
the operational characterization of the notion of probability is presented.
On this basis,
we
introduce
the extension of
Martin-L\"of randomness with respect to
a
generalized Bernoulli measure over
the
Baire space in Section~\ref{MLRP-Baire}.

In Section~\ref{OPT-Baire} we
introduce the notion of ensemble,
and put forward a thesis
which states to identify the ensemble
as an operational characterization of the notion of probability
for a discrete probability space.
We then
check
the validity of the thesis.
In Section~\ref{CPITW} we start to construct our framework
for developing the operational characterization,
by characterizing operationally the notions of conditional probability and
the independence between two events,
in terms of ensembles.
We then characterize operationally the notion of the independence
of an arbitrary number of events/random variables
in terms of ensembles in Section~\ref{IANERV}.
In Section~\ref{FENICFPS} we show that
the independence notions, introduced in the preceding sections, are further equivalent to
the notion of the independence in the sense of van Lambalgen's Theorem,
in the case where the underlying discrete probability space is computable,
by generalizing van Lambalgen's Theorem
over our framework.
Thus we show that the three independence notions, considered in this paper,
are all equivalent in this case.
We conclude this paper with
a mention of
the major application of our framework,
i.e.,
the
application
to quantum mechanics,
in Section~\ref{Concluding}.

%%%%%%%%%%%%%%%%%%%%%%%%%%%%%%%%%%%%%%%%%%%%%%%%%%%%%%%%%%%%%%%%%%%%%%%%%%%
\section{Preliminaries}
\label{preliminaries}

%%%%%%%%%%%%%%%%%%%%%%%%%%%%%%%%%%%%%%%%%%%%%%%%%%%%%%%
\subsection{Basic notation and definitions}
\label{basic notation}

We start with some notation about numbers and strings which will be used in this paper.
$\#S$ is the cardinality of $S$ for any set $S$.
$\N=\left\{0,1,2,3,\dotsc\right\}$ is the set of \emph{natural numbers},
and $\N^+$ is the set of \emph{positive integers}.
$\Q$ is the set of \emph{rationals}, and $\R$ is the set of \emph{reals}.

An \emph{alphabet} is a nonempty
set.
Let $\Omega$ be an arbitrary alphabet throughout the rest of this subsection.
A \emph{finite string over $\Omega$} is a finite sequence of elements from the alphabet $\Omega$.
We use $\Omega^*$ to denote the set of all finite strings over $\Omega$,
which contains the \emph{empty string} denoted by $\lambda$.
We use $\Omega^+$ to denote the set $\Omega^*\setminus\{\lambda\}$.
For any $\sigma\in\Omega^*$, $\abs{\sigma}$ is the \emph{length} of $\sigma$.
Therefore $\abs{\lambda}=0$.
For any $\sigma\in\Omega^+$ and $k\in\N^+$ with $k\le\abs{\sigma}$,
we use $\sigma(k)$ to denote the $k$th element in $\sigma$.
Therefore, we have $\sigma=\sigma(1)\sigma(2)\dots\sigma(\abs{\sigma})$
for every $\sigma\in\Omega^+$.
For any $n\in\N$,
we define the sets $\Omega^n$, $\Omega^{\le n}$, and $\Omega^{\ge n}$ as follows:
\begin{align*}
  \Omega^n&:=\{\,x\mid x\in\Omega^*\;\&\;\abs{x}=n\}, \\
  \Omega^{\le n}&:=\{\,x\mid x\in\Omega^*\;\&\;\abs{x}\le n\}, \\
  \Omega^{\ge n}&:=\{\,x\mid x\in\Omega^*\;\&\;\abs{x}\ge n\}.
\end{align*}
A subset $S$ of $\Omega^*$ is called
\emph{prefix-free}
if no string in $S$ is a prefix of another string in $S$.

An \emph{infinite sequence over $\Omega$} is an infinite sequence of elements from the alphabet $\Omega$,
where the sequence is infinite to the right but finite to the left.
We use $\Omega^\infty$ to denote the set of all infinite sequences over $\Omega$.

Let $\alpha\in\Omega^\infty$.
For any $n\in\N$ we denote by $\rest{\alpha}{n}\in\Omega^*$ the first $n$ elements
in the infinite sequence $\alpha$, and
for any $n\in\N^+$ we denote by $\alpha(n)$ the $n$th element in $\alpha$.
Thus, for example, $\rest{\alpha}{4}=\alpha(1)\alpha(2)\alpha(3)\alpha(4)$, and $\rest{\alpha}{0}=\lambda$.

For any $S\subset\Omega^*$, the set
$\{\alpha\in\Omega^\infty\mid\exists\,n\in\N\;\rest{\alpha}{n}\in S\}$
is denoted by $\osg{S}$.
Note that
(i) $\osg{S}\subset\osg{T}$ for every $S\subset T\subset\Omega^*$, and
(ii) for every set $S\subset\Omega^*$ there exists a prefix-free set $P\subset\Omega^*$ such that
$\osg{S}=\osg{P}$.
For any $\sigma\in\Omega^*$, we denote by $\osg{\sigma}$ the set $\osg{\{\sigma\}}$, i.e.,
the set of all infinite sequences over $\Omega$ extending $\sigma$.
Therefore $\osg{\lambda}=\Omega^\infty$.

For any function $f$,
the domain of definition of $f$ is denoted by $\Dom f$.

%%%%%%%%%%%%%%%%%%%%%%%%%%%%%%%%%%%%%%%%%%%%%%%%%%%%%%%
\subsection{Measure theory on infinite sequences over a finite alphabet}
\label{MR}

A \emph{finite alphabet} is a non-empty finite set.
Let $\Omega$ be an arbitrary finite alphabet throughout the rest of this subsection.
We briefly review measure theory
on $\Omega^\infty$
according to Nies~\cite[Section 1.9]{N09}.
See also Billingsley~\cite{B95}
for measure theory in general.

\begin{definition}[Outer measure]\label{def_outer-measure}
Let $\Gamma$ be a nonempty set.
A real-valued function $\mu$ defined on the class of all subsets of $\Gamma$ is called
an \emph{outer measure on $\Gamma$} if the following conditions hold.
\begin{enumerate}
\item $\mu\left(\emptyset\right)=0$;
\item $\mu\left(\mathcal{C}\right)\le\mu\left(\mathcal{D}\right)$
  for every subsets $\mathcal{C}$ and $\mathcal{D}$ of $\Gamma$
  with $\mathcal{C}\subset\mathcal{D}$;
\item $\mu\left(\bigcup_{i}\mathcal{C}_i\right)\le\sum_{i}\mu\left(\mathcal{C}_i\right)$
  for every sequence $\{\mathcal{C}_i\}_{i\in\N}$ of subsets of $\Gamma$.\qed
\end{enumerate}
\end{definition}

A \emph{probability measure representation over $\Omega$} is
a function $r\colon\Omega^*\to[0,1]$ such that
\begin{enumerate}
  \item $r(\lambda)=1$ and
  \item for every $\sigma\in\Omega^*$ it holds that
    \begin{equation}\label{pmr}
       r(\sigma)=\sum_{a\in\Omega}r(\sigma a).
    \end{equation}
\end{enumerate}
A probability measure representation $r$ over $\Omega$ \emph{induces}
an outer measure $\mu_r$ on $\Omega^\infty$ in the following manner:
A subset $\mathcal{R}$ of $\Omega^\infty$ is \emph{open} if
$\mathcal{R}=\osg{S}$ for some $S\subset\Omega^*$.
Let
$r$
be an arbitrary probability measure representation over $\Omega$. 
For each open subset $\mathcal{A}$ of $\Omega^\infty$, we define $\mu_r(\mathcal{A})$ by
\begin{equation*}
  \mu_r(\mathcal{A}):=\sum_{\sigma\in E}r(\sigma),
\end{equation*}
where $E$ is a prefix-free subset of $\Omega^*$ with $\osg{E}=\mathcal{A}$.
Due to the equality~\eqref{pmr} the sum is independent of the choice of  the prefix-free set $E$,
and therefore the value $\mu_r(\mathcal{A})$ is well-defined.
Then, for any subset $\mathcal{C}$ of $\Omega^\infty$, we define $\mu_r(\mathcal{C})$ by
\begin{equation*}
  \mu_r(\mathcal{C}):=
  \inf\{\mu_r(\mathcal{A})\mid
  \mathcal{C}\subset\mathcal{A}\text{ \& $\mathcal{A}$ is an open subset of $\Omega^\infty$}\}.
\end{equation*}
We can then show that $\mu_r$ is an \emph{outer measure} on $\Omega^\infty$  such that
$\mu_r(\Omega^\infty)=1$.

A class $\mathcal{F}$ of subsets of $\Omega^\infty$ is called
a \emph{$\sigma$-field on $\Omega^\infty$}
if  $\mathcal{F}$ includes $\Omega^\infty$, is closed under complements,
and is closed under the formation of countable unions.
The \emph{Borel class} $\mathcal{B}_{\Omega}$ is the $\sigma$-field \emph{generated by}
all open sets on $\Omega^\infty$.
Namely, the Borel class $\mathcal{B}_{\Omega}$ is defined
as the intersection of all the $\sigma$-fields on $\Omega^\infty$ containing
all open sets
on
$\Omega^\infty$.
A real-valued function $\mu$ defined on the Borel class $\mathcal{B}_{\Omega}$ is called
a \emph{probability measure on $\Omega^\infty$} if the following conditions hold.
\begin{enumerate}
\item $\mu\left(\emptyset\right)=0$ and $\mu\left(\Omega^\infty\right)=1$;
\item $\mu\left(\bigcup_{i}\mathcal{D}_i\right)=\sum_{i}\mu\left(\mathcal{D}_i\right)$
  for every sequence $\{\mathcal{D}_i\}_{i\in\N}$ of sets in $\mathcal{B}_{\Omega}$ such that
  $\mathcal{D}_i\cap\mathcal{D}_i=\emptyset$ for all $i\neq j$.
\end{enumerate}
Then, for every probability measure representation $r$ over $\Omega$,
we can show that the restriction of the outer measure $\mu_r$ on $\Omega^\infty$
to the Borel class $\mathcal{B}_{\Omega}$ is
a probability measure on $\Omega^\infty$.
We denote the restriction of $\mu_r$ to $\mathcal{B}_{\Omega}$ by
$\mu_r$
just the same.

Then it is easy to see that
\begin{equation}\label{mr}
  \mu_r\left(\osg{\sigma}\right)=r(\sigma)
\end{equation}
for every probability measure representation $r$ over $\Omega$ and every $\sigma\in \Omega^*$.

%%%%%%%%%%%%%%%%%%%%%%%%%%%%%%%%%%%%%%%%%%%%%%%%%%%%%%%
\subsection{Computability}
\label{Computability}

A \emph{partial computable function} is a function $f$ such that
there exists a deterministic Turing machine $\mathcal{M}$ with the properties that
\begin{enumerate}
  \item $\Dom f\subset D$, where $D$ denotes the set of all the inputs for $\mathcal{M}$, and
  \item for each input $x\in D$, when executing $\mathcal{M}$ with the input $x$,
  \begin{enumerate}
    \item if $x\in\Dom f$ then the computation of $\mathcal{M}$ eventually terminates
      and then $\mathcal{M}$ outputs $f(x)$;
    \item if $x\notin\Dom f$ then the computation of $\mathcal{M}$ does not terminate.
  \end{enumerate}
\end{enumerate}
A partial computable function is also called a \emph{partial recursive function}.
A \emph{computable function} is a partial computable function $f$ such that
$\Dom f$ equals to $D$ in the above definition of partial computable function.
Namely,
a \emph{computable function} is a function $f$ such that
there exists a deterministic Turing machine $\mathcal{M}$ with the properties that
\begin{enumerate}
  \item $\Dom f$ equals to the set of all the inputs for $\mathcal{M}$, and
  \item for each $x\in\Dom f$, when executing $\mathcal{M}$ with the input $x$,
    the computation of $\mathcal{M}$ eventually terminates and then $\mathcal{M}$ outputs $f(x)$.
\end{enumerate}
A computable function is also called a \emph{total recursive function}.

We say that $\alpha\in\Omega^\infty$ is \emph{computable}
if the mapping $\N\ni n\mapsto\rest{\alpha}{n}$ is a computable function.
A real $a$ is called \textit{computable} if there exists a computable function $g\colon\N\to\Q$ such that $\abs{a-g(k)} < 2^{-k}$ for all $k\in\N$.
A real $a$ is called \emph{left-computable} if
there exists a computable, increasing sequence of rationals which converges to $a$, i.e., if
there exists a computable function $h\colon\N\to\Q$ such that
$h(n)\le h(n+1)$ for every $n\in\N$ and $\lim_{n\to\infty}h(n)=a$.
On the other hand, a real $a$ is called \textit{right-computable} if
$-a$ is left-computable.
It is then easy to see that, for every $a\in\R$,
$a$ is computable if and only if $a$ is both left-computable and right-computable.

A \emph{recursively enumerable set} is a set $S$
such that there exists a deterministic Turing machine $\mathcal{M}$ with the properties that
\begin{enumerate}
  \item $S\subset D$, where $D$ denotes the set of all the inputs for $\mathcal{M}$, and
  \item for each input $x\in D$, when executing $\mathcal{M}$ with the input $x$,
  \begin{enumerate}
    \item if $x\in S$ then the computation of $\mathcal{M}$ eventually terminates;
    \item if $x\notin S$ then the computation of $\mathcal{M}$ does not terminate.
  \end{enumerate}
\end{enumerate}
We write ``r.e.'' instead of ``recursively enumerable.''
A \emph{recursive set} is a set $S$
such that there exists a deterministic Turing machine $\mathcal{M}$ with the properties that
\begin{enumerate}
  \item $S\subset D$, where $D$ denotes the set of all the inputs for $\mathcal{M}$, and
  \item for each input $x\in D$, when executing $\mathcal{M}$ with the input $x$,
    the computation of $\mathcal{M}$ eventually terminates and then
    $\mathcal{M}$ outputs $1$ if $x\in S$ and
    $0$ otherwise.
\end{enumerate}
Note that every recursive set is an r.e.~set, and every r.e.~set is a countable set. 

%%%%%%%%%%%%%%%%%%%%%%%%%%%%%%%%%%%%%%%%%%%%%%%%%%%%%%%
\subsection{Martin-L\"of randomness with respect to an arbitrary probability measure}
\label{MLRam}

In this subsection,
we introduce the notion of \emph{Martin-L\"of randomness} \cite{M66}
in a general setting.

Let $\Omega$ be an arbitrary finite alphabet,
and $\mu$ be an arbitrary probability measure on $\Omega^\infty$.
The basic idea of Martin-L\"of randomness
(with respect to the probability measure $\mu$)
is as follows.
\begin{quote}
\textbf{Basic idea of Martin-L\"of randomness}:
The \emph{random} infinite sequences over $\Omega$ are precisely sequences
which are not contained in any \emph{effective null set} on $\Omega^\infty$.
\end{quote}
Here, an \emph{effective null set} on $\Omega^\infty$ is
a set $\mathcal{S}\in\mathcal{B}_\Omega$ such that $\mu(\mathcal{S})=0$
and moreover $\mathcal{S}$ has some type of \emph{effective} property.
As a specific implementation of the idea of effective null set, we introduce the following notion.

\begin{definition}[Martin-L\"{o}f test with respect to a probability measure]
\label{ML-testM}
Let $\Omega$ be a finite alphabet, and let $\mu$ be a probability measure on $\Omega^\infty$.
A subset $\mathcal{C}$ of $\N^+\times\Omega^*$ is called a
\emph{Martin-L\"{o}f test with respect to $\mu$} if
$\mathcal{C}$ is
an r.e.~set
such that for every $n\in\N^+$ it holds that $\mathcal{C}_n$ is a prefix-free subset of $\Omega^*$ and
\begin{equation}\label{muocn<2n}
  \mu\left(\osg{\mathcal{C}_n}\right)<2^{-n},
\end{equation}
where $\mathcal{C}_n$ denotes the set
$\left\{\,
    \sigma\mid (n,\sigma)\in\mathcal{C}
\,\right\}$.
\qed
\end{definition}

Let $\mathcal{C}$ be a Martin-L\"{o}f test with respect to $\mu$.
Then,
it follows from \eqref{muocn<2n}
that $\mu\left(\bigcap_{n=1}^{\infty}\osg{\mathcal{C}_n}\right)=0$.
Therefore,
the set $\bigcap_{n=1}^{\infty}\osg{\mathcal{C}_n}$ serves as an effective null set.
In this manner, the notion of an effective null set is implemented
as
a Martin-L\"{o}f test with respect a probability measure
in Definition~\ref{ML-testM}.

Then,
the notion of \emph{Martin-L\"of randomness with respect to a probability measure} is
defined as follows, according to the basic idea of Martin-L\"{o}f randomness stated above.

\begin{definition}[Martin-L\"{o}f randomness with respect to a probability measure]
\label{ML-randomness-wrtm}
Let $\Omega$ be a finite alphabet, and let $\mu$ be a probability measure on $\Omega^\infty$.
For any $\alpha\in\Omega^\infty$, we say that $\alpha$ is
\emph{Martin-L\"{o}f random with respect to $\mu$} if
$$\alpha\notin\bigcap_{n=1}^{\infty}\osg{\mathcal{C}_n}$$
for every Martin-L\"{o}f test $\mathcal{C}$ with respect to $\mu$.\qed
\end{definition}

%%%%%%%%%%%%%%%%%%%%%%%%%%%%%%%%%%%%%%%%%%%%%%%%%%%%%%%%%%%%%%%%%%%%%%%%%%%
\section{Operational characterization of the notion of probability for a finite probability space}
\label{FPS}

In our former work~\cite{T14,T15,T16arXiv}
we
provided
an operational characterization of the notion of probability for a \emph{finite probability space}.
In this section we review
the fundamental framework of the operational characterization for a finite probability space.

First,
a finite probability space is
defined as follows.

\begin{definition}[Finite probability space]\label{def-FPS}
Let $\Omega$ be a finite alphabet. A \emph{finite probability space on $\Omega$} is a function $P\colon\Omega\to\R$
such that
\begin{enumerate}
  \item $P(a)\ge 0$ for every $a\in \Omega$, and
  \item $\sum_{a\in \Omega}P(a)=1$.
\end{enumerate}
The set of all finite probability spaces on $\Omega$ is denoted by $\PS(\Omega)$.

Let $P\in\PS(\Omega)$.
The set $\Omega$ is called the \emph{sample space} of $P$,
and elements
of
$\Omega$ are called \emph{sample points} or \emph{elementary events}
of $P$.
For each $A\subset\Omega$, we define $P(A)$ by
$$P(A):=\sum_{a\in A}P(a).$$
A subset of $\Omega$ is called an \emph{event} on $P$, and
$P(A)$ is called the \emph{probability} of $A$
for every event $A$
on $P$.
\qed
\end{definition}

In the framework~\cite{T14,T15,T16arXiv},
a finite alphabet  $\Omega$ plays
a role of the set of all possible outcomes of stochastic trials such experiments or observations.
An operational characterization of the notion of probability which we
provide
for a finite probability space
on $\Omega$ is an infinite sequence over $\Omega$.
In order to provide
it,
we use the notion of Martin-L\"of randomness with respect to \emph{Bernoulli measure}.
A Bernoulli measure is introduced
in the following manner.

Let $\Omega$ be a finite alphabet, and let $P\in\PS(\Omega)$.
For each $\sigma\in\Omega^*$, we use $P(\sigma)$ to denote
$P(\sigma_1)P(\sigma_2)\dots P(\sigma_n)$
where $\sigma=\sigma_1\sigma_2\dots\sigma_n$ with $\sigma_i\in\Omega$.
Therefore $P(\lambda)=1$, in particular.
For each subset $S$ of $\Omega^*$, we use $P(S)$ to denote
$$\sum_{\sigma\in S}P(\sigma).$$
Therefore $P(\emptyset)=0$, in particular.

Consider a function $r\colon\Omega^*\to[0,1]$ such that $r(\sigma)=P(\sigma)$ for every $\sigma\in\Omega^*$.
It is then easy to see that the function $r$ is a probability measure representation over $\Omega$.
The probability measure $\mu_r$ induced by $r$ is
called
a \emph{Bernoulli measure on $\Omega^\infty$}, denoted
$\lambda_{P}$.
The Bernoulli measure $\lambda_{P}$ on $\Omega^\infty$ has
the following property:
For every $\sigma\in \Omega^*$,
\begin{equation*}%
  \Bm{P}{\osg{\sigma}}=P(\sigma),
\end{equation*}
which results from \eqref{mr}.

Martin-L\"of randomness with respect to Bernoulli measure,
which is called \emph{Martin-L\"of $P$-randomness} in
our framework,
is defined as follows.
This notion was, in essence, introduced by Martin-L\"{o}f~\cite{M66},
as well as the notion of Martin-L\"of randomness
with respect to Lebesgue measure.

\begin{definition}[%
Martin-L\"of $P$-randomness,
Martin-L\"{o}f \cite{M66}]\label{ML_P-randomness}
Let $\Omega$ be a finite alphabet, and let $P\in\PS(\Omega)$.
For any $\alpha\in\Omega^\infty$,
we say that $\alpha$ is \emph{Martin-L\"{o}f $P$-random} if
$\alpha$ is Martin-L\"{o}f random with respect to $\lambda_{P}$.
\qed
\end{definition}

Let $\Omega$ be a finite alphabet, and let $P\in\PS(\Omega)$.
In the work~\cite{T14,T15,T16arXiv},
we propose to regard a Martin-L\"of $P$-random sequence of sample points
as an \emph{operational characterization of the notion of probability}
for a finite probability space $P$ on $\Omega$.
Namely,
we propose to identify a Martin-L\"of $P$-random sequence of sample points
with the \emph{substance} of the notion of probability for a finite probability space $P$.
Thus, since the notion of Martin-L\"of $P$-random sequence plays a central role in our framework,
in particular we call it an \emph{ensemble}, as in Definition~\ref{ensemble-finite},
instead of collective for distinction.

\begin{definition}[Ensemble]\label{ensemble-finite}
Let $\Omega$ be a finite alphabet, and let $P\in\PS(\Omega)$.
A Martin-L\"of $P$-random
infinite sequence over $\Omega$
is called an \emph{ensemble} for the finite probability space $P$ on $\Omega$.
\qed
\end{definition}

Consider an infinite sequence $\alpha\in\Omega^\infty$ of outcomes which is being generated
by infinitely repeated trials \emph{described by} the finite probability space $P$.
The operational characterization of the notion of probability for the finite probability space $P$
is thought to be completed
if the property which the infinite sequence $\alpha$ has to satisfy is determined.
In the work~\cite{T15,T16arXiv}
we thus proposed the following thesis.

\begin{thesis}[Tadaki~\cite{T15,T16arXiv}]\label{thesis}
Let $\Omega$ be a finite alphabet, and let $P\in\PS(\Omega)$.
An infinite sequence of outcomes in $\Omega$ which is
being generated by infinitely repeated trials \emph{described by} the finite probability space $P$ on $\Omega$
is an ensemble for $P$.
\qed
\end{thesis}

In the work~\cite{T14,T15,T16arXiv},
we confirmed the validity of Thesis~\ref{thesis} from the various aspects.

%%%%%%%%%%%%%%%%%%%%%%%%%%%%%%%%%%%%%%%%%%%%%%%%%%%%%%%%%%%%%%%%%%%%%%%%%%%
\section{Measure theory on the Baire space}
\label{Measure-theory-on-Baire-space}

From now on, we start our investigation
to provide an operational characterization of the notion of probability
for a \emph{discrete probability space}.

First, we develop measure theory on
the
\emph{Baire space}.
An \emph{countable alphabet} is a countably infinite set.
Let $\Omega$ be an arbitrary countable alphabet throughout the rest of this section.
We introduce a measure on
$\Omega^\infty$
by generalizing the argument given in Nies~\cite[Section 1.9]{N09}, i.e.,
by generalizing
the argument reviewed in Subsection~\ref{MR}
above.%
\footnote{The Baire space is the set of all infinite sequences of natural numbers.
Since $\Omega$ is countably infinite,
the set $\Omega^\infty$
is, in essence,
the
Baire space.}
Although the space $\Omega^\infty$ is not compact
and therefore it is hard to handle,
we can certainly develop measure theory on $\Omega^\infty$.

\begin{definition}[Measure representation]
A \emph{measure representation over $\Omega$} is
a function $r\colon\Omega^*\to[0,1]$ such that for every $\sigma\in\Omega^*$ it holds that
\begin{equation}\label{pmr-Bs}
  r(\sigma)=\sum_{a\in\Omega}r(\sigma a).
\end{equation}
\qed
\end{definition}

A measure representation $r$ over $\Omega$ \emph{induces}
an outer measure $\mu_r$ on $\Omega^\infty$ in the following manner:
For any subset $S$ of $\Omega^*$, we use $r(S)$ to denote
\[
  \sum_{\sigma\in S}r(\sigma)
\]
(may be $\infty$).
For any subset $S$ of $\Omega^*$ and $\rho\in\Omega^*$, we use $S[\rho]$ to denote
the set of all $\sigma\in S$ such that $\rho$ is a prefix of $\sigma$.

First, we show the following theorem.
For any $S\subset \Omega^*$ and $n\in\N$, we denote by $\rest{S}{n}$
the set of all $\sigma\in\Omega^n$ such that $\sigma$ is a prefix of some element of $S$.

\begin{theorem}\label{thm_rErlerr}
Let $r$ be a measure representation over $\Omega$.
Let $\rho\in\Omega^*$ and let $E$ be a prefix-free subset of
$\Omega^*$.
Then $r(E[\rho])$ converges and satisfies that $r(E[\rho])\le r(\rho)$.
\end{theorem}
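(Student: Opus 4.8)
The plan is to bound the (a priori infinite) sum $r(E[\rho])=\sum_{\sigma\in E[\rho]}r(\sigma)$ by passing through its finite partial sums. Since every term $r(\sigma)$ is nonnegative and $E[\rho]$ is a countable set (being a subset of $\Omega^*$, which is countable as $\Omega$ is), the series $r(E[\rho])$ converges to a finite value if and only if the finite partial sums $\sum_{\sigma\in F}r(\sigma)$ over finite $F\subseteq E[\rho]$ are bounded above, and in that case $r(E[\rho])$ equals their supremum. Hence it suffices to prove $\sum_{\sigma\in F}r(\sigma)\le r(\rho)$ for every finite $F\subseteq E[\rho]$; since $r(\rho)\le 1<\infty$, this simultaneously yields convergence and the desired inequality.

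The first ingredient I would establish is the \emph{iterated branching} identity: for every $\sigma\in\Omega^*$ and every $n\in\N$,
\[
  r(\sigma)=\sum_{\tau\in\Omega^n}r(\sigma\tau).
\]
This follows by induction on $n$ from the defining identity \eqref{pmr-Bs}, the rearrangement of the arising double series (indexed by $\Omega^n\times\Omega$) being justified by nonnegativity of $r$.

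Next, given a finite nonempty $F\subseteq E[\rho]$, I would put $N:=\max_{\sigma\in F}\abs{\sigma}$; note $N\ge\abs{\rho}$ because every $\sigma\in F$ extends $\rho$. Expanding each $r(\sigma)$ as $\sum_{\tau\in\Omega^{N-\abs{\sigma}}}r(\sigma\tau)$ by the identity above gives $\sum_{\sigma\in F}r(\sigma)=\sum_{\sigma\in F}\sum_{\tau\in\Omega^{N-\abs{\sigma}}}r(\sigma\tau)$. The key observation is that the strings $\sigma\tau$ occurring here are pairwise distinct: if $\sigma_1\tau_1=\sigma_2\tau_2$ with $\sigma_1,\sigma_2\in F$, then $\sigma_1$ and $\sigma_2$ are both prefixes of a single string, hence comparable, hence equal by the prefix-freeness of $E\supseteq F$, whence $\tau_1=\tau_2$. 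Therefore $\sum_{\sigma\in F}r(\sigma)=\sum_{\mu\in G}r(\mu)$, where $G$ is a set of \emph{distinct} strings of length $N$, each extending $\rho$; thus $G\subseteq\{\rho\eta\mid\eta\in\Omega^{N-\abs{\rho}}\}$, and nonnegativity of $r$ together with the iterated branching identity applied at $\rho$ gives $\sum_{\mu\in G}r(\mu)\le\sum_{\eta\in\Omega^{N-\abs{\rho}}}r(\rho\eta)=r(\rho)$, as required. The degenerate cases (when $F$ or $E[\rho]$ is empty) are trivial.

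I do not anticipate a genuine obstacle; the only places demanding care are bookkeeping ones stemming from $\Omega$ being infinite, so that the sums over $\Omega$, over $\Omega^n$, and over $E[\rho]$ are honest infinite series whose manipulations (rearrangement, and comparison of a subset sum with a larger sum) must be justified by nonnegativity rather than by finiteness. Prefix-freeness of $E$ enters exactly once — to rule out overcounting when the finitely many $r(\sigma)$ are refined to a common level $N$ — and is precisely what makes the argument go through.
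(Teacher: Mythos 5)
Your proof is correct. It reaches the same conclusion by a somewhat different organization than the paper's. The paper first treats the case $\rho=\lambda$ by an induction on the truncation level $n$: it introduces the sets $E_{\le n}=E\cap\Omega^{\le n}$ and $\widetilde{E}_{>n}$ (the length-$n$ prefixes of the elements of $E$ longer than $n$), shows that $r(E_{\le n}\cup\widetilde{E}_{>n})\le r(\lambda)$ is preserved when passing from level $n$ to level $n+1$ (dropping nonnegative terms via \eqref{pmr-Bs}), concludes that the partial sums $r(E_{\le n})$ are bounded, and finally handles general $\rho$ by transporting everything to the auxiliary measure representation $q(\sigma):=r(\rho\sigma)$. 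You instead reduce at the outset to finite subsets $F\subseteq E[\rho]$ (legitimate by nonnegativity, and exactly the right reading of the paper's unordered sum $r(S)$), establish the iterated branching identity $r(\sigma)=\sum_{\tau\in\Omega^{n}}r(\sigma\tau)$ once and for all, and then expand the finitely many strings of $F$ to a single common depth $N$, invoking prefix-freeness exactly once to rule out double counting among the resulting length-$N$ strings; comparison with the full depth-$N$ expansion of $r(\rho)$ finishes the argument, with no need to reduce to $\rho=\lambda$. The underlying mechanism is the same in both proofs --- prefix-freeness plus the branching identity applied at a common depth --- but your reduction to finite partial sums replaces the paper's level-by-level bookkeeping with a one-shot expansion and handles general $\rho$ directly, which is arguably cleaner; all the infinite-sum manipulations you perform (rearrangement of the double series over $\Omega^{n}\times\Omega$, and bounding a sum over a subset of $\Omega^{N-\abs{\rho}}$ by the full sum) are justified by nonnegativity exactly as you say.
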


\begin{proof}
First, we show the result in the case of $\rho=\lambda$, i.e., we show that
$r(E)\le r(\lambda)$.
For any $n\in\N$, we use $E_{\le n}$, $E_n$, and $E_{\ge n}$
to denote the sets  $E\cap\Omega^{\le n}$,  $E\cap\Omega^n$, and  $E\cap\Omega^{\ge n}$,
respectively.
In particular, we denote the set $\rest{E_{\ge n+1}}{n}$ by $\widetilde{E}_{>n}$ for any $n\in\N$.
Note that $E_{\le n}\cap \widetilde{E}_{>n}=\emptyset$ for every $n\in\N$, since $E$ is prefix-free.

We prove the following inequality by induction on $n\in\N$:
\begin{equation}\label{rencupenlerl}
  r(E_{\le n}\cup \widetilde{E}_{>n})\le r(\lambda).
\end{equation}
Since $E_{\le 0}\cup \widetilde{E}_{>0}\subset\{\lambda\}$, the inequality~\eqref{rencupenlerl} holds for $n=0$, obviously.

For an arbitrary $k\in\N$, assume that the inequality~\eqref{rencupenlerl} holds for $n=k$.
Then, we show that
\begin{equation}\label{rE<=k+1uwE>k+1<=rE<=kuwE>k}
  r(E_{\le k+1}\cup \widetilde{E}_{>k+1})\le r(E_{\le k}\cup \widetilde{E}_{>k}).
\end{equation}
First, we see that
\begin{equation}\label{rEkuwE>k}
  r(E_{\le k}\cup \widetilde{E}_{>k})
  =r(E_{\le k})+r(\widetilde{E}_{>k})
  =r(E_{\le k})+r(\rest{E_{\ge k+1}}{k}),
\end{equation}
where the first equality follows from the fact that $E_{\le k}\cap \widetilde{E}_{>k}=\emptyset$.
We then see that
\begin{equation}\label{rs=saOrsa=gstArt+stBnrt}
  r(\rest{E_{\ge k+1}}{k})
  =\sum_{\sigma\in\reste{E_{\ge k+1}}{k}}r(\sigma)
  =\sum_{\sigma\in\reste{E_{\ge k+1}}{k}}\sum_{a\in\Omega}r(\sigma a)
  \ge\sum_{\tau\in\reste{E_{\ge k+1}}{k+1}}r(\tau)
  =r(\rest{E_{\ge k+1}}{k+1}),
\end{equation}
where the second equality follows from \eqref{pmr-Bs}.
Since
$r(\rest{E_{\ge k+1}}{k+1})=r(E_{k+1})+r(\rest{E_{\ge k+2}}{k+1})$,
it follows from \eqref{rEkuwE>k} and \eqref{rs=saOrsa=gstArt+stBnrt} that
\begin{align*}
  r(E_{\le k}\cup \widetilde{E}_{>k})
  &\ge r(E_{\le k})+r(E_{k+1})+r(\rest{E_{\ge k+2}}{k+1})
  =r(E_{\le k+1})+r(\widetilde{E}_{>k+1}) \\
  &=r(E_{\le k+1}\cup \widetilde{E}_{>k+1}),
\end{align*}
where the last equality follows from the fact that $E_{\le k+1}\cap \widetilde{E}_{>k+1}=\emptyset$.
Thus, we have the inequality~\eqref{rE<=k+1uwE>k+1<=rE<=kuwE>k}, as desired.
Hence, from the assumption we have
that the inequality~\eqref{rencupenlerl} holds for $n=k+1$.

Thus, the inequality~\eqref{rencupenlerl} holds for all $n\in\N$.
It follows that $r(E_{\le n})\le r(E_{\le n+1})\le r(\lambda)$ for all $n\in\N$.
Thus, $r(E)$ converges and satisfies that $r(E)\le r(\lambda)$.

Next, we show the result $r(E[\rho])\le r(\rho)$ in the general case of an arbitrary $\rho\in\Omega^*$.
Since $E$ is prefix free, there exists a prefix-free subset $F$ of $\Omega^*$ such that
$E[\rho]=\{\rho\sigma\mid \sigma\in F\}$.
Consider a function $q\colon\Omega^*\to[0,1]$ defined by $q(\sigma):=r(\rho\sigma)$.
Since $r$ is a measure representation over $\Omega$, it is easy to see that
$q$ is also a measure representation over $\Omega$.
Applying the result above to $q$ and $F$, we have $q(F)\le q(\lambda)$,
which implies that $r(E[\rho])\le r(\rho)$, as desired.
This completes the proof.
\end{proof}

\begin{theorem}\label{orsoEr-rEr=rr}
Let $r$ be a measure representation over $\Omega$.
Let $\rho\in\Omega^*$ and let $E$ be a prefix-free subset of
$\Omega^*$.
Suppose that $\osg{\rho}\subset\osg{E[\rho]}$.
Then $r(E[\rho])=r(\rho)$.
\end{theorem}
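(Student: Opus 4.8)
The plan is to prove the equality directly; note first that Theorem~\ref{thm_rErlerr} already gives $r(E[\rho])\le r(\rho)$, so the real content is the reverse inequality, and this is where the hypothesis $\osg{\rho}\subseteq\osg{E[\rho]}$ has to be used. I would begin by reducing to the case $\rho=\lambda$, exactly as in the proof of Theorem~\ref{thm_rErlerr}: since $E$ is prefix-free, write $E[\rho]=\{\rho\sigma\mid\sigma\in F\}$ for a prefix-free set $F\subseteq\Omega^*$, put $q(\sigma):=r(\rho\sigma)$, which is again a measure representation over $\Omega$ by \eqref{pmr-Bs}, and observe that $r(E[\rho])=q(F)$ and $r(\rho)=q(\lambda)$, while the assumption $\osg{\rho}\subseteq\osg{E[\rho]}$ translates precisely into $\osg{F}=\Omega^\infty$. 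Thus it suffices to show: \emph{if $q$ is a measure representation over $\Omega$ and $F\subseteq\Omega^*$ is prefix-free with $\osg{F}=\Omega^\infty$, then $q(F)=q(\lambda)$.}

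For this I would pass to the tree $T:=\{\tau\in\Omega^*\mid\tau\text{ is a prefix of some element of }F\}$, which is closed under prefixes and whose maximal elements (leaves) are exactly the elements of $F$, by prefix-freeness. The hypothesis $\osg{F}=\Omega^\infty$ enters through two observations. First, $T$ has no infinite branch: such a branch $\alpha$ would have some prefix $\sigma\in F$, and then the length-$(\abs{\sigma}+1)$ prefix of $\alpha$, being in $T$, would be a prefix of some element of $F$ properly extending $\sigma$ — impossible since $F$ is prefix-free. Second, every non-leaf $\tau\in T$ has the \emph{full} successor set $\{\tau a\mid a\in\Omega\}$ inside $T$: otherwise some $\tau a$ is a prefix of no element of $F$, and since (again by prefix-freeness, $\tau$ being a proper prefix of an element of $F$) no prefix of $\tau$ lies in $F$, no prefix of $\tau a$ lies in $F$ at all, so every infinite sequence extending $\tau a$ has no prefix in $F$, contradicting $\osg{F}=\Omega^\infty$. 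Consequently, for a non-leaf $\tau\in T$ the defining identity \eqref{pmr-Bs} reads $q(\tau)=\sum_{a\in\Omega}q(\tau a)$, a sum over precisely the successors of $\tau$ in $T$; that is, the assignment $\tau\mapsto q(\tau)$ is mass-conservative on $T$.

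Since $T$ is well-founded, each of its nodes carries an ordinal rank, and I would prove by transfinite induction on this rank that $q(\tau)=\sum\{q(\sigma)\mid\sigma\in F,\ \tau\preceq\sigma\}$ for every $\tau\in T$: the base case $\tau\in F$ is trivial, and for a non-leaf $\tau$ the mass-conservation identity, combined with the induction hypothesis applied to each strictly lower-rank successor $\tau a$ and a rearrangement of the resulting non-negative double series, gives the claim. Evaluating at $\tau=\lambda$ yields $q(\lambda)=q(F)$, hence $r(E[\rho])=r(\rho)$. The step where care is needed — and what I expect to be the main obstacle — is exactly this infinite branching of $T$: one cannot run a finite induction on the level as in the proof of Theorem~\ref{thm_rErlerr}, since a priori positive mass could ``leak to infinity'' along $T$, and it is only the absence of infinite branches, guaranteed by $\osg{\rho}\subseteq\osg{E[\rho]}$, that rules this out and lets the transfinite induction go through. (If one prefers to avoid ordinals, the same effect is obtained by proving the finite identity $q(\lambda)=q(F\cap\Omega^{\le n})+q(H_n)$, where $H_n$ is the set of length-$n$ proper prefixes of elements of $F$ of length greater than $n$, and then showing that $\lim_n q(H_n)>0$ would let one build greedily an infinite sequence lying in every $\osg{H_n}$, contradicting $\bigcap_n\osg{H_n}=\emptyset$.)
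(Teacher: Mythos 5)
Your proof is correct, but it takes a genuinely different route in the core argument. The reduction to $\rho=\lambda$ via $q(\sigma):=r(\rho\sigma)$ and a prefix-free $F$ with $E[\rho]=\{\rho\sigma\mid\sigma\in F\}$ is identical to the paper's. For the case $\rho=\lambda$, however, the paper argues by contradiction: assuming $r(E)<r(\lambda)$, it uses the identity \eqref{pmr-Bs} to show that the strict defect $r(E[\tau])<r(\tau)$ must pass from $\tau$ to at least one child $\tau a$ (picking the least such $a$ under a well-ordering of $\Omega$), thereby building an infinite sequence all of whose prefixes carry the defect; since $\osg{E}=\Omega^\infty$ forces some prefix $\tau_{n_0}$ to lie in $E$, where $r(E[\tau_{n_0}])=r(\tau_{n_0})$ by prefix-freeness, a contradiction results. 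You instead prove the node-wise identity $q(\tau)=q(F[\tau])$ directly, by showing that the prefix tree $T$ of $F$ is well-founded and fully branching at internal nodes (both consequences of $\osg{F}=\Omega^\infty$ together with prefix-freeness, and both correctly justified), and then running a transfinite induction on the ordinal rank of $T$, with a legitimate rearrangement of non-negative double series at each internal node. The two arguments exploit exactly the same two ingredients --- the local mass-conservation identity \eqref{pmr-Bs} and the covering hypothesis --- and your well-foundedness claim is precisely the contrapositive of the paper's branch construction; what your version buys is a direct (non-reductio) proof giving the stronger conclusion $q(\tau)=q(F[\tau])$ for every $\tau\in T$, at the cost of invoking ordinal ranks, whereas the paper stays within ordinary induction on $\N$. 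Your parenthetical alternative (the identity $q(\lambda)=q(F\cap\Omega^{\le n})+q(H_n)$ plus a greedy branch construction from $\lim_n q(H_n)>0$) is essentially the paper's argument, obtained by sharpening the inequality in the proof of Theorem~\ref{thm_rErlerr} to an equality under the covering hypothesis.
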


\begin{proof}
First, we show the result in the case of $\rho=\lambda$, i.e.,
we show that $r(E)=r(\lambda)$ if $\Omega^\infty=\osg{E}$.
Note that $\Omega$ is well-ordered, since it is a countably infinite set.
Thus, every non-empty subset of $\Omega$ has a least element.

Now, let us assume contrarily that $\Omega^\infty=\osg{E}$ but $r(E)\neq r(\lambda)$.
It follows from Theorem~\ref{thm_rErlerr} that
\begin{equation}\label{rE<rl}
  r(E)<r(\lambda).
\end{equation}
Based on this,
we choose an infinite sequence $\tau_0,\tau_1,\tau_2,\tau_3,\dotsc$ of elements of $\Omega^*$
such that
\begin{enumerate}
  \item $\abs{\tau_n}=n$,
  \item $r(E[\tau_n])<r(\tau_n)$, and
  \item there exists $a\in\Omega$ with the properties that
    $\tau_n a=\tau_{n+1}$ and $a$ is the least element of $\Omega$ for which $r(E[\tau_na])<r(\tau_na)$
\end{enumerate}
for all $n\in\N$, inductively, in the following manner.

First, we set $\tau_0:=\lambda$.
Obviously, $\abs{\tau_0}=0$ and we have $r(E[\tau_0])<r(\tau_0)$ due to \eqref{rE<rl}.
Assume that the sequence $\tau_0,\tau_1,\tau_2,\dots,\tau_k$ satisfying
the properties~(i), (ii), and (iii) above
has already been chosen.
Then
\begin{equation}\label{rEtauk<rtauk}
  r(E[\tau_k])<r(\tau_k)
\end{equation}
holds, in particular.
On the other hand, it follows
from Theorem~\ref{thm_rErlerr} that $r(E[\tau_ka])\le r(\tau_ka)$ for every $a\in\Omega$.
Assume contrarily that $r(E[\tau_ka])=r(\tau_ka)$ for every $a\in\Omega$.
Then, since
$r$ is a measure representation over $\Omega$, we have that
\begin{equation*}
  r(E[\tau_k])\ge\sum_{a\in\Omega}r(E[\tau_ka])=\sum_{a\in\Omega}r(\tau_ka)=r(\tau_k).
\end{equation*}
However, this contradicts the inequality~\eqref{rEtauk<rtauk}.
Thus, we have that $r(E[\tau_ka_0])<r(\tau_ka_0)$ for some $a_0\in\Omega$.
We then choose a least $a\in\Omega$ such that $r(E[\tau_ka])<r(\tau_ka)$,
and set $\tau_{k+1}:=\tau_ka$.
As a result,
the properties~(i), (ii), and (iii) hold for the sequence $\tau_0,\tau_1,\tau_2,\dots,\tau_k,\tau_{k+1}$,
certainly.

In this manner, we can generate
an infinite sequence $\tau_0,\tau_1,\tau_2,\tau_3,\dotsc$ of elements of $\Omega^*$
satisfying the properties~(i), (ii), and (iii) above.

Then,
due to the properties~(i) and (iii),
there exists an infinite sequence $\alpha\in\Omega^\infty$ such that
$\rest{\alpha}{n}=\tau_n$ for all
$n\in\N$.
Since $\Omega^\infty=\osg{E}$, we have that $\alpha\in\osg{E}$
and therefore $\rest{\alpha}{n_0}\in E$ for some
$n_0\in\N$.
This implies that $\tau_{n_0}\in E$.
Therefore, $r(E[\tau_{n_0}])=r(\tau_{n_0})$ since $E$ is prefix-free.
However, this contracts
the property~(ii) above
which implies
that $r(E[\tau_{n_0}])<r(\tau_{n_0})$.
Hence, we have that $r(E)=r(\lambda)$
if $\Omega^\infty=\osg{E}$, as desired.

Next, we show the result in the general case,
i.e., we show that $r(E[\rho])=r(\rho)$ if $\osg{\rho}\subset\osg{E[\rho]}$.
Since $E$ is prefix free, there exists a prefix-free subset $F$ of $\Omega^*$ such that
$E[\rho]=\{\rho\sigma\mid \sigma\in F\}$.
It follows that if $\osg{\rho}\subset\osg{E[\rho]}$ then $\Omega^\infty=\osg{F}$.
On the other hand,
consider a function $q\colon\Omega^*\to[0,1]$ defined by $q(\sigma):=r(\rho\sigma)$.
Since $r$ is a measure representation over $\Omega$, it is easy to see that
$q$ is also a measure representation over $\Omega$.
Applying the result above to $q$ and $F$, we have that
if $\osg{\rho}\subset\osg{E[\rho]}$ then $q(F)=q(\lambda)$,
which implies that $r(E[\rho])=r(\rho)$, as desired.
This completes the proof.
\end{proof}

\begin{theorem}\label{oEleoFimprElerF}
Let $r$ be a measure representation over $\Omega$.
Let $E$ and $F$ be prefix-free subsets of $\Omega^*$.
Suppose that $\osg{E}\subset\osg{F}$. Then $r(E)\le r(F)$.
\end{theorem}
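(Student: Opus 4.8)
The plan is to bound $r(E)=\sum_{\sigma\in E}r(\sigma)$ by assigning to each $\sigma\in E$ a subset of $F$ of $r$-value at least $r(\sigma)$, the assigned subsets being pairwise disjoint; then $r(E)\le r(F)$ follows from $r\ge 0$ (all sums being finite, since $r(F)\le r(\lambda)\le 1$ by Theorem~\ref{thm_rErlerr}). The first step is to split $E$ according to whether a string has a prefix in $F$: let $E_1$ be the set of $\sigma\in E$ possessing a prefix in $F$, and let $E_2:=E\setminus E_1$.

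For $\sigma\in E_1$ the prefix $\tau\in F$ of $\sigma$ is unique by prefix-freeness of $F$; setting $F':=\{\tau\in F\mid E[\tau]\neq\emptyset\}$, the sets $E[\tau]$ with $\tau\in F'$ are pairwise disjoint (distinct elements of $F'$ are incomparable, so no string of $E$ extends two of them) and their union is $E_1$, whence by Theorem~\ref{thm_rErlerr} applied with $\rho=\tau$ we get $r(E_1)=\sum_{\tau\in F'}r(E[\tau])\le\sum_{\tau\in F'}r(\tau)$. For $\sigma\in E_2$ I would check that $\osg{\sigma}\subset\osg{F[\sigma]}$: for any $\alpha\in\osg{\sigma}$ we have $\alpha\in\osg{F}$, since $\osg{\sigma}\subset\osg{E}\subset\osg{F}$, so $\rest{\alpha}{n}\in F$ for some $n$; as $\rest{\alpha}{n}$ and $\sigma$ are comparable (both being prefixes of $\alpha$) and $\sigma$ has no prefix in $F$, the string $\sigma$ must be a prefix of $\rest{\alpha}{n}$, i.e.\ $\rest{\alpha}{n}\in F[\sigma]$. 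Then Theorem~\ref{orsoEr-rEr=rr}, applied with $\rho=\sigma$ and the prefix-free set $F$, gives $r(F[\sigma])=r(\sigma)$, so $r(E_2)=\sum_{\sigma\in E_2}r(F[\sigma])$.

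Finally I would verify that $F'$ together with the sets $F[\sigma]$ for $\sigma\in E_2$ form a family of pairwise disjoint subsets of $F$: $F[\sigma]\cap F[\sigma']=\emptyset$ for distinct $\sigma,\sigma'\in E_2$ because $\sigma,\sigma'$ are incomparable; and if $\tau\in F'\cap F[\sigma]$ with $\sigma\in E_2$, then $\sigma$ is a prefix of $\tau$ and $\tau$ is a prefix of some $\sigma'\in E$, forcing $\sigma=\sigma'$ by prefix-freeness of $E$ and then $\tau=\sigma\in F$, contradicting $\sigma\in E_2$. Granting this, $r(E)=r(E_1)+r(E_2)\le\sum_{\tau\in F'}r(\tau)+\sum_{\sigma\in E_2}r(F[\sigma])\le r(F)$, the last step using $r\ge 0$ and the disjointness of these subsets of $F$. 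I expect the main obstacle to be precisely this combinatorial disjointness-and-covering bookkeeping about prefixes; the two preceding theorems then supply all of the analytic content.
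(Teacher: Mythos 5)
Your proof is correct, but it takes a genuinely different route from the paper's. The paper introduces a single auxiliary prefix-free set $G$, consisting of the minimal strings $\sigma$ with $\osg{\sigma}\subset\osg{F}$, and then decomposes \emph{both} $E$ and $F$ under this common roof: $F=\bigcup_{\sigma\in G}F[\sigma]$ with $r(F[\sigma])=r(\sigma)$ by Theorem~\ref{orsoEr-rEr=rr} (giving $r(F)=r(G)$), and $E=\bigcup_{\sigma\in G}E[\sigma]$ with $r(E[\sigma])\le r(\sigma)$ by Theorem~\ref{thm_rErlerr} (giving $r(E)\le r(G)$). You instead avoid any auxiliary set and work directly from $E$'s side, splitting $E$ according to whether a string lies above or below $F$ in the prefix order: the part $E_1$ lying above $F$ is grouped under its $F$-ancestors and bounded via Theorem~\ref{thm_rErlerr}, while each $\sigma$ in the part $E_2$ lying below $F$ is matched to the cone $F[\sigma]$ of exactly equal mass via Theorem~\ref{orsoEr-rEr=rr}, and the disjointness bookkeeping you describe (which does go through, with prefix-freeness of $E$ and of $F$ each used once) lets you pack all these pieces into $F$. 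Both arguments draw their analytic content from the same two theorems, so neither is more general; the paper's version is somewhat more symmetric and economical in its case analysis (one covering set, two sums), while yours is more direct in that it exhibits an explicit injection of $E$-mass into disjoint portions of $F$-mass without passing through an intermediate set. Your parenthetical justification of convergence (all series have non-negative terms and are dominated by $r(\lambda)$, so the rearrangements are legitimate) is the right thing to note and is implicitly used in the paper's proof as well.
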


\begin{proof}
In the case where $E$ is an empty set, the result is obvious.
Thus, we assume that  $E$ is a nonempty set, in what follows.
Therefore, since $\osg{E}\subset\osg{F}$, $F$ is also nonempty.
 
Let $G$ be the set of all $\sigma\in\Omega^*$ such that (i) $\osg{\sigma}\subset\osg{F}$
and (ii) $\osg{\rho}\not\subset\osg{F}$ for every proper prefix $\rho$ of $\sigma$.
Since $F$ is a nonempty set, $G$ is also nonempty.

First, we show that $r(F)=r(G)$.
On the one hand, it is easy to see that $\osg{\sigma}\subset\osg{F[\sigma]}$ for every $\sigma\in G$.
Thus, it follows from Theorem~\ref{orsoEr-rEr=rr} that
\begin{equation}\label{oEsoFimprElerF_rFslers}
  r(F[\sigma])=r(\sigma)
\end{equation}
for every $\sigma\in G$.
On the other hand, note that $G$ is a prefix-free set.
Therefore, we have that $F[\sigma_1]\cap F[\sigma_2]=\emptyset$
for every $\sigma_1,\sigma_2\in G$ with $\sigma_1\neq\sigma_2$, in particular.
Since some prefix of $\rho$ is in $G$ for every $\rho\in F$, we have
\begin{equation*}
  F=\bigcup_{\sigma\in G}F[\sigma].
\end{equation*}
Hence, using \eqref{oEsoFimprElerF_rFslers} we have that
\[
  r(F)=\sum_{\sigma\in G}r(F[\sigma])=\sum_{\sigma\in G}r(\sigma)=r(G),
\]
as desired.

Next, we
show that $r(E)\le r(G)$.
As above, since $G$ is prefix-free, we have that $E[\sigma_1]\cap E[\sigma_2]=\emptyset$
for every $\sigma_1,\sigma_2\in G$ with $\sigma_1\neq\sigma_2$.
Since $\osg{E}\subset\osg{F}$, for each $\rho\in E$ we see that $\osg{\rho}\subset\osg{F}$ and
therefore some prefix of $\rho$ is in $G$.
Thus we have 
\begin{equation*}
  E=\bigcup_{\sigma\in G}E[\sigma].
\end{equation*}
Hence, it follows from Theorem~\ref{thm_rErlerr} that
\[
  r(E)=\sum_{\sigma\in G}r(E[\sigma])\le \sum_{\sigma\in G}r(\sigma)=r(G),
\]
as desired.

Thus, we have $r(E)\le r(G)=r(F)$.
This completes the proof. 
\end{proof}

The following is immediate from Theorem~\ref{oEleoFimprElerF}.

\begin{corollary}\label{oE=oE'imprE=rE'}
Let $r$ be a measure representation over $\Omega$.
Let $E$ and $E'$ be prefix-free subsets of $\Omega^*$.
Suppose that $\osg{E}=\osg{E'}$. Then $r(E)=r(E')$.
\qed
\end{corollary}

A subset $\mathcal{R}$ of $\Omega^\infty$ is \emph{open} if
$\mathcal{R}=\osg{S}$ for some $S\subset\Omega^*$.
It is easy to see that for every open subset $\mathcal{A}$ of $\Omega^\infty$
there exists a prefix-free subset $E$ of $\Omega^*$ such that $\mathcal{A}=\osg{E}$.
For
the set of all $\sigma\in\Omega^*$ such that (i) $\osg{\sigma}\subset\mathcal{A}$
and (ii) $\osg{\rho}\not\subset\mathcal{A}$ for every proper prefix $\rho$ of $\sigma$
serves as such a prefix-free set $E$.

Let $r$ be an arbitrary measure representation over $\Omega$. 
For each open subset $\mathcal{A}$ of $\Omega^\infty$, we define $r(\mathcal{A})$ by
\begin{equation*}
  r(\mathcal{A}):=r(E),
\end{equation*}
where $E$ is a prefix-free subset of $\Omega^*$ with $\osg{E}=\mathcal{A}$.
Due to Corollary~\ref{oE=oE'imprE=rE'},
the real value
$r(E)$ is independent of the choice of  the prefix-free set $E$
and therefore the
real
value $r(\mathcal{A})$ is well-defined.

Then, for any subset $\mathcal{C}$ of $\Omega^\infty$, we define $\mu_r(\mathcal{C})$ by
\begin{equation}\label{mu_rC=infrAmidCsubsetAopen}
  \mu_r(\mathcal{C}):=
  \inf\{r(\mathcal{A})\mid
  \mathcal{C}\subset\mathcal{A}\text{ \& $\mathcal{A}$ is an open subset of $\Omega^\infty$}\}.
\end{equation}
We can then show the following theorem.

\begin{theorem}\label{thm_mur-outer-measure-Bs}
Let $r$ be a measure representation over $\Omega$.
Then $\mu_r$ is an outer measure on $\Omega^\infty$ such that
$\mu_r(\mathcal{A})=r(\mathcal{A})$
for every open subset $\mathcal{A}$ of $\Omega^\infty$.
\end{theorem}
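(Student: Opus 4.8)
The plan is to verify the three defining conditions of an outer measure (Definition~\ref{def_outer-measure}) for $\mu_r$ as defined in~\eqref{mu_rC=infrAmidCsubsetAopen}, and then to prove the stated agreement $\mu_r(\mathcal{A})=r(\mathcal{A})$ on open sets. Before doing either, I would record two preliminary facts about the auxiliary functional $r(\cdot)$ on open sets: first, $r(\emptyset)=0$ (take $E=\emptyset$), and second, \emph{monotonicity on open sets}, i.e. $r(\mathcal{A})\le r(\mathcal{B})$ whenever $\mathcal{A}\subset\mathcal{B}$ are open --- this is exactly Theorem~\ref{oEleoFimprElerF} phrased in terms of open sets via the canonical prefix-free representatives, whose existence and well-definedness were established just above the statement using Corollary~\ref{oE=oE'imprE=rE'}. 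Third, and this is the one step that needs genuine work, I would establish \emph{countable subadditivity of $r$ on open sets}: if $\mathcal{A}=\bigcup_{i}\mathcal{A}_i$ with all $\mathcal{A}_i$ open, then $r(\mathcal{A})\le\sum_i r(\mathcal{A}_i)$.

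For this countable-subadditivity-on-open-sets step, let $E$ be the canonical prefix-free representative of $\mathcal{A}$ and $E_i$ that of $\mathcal{A}_i$. Fix any finite subset $D\subset E$; then $\osg{D}\subset\osg{E}=\bigcup_i\osg{E_i}$, and since $D$ is finite and each $\osg{\sigma}$ for $\sigma\in D$ is covered, a compactness-style argument on the finitely many strings $\sigma\in D$ lets me find, for each $\sigma$, finitely many indices and strings from the $E_i$ covering $\osg{\sigma}$; assembling these gives a prefix-free set $F$ contained in $\bigcup_i E_i$ (after refining to a prefix-free subfamily) with $\osg{D}\subset\osg{F}$, whence $r(D)\le r(F)\le\sum_i r(E_i)$ by Theorem~\ref{oEleoFimprElerF} and nonnegativity. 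Here I must be a little careful because $\Omega$ is countably infinite, so $\osg{\sigma}$ is \emph{not} compact; but $D$ is a finite set of strings, and the covering of each single cylinder $\osg{\sigma}$ by the open sets $\mathcal{A}_i$ can be handled by a König's-lemma / tree argument --- the set of strings below $\sigma$ not yet ``captured'' forms a finitely branching... no: it is \emph{infinitely} branching, so instead I would argue directly that $r(\osg{\sigma})=r(\sigma)=\sum_{\text{levels}}$ and push the estimate through using Theorem~\ref{thm_rErlerr} and Theorem~\ref{orsoEr-rEr=rr} rather than topological compactness. Taking the supremum over finite $D\subset E$ and using that $r(E)=\sup_D r(D)$ (which follows since $r(E)$ converges with nonnegative terms) yields $r(\mathcal{A})\le\sum_i r(\mathcal{A}_i)$. \textbf{This is the main obstacle:} the failure of compactness of $\Omega^\infty$ means the usual one-line reduction of countable to finite subadditivity must be replaced by a direct argument resting on the convergence theorems already proved.

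Granting the three facts about $r$ on open sets, the outer measure axioms for $\mu_r$ follow by the standard routine. Condition~(i): $\mu_r(\emptyset)=0$ because $\emptyset$ is itself open with $r(\emptyset)=0$, and $\mu_r\ge 0$ since $r\ge 0$. Condition~(ii), monotonicity of $\mu_r$: if $\mathcal{C}\subset\mathcal{D}$ then every open superset of $\mathcal{D}$ is an open superset of $\mathcal{C}$, so the infimum defining $\mu_r(\mathcal{C})$ is over a larger set and hence no larger. Condition~(iii), countable subadditivity of $\mu_r$: given $\{\mathcal{C}_i\}$ and $\varepsilon>0$, choose open $\mathcal{A}_i\supset\mathcal{C}_i$ with $r(\mathcal{A}_i)\le\mu_r(\mathcal{C}_i)+\varepsilon 2^{-i}$; then $\mathcal{A}:=\bigcup_i\mathcal{A}_i$ is open and contains $\bigcup_i\mathcal{C}_i$, so $\mu_r(\bigcup_i\mathcal{C}_i)\le r(\mathcal{A})\le\sum_i r(\mathcal{A}_i)\le\sum_i\mu_r(\mathcal{C}_i)+\varepsilon$, and letting $\varepsilon\to 0$ gives the claim. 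Finally, for the agreement on open sets: if $\mathcal{A}$ is open then taking $\mathcal{A}$ itself as the competitor in~\eqref{mu_rC=infrAmidCsubsetAopen} gives $\mu_r(\mathcal{A})\le r(\mathcal{A})$, while for any open $\mathcal{B}\supset\mathcal{A}$ monotonicity of $r$ on open sets gives $r(\mathcal{A})\le r(\mathcal{B})$, so $r(\mathcal{A})$ is a lower bound for the infimum and $r(\mathcal{A})\le\mu_r(\mathcal{A})$; hence equality. This completes the plan.
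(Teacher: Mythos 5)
Your overall skeleton is correct and matches the paper's for the routine parts: the outer measure axioms (i) and (ii), the countable subadditivity of $\mu_r$ given subadditivity of $r$ on open sets, and the identity $\mu_r(\mathcal{A})=r(\mathcal{A})$ on open sets via monotonicity (Theorem~\ref{oEleoFimprElerF}) all go through exactly as you describe. However, the one step you flag as ``the main obstacle'' --- countable subadditivity of $r$ on open sets --- is precisely the step you do not actually prove. You first propose a finite-subcover argument, correctly observe that it fails because cylinders in $\Omega^\infty$ are not compact when $\Omega$ is infinite, then propose a K\"onig's-lemma argument, correctly observe that the tree is infinitely branching, and finally fall back on ``push the estimate through using Theorem~\ref{thm_rErlerr} and Theorem~\ref{orsoEr-rEr=rr},'' which is not an argument. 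As written, the proof of the theorem is incomplete at its only nontrivial point.

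The missing idea is a partition, not a covering argument, and it requires no compactness at all. Let $\mathcal{A}_i$ be the open sets, and let $E$ be the canonical prefix-free representative of $\bigcup_i\mathcal{A}_i$ (all $\sigma$ with $\osg{\sigma}\subset\mathcal{A}_i$ for some $i$ but $\osg{\rho}\not\subset\mathcal{A}_i$ for every proper prefix $\rho$ of $\sigma$ and every $i$). Now split $E=\bigcup_i E_i$ where $E_i$ consists of those $\sigma\in E$ with $\osg{\sigma}\subset\mathcal{A}_i$ but $\osg{\sigma}\not\subset\mathcal{A}_k$ for all $k<i$. The $E_i$ are pairwise disjoint, so $r(E)=\sum_i r(E_i)$ is just a rearrangement of an absolutely convergent series of nonnegative terms; each $E_i$ is prefix-free with $\osg{E_i}\subset\mathcal{A}_i$, so Theorem~\ref{oEleoFimprElerF} gives $r(E_i)\le r(\mathcal{A}_i)$, and summing yields $r\left(\bigcup_i\mathcal{A}_i\right)\le\sum_i r(\mathcal{A}_i)$. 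Your detour through finite subsets $D\subset E$ and $r(E)=\sup_D r(D)$ is unnecessary once you have this partition, and without it the finite-$D$ reduction does not help, since the obstruction (no finite subcover of a single cylinder) persists for each individual $\sigma\in D$.
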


\begin{proof}
First, note that $\mathcal{C}\subset\osg{\{\lambda\}}$ for every $\mathcal{C}\subset\Omega^\infty$,
since $\osg{\{\lambda\}}=\Omega^\infty$.
Therefore, since $\osg{\{\lambda\}}$ is an open subset of $\Omega^\infty$ and $r(\osg{\{\lambda\}})=r(\lambda)$,
for each $\mathcal{C}\subset\Omega^\infty$
we see that the infimum in the right-hand side of \eqref{mu_rC=infrAmidCsubsetAopen} exists
as a non-negative real
at most $r(\lambda)$.
Thus, $\mu_r(\mathcal{C})$ is a non-negative real for every $\mathcal{C}\subset\Omega^\infty$.

Secondly, it follows from Theorem~\ref{oEleoFimprElerF} that,
for every open subsets $\mathcal{A}$ and $\mathcal{B}$ of $\Omega^\infty$,
if $\mathcal{A}\subset\mathcal{B}$ then $r(\mathcal{A})\le r(\mathcal{B})$.
This implies that $\mu_r(\mathcal{A})=r(\mathcal{A})$
for every open subset $\mathcal{A}$ of $\Omega^\infty$, as desired.

Since $\emptyset$ is an open subset of $\Omega^\infty$, we have $\mu_r(\emptyset)=r(\emptyset)=0$.
It is also easy to show that $\mu_r(\mathcal{C})\le\mu_r(\mathcal{D})$
for every subsets $\mathcal{C}$ and $\mathcal{D}$ of $\Omega^\infty$
with $\mathcal{C}\subset\mathcal{D}$.
To see this, let $\mathcal{C}$ and $\mathcal{D}$ be arbitrary subsets of $\Omega^\infty$
with $\mathcal{C}\subset\mathcal{D}$, and let $\varepsilon$ be an arbitrary positive real.
Then
there exists an open subset $\mathcal{A}$ of $\Omega^\infty$
such that $\mathcal{D}\subset\mathcal{A}$ and $r(\mathcal{A})<\mu_r(\mathcal{D})+\varepsilon$.
Since $\mathcal{C}\subset\mathcal{D}\subset\mathcal{A}$,
it follows that $\mu_r(\mathcal{C})<\mu_r(\mathcal{D})+\varepsilon$.
Since $\varepsilon$ is arbitrary, we have $\mu_r(\mathcal{C})\le\mu_r(\mathcal{D})$, as desired.

Finally, we show that $\mu_r(\bigcup_{i}\mathcal{C}_i)\le\sum_{i}\mu_r(\mathcal{C}_i)$
for every sequence $\{\mathcal{C}_i\}_{i\in\N}$ of subsets of $\Omega^\infty$.
Let $\{\mathcal{C}_i\}_{i\in\N}$ be an arbitrary sequence of subsets of $\Omega^\infty$.
In the case where $\sum_{i}\mu_r(\mathcal{C}_i)$ diverges, the result is obvious.
Thus, we assume that $\sum_{i}\mu_r(\mathcal{C}_i)$ converges, in what follows.
Let $\varepsilon$ be an arbitrary positive real.
Then, for each $i$ there exists an open subset $\mathcal{A}_i$ of $\Omega^\infty$ such that
$\mathcal{C}_i\subset\mathcal{A}_i$ and
\begin{equation}\label{thm_mromBs_eq0}
  r(\mathcal{A}_i)<\mu_r(\mathcal{C}_i)+\varepsilon 2^{-i}.
\end{equation}
Let $E$ be the set of all $\sigma\in\Omega^*$ such that
(i) $\osg{\sigma}\subset\mathcal{A}_i$ for some $i$
and (ii) $\osg{\rho}\not\subset\mathcal{A}_i$ for every proper prefix $\rho$ of $\sigma$ and every $i$.
Then, $E$ is a prefix-free subset of $\Omega^*$ and $\osg{E}=\bigcup_{i}\mathcal{A}_i$.
Thus, we have
\begin{equation}\label{thm_mromBs_eq1}
  r\left(\bigcup_{i}\mathcal{A}_i\right)=r(E).
\end{equation}
For each $i$, let $E_i$ be the set of all $\sigma\in E$ such that (i) $\osg{\sigma}\subset\mathcal{A}_i$
but (ii) $\osg{\sigma}\not\subset\mathcal{A}_k$ for every $k<i$.
It follows that $E=\bigcup_{i}E_i$ and $E_i\cap E_j=\emptyset$ for every $i\neq j$.
Thus, we have
\begin{equation}\label{thm_mromBs_eq2}
  r(E)=\sum_{i}r(E_i)
\end{equation}
On the other hand,
for each $i$,
since $\osg{E_i}\subset\mathcal{A}_i$ and $E_i$ is prefix-free,
it follows from Theorem~\ref{oEleoFimprElerF} that
\begin{equation}\label{thm_mromBs_eq3}
  r(E_i)\le r(\mathcal{A}_i).
\end{equation}
Hence, since $\bigcup_{i}\mathcal{C}_i\subset\bigcup_{i}\mathcal{A}_i$,
using \eqref{thm_mromBs_eq1}, \eqref{thm_mromBs_eq2}, \eqref{thm_mromBs_eq3},
\eqref{thm_mromBs_eq0} we have that
\[
  \mu_r\left(\bigcup_{i}\mathcal{C}_i\right)\le r\left(\bigcup_{i}\mathcal{A}_i\right)
  <\sum_{i}\left\{\mu_r(\mathcal{C}_i)+\varepsilon 2^{-i}\right\}
  =\sum_{i}\mu_r(\mathcal{C}_i)+\varepsilon.
\]
Thus, since $\varepsilon$ is an arbitrary positive real,
we have $\mu_r(\bigcup_{i}\mathcal{C}_i)\le\sum_{i}\mu_r(\mathcal{C}_i)$,
as desired.
\end{proof}

\begin{definition}[$\sigma$-field and measure]
Let $\Gamma$ be a nonempty set.
A class $\mathcal{F}$ of subsets of $\Gamma$ is called a \emph{$\sigma$-field in $\Gamma$}
if  $\mathcal{F}$ includes $\Gamma$, is closed under complements,
and is closed under the formation of countable unions.
A real-valued function $\mu$ defined on a $\sigma$-field $\mathcal{F}$ in $\Gamma$ is called
a \emph{measure on $\mathcal{F}$} if the following conditions hold.
\begin{enumerate}
\item $\mu\left(\emptyset\right)=0$;
\item $\mu\left(\bigcup_{i}\mathcal{D}_i\right)=\sum_{i}\mu\left(\mathcal{D}_i\right)$
  for every sequence $\{\mathcal{D}_i\}_{i\in\N}$ of sets in $\mathcal{F}$ such that
  $\mathcal{D}_i\cap\mathcal{D}_i=\emptyset$ for all $i\neq j$.\qed
\end{enumerate}
\end{definition}

\begin{definition}
[Carath\'{e}odory~\cite{Cara68}]
\label{def_mu-measurable-set}
Let $\Gamma$ be a nonempty set, and let $\mu$ be an outer measure on $\Gamma$.
A subset $\mathcal{G}$ of $\Gamma$ is called \emph{$\mu$-measurable} if
\[
  \mu(\mathcal{C}\cap\mathcal{G})+\mu(\mathcal{C}\setminus\mathcal{G})=\mu(\mathcal{C})
\]
for every subset $\mathcal{C}$ of $\Gamma$.
The class of all $\mu$-measurable sets is denoted by $\mathcal{M}(\mu)$.
\qed
\end{definition}

Carath\'{e}odory~\cite{Cara68} showed the following central result of measure theory.

\begin{theorem}[Carath\'{e}odory~\cite{Cara68}]\label{thm_mu-measurable-set}
Let $\Gamma$ be a nonempty set, and let $\mu$ be an outer measure on $\Gamma$.
Then $\mathcal{M}(\mu)$ is a $\sigma$-field in $\Gamma$,
and $\mu$ restricted to $\mathcal{M}(\mu)$ is a measure on $\mathcal{M}(\mu)$.
\qed
\end{theorem}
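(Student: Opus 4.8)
The plan is to run Carath\'eodory's classical argument, in two stages: first show that $\mathcal{M}(\mu)$ is an algebra on which $\mu$ is finitely additive, then bootstrap this to countable unions and countable additivity. Throughout I would keep at hand the trivial half of the measurability criterion: applying condition~3 of Definition~\ref{def_outer-measure} to the cover of $\mathcal{C}$ by the two sets $\mathcal{C}\cap\mathcal{G}$, $\mathcal{C}\setminus\mathcal{G}$ (padded out by empty sets, whose $\mu$-value is $0$ by condition~1) gives
\[
  \mu(\mathcal{C}) \le \mu(\mathcal{C}\cap\mathcal{G}) + \mu(\mathcal{C}\setminus\mathcal{G})
\]
for all $\mathcal{C},\mathcal{G}\subseteq\Gamma$. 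Hence $\mathcal{G}\in\mathcal{M}(\mu)$ if and only if the reverse inequality $\mu(\mathcal{C}\cap\mathcal{G}) + \mu(\mathcal{C}\setminus\mathcal{G}) \le \mu(\mathcal{C})$ holds for every test set $\mathcal{C}$.

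For the elementary closure properties: $\Gamma\in\mathcal{M}(\mu)$ is immediate from $\mu(\emptyset)=0$, and closure under complements is immediate because the defining equation is symmetric under $\mathcal{G}\mapsto\Gamma\setminus\mathcal{G}$. For closure under finite unions, given $\mathcal{G}_1,\mathcal{G}_2\in\mathcal{M}(\mu)$ and an arbitrary $\mathcal{C}$, I would apply measurability of $\mathcal{G}_1$ to $\mathcal{C}$ and then measurability of $\mathcal{G}_2$ to $\mathcal{C}\setminus\mathcal{G}_1$, obtaining $\mu(\mathcal{C})$ as the sum of the $\mu$-values of the three pieces $\mathcal{C}\cap\mathcal{G}_1$, $(\mathcal{C}\setminus\mathcal{G}_1)\cap\mathcal{G}_2$, $(\mathcal{C}\setminus\mathcal{G}_1)\setminus\mathcal{G}_2$; since the first two cover $\mathcal{C}\cap(\mathcal{G}_1\cup\mathcal{G}_2)$ and the last equals $\mathcal{C}\setminus(\mathcal{G}_1\cup\mathcal{G}_2)$, subadditivity gives the required inequality for $\mathcal{G}_1\cup\mathcal{G}_2$. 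So $\mathcal{M}(\mu)$ is an algebra. Next, if in addition $\mathcal{G}_1\cap\mathcal{G}_2=\emptyset$, applying measurability of $\mathcal{G}_1$ to the test set $\mathcal{C}\cap(\mathcal{G}_1\cup\mathcal{G}_2)$ and simplifying $(\mathcal{G}_1\cup\mathcal{G}_2)\cap\mathcal{G}_1=\mathcal{G}_1$, $(\mathcal{G}_1\cup\mathcal{G}_2)\setminus\mathcal{G}_1=\mathcal{G}_2$ yields $\mu(\mathcal{C}\cap(\mathcal{G}_1\cup\mathcal{G}_2))=\mu(\mathcal{C}\cap\mathcal{G}_1)+\mu(\mathcal{C}\cap\mathcal{G}_2)$, and a routine induction extends this to
\[
  \mu\Bigl(\mathcal{C}\cap\bigcup_{i=1}^{n}\mathcal{G}_i\Bigr) = \sum_{i=1}^{n}\mu(\mathcal{C}\cap\mathcal{G}_i)
\]
for pairwise disjoint $\mathcal{G}_1,\dots,\mathcal{G}_n\in\mathcal{M}(\mu)$ and any $\mathcal{C}$.

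The substantive step is closure under countable unions together with countable additivity. Given a sequence $\{\mathcal{G}_i\}_{i\in\N}$ in $\mathcal{M}(\mu)$, I would first replace it by $\mathcal{G}_i\setminus\bigcup_{k<i}\mathcal{G}_k$, which stays in $\mathcal{M}(\mu)$ since $\mathcal{M}(\mu)$ is an algebra and has the same union, so assume the $\mathcal{G}_i$ pairwise disjoint. Write $\mathcal{H}_n=\bigcup_{i=1}^{n}\mathcal{G}_i\in\mathcal{M}(\mu)$ and $\mathcal{G}=\bigcup_{i=1}^{\infty}\mathcal{G}_i$. For any $\mathcal{C}$, using measurability of $\mathcal{H}_n$, then the displayed finite additivity, then monotonicity (condition~2; note $\mathcal{C}\setminus\mathcal{H}_n\supseteq\mathcal{C}\setminus\mathcal{G}$),
\[
  \mu(\mathcal{C}) = \mu(\mathcal{C}\cap\mathcal{H}_n) + \mu(\mathcal{C}\setminus\mathcal{H}_n)
  \ge \sum_{i=1}^{n}\mu(\mathcal{C}\cap\mathcal{G}_i) + \mu(\mathcal{C}\setminus\mathcal{G}).
\]
Letting $n\to\infty$ and then applying countable subadditivity to $\mathcal{C}\cap\mathcal{G}=\bigcup_i(\mathcal{C}\cap\mathcal{G}_i)$,
\[
  \mu(\mathcal{C}) \ge \sum_{i=1}^{\infty}\mu(\mathcal{C}\cap\mathcal{G}_i) + \mu(\mathcal{C}\setminus\mathcal{G})
  \ge \mu(\mathcal{C}\cap\mathcal{G}) + \mu(\mathcal{C}\setminus\mathcal{G}),
\]
which is the nontrivial half of the criterion for $\mathcal{G}$; hence $\mathcal{G}\in\mathcal{M}(\mu)$ and $\mathcal{M}(\mu)$ is a $\sigma$-field in $\Gamma$. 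Finally, taking $\mathcal{C}=\mathcal{G}$ collapses the last chain to equalities, giving $\mu(\mathcal{G})=\sum_{i=1}^{\infty}\mu(\mathcal{G}_i)$ for pairwise disjoint $\{\mathcal{G}_i\}$; together with $\mu(\emptyset)=0$ this is exactly what is needed for $\mu$ restricted to $\mathcal{M}(\mu)$ to be a measure. I expect the only places needing care to be the recombination bookkeeping in the finite-union step and the ordering of the two limiting operations in the countable step (the passage $n\to\infty$ must precede the countable-subadditivity estimate so the partial sums accumulate to $\sum_{i=1}^{\infty}\mu(\mathcal{C}\cap\mathcal{G}_i)$); the rest is a direct use of the three outer-measure axioms.
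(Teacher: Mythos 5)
Your proposal is correct: it is the standard Carath\'eodory argument (trivial half from subadditivity, algebra closure, finite additivity on test sets, then the $\mathcal{H}_n$ truncation with the limit taken before invoking countable subadditivity), and every step checks out against the three axioms of Definition~\ref{def_outer-measure}. The paper itself gives no proof of this theorem --- it is stated purely as a citation to Carath\'eodory --- so there is nothing to compare against; your write-up supplies exactly the classical argument the citation stands in for.
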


The \emph{Borel class} $\mathcal{B}_{\Omega}$ is the $\sigma$-field \emph{generated by}
all open sets on $\Omega^\infty$.
Namely, the Borel class $\mathcal{B}_{\Omega}$ is defined
as the intersection of all the $\sigma$-fields in $\Omega^\infty$ containing
all open sets on $\Omega^\infty$.

\begin{theorem}\label{thm_Brlsbmumsbl}
Let $r$ be a measure representation over $\Omega$.
Then $\mathcal{B}_{\Omega}\subset\mathcal{M}(\mu_r)$.
\end{theorem}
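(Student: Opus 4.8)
The plan is to reduce the statement to the single assertion that every cylinder $\osg{\sigma}$, $\sigma\in\Omega^*$, is $\mu_r$-measurable. Indeed, by Theorem~\ref{thm_mu-measurable-set} the class $\mathcal{M}(\mu_r)$ is a $\sigma$-field in $\Omega^\infty$, and $\mathcal{B}_{\Omega}$ is by definition the smallest $\sigma$-field containing every open subset of $\Omega^\infty$; since each open set has the form $\osg{S}=\bigcup_{\sigma\in S}\osg{\sigma}$ with $S\subseteq\Omega^*$ and $\Omega^*$ countable, once all cylinders lie in $\mathcal{M}(\mu_r)$ so does every open set, and therefore $\mathcal{B}_{\Omega}\subseteq\mathcal{M}(\mu_r)$.

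The crux will be a finite-additivity property of $r$ on disjoint open sets: if $\mathcal{U},\mathcal{V}$ are disjoint open subsets of $\Omega^\infty$, then $r(\mathcal{U})+r(\mathcal{V})=r(\mathcal{U}\cup\mathcal{V})$. To prove this, take prefix-free sets $E,F\subseteq\Omega^*$ with $\osg{E}=\mathcal{U}$ and $\osg{F}=\mathcal{V}$. If some $\rho\in E$ were comparable with some $\eta\in F$, say with $\rho$ a prefix of $\eta$, then $\emptyset\neq\osg{\eta}\subseteq\osg{\rho}\subseteq\mathcal{U}$ and also $\osg{\eta}\subseteq\mathcal{V}$, contradicting $\mathcal{U}\cap\mathcal{V}=\emptyset$; hence no element of $E$ is a prefix of an element of $F$ or vice versa, so $E\cup F$ is prefix-free, and for the same reason $E\cap F=\emptyset$. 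Since $\osg{E\cup F}=\mathcal{U}\cup\mathcal{V}$, Corollary~\ref{oE=oE'imprE=rE'} (well-definedness of $r$ on open sets) gives $r(\mathcal{U}\cup\mathcal{V})=r(E\cup F)=r(E)+r(F)=r(\mathcal{U})+r(\mathcal{V})$, all the series involved being finite by Theorem~\ref{thm_rErlerr}.

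To verify the Carath\'{e}odory condition for $\osg{\sigma}$, fix an arbitrary $\mathcal{C}\subseteq\Omega^\infty$; since $\mu_r$ is an outer measure by Theorem~\ref{thm_mur-outer-measure-Bs}, it suffices to show $\mu_r(\mathcal{C}\cap\osg{\sigma})+\mu_r(\mathcal{C}\setminus\osg{\sigma})\le\mu_r(\mathcal{C})$. Given $\varepsilon>0$, choose an open set $\mathcal{A}\supseteq\mathcal{C}$ with $r(\mathcal{A})<\mu_r(\mathcal{C})+\varepsilon$. Now $\Omega^\infty\setminus\osg{\sigma}$ is open (it equals $\osg{\{\tau\in\Omega^{\abs{\sigma}}\mid\tau\neq\sigma\}}$ when $\sigma\neq\lambda$, and $\emptyset$ otherwise) and a finite intersection of open sets is open, so $\mathcal{A}\cap\osg{\sigma}$ and $\mathcal{A}\setminus\osg{\sigma}$ are disjoint open sets whose union is $\mathcal{A}$; by the lemma $r(\mathcal{A}\cap\osg{\sigma})+r(\mathcal{A}\setminus\osg{\sigma})=r(\mathcal{A})$. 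From the definition of $\mu_r$ as the infimum of $r$ over open supersets, $\mu_r(\mathcal{C}\cap\osg{\sigma})\le r(\mathcal{A}\cap\osg{\sigma})$ and $\mu_r(\mathcal{C}\setminus\osg{\sigma})\le r(\mathcal{A}\setminus\osg{\sigma})$, so their sum is $\le r(\mathcal{A})<\mu_r(\mathcal{C})+\varepsilon$. Letting $\varepsilon\to 0$ yields the required inequality, hence $\osg{\sigma}\in\mathcal{M}(\mu_r)$, completing the argument.

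I expect the main obstacle to be the finite-additivity lemma for disjoint open sets — in particular, transferring disjointness of the open sets to prefix-freeness (and disjointness) of the string sets representing them — together with the minor topological checks that the complement of a cylinder and finite intersections of open sets are again open, which are what make the lemma applicable to $\mathcal{A}\setminus\osg{\sigma}$.
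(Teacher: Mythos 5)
Your proposal is correct and follows essentially the same route as the paper's proof: reduce to showing each cylinder $\osg{\sigma}$ is $\mu_r$-measurable, establish finite additivity of $r$ on disjoint open sets by transferring disjointness to prefix-freeness of $E\cup F$ (this is exactly the paper's Lemma~\ref{ropnAcupopnB=rA+rB}), and then run the $\varepsilon$-approximation argument with an open superset $\mathcal{A}$ split as $(\mathcal{A}\cap\osg{\sigma})\cup(\mathcal{A}\setminus\osg{\sigma})$, finishing with subadditivity of the outer measure for the reverse inequality. No gaps.
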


\begin{proof}
First, note from Theorems~\ref{thm_mur-outer-measure-Bs} and \ref{thm_mu-measurable-set}
that $\mathcal{M}(\mu_r)$ is
a $\sigma$-field in $\Omega^\infty$.
Since the Borel class $\mathcal{B}_{\Omega}$ is the $\sigma$-field generated by
all open sets on $\Omega^\infty$,
it is sufficient to show that
all open sets on $\Omega^\infty$ are $\mu_r$-measurable.
For showing this in turn, it is sufficient to prove that $\osg{\sigma}$ is $\mu_r$-measurable
for every $\sigma\in\Omega^*$,
since $\mathcal{M}(\mu_r)$ is a $\sigma$-field in $\Omega^\infty$
and every subset of $\Omega^*$ is at most countable.

Let $\sigma\in\Omega^*$ and let $\mathcal{C}$ be a subset of $\Omega^\infty$.
We show that
$\mu_r(\mathcal{C}\cap\osg{\sigma})+\mu_r(\mathcal{C}\setminus\osg{\sigma})\le\mu_r(\mathcal{C})$. 
Let $\varepsilon$ be an arbitrary positive real.
Then, there exists an open subset $\mathcal{A}$ of $\Omega^\infty$ such that
$\mathcal{C}\subset\mathcal{A}$ and
\begin{equation}\label{thm_Brlsbmumsbl_eq1}
  r(\mathcal{A})<\mu_r(\mathcal{C})+\varepsilon.
\end{equation}
Note that if $\mathcal{D}_1$ and $\mathcal{D}_2$ are open subsets of $\Omega^\infty$
then $\mathcal{D}_1\cap\mathcal{D}_2$ is also an open subset of $\Omega^\infty$.
This can be confirmed by the equality
\[
  \mathcal{D}_1\cap\mathcal{D}_2
  =\osg{\{\rho\in\Omega^*\mid\osg{\rho}\subset\mathcal{D}_1\cap\mathcal{D}_2\}}.
\]
Thus, $\mathcal{A}\cap\osg{\sigma}$ is an open set, in particular.
Since
$\mathcal{A}\setminus\osg{\sigma}=\mathcal{A}\cap\osg{\Omega^{\abs{\sigma}}\setminus\{\sigma\}}$,
we see that $\mathcal{A}\setminus\osg{\sigma}$ is also an open set.
Since $(\mathcal{A}\cap\osg{\sigma})\cap(\mathcal{A}\setminus\osg{\sigma})=\emptyset$,
it follows from Lemma~\ref{ropnAcupopnB=rA+rB} below that
\begin{equation}\label{thm_Brlsbmumsbl_eq2}
  r(\mathcal{A}\cap\osg{\sigma})+r(\mathcal{A}\setminus\osg{\sigma})=r(\mathcal{A}).
\end{equation}
Hence, since $\mathcal{C}\cap\osg{\sigma}\subset\mathcal{A}\cap\osg{\sigma}$ and
$\mathcal{C}\setminus\osg{\sigma}\subset\mathcal{A}\setminus\osg{\sigma}$,
using \eqref{thm_Brlsbmumsbl_eq2} and \eqref{thm_Brlsbmumsbl_eq1} we have that
\[
  \mu_r(\mathcal{C}\cap\osg{\sigma})+\mu_r(\mathcal{C}\setminus\osg{\sigma})
  \le r(\mathcal{A}\cap\osg{\sigma})+r(\mathcal{A}\setminus\osg{\sigma})
  <\mu_r(\mathcal{C})+\varepsilon.
\]
Thus, since $\varepsilon$ is an arbitrary positive real,
we have
$\mu_r(\mathcal{C}\cap\osg{\sigma})+\mu_r(\mathcal{C}\setminus\osg{\sigma})\le\mu_r(\mathcal{C})$,
as desired.

Then, it follows from the conditions~(i) and (iii) of Definition~\ref{def_outer-measure} that
\[
  \mu_r(\mathcal{C}\cap\osg{\sigma})+\mu_r(\mathcal{C}\setminus\osg{\sigma})=\mu_r(\mathcal{C}).
\]
Therefore, $\osg{\sigma}$ is $\mu_r$-measurable.
This completes the proof.
\end{proof}

\begin{lemma}\label{ropnAcupopnB=rA+rB}
Let $r$ be a measure representation over $\Omega$.
For every open subsets $\mathcal{A}$ and $\mathcal{B}$ of $\Omega^\infty$,
if $\mathcal{A}\cap\mathcal{B}=\emptyset$ then
$r(\mathcal{A}\cup\mathcal{B})=r(\mathcal{A})+r(\mathcal{B})$.
\end{lemma}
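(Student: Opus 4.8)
The plan is to pass from the open sets $\mathcal{A},\mathcal{B}$ to prefix-free representations and reduce the claim to a splitting of a convergent series of non-negative reals. First I would use the observation made just before the definition of $r(\mathcal{A})$ (that every open subset of $\Omega^\infty$ has a prefix-free representation) to fix prefix-free subsets $E$ and $F$ of $\Omega^*$ with $\osg{E}=\mathcal{A}$ and $\osg{F}=\mathcal{B}$; by definition of $r$ on open sets we then have $r(\mathcal{A})=r(E)$ and $r(\mathcal{B})=r(F)$.

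The key structural step is to check that $E\cup F$ is again a prefix-free subset of $\Omega^*$. Since $E$ and $F$ are each prefix-free, it suffices to rule out a string of one being a prefix of a string of the other. If $\sigma\in E$ were a prefix of $\tau\in F$ (or symmetrically), then $\osg{\tau}\subset\osg{\sigma}\subset\osg{E}=\mathcal{A}$ and also $\osg{\tau}\subset\osg{F}=\mathcal{B}$, so $\osg{\tau}\subset\mathcal{A}\cap\mathcal{B}=\emptyset$; but $\osg{\tau}\neq\emptyset$ because $\Omega$ is nonempty, a contradiction. Taking $\tau=\sigma$ in this argument also gives $E\cap F=\emptyset$. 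Hence $E\cup F$ is prefix-free, and moreover $\osg{E\cup F}=\osg{E}\cup\osg{F}=\mathcal{A}\cup\mathcal{B}$, so by the well-definedness of $r$ on open subsets (Corollary~\ref{oE=oE'imprE=rE'}) we obtain $r(\mathcal{A}\cup\mathcal{B})=r(E\cup F)$.

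It then remains only to observe that $r(E\cup F)=r(E)+r(F)$. By Theorem~\ref{thm_rErlerr} applied to the prefix-free set $E\cup F$, the series $r(E\cup F)=\sum_{\rho\in E\cup F}r(\rho)$ converges, and all of its terms are non-negative; since $E\cap F=\emptyset$, it splits as $\sum_{\rho\in E}r(\rho)+\sum_{\rho\in F}r(\rho)=r(E)+r(F)$. Combining the two displays yields $r(\mathcal{A}\cup\mathcal{B})=r(E)+r(F)=r(\mathcal{A})+r(\mathcal{B})$, as desired.

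I do not expect a genuine obstacle here: the only points requiring care are the verification that no prefix relations hold between $E$ and $F$ — which is precisely where the disjointness of $\mathcal{A}$ and $\mathcal{B}$ and the nonemptiness of the cylinders $\osg{\tau}$ come in — and the rearrangement of a convergent non-negative series over a disjoint union, both of which are routine.
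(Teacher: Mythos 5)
Your proof is correct and follows the same route as the paper's: pass to prefix-free representations $E$ and $F$, use disjointness of $\mathcal{A}$ and $\mathcal{B}$ to conclude that $E\cup F$ is prefix-free with $E\cap F=\emptyset$, and split the convergent series. The only difference is that you spell out the prefix-relation argument that the paper leaves implicit.
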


\begin{proof}
Let $\mathcal{A}$ and $\mathcal{B}$ be open subsets of $\Omega^\infty$.
Then there exist prefix-free subsets $E$ and $F$ of $\Omega^*$ such that
$\mathcal{A}=\osg{E}$ and $\mathcal{B}=\osg{F}$.
Since $\mathcal{A}\cap\mathcal{B}=\emptyset$, we see that
$E\cap F=\emptyset$ and $E\cup F$ is prefix-free.
Therefore, since $\mathcal{A}\cup\mathcal{B}=\osg{E\cup F}$,
we have
$r(\mathcal{A}\cup\mathcal{B})=r(E\cup F)=r(E)+r(F)=r(\mathcal{A})+r(\mathcal{B})$.
\end{proof}

Thus, for every measure representation $r$ over $\Omega$,
based on Theorem~\ref{thm_mur-outer-measure-Bs},
\ref{thm_Brlsbmumsbl}, and \ref{thm_mu-measurable-set} we see that
the restriction of the outer measure $\mu_r$ on $\Omega^\infty$
to the Borel class $\mathcal{B}_{\Omega}$ is a measure on
$\mathcal{B}_{\Omega}$.
We denote the restriction of $\mu_r$ to $\mathcal{B}_{\Omega}$ by
$\mu_r$
just the same
in what follows.

Then it follows from Theorem~\ref{thm_mur-outer-measure-Bs} that
\begin{equation}\label{mr-Bs}
  \mu_r\left(\osg{\sigma}\right)=r(\sigma)
\end{equation}
for every measure representation $r$ over $\Omega$ and every $\sigma\in \Omega^*$.

\begin{definition}[Probability measure representation]
A \emph{probability measure representation over $\Omega$} is
a measure representation $r$ over $\Omega$ with $r(\lambda)=1$.
\qed
\end{definition}

\begin{definition}[Probability measure]
Let $\Gamma$ be a nonempty set, and let $\mathcal{F}$ be a $\sigma$-field in $\Gamma$.
A \emph{probability measure on $\mathcal{F}$}
is a measure $\mu$ on $\mathcal{F}$ with $\mu(\Gamma)=1$.
\qed
\end{definition}

Using \eqref{mr-Bs},
we see that, for every probability measure representation $r$ over $\Omega$,
the measure $\mu_r$ on $\mathcal{B}_{\Omega}$ is a probability measure on $\mathcal{B}_{\Omega}$.

%%%%%%%%%%%%%%%%%%%%%%%%%%%%%%%%%%%%%%%%%%%%%%%%%%%%%%%%%%%%%%%%%%%%%%%%%%%
\section{Discrete probability spaces}
\label{DPS}

In this paper
we give an operational characterization of the notion of probability for a \emph{discrete probability space}.%
\footnote{Normaly, a discrete probability space is a probability space whose
sample space is finite or countably infinite.
For distinction, a discrete probability space in this paper means a discrete probability space whose
sample space is countably infinite.}
A discrete probability space is
defined as follows.

\begin{definition}[Discrete probability space]\label{def-DPS}
Let $\Omega$ be a countable alphabet.
A \emph{discrete probability space on $\Omega$} is a function $P\colon\Omega\to\R$
such that
\begin{enumerate}
  \item $P(a)\ge 0$ for every $a\in \Omega$, and
  \item $\sum_{a\in \Omega}P(a)=1$.
\end{enumerate}
The set of all discrete probability spaces on $\Omega$ is denoted by $\PS(\Omega)$.

Let $P\in\PS(\Omega)$.
The set $\Omega$ is called the \emph{sample space} of $P$,
and elements
of
$\Omega$ are called \emph{sample points} or \emph{elementary events}
of $P$.
For each $A\subset\Omega$, we define $P(A)$ by
$$P(A):=\sum_{a\in A}P(a).$$
A subset of $\Omega$ is called an \emph{event} on $P$, and
$P(A)$ is called the \emph{probability} of $A$
for every event $A$
on $P$.
\qed
\end{definition}

Let $\Omega$ be an arbitrary countable alphabet through out the rest of this section.
It plays a role of the set of all possible outcomes of
a stochastic trial.
An operational characterization of the notion of probability which we give for a discrete probability space
on $\Omega$ is an infinite sequence over $\Omega$.

In order to provide
such
an operational characterization of the notion of probability
we use an extension of Martin-L\"of randomness over a
countable
alphabet.
For that purpose,
we first introduce the notion of a \emph{generalized} Bernoulli measure on $\Omega^\infty$
as follows.

Let $P\in\PS(\Omega)$.
For each $\sigma\in\Omega^*$, we use $P(\sigma)$ to denote
$P(\sigma_1)P(\sigma_2)\dots P(\sigma_n)$
where $\sigma=\sigma_1\sigma_2\dots\sigma_n$ with $\sigma_i\in\Omega$.
Therefore $P(\lambda)=1$, in particular.
For each subset $S$ of $\Omega^*$, we use $P(S)$ to denote
\[
  \sum_{\sigma\in S}P(\sigma).
\]
Therefore $P(\emptyset)=0$, in particular.

Consider a function $r\colon\Omega^*\to[0,1]$ such that $r(\sigma)=P(\sigma)$ for every $\sigma\in\Omega^*$.
It is then easy to see that the function $r$ is a probability measure representation over $\Omega$.
The probability measure $\mu_r$
on
$\mathcal{B}_{\Omega}$,
induced by $r$,
is called a \emph{generalized Bernoulli measure on $\Omega^\infty$},
denoted $\lambda_{P}$.
The generalized Bernoulli measure $\lambda_{P}$ on $\Omega^\infty$ has the following property:
For every $\sigma\in \Omega^*$,
\begin{equation}\label{pBm-Bs}
  \Bm{P}{\osg{\sigma}}=P(\sigma),
\end{equation}
which results from \eqref{mr-Bs}.

In this paper, we develop an operational characterization of the notion of probability
for discrete probability spaces,
whose sample space is \emph{countably infinite.}
From the operational point of view, we
must
be able to determine \emph{effectively}
whether each outcome of a trial is in the sample space of  the discrete probability space,
or not.
Thus, in this paper we consider discrete probability spaces whose
sample spaces are \emph{recursive} infinite sets.
For the same reason,
we
must
be able to determine \emph{effectively}
whether each outcome of a trial is in a given event of
a
discrete probability space,
or not.
Thus, in this paper we consider \emph{recursive} events of
discrete probability spaces.
For mathematical generality,
however,
we
make a weaker assumption especially about the sample spaces.
Namely, we assume that
the sample spaces are simply \emph{recursively enumerable} infinite sets,
when stating definitions and results
throughout the rest of this paper.

It is
convenient to introduce the notion of \emph{computable discrete probability space} as follows.

\begin{definition}[Computability of discrete probability space]
Let $\Omega$ be an r.e.~infinite set,
and let $P\in\PS(\Omega)$.
We say that $P$ is \emph{computable} if
there exists
a partial recursive function $f$ such that
(i) $\Dom f=\Omega\times\N$,
(ii) $f(\Dom f)\subset\Q$, and
(iii) $\abs{P(a)-f(a,k)}\le 2^{-k}$ for every $a\in\Omega$ and $k\in\N$.
\qed
\end{definition}

We may try to weaken the notion of the computability for a discrete probability space as follows:
Let $\Omega$ be an r.e.~infinite set, and let $P\in\PS(\Omega)$.
We say that $P$ is \emph{left-computable} if
there exists a partial recursive function
$f$
such that
(i) $\Dom f=\Omega\times\N$,
(ii) $f(\Dom f)\subset \Q$,
(iii) $P(a)\ge f(a,k)$ for every $a\in\Omega$ and $k\in\N$, and
(iv) $\lim_{k\to\infty} f(a,k)=P(a)$ for every $a\in\Omega$.
On the other hand,
we say that $P$ is \emph{right-computable} if
there exists a partial recursive function
$f$
such that
(i) $\Dom f=\Omega\times\N$,
(ii) $f(\Dom f)\subset \Q$,
(iii) $P(a)\le f(a,k)$ for every $a\in\Omega$ and $k\in\N$, and
(iv) $\lim_{k\to\infty} f(a,k)=P(a)$ for every $a\in\Omega$.
However, using the condition~(ii) of Definition~\ref{def-DPS} we can see that
these three computable notions for a discrete probability space coincide with one another,
as the following proposition states.

\begin{proposition}\label{computable-left-computable-right-computable-equivalent}
Let $\Omega$ be an r.e.~infinite set,
and let $P\in\PS(\Omega)$.
The following conditions are equivalent to one another.
\begin{enumerate}
  \item $P$ is computable.
  \item $P$ is left-computable.
  \item $P$ is right-computable.\qed
\end{enumerate}
\end{proposition}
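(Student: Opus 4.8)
The plan is to prove the cycle of implications (i) $\Rightarrow$ (ii) $\Rightarrow$ (i) and (i) $\Rightarrow$ (iii) $\Rightarrow$ (i); the implications from (i) are trivial (a computable approximation within $2^{-k}$ yields both a left and a right approximation by subtracting/adding $2^{-k}$ and reindexing), so the real content is showing that left-computability alone, together with $\sum_{a\in\Omega}P(a)=1$, already forces computability, and symmetrically for right-computability. I would first fix an effective enumeration $a_0,a_1,a_2,\dots$ of the r.e.\ set $\Omega$ (without repetition; this is possible since $\Omega$ is an infinite r.e.\ set). The key observation is that from a lower approximation $f$ to $P$ one gets, for each finite $n$ and each precision, a lower bound $\sum_{i=0}^{n}f(a_i,k)$ to the partial sum $\sum_{i=0}^{n}P(a_i)$, and by monotone convergence $\sum_{i=0}^{n}P(a_i)\to 1$; conversely, for any individual $a=a_j$ we can write
\begin{equation*}
  P(a_j) = 1 - \sum_{i\neq j}P(a_i),
\end{equation*}
and the tail sum $\sum_{i\neq j}P(a_i)$ can be approximated from below by finite sums, hence $P(a_j)$ can be approximated from above. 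Thus a left approximation to all of $P$ plus the normalization $\sum P(a)=1$ yields a right approximation to each $P(a_j)$, and vice versa; combining the given approximation with the derived one gives a two-sided, i.e.\ computable, approximation.

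Concretely, for (ii) $\Rightarrow$ (i): given left-computable $f$, to approximate $P(a_j)$ within $2^{-k}$ I would search, using the enumeration of $\Omega$, for an index $N>j$ and a precision level $m$ such that
\begin{equation*}
  \sum_{i=0}^{N} f(a_i,m) > 1 - 2^{-(k+1)};
\end{equation*}
such $N,m$ exist and can be found effectively because $\sum_{i=0}^{N}P(a_i)\to 1$ and $f(a_i,m)\to P(a_i)$. Once found, the number
\begin{equation*}
  g(a_j,k) := f(a_j,k+1) + \Bigl(1 - \sum_{\substack{0\le i\le N\\ i\neq j}} f(a_i,m)\Bigr)\Big/2
\end{equation*}
(or a similarly chosen rational lying between the known lower and upper bounds for $P(a_j)$) approximates $P(a_j)$ within $2^{-k}$; the lower bound is $f(a_j,k+1)$ and the upper bound comes from $P(a_j)\le 1 - \sum_{i\neq j, i\le N} f(a_i,m) < f(a_j,k+1) + 2^{-(k+1)} + (\text{error of }f\text{ at }a_j)$, so a small amount of bookkeeping with the $\varepsilon$'s closes the gap. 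The implication (iii) $\Rightarrow$ (i) is entirely symmetric: from an upper approximation to all $P(a)$ one gets, via $P(a_j)=1-\sum_{i\neq j}P(a_i)$ and upper approximations to the other terms being summed and subtracted, a lower approximation to $P(a_j)$, hence computability.

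The main obstacle, and the only place where care is genuinely needed, is the \emph{effectivity of the search} for $N$ and $m$: I must argue that the halting of this search is guaranteed, which rests precisely on two facts — that $f(a_i,m)\uparrow P(a_i)$ as $m\to\infty$ for each fixed $i$, and that $\sum_{i=0}^{N}P(a_i)\uparrow 1$ as $N\to\infty$ — so that the finite partial sums of the finite lower approximations can be driven arbitrarily close to $1$. A secondary subtlety is uniformity: the resulting function $g$ must be partial recursive with $\Dom g = \Omega\times\N$ and rational-valued, which is automatic since all operations involved (running the enumeration of $\Omega$, evaluating $f$, forming finite rational sums, comparing rationals) are effective and the search provably terminates. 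I would close by noting that (i) $\Rightarrow$ (ii) and (i) $\Rightarrow$ (iii) are immediate, so all three conditions are equivalent, and remark that the argument crucially uses condition (ii) of Definition~\ref{def-DPS}; without normalization, left-computability and right-computability are genuinely weaker than computability.
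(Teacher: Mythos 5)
Your implications (i) $\Rightarrow$ (ii), (i) $\Rightarrow$ (iii) and, most importantly, (ii) $\Rightarrow$ (i) are sound. The paper omits the proof of this proposition, but the squeeze you describe is surely the intended argument for (ii) $\Rightarrow$ (i): lower approximations to every $P(a_i)$ together with $\sum_i P(a_i)=1$ yield, by truncating the series, effective upper bounds $1-\sum_{i\le N,\,i\ne j}f(a_i,m)$ on $P(a_j)$, and once the search finds $N,m$ with $\sum_{i\le N}f(a_i,m)>1-2^{-(k+1)}$ the gap between this upper bound and the lower bound $f(a_j,m)$ is exactly $1-\sum_{i\le N}f(a_i,m)<2^{-(k+1)}$. (Use $f(a_j,m)$, not $f(a_j,k+1)$, as the lower endpoint; that removes the ``bookkeeping with the $\varepsilon$'s'' entirely, since left-computability gives no rate relating the two precision indices.)

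The genuine gap is the assertion that (iii) $\Rightarrow$ (i) is ``entirely symmetric.'' It is not. In the left-computable case you need an \emph{upper} bound on $P(a_j)=1-\sum_{i\ne j}P(a_i)$, hence a \emph{lower} bound on the series $\sum_{i\ne j}P(a_i)$, and there you may simply discard the nonnegative tail $\sum_{i>N}P(a_i)$. In the right-computable case you need a \emph{lower} bound on $P(a_j)$, hence an \emph{upper} bound on the whole series $\sum_{i\ne j}P(a_i)$; the finite truncation $\sum_{i\ne j,\,i\le N}f(a_i,m)$ is no longer an upper bound for it, and controlling the tail $\sum_{i>N}P(a_i)=1-\sum_{i\le N}P(a_i)$ from above would require lower bounds on the $P(a_i)$ --- precisely what right-computability does not provide. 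This is not a repairable omission in your argument: take a left-computable, non-computable real $\beta\in(0,1/2)$ with a non-decreasing computable rational approximation $0=\beta_0\le\beta_1\le\cdots\to\beta$, let $\Omega=\N$, and set $P(0)=1/2-\beta$, $P(1)=1/2$, and $P(i)=\beta_{i-1}-\beta_{i-2}$ for $i\ge 2$. Then $P\in\PS(\N)$ with $\sum_iP(i)=1$, and $f(0,k)=1/2-\beta_k$, $f(1,k)=1/2$, $f(i,k)=\beta_{i-1}-\beta_{i-2}$ witnesses right-computability, yet $P(0)$ is not a computable real, so $P$ is not computable. Hence your method establishes (i) $\Leftrightarrow$ (ii) and (i) $\Rightarrow$ (iii) only, and the remaining implication (iii) $\Rightarrow$ (i) cannot be obtained by symmetry (indeed the example above indicates that the stated equivalence with (iii) needs to be re-examined).
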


%%%%%%%%%%%%%%%%%%%%%%%%%%%%%%%%%%%%%%%%%%%%%%%%%%%%%%%%%%%%%%%%%%%%%%%%%%%
\section{\boldmath Extension of Martin-L\"of randomness
over discrete probability spaces}
\label{MLRP-Baire}

In order to provide an operational characterization of the notion of probability
we use an extension of Martin-L\"of randomness over a generalized Bernoulli measure.

Martin-L\"of randomness with respect to a generalized Bernoulli measure,
which is called \emph{Martin-L\"of $P$-randomness} in this paper, is defined as follows.

\begin{definition}
[Martin-L\"of $P$-randomness]\label{ML_P-randomness-Bs}
Let $\Omega$ be an r.e.~infinite set, and let $P\in\PS(\Omega)$.
\begin{enumerate}
  \item A subset $\mathcal{C}$ of $\N^+\times \Omega^*$ is called a \emph{Martin-L\"{o}f $P$-test} if
    $\mathcal{C}$ is an r.e.~set such that for every $n\in\N^+$ it holds that
    $\mathcal{C}_n$ is a prefix-free subset of $\Omega^*$ and
    $$\Bm{P}{\osg{\mathcal{C}_n}}<2^{-n},$$
    where
    $\mathcal{C}_n
    :=
    \left\{\,
      \sigma\bigm|(n,\sigma)\in\mathcal{C}
    \,\right\}$.
  \item For any $\alpha\in\Omega^\infty$ and Martin-L\"{o}f $P$-test $\mathcal{C}$,
    we say that $\alpha$ \emph{passes} $\mathcal{C}$ if there exists $n\in\N^+$ such that
    $\alpha\notin\osg{\mathcal{C}_n}$.
  \item For any $\alpha\in\Omega^\infty$, we say that $\alpha$ is \emph{Martin-L\"{o}f $P$-random} if
    for every Martin-L\"{o}f $P$-test $\mathcal{C}$
    it holds that $\alpha$ passes $\mathcal{C}$.\qed
\end{enumerate}
\end{definition}

Note that we do not require $P$ to be computable in Definition~\ref{ML_P-randomness-Bs}.
Thus, the generalized Bernoulli measure $\lambda_{P}$ itself is not necessarily computable
in Definition~\ref{ML_P-randomness-Bs}.
Here, we say that a generalized Bernoulli measure $\lambda_{P}$ is \emph{computable} if there exists
a partial recursive function $g$ such that
(i) $\Dom g=\Omega^*\times\N$, (ii) $g(\Dom g)\subset\Q$, and
(iii) $\abs{\Bm{P}{\osg{\sigma}}-g(\sigma,k)} < 2^{-k}$ for all $\sigma\in\Omega^*$ and $k\in\N$.
Note also that in Definition~\ref{ML_P-randomness-Bs} we do not require that
$P(a)>0$ for all $a\in\Omega$.
Therefore, $P(a_0)$ may be $0$ for some $a_0\in\Omega$.

In Definition~\ref{ML_P-randomness-Bs}, we require that
the set $\mathcal{C}_n$ is prefix-free in the definition of a Martin-L\"{o}f $P$-test $\mathcal{C}$.
However, we can eliminate this
requirement
while keeping the notion of Martin-L\"of $P$-randomness
the same.
Namely, we can show the following theorem.

\begin{theorem}\label{eliminate-prefix-freeness}
Let $\Omega$ be an r.e.~infinite set, and let $P\in\PS(\Omega)$.
For every r.e.~subset $\mathcal{C}$ of $\N^+\times \Omega^*$ such that
$\Bm{P}{\osg{\mathcal{C}_n}}<2^{-n}$ for every $n\in\N^+$,
then there exists a Martin-L\"{o}f $P$-test $\mathcal{D}\subset\N^+\times \Omega^*$ such that
$\osg{\mathcal{C}_n}=\osg{\mathcal{D}_n}$ for every $n\in\N^+$.
\qed
\end{theorem}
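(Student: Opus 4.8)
The plan is to run the classical ``disjointification'' argument that turns an r.e.\ set into an r.e.\ prefix-free set generating the same open subset of $\Omega^\infty$, with the one adjustment forced by the Baire space: the auxiliary prefix-free pieces, which are finite over a finite alphabet, will here be allowed to be countably infinite (but still recursively enumerable). Note that two naive attempts fail --- keeping each enumerated string only when no \emph{earlier-enumerated} string is a prefix of it yields an r.e.\ set that need not be prefix-free, while keeping only the prefix-minimal elements of each $\mathcal{C}_n$ yields a prefix-free set that is in general only co-r.e. So a staged construction is needed.

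Concretely, I would fix a computable enumeration of the r.e.\ set $\mathcal{C}$ and read off from it, uniformly in $n$, an enumeration $w_{n,0},w_{n,1},w_{n,2},\dots$ of $\mathcal{C}_n$ (which may be finite, empty, or have repetitions). For each $n$ and each index $i$ for which $w_{n,i}$ is defined, set $L_{n,i}:=\max\{\abs{w_{n,0}},\dots,\abs{w_{n,i}}\}$ and put
\[
  E_{n,i}:=\bigl\{\,\tau\in\Omega^{L_{n,i}}\ \bigm|\ w_{n,i}\text{ is a prefix of }\tau,\text{ and no }w_{n,j}\text{ with }j<i\text{ is a prefix of }\tau\,\bigr\}.
\]
Because the prefix relation on $\Omega^*$ is decidable and $\Omega^{L_{n,i}}$ is r.e.\ uniformly, the family $\{E_{n,i}\}$ is r.e.\ uniformly in $n$ and $i$. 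The routine facts to record are: (i) every element of $E_{n,i}$ extends $w_{n,i}$ and has length $L_{n,i}$, so $E_{n,i}$ is prefix-free; and (ii) $\osg{E_{n,i}}=\osg{w_{n,i}}\setminus\bigcup_{j<i}\osg{w_{n,j}}$, where for ``$\subseteq$'' one uses $L_{n,i}\ge\abs{w_{n,j}}$ whenever $j<i$ (so a length-$L_{n,i}$ string that avoids all those prefixes lies in no $\osg{w_{n,j}}$), and for ``$\supseteq$'' one truncates a given $\alpha$ to $\rest{\alpha}{L_{n,i}}$.

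Then I would put $\mathcal{D}_n:=\bigcup_i E_{n,i}$ and $\mathcal{D}:=\{(n,\tau) : n\in\N^+,\ \tau\in\mathcal{D}_n\}$ and verify the four required properties. First, the cylinders $\osg{E_{n,i}}$, $i\in\N$, are pairwise disjoint: for $j<i$, fact (ii) gives $\osg{E_{n,i}}\cap\osg{w_{n,j}}=\emptyset$ while $\osg{E_{n,j}}\subseteq\osg{w_{n,j}}$; combined with (i) this forces $\mathcal{D}_n$ to be prefix-free, since if $u\in E_{n,i}$ were a proper prefix of $v\in E_{n,j}$ then $i=j$ is impossible by (i), while $i\ne j$ would put the nonempty set $\osg{v}$ inside $\osg{E_{n,i}}\cap\osg{E_{n,j}}$. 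Second, the telescoping identity $\bigcup_i\bigl(A_i\setminus\bigcup_{j<i}A_j\bigr)=\bigcup_i A_i$, applied with $A_i=\osg{w_{n,i}}$, together with (ii) yields $\osg{\mathcal{D}_n}=\bigcup_i\osg{E_{n,i}}=\bigcup_i\osg{w_{n,i}}=\osg{\mathcal{C}_n}$. Third, since $\osg{\mathcal{D}_n}$ and $\osg{\mathcal{C}_n}$ are literally the same set, $\Bm{P}{\osg{\mathcal{D}_n}}=\Bm{P}{\osg{\mathcal{C}_n}}<2^{-n}$ by hypothesis; in particular this step uses nothing about computability of $P$. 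Fourth, $\mathcal{D}$ is r.e.\ by dovetailing the uniform enumerations of the $E_{n,i}$ over all $n\in\N^+$ and all indices $i$, and clearly $\{\tau\mid(n,\tau)\in\mathcal{D}\}=\mathcal{D}_n$; hence $\mathcal{D}$ is a Martin-L\"of $P$-test with $\osg{\mathcal{D}_n}=\osg{\mathcal{C}_n}$ for every $n\in\N^+$, as desired.

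The one genuinely delicate point is the one flagged at the outset: one must be willing to run the finite-alphabet construction with the pieces $E_{n,i}$ \emph{infinite}, and to observe that this is harmless --- a countable union of prefix-free sets with pairwise disjoint generated cylinders is again prefix-free, and recursive enumerability survives the dovetailed union. Everything else is bookkeeping with the prefix order on $\Omega^*$.
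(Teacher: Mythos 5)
The paper states Theorem~\ref{eliminate-prefix-freeness} without proof (the statement is closed with a tombstone), so there is nothing to compare against; judged on its own, your argument is correct. The construction is the standard disjointification-by-length-padding: replacing each enumerated $w_{n,i}$ by all its extensions of length $L_{n,i}$ that avoid the earlier $w_{n,j}$, and you correctly identify and handle the only point where the countable alphabet matters, namely that the pieces $E_{n,i}$ are now infinite sets, which is harmless because a Martin-L\"of $P$-test only requires each $\mathcal{D}_n$ to be prefix-free and r.e., not finite. The key verifications --- $\osg{E_{n,i}}=\osg{w_{n,i}}\setminus\bigcup_{j<i}\osg{w_{n,j}}$ (using $L_{n,i}\ge\abs{w_{n,j}}$ for $j<i$ in one direction and truncation to $\rest{\alpha}{L_{n,i}}$ in the other), the resulting disjointness of the cylinders, the telescoping identity giving $\osg{\mathcal{D}_n}=\osg{\mathcal{C}_n}$, and the uniform recursive enumerability --- are all sound, and you rightly note that no computability of $P$ is used since the measure bound is inherited from the set equality $\osg{\mathcal{D}_n}=\osg{\mathcal{C}_n}$.
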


Actually, from Theorem~\ref{eliminate-prefix-freeness} we have the following theorem.

\begin{theorem}\label{ML_P-randomness_eliminated-prefix-freeness}
Let $\Omega$ be an r.e.~infinite set, and let $P\in\PS(\Omega)$.
Let $\alpha\in\Omega^\infty$.
Then the following conditions are equivalent to each other.
\begin{enumerate} 
  \item The infinite sequence $\alpha$ is Martin-L\"{o}f $P$-random.
  \item For every r.e.~subset $\mathcal{C}$ of $\N^+\times \Omega^*$ such that
    $\Bm{P}{\osg{\mathcal{C}_n}}<2^{-n}$ for every $n\in\N^+$,
    there exists $n\in\N^+$ such that $\alpha\notin\osg{\mathcal{C}_n}$.\qed
\end{enumerate}
\end{theorem}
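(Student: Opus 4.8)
The plan is to derive the equivalence directly from Theorem~\ref{eliminate-prefix-freeness}, which already performs the substantive work of converting an arbitrary r.e.\ set satisfying the measure bounds into a genuine Martin-L\"{o}f $P$-test generating the same open sets at every level. Granting that theorem, what remains is only a short bookkeeping argument showing that the two conditions in the statement are logically equivalent.

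First I would prove (ii) $\Rightarrow$ (i). Assume (ii) holds and let $\mathcal{C}$ be an arbitrary Martin-L\"{o}f $P$-test. By Definition~\ref{ML_P-randomness-Bs}, such a $\mathcal{C}$ is in particular an r.e.\ subset of $\N^+\times\Omega^*$ with $\Bm{P}{\osg{\mathcal{C}_n}}<2^{-n}$ for every $n\in\N^+$, so hypothesis (ii) applies verbatim to $\mathcal{C}$ and yields some $n\in\N^+$ with $\alpha\notin\osg{\mathcal{C}_n}$; that is, $\alpha$ passes $\mathcal{C}$. Since $\mathcal{C}$ was arbitrary, $\alpha$ is Martin-L\"{o}f $P$-random.

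Next I would prove (i) $\Rightarrow$ (ii). Assume $\alpha$ is Martin-L\"{o}f $P$-random and let $\mathcal{C}$ be an r.e.\ subset of $\N^+\times\Omega^*$ with $\Bm{P}{\osg{\mathcal{C}_n}}<2^{-n}$ for every $n\in\N^+$. By Theorem~\ref{eliminate-prefix-freeness} there is a Martin-L\"{o}f $P$-test $\mathcal{D}\subset\N^+\times\Omega^*$ with $\osg{\mathcal{C}_n}=\osg{\mathcal{D}_n}$ for every $n\in\N^+$. Since $\alpha$ is Martin-L\"{o}f $P$-random it passes $\mathcal{D}$, so there is $n\in\N^+$ with $\alpha\notin\osg{\mathcal{D}_n}=\osg{\mathcal{C}_n}$, which is exactly the assertion of (ii).

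There is essentially no obstacle at this level: the only non-trivial content is Theorem~\ref{eliminate-prefix-freeness} itself, whose proof would have to exhibit the construction of $\mathcal{D}$ --- presumably by, for each level $n$, enumerating $\mathcal{C}_n$ and retaining only those strings none of whose proper prefixes has already appeared, then checking that this pruning preserves r.e.-ness, yields prefix-free sections, and leaves the generated open set $\osg{\mathcal{C}_n}$ unchanged (the last point using Corollary~\ref{oE=oE'imprE=rE'} together with the monotonicity in Theorem~\ref{oEleoFimprElerF} to keep the measure bound intact). Given that theorem as available, the present statement is immediate from the two implications above.
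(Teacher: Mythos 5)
Your proposal is correct and follows exactly the paper's route: the paper derives this theorem directly from Theorem~\ref{eliminate-prefix-freeness} (it states "Actually, from Theorem~\ref{eliminate-prefix-freeness} we have the following theorem" and omits further detail), and your two implications supply precisely the omitted bookkeeping. Your parenthetical sketch of how Theorem~\ref{eliminate-prefix-freeness} itself would be proved is a reasonable bonus but is not required for the statement at hand.
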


Since there are only countably infinitely many algorithms,
we can show the following
theorem, as is
shown for the usual Martin-L\"of randomness
for infinite binary sequences with respective to Lebesgue measure.

\begin{theorem}\label{Bmae-Baire}
Let $\Omega$ be an r.e.~infinite set, and let $P\in\PS(\Omega)$.
Then $\mathrm{ML}_P\in\mathcal{B}_{\Omega}$ and $\Bm{P}{\mathrm{ML}_P}=1$,
where $\mathrm{ML}_P$ is the set of all Martin-L\"of $P$-random sequences over $\Omega$.
\end{theorem}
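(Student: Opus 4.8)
The plan is to write the complement of $\mathrm{ML}_P$ as a countable union of Borel sets of measure $0$. Since there are only countably infinitely many algorithms, there are only countably many r.e.\ subsets of $\N^+\times\Omega^*$, and hence only countably many Martin-L\"of $P$-tests; fix an enumeration $\mathcal{C}^{(1)},\mathcal{C}^{(2)},\mathcal{C}^{(3)},\dotsc$ of all of them. By Definition~\ref{ML_P-randomness-Bs}, an infinite sequence $\alpha\in\Omega^\infty$ fails to be Martin-L\"of $P$-random precisely when it does not pass some Martin-L\"of $P$-test, i.e.\ when $\alpha\in\bigcap_{n=1}^\infty\osg{\mathcal{C}^{(i)}_n}$ for some $i$. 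Thus
\[
  \Omega^\infty\setminus\mathrm{ML}_P=\bigcup_{i=1}^\infty\bigcap_{n=1}^\infty\osg{\mathcal{C}^{(i)}_n}.
\]

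Next I would check that each set $\bigcap_{n=1}^\infty\osg{\mathcal{C}^{(i)}_n}$ lies in $\mathcal{B}_{\Omega}$ and has $\lambda_{P}$-measure $0$. For membership in $\mathcal{B}_{\Omega}$: each $\osg{\mathcal{C}^{(i)}_n}$ is an open subset of $\Omega^\infty$, hence belongs to $\mathcal{B}_{\Omega}$, and since $\mathcal{B}_{\Omega}$ is a $\sigma$-field it is closed under countable intersections, so $\bigcap_{n=1}^\infty\osg{\mathcal{C}^{(i)}_n}\in\mathcal{B}_{\Omega}$. For the measure: monotonicity of $\lambda_{P}$ together with the defining inequality $\Bm{P}{\osg{\mathcal{C}^{(i)}_n}}<2^{-n}$ of a Martin-L\"of $P$-test gives $\Bm{P}{\bigcap_{m=1}^\infty\osg{\mathcal{C}^{(i)}_m}}\le\Bm{P}{\osg{\mathcal{C}^{(i)}_n}}<2^{-n}$ for every $n\in\N^+$, whence this measure equals $0$.

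Finally I would conclude as follows. Being a countable union of members of $\mathcal{B}_{\Omega}$, the set $\Omega^\infty\setminus\mathrm{ML}_P$ belongs to $\mathcal{B}_{\Omega}$, and therefore so does its complement $\mathrm{ML}_P$. By countable subadditivity of $\lambda_{P}$ (inherited from the outer measure $\mu_r$ via condition~(iii) of Definition~\ref{def_outer-measure} and Theorem~\ref{thm_mur-outer-measure-Bs}) we get
\[
  \Bm{P}{\Omega^\infty\setminus\mathrm{ML}_P}\le\sum_{i=1}^\infty\Bm{P}{\bigcap_{n=1}^\infty\osg{\mathcal{C}^{(i)}_n}}=0.
\]
Since $r(\lambda)=P(\lambda)=1$, the measure $\lambda_{P}$ is a probability measure on $\mathcal{B}_{\Omega}$ with $\Bm{P}{\Omega^\infty}=1$, so $\Bm{P}{\mathrm{ML}_P}=\Bm{P}{\Omega^\infty}-\Bm{P}{\Omega^\infty\setminus\mathrm{ML}_P}=1$.

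The only genuinely delicate point I anticipate is the first step: one has to be careful that ``r.e.\ subset of $\N^+\times\Omega^*$'' is a meaningful notion when $\Omega$ is merely r.e.\ rather than recursive, and that the totality of such sets is genuinely countable, so that the displayed union really is a countable one (note that we do \emph{not} need this enumeration of tests to be effective, only countable). Everything after that is a routine application of the $\sigma$-field axioms for $\mathcal{B}_{\Omega}$ and of the countable subadditivity of the measure constructed in Section~\ref{Measure-theory-on-Baire-space}.
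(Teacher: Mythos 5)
Your proposal is correct and follows essentially the same route as the paper's own proof: both enumerate the countably many Martin-L\"of $P$-tests (there being only countably many Turing machines), write $\Omega^\infty\setminus\mathrm{ML}_P$ as the countable union of the Borel null sets $\bigcap_{n=1}^\infty\osg{\mathcal{C}^{(i)}_n}$, and conclude by countable subadditivity that $\Bm{P}{\mathrm{ML}_P}=1$. The points you flag as delicate are handled the same way in the paper, so no changes are needed.
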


\begin{proof}
Since there are only countably infinitely many Turing machines,
there are only countably infinitely many Martin-L\"{o}f $P$-tests
$\mathcal{C}^1,\mathcal{C}^2,\mathcal{C}^3,\dotsc$.
For each $i\in\N^+$, let
$\mathrm{NML}_P^i$
be the set of all $\alpha\in\Omega^\infty$ which does not pass $\mathcal{C}_i$.

Let $i\in\N^+$.
We see that $\mathrm{NML}_P^i=\bigcap_{n=1}^\infty\osg{\mathcal{C}^i_n}$ and therefore
$\mathrm{NML}_P^i\in\mathcal{B}_{\Omega}$.
Since
\[
  \Bm{P}{\mathrm{NML}_P^i}\le\Bm{P}{\osg{\mathcal{C}^i_n}}<2^{-n}
\]
for every $n\in\N^+$,
we have $\Bm{P}{\mathrm{NML}_P^i}=0$.
Thus, since $\Omega^\infty\setminus\mathrm{ML}_P=\bigcup_{i=1}^\infty \mathrm{NML}_P^i$,
it follows that $\mathrm{ML}_P\in\mathcal{B}_{\Omega}$ and
$\Bm{P}{\Omega^\infty\setminus\mathrm{ML}_P}=0$.
In particular, the latter
implies that $\Bm{P}{\mathrm{ML}_P}=1$, as desired.
\end{proof}

%%%%%%%%%%%%%%%%%%%%%%%%%%%%%%%%%%%%%%%%%%%%%%%%%%%%%%%%%%%%%%%%%%%%%%%%%%%
\section{Ensemble}
\label{OPT-Baire}

Let $\Omega$ be an arbitrary r.e.~infinite set throughout this section.
In this section
we present an operational characterization of the notion of probability for a discrete probability space,
and consider its validity.
We propose to regard a Martin-L\"of $P$-random sequence of sample points
as an \emph{operational characterization of the notion of probability}
for a discrete probability space $P$ on $\Omega$.
Namely,
we propose to identify a Martin-L\"of $P$-random sequence of sample points
with the \emph{substance} of the notion of probability for a discrete probability space $P$.
Thus, since the notion of Martin-L\"of $P$-random sequence plays a central role in our framework,
in particular we call it an \emph{ensemble}, as in Definition~\ref{ensemble}.
The name ``ensemble'' comes from physics, in particular, from quantum mechanics and
statistical mechanics.%
\footnote{The notion of ensemble plays a fundamental role in quantum mechanics and
statistical mechanics. However, the notion is very vague in physics from a mathematical point of view.
We propose to regard a Martin-L\"of $P$-random sequence of quantum states
as a formal definition of the notion of ensemble in quantum mechanics and statistical mechanics
\cite{T15Kokyuroku,T15WiNF-Tadaki_rule,T16QIT35,T18arXiv}.}

\begin{definition}[Ensemble]\label{ensemble}
Let $P\in\PS(\Omega)$.
A Martin-L\"of $P$-random
infinite sequence over $\Omega$
is called an \emph{ensemble} for the discrete probability space $P$ on $\Omega$.
\qed
\end{definition}

Let $P\in\PS(\Omega)$.
Consider an infinite sequence $\alpha\in\Omega^\infty$ of outcomes which is being generated
by infinitely repeated trials \emph{described by} the discrete probability space $P$.
The operational characterization of the notion of probability for the discrete probability space $P$
is thought to be completed
if the property which the infinite sequence $\alpha$ has to satisfy is determined.
We thus propose the following thesis.

\begin{thesis}\label{thesis-Bs}
Let $P\in\PS(\Omega)$.
An infinite sequence of outcomes in $\Omega$ which is being generated by
infinitely repeated trials \emph{described by} the discrete probability space $P$ on $\Omega$
is an ensemble for $P$.
\qed
\end{thesis}

Let us
check
the validity of Thesis~\ref{thesis-Bs}.
First of all, what is ``probability''?
It would seem very difficult to answer this question
\emph{completely} and \emph{sufficiently}.
However, we may enumerate the \emph{necessary} conditions which the notion of probability is
considered to have to satisfy
\emph{according to
our intuitive understanding of the notion of probability}.
In the subsequent subsections,
we check that the notion of ensemble satisfies these necessary conditions.

%%%%%%%%%%%%%%%%%%%%%%%%%%%%%%%%%%%%%%%%%%%%%%%%%%%%%%%
\subsection{Event with probability one}

Let $P\in\PS(\Omega)$, and
let us consider
an infinite sequence $\alpha\in\Omega^\infty$ of outcomes
which is being generated by infinitely repeated trials described
by the discrete probability space $P$ on $\Omega$.
The first necessary condition which the notion of probability for the discrete probability space $P$
is considered to have to satisfy is
the condition that
\emph{an elementary event with probability one always occurs in the infinite sequence $\alpha$},
i.e., the condition that for every $a\in\Omega$ if $P(a)=1$ then $\alpha$ is of the form
$\alpha=aaaaa\dotsc\dotsc$.
This intuition that \emph{an elementary event with probability one occurs certainly}
is particularly supported by the notion of probability in \emph{quantum mechanics}, as we will see in what follows.

In our former work~\cite{T16arXiv},
we confirmed the fact that an elementary event with probability one occurs certainly,
in particular, in quantum measurements over a \emph{finite-dimensional} quantum system, i.e.,
a quantum system whose state space is a finite-dimensional Hilbert space.
Note that the number of
possible
measurement outcomes is normally \emph{finite}
in measurements over a finite-dimensional quantum system.
To be specific,
projective measurements over a finite-dimensional quantum system gives a measurement outcome
from a \emph{finite} set.

In contrast,
in this paper
we consider a
stochastic
trial where the number of elementary events is \emph{countable infinite}.
Nonetheless, we can still confirm
the fact that
an elementary event with probability one occurs certainly
in quantum measurements,
even in the case
where
the number of elementary events is
\emph{infinite}.
In order to
see
this,
we consider quantum measurements over an \emph{infinite-dimensional} quantum system,
where the number of
possible
measurement outcomes is normally \emph{infinite}.

First, we recall some of the central postulates of quantum mechanics.
Due to the above reasons, we here consider the postulates of quantum mechanics
for an \emph{infinite-dimensional} quantum system, i.e.,
for
a quantum system whose state space is an infinite-dimensional Hilbert space,
in particular.
See von Neumann~\cite{vN55},
Prugove\v{c}ki~\cite{P81},
Arai and Ezawa~\cite{AE99}, Blank, Exner, and Havl\'{i}\v{c}ek~\cite{BEH08},
Hall~\cite{H13}, Teschl~\cite{Tesch14},
and Moretti~\cite{Mor17}
for the detail of the formulation of the postulates of quantum mechanics
in the infinite-dimensional case as well as the related mathematical notions and results
such as self-adjoint operators, spectral measures, and spectral theorem.

The first postulate of quantum mechanics is about \emph{state space} and \emph{state vector}.

\begin{postulate}[State space and state vector]\label{state_space}
Associated to any isolated physical system is
a
(separable complex)
Hilbert space
known as the \emph{state space} of the system.
The system is completely described by its \emph{state vector},
which is a
non-zero
vector in the system's state space.
\qed
\end{postulate}

The second postulate of quantum mechanics is about observables of quantum systems.

\begin{postulate}[Observables]\label{observables}
A physical quantity of a quantum system,
called an \emph{observable},
is described by a self-adjoint operator
on the state space of the system.
\qed
\end{postulate}

Let $H$ be a
(separable complex)
Hilbert space.
We denote by $(\cdot, \cdot)$ the inner-product defined on $H$.
The domain of definition of an operator $A$ on $H$ is denoted by $D(A)$.
We use $\mathcal{P}(H)$ to denote the set of projectors on
$H$.
The Borel class on $\R$ is denoted by $\mathcal{B}$.
Then,
in order to state the third postulate of quantum mechanics, we need
the spectral theorem below 
(see e.g.~Arai and Ezawa~\cite[Section~2.9.4]{AE99} for this form of the spectral theorom).

\begin{theorem}[The spectral theorem]\label{spectral-theorem}
For every self-adjoint operator $A$ on a Hilbert space $H$,
there exists a unique spectral measure $E\colon\mathcal{B}\to\mathcal{P}(H)$ such that
\begin{equation*}
  D(A)=\left\{\Psi\in H\middle|\int_{\R}\lambda^2 d\langle\Psi, E(\lambda)\Psi\rangle<\infty\right\}
\end{equation*}
and
\begin{equation*}
  \langle\Phi,A\Psi\rangle=\int_{\R}\lambda d\langle\Phi, E(\lambda)\Psi\rangle
\end{equation*}
for every $\Psi\in D(A)$ and $\Phi\in H$.
The spectral measure $E\colon\mathcal{B}\to\mathcal{P}(H)$ is called
the \emph{spectral measure of $A$}. 
\qed
\end{theorem}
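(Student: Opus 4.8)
The plan is to reduce the general (possibly unbounded) self-adjoint case to the unitary case by means of von Neumann's Cayley transform, and to handle the unitary case — equivalently, the bounded self-adjoint case — through the continuous functional calculus combined with the Riesz representation theorem.

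First I would establish the statement for a bounded self-adjoint operator $B$ on $H$. Here $\sigma(B)\subset[-\|B\|,\|B\|]\subset\R$, and the polynomial functional calculus $p\mapsto p(B)$ is isometric for the supremum norm on $\sigma(B)$, since $\|p(B)\|^{2}=\|(\overline{p}p)(B)\|=r\bigl((\overline{p}p)(B)\bigr)=\sup_{\lambda\in\sigma(B)}\abs{p(\lambda)}^{2}$, using $\|C\|=r(C)$ for self-adjoint $C$ and the spectral mapping theorem for polynomials. By Stone--Weierstrass this extends to an isometric $*$-homomorphism $f\mapsto f(B)$ from $C(\sigma(B))$ into the bounded operators on $H$. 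For each $\Psi\in H$ the functional $f\mapsto\langle\Psi,f(B)\Psi\rangle$ is positive and bounded on $C(\sigma(B))$, hence by Riesz representation equals $\int f\,d\mu_{\Psi}$ for a finite positive Borel measure $\mu_{\Psi}$; polarization then gives complex measures $\mu_{\Phi,\Psi}$ with $\langle\Phi,f(B)\Psi\rangle=\int f\,d\mu_{\Phi,\Psi}$. Defining $\langle\Phi,E(S)\Psi\rangle:=\mu_{\Phi,\Psi}(S)$ for $S\in\mathcal{B}$ yields, via the bounded sesquilinear form, an operator $E(S)$; the required identities $E(S)=E(S)^{*}=E(S)^{2}$ and $E(S\cap T)=E(S)E(T)$, strong-operator countable additivity, and $E(\R)=I$ follow by transporting $\overline{f}=f$, $f\overline{g}$, and $fg$ from $C(\sigma(B))$ to bounded Borel functions through a monotone-class and dominated-convergence argument. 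The same construction, applied to the commuting bounded self-adjoint operators $\tfrac12(U+U^{*})$ and $\tfrac1{2i}(U-U^{*})$ (or to the commutative $C^{*}$-algebra generated by $U$), produces a spectral measure $F$ for an arbitrary unitary operator $U$, supported on the unit circle $\mathbb{T}$.

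Next, for a general self-adjoint $A$, I would form the Cayley transform $U:=(A-iI)(A+iI)^{-1}$; self-adjointness of $A$ guarantees that $U$ is unitary, that $1$ is not an eigenvalue of $U$, and that $A=i(I+U)(I-U)^{-1}$ with $D(A)=\operatorname{ran}(I-U)$. Taking $F$ to be the spectral measure of $U$ from the previous step, the fact that $1$ is not an eigenvalue gives $F(\{1\})=0$, so $F$ is carried by $\mathbb{T}\setminus\{1\}$. I then transport $F$ along the homeomorphism $\varphi\colon\R\to\mathbb{T}\setminus\{1\}$, $\varphi(\lambda)=(\lambda-i)/(\lambda+i)$, by setting $E(S):=F(\varphi(S))$ for $S\in\mathcal{B}$; this $E$ is a spectral measure on $\mathcal{B}$. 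Applying the (unbounded) Borel functional calculus for $U$ to $g(z)=i(1+z)/(1-z)$, which satisfies $g\circ\varphi=\mathrm{id}_{\R}$, one obtains exactly $D(A)=\{\Psi\in H:\int_{\R}\lambda^{2}\,d\langle\Psi,E(\lambda)\Psi\rangle<\infty\}$ and $\langle\Phi,A\Psi\rangle=\int_{\R}\lambda\,d\langle\Phi,E(\lambda)\Psi\rangle$ for all $\Psi\in D(A)$ and $\Phi\in H$.

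Finally, uniqueness: if $E$ and $E'$ both satisfy the conclusion, then for every non-real $z$ the resolvent $(A-zI)^{-1}$, which depends only on $A$, equals both $\int_{\R}(\lambda-z)^{-1}\,dE(\lambda)$ and $\int_{\R}(\lambda-z)^{-1}\,dE'(\lambda)$, so the finite complex measures $\langle\Phi,E(\cdot)\Psi\rangle$ and $\langle\Phi,E'(\cdot)\Psi\rangle$ have the same Cauchy--Stieltjes transform on $\mathbb{C}\setminus\R$; by the Stieltjes inversion formula the two measures agree, and since $\Phi,\Psi$ are arbitrary, $E=E'$. The step I expect to be the main obstacle is the construction in the bounded case — upgrading the continuous functional calculus to a genuine projection-valued measure, i.e.\ checking that each $E(S)$ is an orthogonal projection, that multiplicativity $E(S\cap T)=E(S)E(T)$ survives the extension from continuous to arbitrary bounded Borel functions, and that $S\mapsto E(S)$ is countably additive in the strong operator topology; by comparison the Cayley-transform reduction and the uniqueness argument are routine.
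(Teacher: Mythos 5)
The paper does not prove this theorem: it is imported verbatim as standard background from the functional-analysis literature (the text points the reader to Arai and Ezawa, Section~2.9.4, for this form of the spectral theorem), and the \qed marks the end of the statement, not of an argument. So there is no in-paper proof to compare yours against. On its own terms, your sketch is the classical von Neumann route --- continuous functional calculus plus Riesz representation for the bounded case, Cayley transform to reduce the unbounded case to a unitary, transport of the spectral measure along $\varphi(\lambda)=(\lambda-i)/(\lambda+i)$, and uniqueness via the resolvent and Stieltjes inversion --- and the outline is sound; the identity $g\circ\varphi=\mathrm{id}_{\R}$ checks out, and you correctly flag the genuinely technical step (upgrading the continuous functional calculus to a countably additive projection-valued measure). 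Three points you elide deserve a sentence each in a full write-up: producing a single spectral measure on $\mathbb{T}$ from the two commuting operators $\tfrac12(U+U^{*})$ and $\tfrac1{2i}(U-U^{*})$ needs a joint spectral measure (the commutative $C^{*}$-algebra/Gelfand route you mention parenthetically is cleaner); $F(\{1\})=0$ uses the fact that $\operatorname{ran}F(\{\mu\})=\ker(U-\mu I)$, which itself must be extracted from the functional calculus; and the uniqueness argument needs the intermediate step that the two displayed conditions force $A=\int_{\R}\lambda\,dE(\lambda)$ as operators (domains included), from which the resolvent formula $\int_{\R}(\lambda-z)^{-1}\,dE(\lambda)=(A-zI)^{-1}$ follows by multiplicativity of the Borel calculus. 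None of these is a gap in the sense of a wrong idea; they are standard lemmas the sketch takes for granted.
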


The third postulate of quantum mechanics is about measurements on quantum systems.
This is the so-called \emph{Born rule}, i.e, \emph{the probability interpretation of the wave function}.

\begin{postulate}[The Born rule]\label{measurements}
Consider measurements of an observable of a quantum system.
Let $A$ be a self-adjoint operator describing the observable.
If the state of the quantum system is described by a state vector $\Psi$
immediately before the measurement,
then the \emph{probability} that
the measured value of the observable is found in a Borel set $J$ on $\R$
is given by
\[
  \frac{\langle\Psi,E(J)\Psi\rangle}{\langle\Psi,\Psi\rangle},
\]
where $E$ is the spectral measure of $A$. 
\qed
\end{postulate}

Postulate~\ref{measurements} describes the effects of measurements on quantum systems
using the notion of \emph{probability},
whereas it does not mention the \emph{operational definition} of the notion of probability. 
On the other hand,
there is a postulate about quantum measurements with no reference to the notion of probability.
This is given in Dirac \cite[Section 10]{D58},
and describes a spacial case of  quantum measurements which are performed upon a quantum system
in an \emph{eigenstate} of an observable, i.e., a state represented by an eigenvector of an observable.
\begin{postulate}[Dirac \cite{D58}]\label{Dirac}
If the dynamical system is in an eigenstate of a real dynamical variable $\xi$, belonging to the eigenvalue $\xi'$,
then a measurement of $\xi$ will certainly gives as result the number $\xi'$.
\qed
\end{postulate}
Here, the ``dynamical system'' means quantum system.

Based on Postulates~\ref{state_space}, \ref{observables}, \ref{measurements}, and \ref{Dirac} above,
we can show that an elementary event \emph{with probability one} occurs certainly in quantum mechanics.
To see this, let us consider a quantum system with infinite-dimensional state space,
and measurements of an observable of the quantum system described by a self-adjoint operator $A$.
Suppose that the probability that
the measured value of the observable is equal to a real $\lambda_0$ is \emph{one}
in the measurement of the observable performed upon
the system in
a state represented by a state vector $\Psi_0$.
Then, it follows from Postulate~\ref{measurements} that
\begin{equation*}
  \frac{\langle\Psi_0,E(\{\lambda_0\})\Psi_0\rangle}{\langle\Psi_0,\Psi_0\rangle}=1,
\end{equation*}
where $E$ is the spectral measure
of
$A$.
Thus, since $E(\{\lambda_0\})$ is a projector on $H$, we have that
\begin{equation}\label{E(lambda0)Psi_0=Psi_0}
  E(\{\lambda_0\})\Psi_0=\Psi_0.
\end{equation}
We here note the following theorem
(see Arai and Ezawa~\cite[Theorem~2.84 (i)]{AE99}).

\begin{theorem}\label{Arai-Ezawa-Th2-84i}
Let $A$ be a self-adjoint operator on a Hilbert space $H$, and let $E$ be
the spectral measure of $A$.
Then we have that $\{\Psi\in H\mid A\Psi=\lambda\Psi\}=\{E(\{\lambda\})\Psi\mid \Psi\in H \}$
for every real $\lambda$.
\qed
\end{theorem}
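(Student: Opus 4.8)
\noindent\textit{Proof plan.} The idea is to transfer everything to the scalar spectral measure of a vector and reduce the statement to two elementary facts about a finite positive Borel measure on $\R$. For $\Psi\in H$ write $\nu_\Psi(S):=\langle\Psi,E(S)\Psi\rangle$ for $S\in\mathcal{B}$; since $E(S)\in\mathcal{P}(H)$ is a projector, $\nu_\Psi(S)=\langle E(S)\Psi,E(S)\Psi\rangle=\lVert E(S)\Psi\rVert^2$, so $\nu_\Psi$ is a finite positive measure with $\nu_\Psi(\R)=\lVert\Psi\rVert^2$, and by Theorem~\ref{spectral-theorem} we have $\Psi\in D(A)$ precisely when $\int_\R t^2\,d\nu_\Psi(t)<\infty$ (I use $t$ as the integration variable to avoid a clash with the fixed real $\lambda$). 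The one nontrivial input I shall use is the $L^2$-isometry of the functional calculus in the special form
\[
  \lVert(A-\lambda I)\Psi\rVert^2=\int_\R(t-\lambda)^2\,d\nu_\Psi(t)\qquad\text{for }\Psi\in D(A),
\]
together with the product rule $E(S)E(T)=E(S\cap T)$ for spectral measures.

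First I would prove the inclusion $\supseteq$. Fix $\Psi\in H$ and put $\Phi:=E(\{\lambda\})\Psi$. By the product rule $E(S)\Phi=E(S\cap\{\lambda\})\Psi$, so $\nu_\Phi(S)=\lVert E(S)\Phi\rVert^2$ equals $\lVert\Phi\rVert^2$ when $\lambda\in S$ and $0$ otherwise; that is, $\nu_\Phi=\lVert\Phi\rVert^2\,\delta_\lambda$ is a point mass at $\lambda$. Hence $\int_\R t^2\,d\nu_\Phi(t)=\lambda^2\lVert\Phi\rVert^2<\infty$, so $\Phi\in D(A)$ by Theorem~\ref{spectral-theorem}. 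Moreover, for an arbitrary $\Xi\in H$ the complex measure $S\mapsto\langle\Xi,E(S)\Phi\rangle=\langle\Xi,E(S\cap\{\lambda\})\Psi\rangle$ is again concentrated at $\lambda$, with mass $\langle\Xi,E(\{\lambda\})\Phi\rangle=\langle\Xi,\Phi\rangle$ there, so the form representation of Theorem~\ref{spectral-theorem} gives $\langle\Xi,A\Phi\rangle=\int_\R t\,d\langle\Xi,E(t)\Phi\rangle=\lambda\langle\Xi,\Phi\rangle=\langle\Xi,\lambda\Phi\rangle$. As $\Xi$ is arbitrary, $A\Phi=\lambda\Phi$, so $\Phi$ lies in the set on the left-hand side of the claimed equality. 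Conversely, for $\subseteq$, let $\Psi\in D(A)$ with $A\Psi=\lambda\Psi$. The displayed identity gives $\int_\R(t-\lambda)^2\,d\nu_\Psi(t)=\lVert(A-\lambda I)\Psi\rVert^2=0$, and since $\nu_\Psi$ is a positive measure this forces it to be carried by $\{t\in\R\mid(t-\lambda)^2=0\}=\{\lambda\}$, i.e.\ $\lVert E(\R\setminus\{\lambda\})\Psi\rVert^2=\nu_\Psi(\R\setminus\{\lambda\})=0$. Since $E$ is a spectral measure with $E(\{\lambda\})+E(\R\setminus\{\lambda\})=E(\R)=I$, this yields $\Psi=E(\{\lambda\})\Psi$, so $\Psi$ lies in the set on the right-hand side, and the equality is established.

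The real work, and the step I expect to be the main obstacle, is proving the displayed $L^2$-identity from the weak, quadratic-form version of the spectral theorem available here, which on its face provides only $\langle\Phi,A\Psi\rangle=\int_\R t\,d\langle\Phi,E(t)\Psi\rangle$ rather than any statement about norms. My plan for this is: (i) derive the commutation relations $E(S)\Psi\in D(A)$ and $AE(S)\Psi=E(S)A\Psi$ for $\Psi\in D(A)$ and $S\in\mathcal{B}$, by pairing with an arbitrary $\Xi\in H$ and using the product rule inside the Stieltjes integral; (ii) using self-adjointness of $A$ and of $E(S)$ together with (i), identify the measure $S\mapsto\langle\Psi,E(S)A\Psi\rangle$ with $t\,d\nu_\Psi(t)$, equivalently $\langle E(S)\Psi,A\Psi\rangle=\int_S t\,d\nu_\Psi(t)$; and (iii) apply the form representation to the pair $\Phi:=(A-\lambda I)\Psi$ and $\Psi$ to get $\lVert(A-\lambda I)\Psi\rVert^2=\int_\R(t-\lambda)\,d\langle(A-\lambda I)\Psi,E(t)\Psi\rangle$, and then combine with (i)--(ii) to see that the integrand here is $(t-\lambda)\,d\nu_\Psi(t)$. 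Convergence of all the integrals involved follows from $\nu_\Psi(\R)=\lVert\Psi\rVert^2<\infty$, $\int_\R t^2\,d\nu_\Psi(t)<\infty$, and the Cauchy--Schwarz inequality. Everything outside this computation is routine manipulation of point masses, so if one is willing instead to cite the $L^2$-isometry of the Borel functional calculus as a black box (as in the referenced treatment of the spectral theorem), the remaining argument is essentially immediate.
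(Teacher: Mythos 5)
Your argument is correct, but it is worth noting that the paper itself offers no proof of this statement: it is quoted verbatim from Arai and Ezawa~\cite[Theorem~2.84~(i)]{AE99} and used as a black box, so there is nothing internal to compare against step by step. What you have supplied is a self-contained derivation from the weak, quadratic-form version of the spectral theorem actually stated in the paper (Theorem~\ref{spectral-theorem}) together with the defining properties of a spectral measure ($E(S)E(T)=E(S\cap T)$, $E(\R)=I$, countable additivity). Both inclusions check out: for $\supseteq$, the scalar measure of $E(\{\lambda\})\Psi$ is a point mass at $\lambda$, which puts it in $D(A)$ and forces $\langle\Xi,A\,E(\{\lambda\})\Psi\rangle=\lambda\langle\Xi,E(\{\lambda\})\Psi\rangle$ for all $\Xi$; for $\subseteq$, the identity $\lVert(A-\lambda I)\Psi\rVert^2=\int_\R(t-\lambda)^2\,d\nu_\Psi(t)$ forces $\nu_\Psi$ to be carried by $\{\lambda\}$, whence $E(\R\setminus\{\lambda\})\Psi=0$ and $\Psi=E(\{\lambda\})\Psi$. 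You correctly isolate the one nontrivial input, the $L^2$-identity, and your sketch of it goes through; indeed step~(ii) follows even more directly than you indicate, since $\langle E(S)\Psi,E(T)\Psi\rangle=\nu_\Psi(S\cap T)$ by multiplicativity, so the form representation applied to the pair $(E(S)\Psi,\Psi)$ immediately gives $\langle E(S)\Psi,A\Psi\rangle=\int_S t\,d\nu_\Psi(t)$ without first establishing the commutation relation in step~(i). What your route buys is independence from the cited functional-calculus machinery, at the cost of reproving a small piece of it; citing the $L^2$-isometry of the Borel functional calculus, as the paper implicitly does by deferring to the reference, collapses the whole argument to the two elementary observations about point masses that you give in your second paragraph.
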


It follows from \eqref{E(lambda0)Psi_0=Psi_0} and Theorem~\ref{Arai-Ezawa-Th2-84i}
that $\Psi_0$ is an eigenvector of $A$ belonging to the eigenvalue $\lambda_0$.
Therefore, we have that immediately before the measurement, the quantum system is
in an eigenstate of the observable $A$, belonging to the eigenvalue $\lambda_0$.
While Postulate~\ref{Dirac} is mathematically vague,
it is natural to
identify
the ``real dynamical variable'' referred to in Postulate~\ref{Dirac}
with
an observable
in our terminology above.
Thus, under this identification,
it follows from Postulate~\ref{Dirac} that
the measurement of $A$ will \emph{certainly} gives as result the number $\lambda_0$.
Hence, it turns out that
\emph{an elementary event with probability one occurs certainly in quantum mechanics}.

The above consideration can be generalized
to show that
an \emph{arbitrary} event
\emph{with probability one}
occurs certainly in quantum mechanics.
To see this,
let us again consider a quantum system with infinite-dimensional state space,
and measurements of an observable of the quantum system described by a self-adjoint operator $A$.
Suppose that the probability that
the measured value of the observable is found in a Borel set $J$ on $\R$
is \emph{one}
in the measurement of the observable performed upon
the system in
a state represented by a state vector $\Psi_0$.
Then, it follows from Postulate~\ref{measurements} that
\begin{equation*}
  \frac{\langle\Psi_0,E(J)\Psi_0\rangle}{\langle\Psi_0,\Psi_0\rangle}=1,
\end{equation*}
where $E$ is the spectral measure of $A$.
Thus, since $E(J)$ is a projector on $H$, we have that
\begin{equation}\label{E(J)Psi=Psi}
  E(J)\Psi_0=\Psi_0.
\end{equation}
Now, we define a Borel function $f\colon\R\to\R$ by the condition that
$f(\lambda)$ is $1$ if $\lambda\in J$ and $0$ otherwise.
Then, we can define a operator $f(A)$ such that
\begin{enumerate}
\item
\begin{equation*}
  D(f(A))
  =\left\{\Psi\in H\middle|\int_{\R}\abs{f(\lambda)}^2 d\langle\Psi, E(\lambda)\Psi\rangle<\infty\right\}
\end{equation*}
and
\begin{equation*}
  \langle\Phi,f(A)\Psi\rangle=\int_{\R}f(\lambda) d\langle\Phi, E(\lambda)\Psi\rangle
\end{equation*}
for every $\Psi\in D(A)$ and $\Phi\in H$,
and
\item
if $\Psi$ is an eigenvector of $A$ belonging to an eigenvalue $\lambda$
then $\Psi$ is an eigenvector of $f(A)$ belonging to the eigenvalue $f(\lambda)$.
\end{enumerate}
See e.g.~Prugove\v{c}ki~\cite[Chapter~2]{P81} or Arai and Ezawa~\cite[Chapter~2]{AE99}
for the detail of the definition and property of the operator $f(A)$.
It is then easy to show that
\begin{equation}\label{f(A)=E(J)}
  f(A)=E(J).
\end{equation}
Thus, $f(A)$ is a self-adjoint operator.
It describes \emph{an observable whose measurement is done
by first performing the measurement of the observable described by $A$,
and then simply applying the function $f$ to the measured value}.
On the other hand, it follows from \eqref{E(J)Psi=Psi} and \eqref{f(A)=E(J)} that
$\Psi_0$ is an eigenvector of $f(A)$ belonging to the eigenvalue $1$.
Therefore, we have that immediately before the measurement, the quantum system is
in an eigenstate of the observable
$f(A)$, belonging to the eigenvalue $1$.
Thus,
applying Postulate~\ref{Dirac} under the identification of
the ``real dynamical variable'' referred to in Postulate~\ref{Dirac}
with the observable described by $f(A)$ as above,
we have that
the measurement of $f(A)$ will \emph{certainly} gives as result the number $1$.
Since $f(\lambda)=1$ if and only if $\lambda\in J$,
this can be rephrased as that
the measurement of $A$ will \emph{certainly} gives as result a number in $J$.
Hence, it turns out that
\emph{an event with probability one occurs certainly in quantum mechanics}.

Theorem~\ref{one_probability-Bs} below states that
an elementary event with probability one always occurs in an ensemble,
and thus shows that the notion of ensemble coincides with our intuition about
the notion of probability, in particular, in quantum mechanics.

\begin{theorem}\label{one_probability-Bs}
Let $\Omega$ be an r.e.~infinite set, and let $P\in\PS(\Omega)$.
Let $a\in\Omega$.
Suppose that $\alpha$ is an ensemble for the discrete probability space $P$ and $P(a)=1$.
Then $\alpha$ consists only of $a$,
i.e.,
$\alpha=aaaaaa\dotsc\dotsc$.
\qed
\end{theorem}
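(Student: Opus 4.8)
The plan is to exhibit the one-point complement $\Omega^\infty\setminus\{aaaa\dotsc\dotsc\}$ as (in fact, exactly) the critical set $\bigcap_{n}\osg{\mathcal{C}_n}$ of a single Martin-L\"of $P$-test $\mathcal{C}$; Martin-L\"of $P$-randomness of $\alpha$ then forces $\alpha\notin\osg{\mathcal{C}_n}$ for some $n$, which is the desired conclusion. First I would record the elementary consequence of the hypothesis: since $P(a)=1$, condition~(ii) of Definition~\ref{def-DPS} together with $P\ge 0$ gives $P(b)=0$ for every $b\in\Omega$ with $b\neq a$. Writing $a^{k}\in\Omega^{*}$ for the string consisting of $k$ copies of $a$, it follows that $P(a^{k}b)=P(a)^{k}P(b)=0$ for every $k\in\N$ and every $b\in\Omega\setminus\{a\}$.

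Next I would define, for every $n\in\N^{+}$,
\[
  \mathcal{C}_n:=\left\{\,a^{k}b\;\middle|\;k\in\N,\ b\in\Omega,\ b\neq a\,\right\},
\]
i.e.\ the set $\mathcal{C}:=\N^{+}\times\mathcal{C}_1$ (independent of $n$), and verify the three requirements in Definition~\ref{ML_P-randomness-Bs}(i). Prefix-freeness: if $a^{k_1}b_1$ were a proper prefix of $a^{k_2}b_2$ then $k_1<k_2$, but then the $(k_1{+}1)$-st symbol of the former is $b_1\neq a$ while that of the latter is $a$, a contradiction; and two distinct strings of equal length are never comparable, so $\mathcal{C}_n$ is prefix-free. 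Recursive enumerability: $a$ is a fixed element of the r.e.\ set $\Omega$, so one can effectively enumerate all pairs $(n,a^{k}b)$ with $b\in\Omega\setminus\{a\}$, $k\in\N$, $n\in\N^{+}$; hence $\mathcal{C}$ is r.e. The measure bound: since $\mathcal{C}_n$ is prefix-free, Theorem~\ref{thm_mur-outer-measure-Bs} together with \eqref{pBm-Bs} gives $\Bm{P}{\osg{\mathcal{C}_n}}=\sum_{\sigma\in\mathcal{C}_n}P(\sigma)=\sum_{k,b}P(a^{k}b)=0<2^{-n}$. Thus $\mathcal{C}$ is a Martin-L\"of $P$-test.

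I would then identify $\osg{\mathcal{C}_n}=\Omega^\infty\setminus\{aaaa\dotsc\dotsc\}$: if $\beta\in\Omega^\infty$ and $\beta\neq aaaa\dotsc\dotsc$, let $m\in\N^{+}$ be least with $\beta(m)\neq a$; then $\rest{\beta}{m}=a^{m-1}\beta(m)\in\mathcal{C}_n$, so $\beta\in\osg{\mathcal{C}_n}$, while conversely $aaaa\dotsc\dotsc$ has no prefix in $\mathcal{C}_n$. Since $\alpha$ is Martin-L\"of $P$-random it passes $\mathcal{C}$, so there is $n\in\N^{+}$ with $\alpha\notin\osg{\mathcal{C}_n}=\Omega^\infty\setminus\{aaaa\dotsc\dotsc\}$, whence $\alpha=aaaa\dotsc\dotsc$, as claimed.

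I do not expect a genuine obstacle here; the proof is short. The only points deserving a word of care are (a) that $\mathcal{C}_n$ is genuinely an r.e.\ set, which is exactly where the recursive enumerability of $\Omega$ is used, and (b) that the value $\Bm{P}{\osg{\mathcal{C}_n}}$ is the one computed, which relies on the well-definedness of $\lambda_P$ on open subsets of $\Omega^\infty$ (Corollary~\ref{oE=oE'imprE=rE'} and Theorem~\ref{thm_mur-outer-measure-Bs}). If one wished to sidestep the prefix-freeness check entirely, one could instead take the manifestly r.e.\ (indeed recursive) set $\{(n,\sigma)\mid n\in\N^{+},\ \sigma\in\Omega^{*},\ \sigma\text{ contains some symbol}\neq a\}$, whose $\osg{\,\cdot\,}$ is again $\Omega^\infty\setminus\{aaaa\dotsc\dotsc\}$, and appeal to Theorem~\ref{ML_P-randomness_eliminated-prefix-freeness} (or Theorem~\ref{eliminate-prefix-freeness}).
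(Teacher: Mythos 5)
Your proposal is correct and uses essentially the same mechanism as the paper: the paper derives Theorem~\ref{one_probability-Bs} immediately from Theorem~\ref{zero_probability-Bs} (an elementary event with $P(b)=0$ never occurs in an ensemble), whose proof builds the analogous $n$-independent, prefix-free, r.e., measure-zero test $\{(n,\rho b)\mid \rho\in(\Omega\setminus\{b\})^*\}$. Your direct test $\{a^k b\mid b\neq a\}$ is just the dual packaging of the same idea, and all the verification steps (prefix-freeness, recursive enumerability from that of $\Omega$, and the vanishing of $\Bm{P}{\osg{\mathcal{C}_n}}$) are sound.
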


Theorem~\ref{one_probability-Bs} follows immediately from a more general result,
Theorem~\ref{zero_probability-Bs} below,
which states that an elementary event with probability zero never occurs in an ensemble.

\begin{theorem}\label{zero_probability-Bs}
Let $\Omega$ be an r.e.~infinite set,
and let $P\in\PS(\Omega)$.
  Let $a\in\Omega$.
  Suppose that $\alpha$ is an ensemble for the discrete probability space $P$ and $P(a)=0$.
  Then $\alpha$ does not contain $a$.
\end{theorem}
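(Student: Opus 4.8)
The plan is to produce a single Martin-L\"of $P$-test $\mathcal{C}$ for which $\osg{\mathcal{C}_n}$ equals, for every $n\in\N^+$, the set of all infinite sequences over $\Omega$ that contain $a$; Martin-L\"of $P$-randomness of the ensemble $\alpha$ will then force $\alpha$ out of that set, which is exactly the assertion.

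First I would let $T\subset\Omega^*$ be the set of all $\sigma\in\Omega^+$ whose last symbol is $a$ and whose earlier symbols are all different from $a$; equivalently, $T=\{\tau a\mid\tau\in(\Omega\setminus\{a\})^*\}$. Two routine facts are needed. (i) $T$ is prefix-free: if $\tau_1 a$ were a proper prefix of $\tau_2 a$ with $\tau_1,\tau_2\in(\Omega\setminus\{a\})^*$, then the unique occurrence of $a$ in $\tau_1 a$, at position $\abs{\tau_1}+1$, would coincide with the unique occurrence of $a$ in $\tau_2 a$, at position $\abs{\tau_2}+1$, giving $\tau_1 a=\tau_2 a$, a contradiction. (ii) $T$ is an r.e.\ set, since $\Omega$ is r.e.\ (so the finite strings over $\Omega$ can be effectively enumerated) and membership of an enumerated string in $T$ is a finite check. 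Moreover $\osg{T}=\{\alpha\in\Omega^\infty\mid\exists\,n\in\N^+\ \alpha(n)=a\}$: given $\alpha$ containing $a$, take the least $m$ with $\alpha(m)=a$, so $\rest{\alpha}{m}\in T$ and hence $\alpha\in\osg{T}$; conversely, $\rest{\alpha}{n}\in T$ forces $\alpha(n)=a$.

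Next I would compute $\Bm{P}{\osg{T}}$. Since $\lambda_P=\mu_r$ with $r=P$ on $\Omega^*$ and $T$ is prefix-free, the definition of $r$ on open sets together with Theorem~\ref{thm_mur-outer-measure-Bs} gives $\Bm{P}{\osg{T}}=r(\osg{T})=\sum_{\sigma\in T}P(\sigma)$; this series converges and is bounded by $P(\lambda)=1$ by Theorem~\ref{thm_rErlerr}. But each $\sigma\in T$ has the form $\sigma=\tau a$, whence $P(\sigma)=P(\tau)P(a)=P(\tau)\cdot 0=0$, so the sum is $0$ and $\Bm{P}{\osg{T}}=0$. Therefore $\mathcal{C}:=\N^+\times T$ is an r.e.\ subset of $\N^+\times\Omega^*$ with each $\mathcal{C}_n=T$ a prefix-free set satisfying $\Bm{P}{\osg{\mathcal{C}_n}}=0<2^{-n}$ for every $n\in\N^+$; that is, $\mathcal{C}$ is a Martin-L\"of $P$-test.

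Finally, since $\alpha$ is an ensemble for $P$, it is Martin-L\"of $P$-random, hence passes $\mathcal{C}$: there is $n\in\N^+$ with $\alpha\notin\osg{\mathcal{C}_n}=\osg{T}$. By the description of $\osg{T}$ above, this says exactly that $\alpha(n)\neq a$ for all $n\in\N^+$, i.e., $\alpha$ does not contain $a$. (Alternatively, one may skip the verification that $T$ is prefix-free and instead take $\mathcal{C}_n$ to be the set of \emph{all} strings over $\Omega$ containing $a$, invoking Theorem~\ref{ML_P-randomness_eliminated-prefix-freeness}.) The proof is short, and the only point deserving a little care is the evaluation $\Bm{P}{\osg{T}}=0$: it holds because each summand $P(\tau a)$ already vanishes, so no appeal to convergence rates is required, and the r.e.-ness of $\Omega$ is exactly what makes $T$, and hence $\mathcal{C}$, r.e.
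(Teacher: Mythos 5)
Your proof is correct and follows essentially the same route as the paper's: you construct the identical Martin-L\"of $P$-test $\mathcal{C}_n=\{\tau a\mid\tau\in(\Omega\setminus\{a\})^*\}$, observe that every summand $P(\tau a)=P(\tau)P(a)=0$ so $\Bm{P}{\osg{\mathcal{C}_n}}=0<2^{-n}$, and conclude from the fact that $\alpha$ passes the test. The only differences are cosmetic: you verify prefix-freeness and the identification of $\osg{T}$ explicitly where the paper asserts them, and you phrase the final step directly rather than by contradiction.
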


\begin{proof}
We define $\mathcal{C}$
as the set $\{(n,\rho a)\mid n\in\N^+\;\&\;\rho\in(\Omega\setminus \{a\})^*\}$.
Then $\mathcal{C}_n$ is a prefix-free subset of $\Omega^*$ for every $n\in\N^+$.
Since $\Omega$ is an r.e.~set,
the set $\mathcal{C}$ is an r.e.~subset of $\N^+\times \Omega^*$.
For each $n\in\N^+$,
since $P(\sigma)=0$ for every $\sigma\in\mathcal{C}_n$,
we have $\Bm{P}{\osg{\mathcal{C}_n}}=P(\mathcal{C}_n)=0<2^{-n}$.
Hence, $\mathcal{C}$ is Martin-L\"of $P$-test.

Since $\alpha$ is Martin-L\"of $P$-random, it passes $\mathcal{C}$.
Let us assume contrarily that $\alpha$ contains $a$.
Then there exists a prefix $\sigma$ of $\alpha$
such that $\sigma=\rho a$ for some $\rho\in(\Omega\setminus \{a\})^*$.
Since $\sigma\in\mathcal{C}_n$ for all $n\in\N^+$,
we have that $\alpha\in\osg{\mathcal{C}_n}$ for all $n\in\N^+$.
Therefore $\alpha$ does not pass $\mathcal{C}$.
Thus,
we have a contradiction, and the proof is completed.
\end{proof}

%%%%%%%%%%%%%%%%%%%%%%%%%%%%%%%%%%%%%%%%%%%%%%%%%%%%%%%
\subsection{The law of large numbers}

Let $P\in\PS(\Omega)$, and
let us consider
an infinite sequence $\alpha\in\Omega^\infty$ of outcomes
which is being generated by infinitely repeated trials described
by the discrete probability space $P$ on $\Omega$.
The second necessary condition which the notion of probability for the discrete probability space $P$
is considered to have to satisfy is the condition that
\emph{the law of large numbers holds for $\alpha$}.
Theorem~\ref{LLN-Bs} below confirms that this certainly holds.
Note here that we have to prove that the law of large numbers holds for $\alpha$
even in the case where $P$ is \emph{not computable}.
This is because a discrete probability space is not computable, in general.
However, we can certainly prove it, as shown in Theorem~\ref{LLN-Bs}.

\begin{theorem}[The law of large numbers]\label{LLN-Bs}
Let $\Omega$ be an r.e.~infinite set, and let $P\in\PS(\Omega)$.
For every $\alpha\in\Omega^\infty$, if $\alpha$ is an ensemble for $P$
then for every $a\in\Omega$ it holds that
\begin{equation*}
  \lim_{n\to\infty}\frac{N_a(\rest{\alpha}{n})}{n}=P(a),
\end{equation*}
where $N_a(\sigma)$ denotes the number of the occurrences of $a$ in $\sigma$ for every $a\in\Omega$ and $\sigma\in\Omega^*$.
\qed
\end{theorem}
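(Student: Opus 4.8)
The plan is to exhibit, for each rational threshold, a Martin-L\"of $P$-test whose critical set contains every sequence whose empirical frequency of $a$ deviates from $P(a)$ infinitely often; an ensemble, passing all such tests, then satisfies the law of large numbers for each $a$. Fix $a\in\Omega$ and put $p:=P(a)$. Since $\sum_{b\in\Omega}P(b)=1$, a short computation with \eqref{pBm-Bs} (and countable additivity of $\lambda_P$) shows that under $\lambda_P$ the count $N_a(\rest{\cdot}{m})$ is $\mathrm{Binomial}(m,p)$-distributed, i.e.\ $\Bm{P}{\{\beta\in\Omega^\infty : N_a(\rest{\beta}{m})=j\}}=\binom{m}{j}p^j(1-p)^{m-j}$. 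Hence, by a large-deviation (Hoeffding) estimate, for every rational $q>p$ there is $\theta_q\in(0,1)$ with $\Bm{P}{\{\beta : N_a(\rest{\beta}{m})\ge qm\}}\le\theta_q^{\,m}$, and symmetrically for every rational $q<p$ there is $\theta_q\in(0,1)$ with $\Bm{P}{\{\beta : N_a(\rest{\beta}{m})\le qm\}}\le\theta_q^{\,m}$.

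I would first treat the upper bound $\limsup_{n}N_a(\rest{\alpha}{n})/n\le p$. Fix a rational $q>p$. Since $\theta_q<1$, pick a rational $\theta$ with $\theta_q<\theta<1$; such $\theta$ exists even though it cannot be extracted effectively from $P$. Because $\theta$ is rational, there is a computable function $M\colon\N^+\to\N^+$ with $\sum_{m\ge M(n)}\theta^{\,m}<2^{-n}$ for all $n$. Define
\[
  \mathcal{C}:=\{(n,\sigma)\mid n\in\N^+,\ \sigma\in\Omega^*,\ \abs{\sigma}\ge M(n),\ N_a(\sigma)\ge q\abs{\sigma}\}.
\]
Since $\Omega$ is r.e.\ (and equality of its elements is decidable), $\mathcal{C}$ is an r.e.\ subset of $\N^+\times\Omega^*$; and $\osg{\mathcal{C}_n}=\bigcup_{m\ge M(n)}\{\beta : N_a(\rest{\beta}{m})\ge qm\}$, so $\Bm{P}{\osg{\mathcal{C}_n}}\le\sum_{m\ge M(n)}\theta_q^{\,m}\le\sum_{m\ge M(n)}\theta^{\,m}<2^{-n}$. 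By Theorem~\ref{ML_P-randomness_eliminated-prefix-freeness}, Martin-L\"of $P$-randomness of $\alpha$ gives an $n$ with $\alpha\notin\osg{\mathcal{C}_n}$, i.e.\ $N_a(\rest{\alpha}{m})<qm$ for every $m\ge M(n)$; thus $\limsup_{n}N_a(\rest{\alpha}{n})/n\le q$. Letting $q$ decrease to $p$ through the rationals yields $\limsup_{n}N_a(\rest{\alpha}{n})/n\le p$.

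The lower bound $\liminf_{n}N_a(\rest{\alpha}{n})/n\ge p$ is obtained by the symmetric construction: for rational $q<p$ replace the condition $N_a(\sigma)\ge q\abs{\sigma}$ by $N_a(\sigma)\le q\abs{\sigma}$, use the corresponding lower-tail bound, and note that passing the resulting test forces $N_a(\rest{\alpha}{m})>qm$ for all large $m$; letting $q$ increase to $p$ gives the bound. (Alternatively, once $\liminf_{n}N_b(\rest{\alpha}{n})/n\ge P(b)$ is known for all $b\in\Omega$, the upper bound for a fixed $a$ is already forced by $\sum_{b\in\Omega}P(b)=1$ together with $\sum_{b\in F}N_b(\rest{\alpha}{n})/n\le 1$ for finite $F\subset\Omega$, on letting $F\uparrow\Omega$.) Combining the two bounds gives $\lim_{n\to\infty}N_a(\rest{\alpha}{n})/n=p=P(a)$ for every $a\in\Omega$, as claimed; the cases $p=0$ and $p=1$ are trivial (the former is also Theorem~\ref{zero_probability-Bs}).

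The point that needs care — and the reason the hypothesis only asks $P$ to have an r.e.\ domain rather than to be computable — is the construction of the computable modulus $M$ when $p$ is non-computable: the natural rate $\theta_q$ in the tail estimate is itself non-computable, so a test built directly from it need not be r.e. The fix is to bound $\theta_q$ above by a fixed rational $\theta<1$ (whose existence is purely non-constructive) and to build $M$ from $\theta$; this keeps $\mathcal{C}$ r.e., while the required inequality $\Bm{P}{\osg{\mathcal{C}_n}}<2^{-n}$ — a statement about the actual, possibly non-computable, measure $\lambda_P$ — continues to hold. Everything else is routine: the binomial identity, the Hoeffding estimates, and (via Theorem~\ref{eliminate-prefix-freeness}) the fact that we may ignore prefix-freeness of $\mathcal{C}_n$. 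A slicker-looking alternative is to push $\lambda_P$ forward along the map $\Omega\to\{a,\star\}$ that collapses every $b\neq a$ to $\star$, observe that preimages of r.e.\ tests for the two-letter Bernoulli measure $Q$ with $Q(a)=p$ are r.e.\ tests for $\lambda_P$, so that the image of $\alpha$ is Martin-L\"of $Q$-random, and then invoke the law of large numbers for finite probability spaces from~\cite{T16arXiv}; but that route merely relocates the same non-computability issue into the cited theorem.
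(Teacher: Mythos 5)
Your proof is correct, but it takes a genuinely different route from the paper's. The paper proves this theorem by reduction: it collapses $\Omega$ to the two-letter alphabet $\{0,1\}$ via the map sending $a\mapsto 1$ and everything else to $0$, invokes Theorem~\ref{contraction-Bs} (the closure of ensembles under recursive contraction of the alphabet, which requires only that $\Omega\setminus\{a\}$ be r.e.) to conclude that the image $\beta$ is Martin-L\"of $Q$-random for $Q(1)=P(a)$, and then cites the law of large numbers for \emph{finite} probability spaces from the earlier work~\cite{T16arXiv}; this is exactly the ``slicker-looking alternative'' you sketch in your last sentence, and you are right that it merely relocates the non-computability issue into the cited finite-alphabet theorem. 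Your primary argument instead builds the Martin-L\"of $P$-tests directly on $\Omega^\infty$ from Hoeffding tail bounds, and the one delicate point --- that the decay rate $\theta_q$ depends non-computably on $p=P(a)$, so one must majorize it by a hard-coded rational $\theta<1$ and build the computable modulus $M$ from $\theta$ alone, leaving the measure inequality $\Bm{P}{\osg{\mathcal{C}_n}}<2^{-n}$ as a fact about the true (possibly non-computable) measure --- is handled correctly; the appeal to Theorem~\ref{ML_P-randomness_eliminated-prefix-freeness} properly disposes of the prefix-freeness requirement. What each approach buys: the paper's reduction is shorter and reuses machinery (Theorem~\ref{contraction-Bs}) that is needed elsewhere anyway, while yours is self-contained, makes the treatment of non-computable $P$ explicit rather than delegating it, and your parenthetical remark that the upper bound for each $a$ already follows from the lower bounds for all $b$ via $\sum_{b\in F}N_b(\rest{\alpha}{n})/n\le 1$ and $\sum_b P(b)=1$ is a nice additional economy (only one tail of the large-deviation estimate is really needed).
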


In order to prove Theorem~\ref{LLN-Bs}, we need Theorem~\ref{LLN} below,
which is Theorem~11 of Tadaki~\cite{T16arXiv}.

\begin{theorem}[The law of large numbers, Tadaki~\cite{T14,T15,T16arXiv}]\label{LLN}
Let $\Theta$ be
a finite
alphabet, and let $Q\in\PS(\Theta)$.
For every $\alpha\in\Theta^\infty$, if $\alpha$ is an ensemble for $Q$
then for every $a\in\Theta$ it holds that
\begin{equation*}
  \lim_{n\to\infty}\frac{N_a(\rest{\alpha}{n})}{n}=Q(a),
\end{equation*}
where $N_a(\sigma)$ denotes the number of the occurrences of $a$ in $\sigma$
for every $a\in\Theta$ and $\sigma\in\Theta^*$.
\qed
\end{theorem}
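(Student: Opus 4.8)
The plan is to realise the set of sequences that violate the conclusion as an effective $\Bm{Q}{\cdot}$-null set, so that no ensemble (i.e.\ no Martin-L\"of $Q$-random sequence) can lie in it. Fix $a\in\Theta$ and write $p:=Q(a)$. Under the generalized Bernoulli measure $\Bm{Q}{\cdot}$ the coordinates of $\alpha$ are independent and each equals $a$ with probability $p$, so for every $m$ the count $N_a(\rest{\alpha}{m})$ is distributed as a sum of $m$ i.i.d.\ Bernoulli$(p)$ trials; in particular $\Bm{Q}{\{\alpha\mid N_a(\rest{\alpha}{m})\ge vm\}}=\Pr[\,\mathrm{Bin}(m,p)\ge vm\,]$. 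I would prove the two one-sided estimates $\limsup_{m\to\infty}N_a(\rest{\alpha}{m})/m\le p$ and $\liminf_{m\to\infty}N_a(\rest{\alpha}{m})/m\ge p$ separately and combine them to force the limit to equal $p$. Throughout, every test I build inspects only the decidable statistic $N_a(\sigma)/\abs{\sigma}$, so I never need a lemma about merging symbols.

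For the upper estimate I would, for each rational $v>p$, construct a Martin-L\"of $Q$-test $\mathcal{C}^{(v)}$ whose null set contains $\{\alpha\mid N_a(\rest{\alpha}{m})/m\ge v \text{ for infinitely many }m\}$. The measure control comes from the Hoeffding--Chernoff bound $\Bm{Q}{\{\alpha\mid N_a(\rest{\alpha}{m})/m\ge v\}}\le e^{-2m(v-p)^2}$, whose exponential rate $2(v-p)^2$ is strictly positive because $v>p$. Summing this geometric tail from a starting length $L_n$ gives $\Bm{Q}{\osg{\{\sigma\mid \abs{\sigma}\ge L_n,\ N_a(\sigma)/\abs{\sigma}\ge v\}}}\le\sum_{m\ge L_n}e^{-2m(v-p)^2}$, which is below $2^{-n}$ once $L_n$ is large enough; taking the prefix-minimal strings of that set makes $\mathcal{C}^{(v)}_n$ prefix-free and, by the standard fact that $\osg{S}=\osg{P}$ for a prefix-free $P$, leaves $\osg{\mathcal{C}^{(v)}_n}$ unchanged. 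Since any ``infinitely often'' sequence lies in $\osg{\mathcal{C}^{(v)}_n}$ for every $n$, it lies in $\bigcap_n\osg{\mathcal{C}^{(v)}_n}$, which a Martin-L\"of $Q$-random $\alpha$ must avoid; hence $\limsup_{m\to\infty}N_a(\rest{\alpha}{m})/m\le v$. Letting $v$ range over all rationals exceeding $p$ (whose infimum is $p$) yields $\limsup_{m\to\infty}N_a(\rest{\alpha}{m})/m\le p$, and the mirror-image construction with thresholds $u<p$ and the lower tail gives $\liminf_{m\to\infty}N_a(\rest{\alpha}{m})/m\ge p$, so the limit is exactly $p$.

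The step I expect to be the main obstacle is making these tests genuinely recursively enumerable even though $Q$, and hence $p$, is allowed to be \emph{noncomputable}: one cannot enumerate the strings cut out by the threshold $p$ itself, since that condition is not r.e.\ without $p$. The resolution I would emphasise is that no single test ever refers to $p$. For each \emph{fixed} rational $v>p$ the required starting length has the form $L_n=\lceil b(n+1)\rceil$ for a constant $b$ chosen to dominate $\tfrac{\ln 2}{2(v-p)^2}$ together with the attendant logarithmic correction; although this $b$ is determined by the unknown real $p$, it is a single fixed rational and may simply be hard-coded into the machine enumerating $\mathcal{C}^{(v)}$ — a fixed rational is trivially computable, so $\mathcal{C}^{(v)}$ is a bona fide Martin-L\"of $Q$-test no matter how wild $\Bm{Q}{\cdot}$ is. It is the passage to a \emph{family} of tests, one per rational on each side of $p$, rather than one test built around $p$, that sidesteps the noncomputability and lets the density of the rationals squeeze the frequency onto $p$. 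The remaining points are routine and I would dispatch them directly: the Hoeffding bound for Bernoulli tails, the decidability of $\sigma\mapsto N_a(\sigma)/\abs{\sigma}$ and of prefix-minimality, and the degenerate cases $p=0$ and $p=1$ (for $p=0$ the symbol $a$ cannot occur at all, exactly as in Theorem~\ref{zero_probability-Bs}, and $p=1$ is symmetric).
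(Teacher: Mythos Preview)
Your argument is correct. The Chernoff--Hoeffding tail bound, the union bound over lengths $m\ge L_n$, the passage to prefix-minimal strings, and---crucially---the observation that each individual test $\mathcal{C}^{(v)}$ may hard-code the non-effective constants determined by $p$ (since a Martin-L\"of $Q$-test need only be r.e., not uniformly computable in $Q$) are all sound; the final squeeze over rational thresholds on either side of $p$ then forces the limit.

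There is, however, no proof in this paper to compare against: Theorem~\ref{LLN} is quoted from the author's earlier work~\cite{T16arXiv} (as Theorem~11 there) and used here as a black box in deriving the countable-alphabet law of large numbers, Theorem~\ref{LLN-Bs}. The present paper's strategy for Theorem~\ref{LLN-Bs} is not a direct test construction but a \emph{reduction} to the finite-alphabet case: given $a\in\Omega$, collapse $\Omega$ to $\{0,1\}$ via Theorem~\ref{contraction-Bs} (sending $a\mapsto 1$ and everything else to $0$), obtain a Martin-L\"of $Q$-random binary sequence for $Q(1)=P(a)$, and invoke Theorem~\ref{LLN}. Your approach supplies exactly the missing ingredient of that pipeline, and does so in a way that would also prove Theorem~\ref{LLN-Bs} directly without the contraction lemma, since nothing in your construction uses finiteness of the alphabet (the statistic $N_a(\sigma)$ and the threshold test remain decidable over an r.e.\ alphabet). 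So your route is both self-contained and slightly more general than the paper's reduction-plus-citation.
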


In order to prove Theorem~\ref{LLN-Bs}, we also need the following theorem.

\begin{theorem}\label{contraction-Bs}
Let $\Omega$ be an r.e.~infinite set, and let $P\in\PS(\Omega)$.
Let $A_1,\dots,A_L$ be r.e.~subsets of
$\Omega$
such that
$\Omega=\bigcup_{i=1}^L A_i$ and $A_i\cap A_j=\emptyset$ for every $i\neq j$.
Let $\Theta=\{a_1,\dots,a_L\}$ be
a finite
alphabet such that $a_i\neq a_j$ for every $i\neq j$.
Suppose that $\alpha$ is an ensemble for $P$.
Let $\beta$ be an infinite sequence over $\Theta$
obtained by replacing all occurrences of elements of $A_i$ in $\alpha$ by $a_i$ for each $i=1,\dots,L$.
Then $\beta$ is an ensemble for $Q$,
where $Q\in\PS(\Theta)$ such that
$Q(a_i):=P(A_i)$
for every $i=1,\dots,L$.
\end{theorem}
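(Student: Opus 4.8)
The plan is to argue by contraposition, pulling a Martin-L\"of $Q$-test for $\beta$ back to a Martin-L\"of $P$-test for $\alpha$ along the ``replacement'' map. Define $\varphi\colon\Omega\to\Theta$ by setting $\varphi(b):=a_i$ for the unique $i$ with $b\in A_i$; this is well defined because $\{A_1,\dots,A_L\}$ partitions $\Omega$, and it is computable on $\Omega$ because $\Omega$ and each $A_i$ are r.e.: given $b\in\Omega$, dovetail the enumerations of $A_1,\dots,A_L$ and output $a_i$ as soon as $b$ is listed in $A_i$. Extend $\varphi$ coordinatewise to $\varphi\colon\Omega^*\to\Theta^*$ and $\varphi\colon\Omega^\infty\to\Theta^\infty$. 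Then $\varphi(\rest{\alpha}{m})=\rest{\varphi(\alpha)}{m}$ for every $m$, so $\beta=\varphi(\alpha)$. Also $Q\in\PS(\Theta)$, since $Q(a_i)=P(A_i)\ge 0$ and $\sum_{i=1}^{L}Q(a_i)=\sum_{i=1}^{L}P(A_i)=P(\Omega)=1$.

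Now suppose, towards a contradiction, that $\beta$ is not an ensemble for $Q$. Then there is a Martin-L\"of $Q$-test $\mathcal{D}\subset\N^+\times\Theta^*$ with $\beta\in\osg{\mathcal{D}_n}$ for every $n\in\N^+$. I would put $\mathcal{C}:=\{(n,\sigma)\in\N^+\times\Omega^*\mid\varphi(\sigma)\in\mathcal{D}_n\}$ and claim that $\mathcal{C}$ is a Martin-L\"of $P$-test. It is r.e.: enumerate $\mathcal{D}$, and for each pair $(n,\tau)$ produced, enumerate the (possibly infinitely many) strings $\sigma$ over $\Omega$ with $\varphi(\sigma)=\tau$, using the enumeration of $\Omega$ and the computability of $\varphi$ on $\Omega$. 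Each $\mathcal{C}_n=\varphi^{-1}(\mathcal{D}_n)$ is prefix-free, since $\varphi$ is length-preserving and coordinatewise: if $\sigma$ were a proper prefix of $\tau$ with both in $\mathcal{C}_n$, then $\varphi(\sigma)$ would be a proper prefix of $\varphi(\tau)$ with both in $\mathcal{D}_n$, contradicting prefix-freeness of $\mathcal{D}_n$. (Alternatively, one may drop this bookkeeping and appeal to Theorem~\ref{eliminate-prefix-freeness}.)

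The heart of the matter is the measure estimate. For a fixed $\tau=a_{i_1}\cdots a_{i_k}\in\Theta^*$, the strings $\sigma$ over $\Omega$ with $\varphi(\sigma)=\tau$ are exactly those of the form $\sigma=b_1\cdots b_k$ with $b_j\in A_{i_j}$, so using the product form of $P(\sigma)$ and Tonelli's theorem for nonnegative series,
\[
  \sum_{\varphi(\sigma)=\tau}P(\sigma)
  =\prod_{j=1}^{k}\Bigl(\sum_{b\in A_{i_j}}P(b)\Bigr)
  =\prod_{j=1}^{k}P(A_{i_j})
  =\prod_{j=1}^{k}Q(a_{i_j})
  =Q(\tau).
\]
Summing over $\tau\in\mathcal{D}_n$ and using prefix-freeness of $\mathcal{D}_n$ gives
$\Bm{P}{\osg{\mathcal{C}_n}}=P(\mathcal{C}_n)=\sum_{\tau\in\mathcal{D}_n}Q(\tau)=\Bm{Q}{\osg{\mathcal{D}_n}}<2^{-n}$; in particular all the relevant sums converge. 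Hence $\mathcal{C}$ is a Martin-L\"of $P$-test. Finally, $\alpha\in\osg{\mathcal{C}_n}$ iff $\rest{\alpha}{m}\in\mathcal{C}_n$ for some $m$, iff $\rest{\beta}{m}=\varphi(\rest{\alpha}{m})\in\mathcal{D}_n$ for some $m$, iff $\beta\in\osg{\mathcal{D}_n}$; since this holds for every $n\in\N^+$, the sequence $\alpha$ fails $\mathcal{C}$, contradicting that $\alpha$ is an ensemble for $P$. Therefore $\beta$ is an ensemble for $Q$.

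I expect the main obstacle to be the verification that $\mathcal{C}$ is genuinely r.e.\ — this is precisely the step that consumes the hypotheses that $\Omega$ and the blocks $A_i$ are r.e.\ and that they partition $\Omega$ (so that $\varphi$ is total and computable on $\Omega$) — together with the care needed in the measure computation, since $\mathcal{C}_n$ may be an \emph{infinite} prefix-free set and the interchange/factorization of the nonnegative sums defining $P(\mathcal{C}_n)$ must be justified.
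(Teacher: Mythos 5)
Your proposal is correct and follows essentially the same route as the paper: contraposition, pulling the Martin-L\"of $Q$-test back along the replacement map (your $\mathcal{C}_n=\varphi^{-1}(\mathcal{D}_n)$ is exactly the paper's $\mathcal{S}_n=\bigcup_{\tau\in\mathcal{T}_n}f(\tau)$), with the same key identity $\sum_{\varphi(\sigma)=\tau}P(\sigma)=Q(\tau)$ and the same r.e./prefix-freeness checks. You merely spell out the Tonelli-type factorization that the paper leaves implicit.
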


\begin{proof}
We show the contraposition. Suppose that $\beta$ is not Martin-L\"of $Q$-random.
Then there exists a Martin-L\"of $Q$-test $\mathcal{T}\subset\N^+\times\Theta^*$ such that
\begin{equation}\label{betainosgmTn_LemmaLLN}
  \beta\in\osg{\mathcal{T}_n}
\end{equation}
for every $n\in\N^+$.
For each $\tau\in\Theta^*$, let $f(\tau)$ be the set of all $\sigma\in\Omega^*$ such that,
when replacing all occurrences of elements of $A_i$ in $\sigma$ by $a_i$ for each $i=1,\dots,L$,
the resulting finite string equals to $\tau$.
Then, since $Q(a_i)=\sum_{a\in A_i}P(a)$ for every $i=1,\dots,L$, we have that
\begin{equation}\label{BmQ=Q=P=BmP_LemmaLLN}
  \Bm{Q}{\osg{\tau}}=Q(\tau)=P(f(\tau))=\Bm{P}{\osg{f(\tau)}}
\end{equation}
for each $\tau\in\Theta^*$.
We then define $\mathcal{S}$ to be a subset of $\N^+\times \Omega^*$ such that
$\mathcal{S}_n=\bigcup_{\tau\in\mathcal{T}_n} f(\tau)$ for every $n\in\N^+$.
Since $\mathcal{T}_n$ is a prefix-free subset of $\Theta^*$ for every $n\in\N^+$,
we see that $\mathcal{S}_n$ is a prefix-free subset of $\Omega^*$ for every $n\in\N^+$.
For each $n\in\N^+$, we also see that
\[
  \Bm{P}{\osg{\mathcal{S}_n}}
  \le\sum_{\tau\in\mathcal{T}_n}\Bm{P}{\osg{f(\tau)}}
  =\sum_{\tau\in\mathcal{T}_n}\Bm{Q}{\osg{\tau}}
  =\Bm{Q}{\osg{\mathcal{T}_n}}<2^{-n},
\]
where the first equality follows from \eqref{BmQ=Q=P=BmP_LemmaLLN} and
the second equality follows from the prefix-freeness of $\mathcal{T}_n$.
Moreover, since all of $A_1,\dots,A_L$, and $\mathcal{T}$ are r.e., $\mathcal{S}$ is also r.e.
Thus, $\mathcal{S}$ is a Martin-L\"of $P$-test.

On the other hand,
note that, for every $n\in\N^+$, if $\beta\in\osg{\mathcal{T}_n}$ then $\alpha\in\osg{\mathcal{S}_n}$.
Thus, it follows from \eqref{betainosgmTn_LemmaLLN}
that $\alpha\in\osg{\mathcal{S}_n}$ for every $n\in\N^+$.
Hence, $\alpha$ is not Martin-L\"of $P$-random.
This completes the proof.
\end{proof}

Theorem~\ref{LLN-Bs} is then proved as follows.

\begin{proof}[Proof of Theorem~\ref{LLN-Bs}]
Let $a\in\Omega$.
We define $Q\in\PS(\{0,1\})$ by the condition that $Q(1)=P(a)$ and $Q(0)=1-P(a)$.
Let $\beta$ be the infinite binary sequence obtained from $\alpha$
by replacing all $a$ by $1$ and all other elements of $\Omega$ by $0$ in $\alpha$.
Note that $\Omega\setminus\{a\}$ is r.e., since $\Omega$ is r.e.
Thus, since $Q(0)=\sum_{x\in\Omega\setminus\{a\}}P(x)$,
it follows from Theorem~\ref{contraction-Bs} that $\beta$ is Martin-L\"of $Q$-random.
On the other hand, obviously, we have that
$N_1(\rest{\beta}{n})=N_a(\rest{\alpha}{n})$ for every $n\in\N^+$.
Thus, using Theorem~\ref{LLN} we have that
\begin{equation*}
  \lim_{n\to\infty}\frac{N_a(\rest{\alpha}{n})}{n}=\lim_{n\to\infty}\frac{N_1(\rest{\beta}{n})}{n}=Q(1)=P(a).
\end{equation*}
This completes the proof.
\end{proof}

The following is immediate from Theorem~\ref{LLN-Bs}.

\begin{corollary}\label{uniquness}
Let $\Omega$ be an r.e.~infinite set, and let $P,Q\in\PS(\Omega)$.
If there exists $\alpha\in\Omega^\infty$ which is both
an ensemble for $P$ and an ensemble for $Q$,
then $P=Q$.
\qed
\end{corollary}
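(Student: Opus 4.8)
The plan is to read off the conclusion directly from the law of large numbers, Theorem~\ref{LLN-Bs}, using only the uniqueness of limits of real sequences. Suppose $\alpha\in\Omega^\infty$ is simultaneously an ensemble for $P$ and an ensemble for $Q$. I would fix an arbitrary $a\in\Omega$ and apply Theorem~\ref{LLN-Bs} twice to the single sequence $\alpha$: once with respect to $P$, which gives
\begin{equation*}
  \lim_{n\to\infty}\frac{N_a(\rest{\alpha}{n})}{n}=P(a),
\end{equation*}
and once with respect to $Q$, which gives
\begin{equation*}
  \lim_{n\to\infty}\frac{N_a(\rest{\alpha}{n})}{n}=Q(a).
\end{equation*}
The left-hand sides are the same numerical sequence, depending only on $\alpha$ and $a$, so its limit is unique; hence $P(a)=Q(a)$.

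Since $a\in\Omega$ was arbitrary, $P$ and $Q$ agree on every element of $\Omega$, i.e.\ $P=Q$ as functions $\Omega\to\R$, which is the desired conclusion by Definition~\ref{def-DPS}.

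There is essentially no obstacle here: all of the work has already been done in establishing Theorem~\ref{LLN-Bs} (which itself relies on Theorem~\ref{contraction-Bs} and the finite-alphabet law of large numbers, Theorem~\ref{LLN}), and in particular the fact that the limit defining the relative frequency of $a$ exists for an ensemble. The only thing being used beyond Theorem~\ref{LLN-Bs} is the elementary fact that a convergent sequence of reals has a unique limit, so the corollary is indeed immediate.
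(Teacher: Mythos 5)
Your proof is correct and is exactly the argument the paper intends: the corollary is stated as immediate from Theorem~\ref{LLN-Bs}, and your application of the law of large numbers to the common ensemble $\alpha$ with respect to both $P$ and $Q$, followed by uniqueness of limits, is precisely that one-line deduction made explicit.
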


%%%%%%%%%%%%%%%%%%%%%%%%%%%%%%%%%%%%%%%%%%%%%%%%%%%%%%%
\subsection{Computable shuffling}

This subsection considers the third necessary condition which
the notion of probability for a discrete probability space is considered to have to satisfy.

Let $P\in\PS(\Omega)$.
Assume that an observer $A$ performs an infinite reputation of trials described by
the discrete probability space $P$,
and thus is generating an infinite sequence $\alpha\in\Omega^\infty$ of outcomes of
the trials as
$$\alpha=a_1 a_2 a_3 a_4 a_5 a_6 a_7 a_8 \dotsc\dotsc$$
with $a_i\in\Omega$. 
According to our thesis, Thesis~\ref{thesis-Bs},
$\alpha$ is an ensemble for $P$.
Consider another observer $B$ who wants to adopt the following subsequence $\beta$ of $\alpha$
as the outcomes of
the trials:
$$\beta=a_2 a_3 a_5 a_7 a_{11} a_{13} a_{17} \dotsc\dotsc,$$
where the observer $B$ only takes into account
the $n$th elements $a_n$ in the original sequence $\alpha$
such that $n$ is a prime number.
According to Thesis~\ref{thesis-Bs},
$\beta$ has to be an ensemble for $P$, as well.
However, is this true?

Consider this problem in a general setting.
Assume as before that an observer $A$ performs an infinite reputation of trials described
by the discrete probability space $P$,
and thus is generating an infinite sequence $\alpha\in\Omega^\infty$ of outcomes of
the trials.
According to Thesis~\ref{thesis-Bs},
$\alpha$ is an ensemble for $P$.
Now, let $f\colon\N^+\to\N^+$ be an injection.
Consider another observer $B$ who wants to adopt the following sequence $\beta$
as the outcomes of
the trials:
$$\beta=\alpha(f(1))\alpha(f(2))\alpha(f(3))\alpha(f(4))\alpha(f(5))\dotsc\dotsc$$
instead of $\alpha$.
According to Thesis~\ref{thesis-Bs}, $\beta$ has to be an ensemble for $P$, as well.
However, is this true?

We can confirm this \emph{by restricting the ability of $B$}, that is, by assuming that
every observer can select elements from the original sequence $\alpha$
\emph{only in an effective manner}.
This means that the function $f\colon\N^+\to\N^+$ has to be a computable function.
Theorem~\ref{cpucs-Bs} below shows this result.

In other words, Theorem~\ref{cpucs-Bs} states that ensembles for $P$ are
\emph{closed under computable shuffling}.

\begin{theorem}[Closure property under computable shuffling%
]\label{cpucs-Bs}
Let $\Omega$ be an r.e.~infinite set, and let $P\in\PS(\Omega)$.
Suppose that $\alpha$ is an ensemble for $P$.
Then, for every injective function $f\colon\N^+\to\N^+$, if $f$ is computable then the infinite sequence
\begin{equation*}
  \alpha_f:=\alpha(f(1)) \alpha(f(2)) \alpha(f(3)) \alpha(f(4)) \dotsc\dotsc\dotsc
\end{equation*}
is an ensemble for $P$.
\end{theorem}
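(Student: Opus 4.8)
The plan is to argue by contraposition, mirroring the standard proof that Martin-L\"of randomness is preserved under a computable selection of positions, but with care taken for the infinite alphabet. So I would assume that $\alpha_f$ is \emph{not} Martin-L\"of $P$-random and fix a Martin-L\"of $P$-test $\mathcal{C}\subset\N^+\times\Omega^*$ with $\alpha_f\in\osg{\mathcal{C}_n}$ for every $n\in\N^+$; the goal is then to manufacture from $\mathcal{C}$ a Martin-L\"of $P$-test that $\alpha$ itself fails.

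The device I would use is a ``pull-back'' map $g$ turning a constraint on a prefix of $\alpha_f$ into a constraint on a prefix of $\alpha$. For $\tau\in\Omega^*$ set $N_\tau:=\max\{f(1),\dots,f(\abs{\tau})\}$ (with $N_\lambda:=0$), which is computable from $\tau$ since $f$ is computable, and let
\begin{equation*}
  g(\tau):=\{\sigma\in\Omega^{N_\tau}\mid \sigma(f(i))=\tau(i)\text{ for }i=1,\dots,\abs{\tau}\}.
\end{equation*}
Because $f$ is injective, the positions $f(1),\dots,f(\abs{\tau})$ are distinct and all at most $N_\tau$, so $\osg{g(\tau)}$ is exactly the set of $\beta\in\Omega^\infty$ with $\beta(f(i))=\tau(i)$ for $i\le\abs{\tau}$. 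I would then record two facts. First, by factoring the sum $P(g(\tau))=\sum_{\sigma\in g(\tau)}P(\sigma)$ into the contribution of the $\abs{\tau}$ already-fixed coordinates and that of the remaining free ones, and using $\sum_{a\in\Omega}P(a)=1$, one gets $P(g(\tau))=P(\tau)$; since $g(\tau)$ is prefix-free (all its strings have the same length), this yields $\Bm{P}{\osg{g(\tau)}}=P(g(\tau))=P(\tau)=\Bm{P}{\osg{\tau}}$ via \eqref{pBm-Bs}. Second, if $\tau,\tau'$ are incomparable under the prefix order, then $g(\tau)\cap g(\tau')=\emptyset$ and $\osg{g(\tau)}\cap\osg{g(\tau')}=\emptyset$.

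Next I would define $\mathcal{D}\subset\N^+\times\Omega^*$ by $\mathcal{D}_n:=\bigcup_{\tau\in\mathcal{C}_n}g(\tau)$. Since $\mathcal{C}$ is r.e., $f$ is computable, and $\Omega$ is r.e., the set $g(\tau)$ is r.e.\ uniformly in $\tau$ (enumerate tuples over $\Omega$ of length $N_\tau$ and keep those satisfying the finitely many coordinate equalities), so $\mathcal{D}$ is r.e. By countable subadditivity of $\lambda_P$, the prefix-freeness of each $\mathcal{C}_n$, and the first fact above,
\begin{equation*}
  \Bm{P}{\osg{\mathcal{D}_n}}\le\sum_{\tau\in\mathcal{C}_n}\Bm{P}{\osg{g(\tau)}}=\sum_{\tau\in\mathcal{C}_n}P(\tau)=\Bm{P}{\osg{\mathcal{C}_n}}<2^{-n}
\end{equation*}
for every $n\in\N^+$. (Using the second fact together with the prefix-freeness of $\mathcal{C}_n$ one can also check that each $\mathcal{D}_n$ is itself prefix-free, so that $\mathcal{D}$ is genuinely a Martin-L\"of $P$-test; alternatively, and more cheaply, I would just invoke Theorem~\ref{ML_P-randomness_eliminated-prefix-freeness}, for which it suffices that $\mathcal{D}$ is r.e.\ with $\Bm{P}{\osg{\mathcal{D}_n}}<2^{-n}$.) Finally, for each $n$, since $\alpha_f\in\osg{\mathcal{C}_n}$ there is an $m$ with $\tau:=\rest{\alpha_f}{m}\in\mathcal{C}_n$, and then $\rest{\alpha}{N_\tau}\in g(\tau)\subset\mathcal{D}_n$, because the $f(i)$th element of $\rest{\alpha}{N_\tau}$ is $\alpha(f(i))=\alpha_f(i)=\tau(i)$ for $i\le m$; hence $\alpha\in\osg{\mathcal{D}_n}$. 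As $n$ is arbitrary, this contradicts the Martin-L\"of $P$-randomness of $\alpha$, which completes the contraposition.

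The step I expect to be the main obstacle is the one genuinely new feature relative to the finite-alphabet argument: $g(\tau)$ is an \emph{infinite} set of strings, so the identity $P(g(\tau))=P(\tau)$ must be justified as a rearrangement and factorization of a non-negative (convergent) double series rather than as a finite computation, and the recursive enumerability of $\mathcal{D}$ must be deduced from the mere recursive enumerability of $\Omega$ rather than from decidability of membership in $\Omega$. Everything else — the definition of $g$, the measure bound, and the covering argument showing $\alpha$ fails $\mathcal{D}$ — is routine bookkeeping.
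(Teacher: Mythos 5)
Your proposal is correct and follows essentially the same route as the paper's own proof: the same contraposition, the same pull-back map (your $g(\tau)$ is exactly the paper's $F(\sigma)$, with $\abs{}$ of its members equal to $\max f(\{1,\dots,\abs{\tau}\})$), the same identity $\Bm{P}{\osg{g(\tau)}}=\Bm{P}{\osg{\tau}}$ from injectivity of $f$ and $\sum_{a\in\Omega}P(a)=1$, and the same test $\mathcal{D}_n=\bigcup_{\tau\in\mathcal{C}_n}g(\tau)$ with the same measure bound and covering argument. The only cosmetic difference is that the paper verifies prefix-freeness of $\mathcal{D}_n$ directly (after noting $\lambda\notin\mathcal{C}_n$) rather than appealing to Theorem~\ref{ML_P-randomness_eliminated-prefix-freeness}.
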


\begin{proof}
We show the contraposition.
Suppose that $\alpha_f$ is not Martin-L\"of $P$-random.
Then there exists a Martin-L\"of $P$-test $\mathcal{C}\subset\N^+\times \Omega^*$ such that
\begin{equation}\label{afinosgmCn_CS}
  \alpha_f\in\osg{\mathcal{C}_n}
\end{equation}
for every $n\in\N^+$.
For each $\sigma\in\Omega^+$, let $F(\sigma)$ be the set of all $\tau\in\Omega^+$ such that
\begin{enumerate}
  \item $\abs{\tau}=\max f(\{1,2,\dots,\abs{\sigma}\})$, and
  \item for every $k=1,2,\dots,\abs{\sigma}$ it holds that $\sigma(k)=\tau(f(k))$.
\end{enumerate}
Then, since $f$ is an injection and $\sum_{a\in\Omega}P(a)=1$, we have that
\begin{equation}\label{BPoFslePFs=Ps=BPos_CS}
  \Bm{P}{\osg{F(\sigma)}}=P(F(\sigma))=P(\sigma)=\Bm{P}{\osg{\sigma}}
\end{equation}
for each $\sigma\in\Omega^+$.
We then define $\mathcal{D}$ to be a subset of $\N^+\times \Omega^*$ such that
$\mathcal{D}_n=\bigcup_{\sigma\in\mathcal{C}_n} F(\sigma)$ for every $n\in\N^+$.
Note here that, for each $n\in\N^+$,
$\lambda\notin\mathcal{C}_n$ since $\Bm{P}{\osg{\mathcal{C}_n}}<2^{-n}<1$.
Then, since $\mathcal{C}_n$ is a prefix-free subset of $\Omega^*$ for every $n\in\N^+$,
we see that $\mathcal{D}_n$ is also a prefix-free subset of $\Omega^*$ for every $n\in\N^+$.
For each $n\in\N^+$, we see that
\[
  \Bm{P}{\osg{\mathcal{D}_n}}
  \le\sum_{\sigma\in\mathcal{C}_n}\Bm{P}{\osg{F(\sigma)}}
  =\sum_{\sigma\in\mathcal{C}_n}\Bm{P}{\osg{\sigma}}
  =\Bm{P}{\osg{\mathcal{C}_n}}<2^{-n},
\]
where the first equality follows from \eqref{BPoFslePFs=Ps=BPos_CS} and
the second equality follows from the prefix-freeness of $\mathcal{C}_n$.
Moreover,
since $f$ is an injective computable function and both $\Omega$ and $\mathcal{C}$ are r.e.,
it is easy to see that $\mathcal{D}$ is r.e.
Thus, $\mathcal{D}$ is a Martin-L\"of $P$-test.

On the other hand, we see that,
for every $n\in\N^+$, if $\alpha_f\in\osg{\mathcal{C}_n}$ then $\alpha\in\osg{\mathcal{D}_n}$.
Thus, it follows from \eqref{afinosgmCn_CS} that
$\alpha\in\osg{\mathcal{D}_n}$ for every $n\in\N^+$.
Hence, $\alpha$ is not Martin-L\"of $P$-random.
This completes the proof.
\end{proof}

%%%%%%%%%%%%%%%%%%%%%%%%%%%%%%%%%%%%%%%%%%%%%%%%%%%%%%%
\subsection{Selection by
partial computable
generalized
selection functions}

As the forth necessary condition which
the notion of probability for a discrete probability space $P$ on $\Omega$ is considered to have to satisfy,
in this subsection
we consider the condition that infinite sequences over $\Omega$ of outcomes each of which
is obtained by an infinite reputation of the trials described by the discrete probability space $P$
are \emph{closed under the selection by a partial computable
generalized
selection function on $\Omega^*$},
which is a
\emph{generalization} of
the notion of partial computable selection function
used in the definition of \emph{von Mises-Wald-Church stochasticity}
over an \emph{r.e.~infinite alphabet}.
The notion of von Mises-Wald-Church stochasticity \emph{itself}
is investigated in the theory of collectives \cite{vM57,vM64,Wa36,Wa37,Ch40}.%
\footnote{%
See Downey and Hirschfeldt~\cite[Section 7.4]{DH10}
for a treatment of the mathematics of the notion of von Mises-Wald-Church stochasticity
\emph{itself}
from a modern point of view.}
For motivating the forth necessary condition,
we carry out a thought experiment
in what follows,
as in the preceding subsection.

Let $P\in\PS(\Omega)$,
and let us assume that an observer $A$ performs an infinite reputation of trials described by
the discrete probability space $P$,
and thus is generating an infinite sequence $\alpha\in\Omega^\infty$ of outcomes of
the trials as
$$\alpha=a_1 a_2 a_3 a_4 a_5 a_6 \dotsc\dotsc$$
with $a_i\in\Omega$.
According to Thesis~\ref{thesis-Bs},
$\alpha$ is an ensemble for $P$.

Consider another observer $B$ who wants to \emph{refute} Thesis~\ref{thesis-Bs}.
For that purpose, the observer $B$ adopts a subsequence
$\beta=b_1 b_2 b_3 b_4 \dotsc\dotsc$ with $b_i\in\Omega$ of $\alpha$ in the following manner:
Whenever a new outcome $a_n$ is generated by the observer $A$,
the observer $B$
investigates
the prefix $a_1 a_2 a_3 \dots a_n$ of $\alpha$ generated so far
by the observer $A$.
Then, based on the prefix, the observer $B$ decides whether the next outcome $a_{n+1}$ should be appended
to the tail of $b_1 b_2 b_3 \dots b_k$
which have been adopted so far by $B$ as a prefix of $\beta$.
In this manner the observer $B$ is generating  the subsequence $\beta$ of $\alpha$.
Note that the length of $\beta$ may or may not be infinite.

On the other hand, the observer $A$ is a \emph{defender} of Thesis~\ref{thesis-Bs}.
Therefore, the observer $A$ tries to inhibit the observer $B$ from breaking Thesis~\ref{thesis-Bs}.
For that purpose, the observer $A$ never generates
the next
outcome $a_{n+1}$
before the observer $B$ decides whether
this
$a_{n+1}$ should be appended
to the tail of $b_1 b_2 b_3 \dots b_k$.
This is because if for each $n$ the observer $B$ knows the outcome $a_{n+1}$
before the decision for $a_{n+1}$ to be appended or to be ignored,
then the observer $B$ can easily generate an infinite subsequence $\beta$ of $\alpha$
which does not satisfy Thesis~\ref{thesis-Bs}.
Thus, due to this careful behavior of the observer $A$,
the observer $B$ has to make the decision of the choice of the next outcome $a_{n+1}$,
based only on the prefix $a_1 a_2 a_3 \dots a_n$ of $\alpha$, without knowing the outcome $a_{n+1}$.
Then, according to Thesis~\ref{thesis-Bs},
$\beta$ has to be an ensemble for $P$, as well as $\alpha$.
However, is this true?

We can confirm this by restricting the ability of $B$, that is, by assuming that
the observer $B$ can make the decision of the choice of the next outcome,
\emph{only in an effective manner}
based on the prefix $a_1 a_2 a_3 \dots a_n$ of $\alpha$ generated so far by the observer $A$.

Put more mathematically,
we introduce some notations.
A \emph{generalized selection function}
is
a function $f$ such that
$\Dom f\subset\Omega^*$ and $f(\Dom f)\subset\{\mathrm{YES}, \mathrm{NO}\}$.
We think of $f$ as the decision of $B$ whether or not to choose the next outcome $\alpha(n+1)$
based on the prefix
$\rest{\alpha}{n}$
of $\alpha$ in generating $\beta$.
For any $\gamma\in\Omega^\infty$, $k\in\N^+$, and
generalized
selection function $g$,
let $s_g (\gamma, k)$ be the $k$th number $\ell\in\N$ such that $g(\rest{\gamma}{\ell})=\mathrm{YES}$,
i.e.,
the least number $\ell\in\N$ such that $\#\{m\le\ell\mid g(\rest{\gamma}{m})=\text{YES}\}=k$,
if such $\ell$ exists.

First, consider the case where $f(\rest{\alpha}{n})$ is not defined for some $n\in\N$.
Let $m$ be the least number of such $n$.
Then, this case means that
the observer $B$ does not make the decision of the choice of the next outcome $\alpha(m+1)$
based on the prefix $\rest{\alpha}{m}$, and is stalled.
Therefore, the length of $\beta$
remains
finite in this case.
Thus, the observer $B$ \emph{cannot refute} Thesis~\ref{thesis-Bs} in this case,
since Thesis~\ref{thesis-Bs} only refers to the property of an \emph{infinite} sequence of outcomes
which is being generated by \emph{infinitely} repeated trials.
Hence, Thesis~\ref{thesis-Bs} survives in this case.

Secondly, consider the case where
$f(\rest{\alpha}{n})$ is defined for all $n\in\N$ and
$\{n\in\N\mid f(\rest{\alpha}{n})=\mathrm{YES}\}$ is a finite set.
In this case, the length of $\beta$
remains also finite.
Thus, the observer $B$ does not refute Thesis~\ref{thesis-Bs}, and therefore
Thesis~\ref{thesis-Bs} survives also in this case.

Finally, consider the remaining case, where $f(\rest{\alpha}{n})$ is defined for all $n\in\N$ and
the set $\{n\in\N\mid f(\rest{\alpha}{n})=\mathrm{YES}\}$ is infinite.
Then, $s_f (\alpha, k)$ is defined and $\beta(k)=\alpha(s_f (\alpha, k)+1)$ for all $k\in\N^+$.
Hence, $\beta$ is an infinite sequence over $\Omega$, 
and thus Thesis~\ref{thesis-Bs} can be applied to $\beta$ in this case.
Therefore,
according to Thesis~\ref{thesis-Bs}, $\beta$ has to be an ensemble for $P$, as well as $\alpha$.
However, is this true?
Actually, we can confirm this \emph{by restricting the ability of $B$}, that is,
\emph{by assuming that
$f$ has to be a partial computable
generalized
selection function}.
Here, a \emph{partial computable
generalized
selection function} is a
generalized
selection function which is
a partial
recursive
function.
Theorem~\ref{cpuscsf-Bs} below shows this result.
It states that ensembles for an arbitrary discrete probability space are
\emph{closed under the selection by a partial computable
generalized
selection function}.
Hence, Thesis~\ref{thesis-Bs} survives in this case as well.

In this way, based on Theorem~\ref{cpuscsf-Bs}, we confirm that
the forth condition certainly holds for ensembles for an arbitrary discrete probability space.

\begin{theorem}[Closure property under the selection by a partial computable
generalized
selection function%
]\label{cpuscsf-Bs}
Let $\Omega$ be an r.e.~infinite set, and let $P\in\PS(\Omega)$.
Suppose that $\alpha$ is an ensemble for $P$.
Let $f $ be a partial computable
generalized
selection function with $\Dom f\subset\Omega^*$.
Suppose that $f(\rest{\alpha}{k})$ is defined for all $k\in\N$ and
$\{k\in\N\mid f(\rest{\alpha}{k})=\mathrm{YES}\}$ is an infinite set. Then
an infinite sequence $\beta$ such that $\beta(k)=\alpha(s_f (\alpha, k)+1)$ for all $k\in\N^+$
is an ensemble for $P$.
\end{theorem}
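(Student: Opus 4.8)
Following the pattern of the proofs of Theorem~\ref{contraction-Bs} and Theorem~\ref{cpucs-Bs}, the plan is to argue the contrapositive. Suppose $\beta$ is not Martin-L\"{o}f $P$-random. Then there is a Martin-L\"{o}f $P$-test $\mathcal{C}\subset\N^+\times\Omega^*$ with $\beta\in\osg{\mathcal{C}_n}$ for every $n\in\N^+$, and the goal is to build a Martin-L\"{o}f $P$-test $\mathcal{D}$ with $\alpha\in\osg{\mathcal{D}_n}$ for every $n\in\N^+$, which contradicts the assumption that $\alpha$ is an ensemble for $P$.

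The core device is a ``pullback'' of strings along the selection process. For each $\sigma\in\Omega^*$ I would let $G(\sigma)$ be the set of all $\tau\in\Omega^*$ such that (i) $f(\rest{\tau}{k})$ is defined for every $k$ with $0\le k<\abs{\tau}$, (ii) the subsequence of $\tau$ formed by those letters $\tau(k+1)$ with $f(\rest{\tau}{k})=\mathrm{YES}$ equals $\sigma$, and (iii) either $\tau=\lambda$ or $f(\rest{\tau}{\abs{\tau}-1})=\mathrm{YES}$; in particular $G(\lambda)=\{\lambda\}$. Each $G(\sigma)$ is prefix-free, because condition (iii) forces the last letter of any nonempty $\tau\in G(\sigma)$ to be the $\abs{\sigma}$-th selected letter, so no proper extension of $\tau$ can also lie in $G(\sigma)$. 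Since the selected subsequence of a prefix of $\tau$ is a prefix of the selected subsequence of $\tau$, it follows that if $\mathcal{C}_n$ is prefix-free then $\mathcal{D}_n:=\bigcup_{\sigma\in\mathcal{C}_n}G(\sigma)$ is prefix-free as well; alternatively one can skip checking prefix-freeness of $\mathcal{D}_n$ and appeal to Theorem~\ref{eliminate-prefix-freeness}.

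The quantitative heart of the argument is the estimate $\Bm{P}{\osg{G(\sigma)}}=P(G(\sigma))\le P(\sigma)=\Bm{P}{\osg{\sigma}}$, which I would prove by induction on $\abs{\sigma}$. The base case $\sigma=\lambda$ is trivial. For the inductive step write $\sigma=\sigma' a$ with $a\in\Omega$, and note that every $\tau\in G(\sigma)$ factors uniquely as $\tau=\tau'\rho a$, where $\tau'\in G(\sigma')$ and $\rho$ is the (possibly empty) block of letters skipped between the $\abs{\sigma'}$-th and the $\abs{\sigma}$-th selections; here $\rho$ ranges over the set $R(\tau')$ of all $\rho$ with $f(\tau'\reste{\rho}{i})=\mathrm{NO}$ for every $i<\abs{\rho}$ and $f(\tau'\rho)=\mathrm{YES}$. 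Each $R(\tau')$ is prefix-free, so Theorem~\ref{thm_rErlerr} applied to $P$ viewed as a measure representation gives $\sum_{\rho\in R(\tau')}P(\rho)\le P(\lambda)=1$; hence, using $\sum_{b\in\Omega}P(b)=1$,
\[
  P(G(\sigma))=\sum_{\tau'\in G(\sigma')}\sum_{\rho\in R(\tau')}P(\tau'\rho a)
  =P(a)\sum_{\tau'\in G(\sigma')}P(\tau')\sum_{\rho\in R(\tau')}P(\rho)
  \le P(a)\,P(G(\sigma'))\le P(a)\,P(\sigma')=P(\sigma).
\]
Consequently $\Bm{P}{\osg{\mathcal{D}_n}}\le\sum_{\sigma\in\mathcal{C}_n}\Bm{P}{\osg{G(\sigma)}}\le\sum_{\sigma\in\mathcal{C}_n}P(\sigma)=\Bm{P}{\osg{\mathcal{C}_n}}<2^{-n}$, using countable subadditivity of $\lambda_P$ and the prefix-freeness of $\mathcal{C}_n$.

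To see that $\mathcal{D}$ is r.e., one enumerates $G(\sigma)$ uniformly in $\sigma$ by dovetailing over candidates $\tau\in\Omega^*$ (possible since $\Omega$ is r.e.) and over time bounds, running $f$ on $\rest{\tau}{0},\dots,\rest{\tau}{\abs{\tau}-1}$ and outputting $\tau$ once all these computations halt and conditions (i)--(iii) are verified; combined with the r.e.-ness of $\mathcal{C}$ this makes $\mathcal{D}$ r.e., so $\mathcal{D}$ is a Martin-L\"{o}f $P$-test. Finally, if $\beta\in\osg{\mathcal{C}_n}$ let $\sigma$ be the prefix of $\beta$ with $\sigma\in\mathcal{C}_n$ and set $\ell:=s_f(\alpha,\abs{\sigma})$, which exists by the hypotheses that $f(\rest{\alpha}{k})$ is defined for all $k$ and that YES occurs infinitely often along $\alpha$; processing $\rest{\alpha}{\ell+1}$ through $f$ selects exactly $\beta(1)\cdots\beta(\abs{\sigma})=\sigma$ and ends with a selected letter, so $\rest{\alpha}{\ell+1}\in G(\sigma)\subset\mathcal{D}_n$ and hence $\alpha\in\osg{\mathcal{D}_n}$. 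As this holds for every $n\in\N^+$, $\alpha$ is not Martin-L\"{o}f $P$-random, the desired contradiction. I expect the main obstacle to be the inductive measure estimate: pinning down the decomposition $\tau=\tau'\rho a$ and, in particular, handling the partiality of $f$ carefully so that the skipped-letter sets $R(\tau')$ are honestly prefix-free and the dovetailed enumeration of $G(\sigma)$ is correct; the remaining steps are bookkeeping analogous to the earlier closure proofs.
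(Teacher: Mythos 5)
Your proposal is correct and follows essentially the same route as the paper's proof: the contrapositive via a pulled-back test, with your $G(\sigma)$ playing the role of the paper's $F(\sigma)$, your skipped-block sets $R(\tau')$ playing the role of the paper's auxiliary prefix-free sets $G(\tau)=\{\upsilon\mid\tau\upsilon a\in F(\sigma)\}$, and the same induction on $\abs{\sigma}$ using the unique factorization $\tau=\tau'\rho a$ together with $\sum_{\rho\in R(\tau')}P(\rho)\le 1$ to get $\Bm{P}{\osg{G(\sigma)}}\le\Bm{P}{\osg{\sigma}}$. The remaining steps (prefix-freeness, r.e.-ness by dovetailing, and the witness $\rest{\alpha}{s_f(\alpha,\abs{\sigma})+1}\in G(\sigma)$) also match the paper's argument.
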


\begin{proof}
We show the contraposition.
Suppose that $\beta$ is not Martin-L\"of $P$-random.
Then there exists a Martin-L\"of $P$-test $\mathcal{C}\subset\N^+\times \Omega^*$ such that
\begin{equation}\label{betainosgmathCn_CPUSPCF}
  \beta\in\osg{\mathcal{C}_n}
\end{equation}
for every $n\in\N^+$.
For
any
$\sigma,\tau\in\Omega^+$, we say that \emph{$\sigma$ is selected by $f$ from $\tau$} if
$f(\rest{\tau}{k})$ is defined for all $k=0,1,\dots,\abs{\tau}-1$ and
there exists a strictly increasing function $h\colon\{1,\dots,\abs{\sigma}\}\to\N$ such that
\begin{enumerate}
  \item $\{k\in\{1,\dots,\abs{\tau}\}\mid f(\rest{\tau}{k-1})=\mathrm{YES}\}=h(\{1,\dots,\abs{\sigma}\})$,
  \item $h(\abs{\sigma})=\abs{\tau}$, and
  \item $\tau(h(k))=\sigma(k)$ for all $k=1,\dots,\abs{\sigma}$.
\end{enumerate}
For each $\sigma\in\Omega^+$, let $F(\sigma)$ be the set of all $\tau\in\Omega^*$ such that
$\sigma$ is selected by $f$ from $\tau$.
We also set $F(\lambda):=\{\lambda\}$.
It is then easy to see that $F(\sigma)$ is a prefix-free
subset of $\Omega^*$
for every $\sigma\in\Omega^*$.

We show that
\begin{equation}\label{PFleP-Bs}
  \Bm{P}{\osg{F(\sigma)}}\le\Bm{P}{\osg{\sigma}}
\end{equation}
for all $\sigma\in\Omega^*$ by the induction on the length of $\abs{\sigma}$.
First, the inequality \eqref{PFleP-Bs} holds for the case of $\abs{\sigma}=0$, obviously.
For an arbitrary $n\in\N$, assume that \eqref{PFleP-Bs} holds for all $\sigma\in\Omega^n$.
Let $\sigma\in\Omega^{n+1}$.
We then denote the prefix of $\sigma$ of length $n$ by $\rho$, and denote $\sigma(\abs{\sigma})$ by $a$.
Therefore $\sigma=\rho a$.
Note that
$$G(\tau):=\{\upsilon\in\Omega^*\mid \tau\upsilon a\in F(\sigma)\}$$
is a prefix-free
subset of $\Omega^*$
for every $\tau\in\Omega^*$.
Therefore, we have that
\begin{equation}\label{lPGosgtle1}
  \sum_{\upsilon\in G(\tau)}\Bm{P}{\osg{\upsilon}}=\Bm{P}{\osg{G(\tau)}}\le 1
\end{equation}
for each $\tau\in\Omega^*$.
Thus,
for each $\sigma\in\Omega^*$,
we see that
\begin{align*}
  \Bm{P}{\osg{F(\sigma)}}
  &=\sum_{\nu\in F(\sigma)}\Bm{P}{\osg{\nu}}
  =\sum_{\tau\in F(\rho)}\sum_{\upsilon\in G(\tau)}\Bm{P}{\osg{\tau\upsilon a}} \\
  &=\sum_{\tau\in F(\rho)}\sum_{\upsilon\in G(\tau)}\Bm{P}{\osg{\tau}}\Bm{P}{\osg{\upsilon}}P(a) \\
  &\le\sum_{\tau\in F(\rho)}\Bm{P}{\osg{\tau}}P(a)
  =\Bm{P}{\osg{F(\rho)}}P(a) \\
  &\le\Bm{P}{\osg{\rho}}P(a)
  =\Bm{P}{\osg{\sigma}},
\end{align*}
where the second equality follows from the fact that the mapping
$$\{(\tau,\upsilon)\mid\tau\in F(\rho)\;\&\; \upsilon\in G(\tau)\}\ni(\tau,\upsilon)\mapsto
\tau \upsilon a\in F(\sigma)$$
is a bijection,
the first inequality follows from \eqref{lPGosgtle1},
and the second inequality follows from the assumption.
Therefore \eqref{PFleP-Bs} holds for all $\sigma\in\Omega^{n+1}$.
Hence, \eqref{PFleP-Bs} holds for all $\sigma\in\Omega^*$, as desired.

We then define $\mathcal{D}$ to be a subset of $\N^+\times \Omega^*$ such that
$\mathcal{D}_n=\bigcup_{\sigma\in \mathcal{C}_n}F(\sigma)$ for every $n\in\N^+$.
Since $\mathcal{C}_n$ is a prefix-free subset of $\Omega^*$ for every $n\in\N^+$,
we see that $\mathcal{D}_n$ is also a prefix-free subset of $\Omega^*$ for every $n\in\N^+$.
For each $n\in\N^+$, we see that
\begin{equation*}
  \Bm{P}{\osg{\mathcal{D}_n}}\le\sum_{\sigma\in \mathcal{C}_n}\Bm{P}{\osg{F(\sigma)}}
  \le\sum_{\sigma\in \mathcal{C}_n}\Bm{P}{\osg{\sigma}}
  =\Bm{P}{\osg{\mathcal{C}_n}}<2^{-n},
\end{equation*}
where the second inequality follows from \eqref{PFleP-Bs} and
the equality follows from the prefix-freeness of $\mathcal{C}_n$.
Moreover, since $\Omega$ and $\mathcal{C}$ are r.e.,
we see
that $\mathcal{D}$ is also r.e.
Thus, $\mathcal{D}$ is a Martin-L\"of $P$-test.

On the other hand, we see that,
for every $n\in\N^+$, if $\beta\in\osg{\mathcal{C}_n}$ then $\alpha\in\osg{\mathcal{D}_n}$.
Thus, it follows from \eqref{betainosgmathCn_CPUSPCF} that
$\alpha\in\osg{\mathcal{D}_n}$ for every $n\in\N^+$.
Hence, $\alpha$ is not Martin-L\"of $P$-random.
This completes the proof.
\end{proof}

Theorem~\ref{cpucs-Bs} and Theorem~\ref{cpuscsf-Bs} show that
certain closure properties hold for ensembles for an arbitrary discrete probability space.
In the subsequent sections,
we will see that various strong closure properties of another type hold for the ensembles.

%%%%%%%%%%%%%%%%%%%%%%%%%%%%%%%%%%%%%%%%%%%%%%%%%%%%%%%%%%%%%%%%%%%%%%%%%%%
\section{Conditional probability and the independence between two events}
\label{CPITW}

In this section we operationally characterize the notions of \emph{conditional probability} and
the \emph{independence between two events} on a discrete probability space,
in terms of ensembles.

Let $\Omega$ be a countable alphabet, and let $P\in\PS(\Omega)$.
Let $A\subset\Omega$ be
an
event on the discrete probability space $P$.
For each ensemble $\alpha$ for $P$,
we use $\chara{A}{\alpha}$ to denote the infinite binary sequence such that,
for every $n\in\N^+$,
its $n$th element $(\chara{A}{\alpha})(n)$ is $1$ if $\alpha(n)\in A$ and $0$ otherwise.
The pair $(P,A)$ induces a finite probability space $\charaps{P}{A}\in\PS(\{0,1\})$ such that
$(\charaps{P}{A})(1)=P(A)$ and $(\charaps{P}{A})(0)=1-P(A)$.
Note that the notions of $\chara{A}{\alpha}$ and $\charaps{P}{A}$ in our theory together correspond to
the notion of \emph{mixing} in the theory of collectives by von Mises \cite{vM64}.
We can then show the following theorem.

\begin{theorem}\label{charaA-Bs}
Let $\Omega$ be an r.e.~infinite set, and let $P\in\PS(\Omega)$.
Let $A\subset\Omega$ be a recursive event on the discrete probability space $P$.
Suppose that $\alpha$ is an ensemble for the discrete probability space $P$.
Then $\chara{A}{\alpha}$ is an ensemble for the finite probability space $\charaps{P}{A}$.
\end{theorem}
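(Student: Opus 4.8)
The plan is to prove the contrapositive, closely following the template of Theorems~\ref{contraction-Bs}, \ref{cpucs-Bs}, and \ref{cpuscsf-Bs}: assume $\chara{A}{\alpha}$ is not Martin-L\"of $\charaps{P}{A}$-random, fix a Martin-L\"of $\charaps{P}{A}$-test $\mathcal{T}\subset\N^+\times\{0,1\}^*$ that covers it at every level, and manufacture from it a Martin-L\"of $P$-test $\mathcal{S}\subset\N^+\times\Omega^*$ that covers $\alpha$ at every level, contradicting that $\alpha$ is an ensemble for $P$.

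First I would set up the string-level translation. For $\tau\in\{0,1\}^*$, let $f(\tau)$ be the set of all $\sigma\in\Omega^*$ with $\abs{\sigma}=\abs{\tau}$ such that for every $k\le\abs{\tau}$ we have $\sigma(k)\in A$ iff $\tau(k)=1$; equivalently $f(\tau)$ is the set of ``$\Omega$-words whose $A$-pattern is $\tau$''. The key measure identity is
\begin{equation*}
  \Bm{P}{\osg{f(\tau)}}=P(f(\tau))=(\charaps{P}{A})(\tau)=\Bm{\charaps{P}{A}}{\osg{\tau}},
\end{equation*}
which reduces, factor by factor over the positions of $\tau$, to the two equalities $\sum_{a\in A}P(a)=P(A)=(\charaps{P}{A})(1)$ and $\sum_{a\in\Omega\setminus A}P(a)=1-P(A)=(\charaps{P}{A})(0)$; the first is the definition of $P(A)$ and the second uses $\sum_{a\in\Omega}P(a)=1$. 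One must also note that for a prefix-free $\mathcal{T}_n\subset\{0,1\}^*$ the family $\{f(\tau)\}_{\tau\in\mathcal{T}_n}$ consists of pairwise-disjoint prefix-free sets with $\bigcup_\tau\osg{f(\tau)}$ still prefix-free-representable, so that defining $\mathcal{S}_n:=\bigcup_{\tau\in\mathcal{T}_n}f(\tau)$ gives a prefix-free subset of $\Omega^*$ with
\begin{equation*}
  \Bm{P}{\osg{\mathcal{S}_n}}\le\sum_{\tau\in\mathcal{T}_n}\Bm{P}{\osg{f(\tau)}}
  =\sum_{\tau\in\mathcal{T}_n}\Bm{\charaps{P}{A}}{\osg{\tau}}
  =\Bm{\charaps{P}{A}}{\osg{\mathcal{T}_n}}<2^{-n}.
\end{equation*}

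Next comes the recursive-enumerability check, which is the one place the hypothesis that $A$ is \emph{recursive} (not merely r.e.) is used: given $(n,\tau)\in\mathcal{T}$ and a candidate $\sigma$ of length $\abs{\tau}$, deciding whether $\sigma\in f(\tau)$ requires testing each $\sigma(k)$ for membership in $A$, and since $A$ is recursive and $\Omega$ is r.e.\ this is effective; hence $\mathcal{S}$ is r.e.\ and is therefore a Martin-L\"of $P$-test. Finally, the transfer of the covering: if $\chara{A}{\alpha}\in\osg{\mathcal{T}_n}$, then some prefix $\rest{\chara{A}{\alpha}}{m}$ lies in $\mathcal{T}_n$, and by construction $\rest{\alpha}{m}\in f(\rest{\chara{A}{\alpha}}{m})\subset\mathcal{S}_n$, so $\alpha\in\osg{\mathcal{S}_n}$; doing this for every $n$ shows $\alpha$ fails the test $\mathcal{S}$, so $\alpha$ is not an ensemble for $P$, completing the contrapositive.

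The routine part is the measure bookkeeping; the main obstacle — really the only subtle point — is making sure the effectivity argument genuinely needs and uses recursiveness of $A$, and handling the degenerate cases $P(A)=0$ or $P(A)=1$ (so that $\charaps{P}{A}$ assigns probability zero to one letter) uniformly, which the argument above does, since the measure identities and the test bound hold regardless. I would also remark in passing that, just as in Theorem~\ref{contraction-Bs}, this is an instance of the general ``coarsening'' principle: $\chara{A}{\cdot}$ merges the blocks $A$ and $\Omega\setminus A$ into the two letters $1$ and $0$.
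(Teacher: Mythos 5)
Your proof is correct and is in substance the same argument the paper uses: the paper simply notes that $\chara{A}{\alpha}$ arises from $\alpha$ by merging the two r.e.\ blocks $A$ and $\Omega\setminus A$ into the letters $1$ and $0$ and invokes Theorem~\ref{contraction-Bs} directly, whereas you have inlined the proof of that theorem in the special case $L=2$. All the details you supply (the product-measure identity for $f(\tau)$, the prefix-freeness and recursive enumerability of $\mathcal{S}$, and the use of recursiveness of $A$ to make both $A$ and $\Omega\setminus A$ r.e.) check out.
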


\begin{proof}
We show the result using Theorem~\ref{contraction-Bs}.
First, since $\Omega$ is r.e.~and $A$ is a recursive subset of $\Omega$,
both $\Omega\setminus A$ and $A$ are r.e.
Obviously, we have $\Omega=(\Omega\setminus A)\cup A$ and
$(\Omega\setminus A)\cap A=\emptyset$.
Note that
$\chara{A}{\alpha}$ is an infinite sequence over $\{0,1\}$
obtained both by replacing all occurrences of elements of $\Omega\setminus A$ in $\alpha$ by $0$ and
by replacing all occurrences of elements of $A$ in $\alpha$ by $1$.
Note, moreover,
that $\charaps{P}{A}$ is a finite probability space on $\{0,1\}$ such that
$(\charaps{P}{A})(0)=P(\Omega\setminus A)$ and $(\charaps{P}{A})(1)=P(A)$.
Thus,
it follows from Theorem~\ref{contraction-Bs} that
$\chara{A}{\alpha}$ is an ensemble for $\charaps{P}{A}$. 
\end{proof}

We show that the notion of conditional probability in a discrete probability space can be
represented by an ensemble in a natural manner.
For that purpose, first we recall the notion of conditional probability in a discrete probability space.

Let $\Omega$ be a countable alphabet, and let $P\in\PS(\Omega)$.
Let $B\subset\Omega$ be an event on the discrete probability space $P$.
Suppose that $P(B)>0$.
Then, for each event $A\subset\Omega$,
the \emph{conditional probability of A given B}, denoted
$P(A|B)$, is defined as $P(A\cap B)/P(B)$.
This notion defines a finite
or discrete
probability space $P_B\in\PS(B)$ such that
$P_B(a)=P(\{a\}|B)$ for every $a\in B$.

When an infinite sequence $\alpha\in\Omega^\infty$ contains infinitely many elements from $B$,
$\cond{B}{\alpha}$ is defined as an infinite sequence in $B^\infty$ obtained from $\alpha$
by eliminating all elements
of
$\Omega\setminus B$ occurring in $\alpha$.
If $\alpha$ is an ensemble for the discrete probability space $P$ and $P(B)>0$,
then $\alpha$ contains infinitely many elements from $B$ due to Theorem~\ref{LLN-Bs}.
Therefore, $\cond{B}{\alpha}$ is
properly
defined in this case.
Note that the notion of $\cond{B}{\alpha}$ in our theory corresponds to
the notion of \emph{partition} in the theory of collectives by von Mises \cite{vM64}.

We can then show Theorem~\ref{conditional_probability-Bs} below, which states that ensembles are
\emph{closed under conditioning}.

\begin{theorem}[Closure property under conditioning]\label{conditional_probability-Bs}
Let $\Omega$ be an r.e.~infinite set, and let $P\in\PS(\Omega)$.
Let $B\subset\Omega$ be a recursive event on the discrete probability space $P$ with $P(B)>0$.
For every ensemble $\alpha$ for $P$,
it holds that $\cond{B}{\alpha}$ is an ensemble for
$P_B$.
\end{theorem}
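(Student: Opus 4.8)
The plan is to mimic the contrapositive arguments of Theorems~\ref{contraction-Bs} and \ref{cpuscsf-Bs}, transferring a Martin-L\"of $P_B$-test on $B^\infty$ back to a Martin-L\"of $P$-test on $\Omega^\infty$ along the ``deletion'' map $\alpha\mapsto\cond{B}{\alpha}$. So suppose $\cond{B}{\alpha}$ (which is well-defined and infinite, since $P(B)>0$ forces $\alpha$ to contain infinitely many elements of $B$ by Theorem~\ref{LLN-Bs}) is not Martin-L\"of $P_B$-random, and fix a Martin-L\"of $P_B$-test $\mathcal{C}\subset\N^+\times B^*$ with $\cond{B}{\alpha}\in\osg{\mathcal{C}_n}$ for every $n\in\N^+$. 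Since $\Bm{P_B}{\osg{\mathcal{C}_n}}<2^{-n}<1=\Bm{P_B}{B^\infty}$, we have $\lambda\notin\mathcal{C}_n$, so every string in $\mathcal{C}_n$ lies in $B^+$.

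The transfer device is, for each $\sigma\in B^+$, the set $F(\sigma)\subset\Omega^+$ of all $\tau$ such that deleting every occurrence of an element of $\Omega\setminus B$ from $\tau$ yields exactly $\sigma$ and, in addition, the last symbol of $\tau$ lies in $B$; put $F(\lambda):=\{\lambda\}$. Equivalently, the elements of $F(\sigma)$ are precisely the strings $\rho_1\sigma(1)\rho_2\sigma(2)\cdots\rho_m\sigma(m)$ with $m=\abs{\sigma}$ and each $\rho_i\in(\Omega\setminus B)^*$. From this normal form it is routine that $F(\sigma)$ is a prefix-free subset of $\Omega^*$, and, using that $P(\Omega\setminus B)=1-P(B)<1$ so that the geometric series $\sum_{\rho\in(\Omega\setminus B)^*}P(\rho)=\sum_{k=0}^{\infty}P(\Omega\setminus B)^k=1/P(B)$ converges, that
\[
  \Bm{P}{\osg{F(\sigma)}}=P(F(\sigma))
  =\prod_{i=1}^{\abs{\sigma}}\Bigl(P(\sigma(i))\sum_{\rho\in(\Omega\setminus B)^*}P(\rho)\Bigr)
  =\prod_{i=1}^{\abs{\sigma}}\frac{P(\sigma(i))}{P(B)}
  =P_B(\sigma)=\Bm{P_B}{\osg{\sigma}}.
\]
Moreover, if $\tau\in F(\sigma)$ and $\tau'\in F(\sigma')$ are comparable, then their $B$-subsequences $\sigma,\sigma'$ are comparable; hence for distinct $\sigma,\sigma'\in\mathcal{C}_n$ the sets $F(\sigma),F(\sigma')$, and even the open sets $\osg{F(\sigma)},\osg{F(\sigma')}$, are disjoint, since $\mathcal{C}_n$ is prefix-free.

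Now define $\mathcal{D}\subset\N^+\times\Omega^*$ by $\mathcal{D}_n:=\bigcup_{\sigma\in\mathcal{C}_n}F(\sigma)$. By the previous paragraph each $\mathcal{D}_n$ is a prefix-free subset of $\Omega^*$, and
\[
  \Bm{P}{\osg{\mathcal{D}_n}}=\sum_{\sigma\in\mathcal{C}_n}\Bm{P}{\osg{F(\sigma)}}
  =\sum_{\sigma\in\mathcal{C}_n}\Bm{P_B}{\osg{\sigma}}=\Bm{P_B}{\osg{\mathcal{C}_n}}<2^{-n}.
\]
Since $\Omega$ is r.e., $B$ is recursive, and $\mathcal{C}$ is r.e., the set $\mathcal{D}$ is r.e.\ (dovetail over $(n,\sigma)\in\mathcal{C}$ and over the ways of interspersing finite blocks from $(\Omega\setminus B)^*$, which is possible because $\Omega\setminus B$ is recursive); note $F(\sigma)$ may be infinite, but this affects only the enumeration, not the bound above. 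Thus $\mathcal{D}$ is a Martin-L\"of $P$-test. Finally, if $\sigma\in\mathcal{C}_n$ is a prefix of $\cond{B}{\alpha}$, let $\tau$ be the shortest prefix of $\alpha$ whose $B$-subsequence equals $\sigma$; then $\tau\in F(\sigma)$, so $\alpha\in\osg{F(\sigma)}\subset\osg{\mathcal{D}_n}$. Hence $\alpha\in\osg{\mathcal{D}_n}$ for every $n\in\N^+$, so $\alpha$ is not Martin-L\"of $P$-random, which proves the contraposition.

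The only genuinely delicate points are the normal-form description of $F(\sigma)$ and the resulting identity $\Bm{P}{\osg{F(\sigma)}}=P_B(\sigma)$, where the hypothesis $P(B)>0$ enters essentially through the summation of the geometric series; the prefix-freeness bookkeeping, the r.e.-ness of $\mathcal{D}$, and the catching of $\alpha$ are then routine. I note that $B$ is permitted to be finite as well as countably infinite, but the argument above is uniform in that distinction, so no case split is needed. A direct reduction to Theorem~\ref{cpuscsf-Bs} does not seem available, since deletion of the symbols of $\Omega\setminus B$ reads the current symbol rather than only the strict past and hence is not literally a generalized selection function; the construction above is the cleaner route.
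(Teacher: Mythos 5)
Your proof is correct, and its engine is the same as the paper's: transfer a Martin-L\"of $P_B$-test backwards along the deletion map by interspersing arbitrary blocks of discarded symbols between consecutive symbols of $\sigma$, and sum the resulting geometric series to produce the factor $1/P(B)^{\abs{\sigma}}$ that converts $\lambda_P$-measure into $\lambda_{P_B}$-measure. The organization, however, is genuinely different. The paper first contracts all of $\Omega\setminus B$ to a single fresh symbol $a$, appeals to (an adaptation of) Theorem~\ref{contraction-Bs} to conclude that the contracted sequence $\beta$ over $B\cup\{a\}$ is Martin-L\"of $Q$-random for the induced $Q\in\PS(B\cup\{a\})$, and only then pulls the $P_B$-test back to a $Q$-test via $F(\sigma)=\{a^{k_1}\sigma_1 a^{k_2}\sigma_2\dots a^{k_L}\sigma_L\}$ and $\sum_{k}Q(a)^k=1/P(B)$; it also splits off the cases $B=\Omega$ and $B$ finite. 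You instead pull the test back to $\Omega^*$ in a single step, with the blocks ranging over $(\Omega\setminus B)^*$ rather than over powers of one letter, so that $\sum_{\rho\in(\Omega\setminus B)^*}P(\rho)=\sum_{k=0}^{\infty}(1-P(B))^k=1/P(B)$ plays the role of the paper's geometric series. This buys a self-contained argument with no intermediate alphabet, no appeal to the contraction theorem, and no case split on the cardinality of $B$; the price is the bookkeeping you correctly flag (the normal form for $F(\sigma)$, the prefix-freeness and disjointness checks, and the r.e.-ness of $\mathcal{D}$, which does use the recursiveness of $B$), all of which you handle. Your closing observation that conditioning cannot be reduced to Theorem~\ref{cpuscsf-Bs} --- because the deletion map inspects the current symbol rather than only the strict past, so it is not a generalized selection function --- is also correct.
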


\begin{proof}
In the case of $B=\Omega$, we have $P_B=P$ and $\cond{B}{\alpha}=\alpha$.
Therefore the result is obvious.
Thus, in what follows, we assume that
$B$ is a proper subset of $\Omega$.
In what follows, we further assume
that $B$ is an infinite set.
The case
in which
$B$ is a finite subset of $\Omega$ can be handled more easily
by simplifying the proof given below,
and thus we omit the proof
for
such a case.

First, we choose
any particular
$a\in\Omega\setminus B$ and
define
$Q\in\PS(B\cup\{a\})$ by the condition that
$Q(x):=\sum_{y\in\Omega\setminus B}P(y)$ if $x=a$ and $Q(x):=P(x)$ otherwise.
Note here that
\begin{equation}\label{1-Q=PB}
 1-Q(a)=P(B),
\end{equation}
and therefore
\begin{equation}\label{Q<1}
  Q(a)<1.
\end{equation}
Let $\beta$ be the infinite sequence over $B\cup\{a\}$
obtained by replacing all occurrences of elements of $\Omega\setminus B$ in $\alpha$ by $a$.
Since $\alpha$ is Martin-L\"of $P$-random and $\Omega\setminus B$ is r.e.,
in a similar manner to the proof of Theorem~\ref{contraction-Bs} we can show that
$\beta$ is Martin-L\"of $Q$-random.
Hence,
in order to complete the proof,
it is sufficient to show that if $\cond{B}{\alpha}$ is not Martin-L\"of $P_B$-random
then $\beta$ is not Martin-L\"of $Q$-random.

Thus, let us assume that $\cond{B}{\alpha}$ is not Martin-L\"of $P_B$-random.
Then there exists a Martin-L\"of $P_B$-test $\mathcal{C}\subset B\times\N^+$ such that
\begin{equation}\label{fBaC}
  \cond{B}{\alpha}\in\osg{\mathcal{C}_n}
\end{equation}
for every $n\in\N^+$.
For each $\sigma\in B^+$, let $F(\sigma)$ be the set of all finite strings over $B\cup\{a\}$ of the form
$a^{k_1}\sigma_1a^{k_2}\sigma_2\dots \sigma_{L-1}a^{k_L}\sigma_{L}$ for some $k_1,k_2,\dots,k_L\in\N$,
where $\sigma=\sigma_1\sigma_2\dots\sigma_L$ with $\sigma_i\in B$.
Note that $F(\sigma)$ is a prefix-free subset of $(B\cup\{a\})^*$ for every $\sigma\in B^+$.
For each $\sigma\in B^+$, we see that
\begin{equation}\label{BmQosgFs=BmPBosgs_CPEuC}
\begin{split}
  \Bm{Q}{\osg{F(\sigma)}}
  &=\sum_{k_1,k_2,\dots,k_L=0}^\infty
    \Bm{Q}{\osg{a^{k_1}\sigma_1a^{k_2}\sigma_2\dots \sigma_{L-1}a^{k_L}\sigma_{L}}} \\
  &=\sum_{k_1,k_2,\dots,k_L=0}^\infty\Bm{Q}{\osg{\sigma}}Q(a)^{k_1}Q(a)^{k_2}\dots Q(a)^{k_L} \\
  &=\Bm{Q}{\osg{\sigma}}\left(\sum_{k=0}^\infty Q(a)^k\right)^L \\
  &=\Bm{Q}{\osg{\sigma}}\frac{1}{(1-Q(a))^L} \\
  &=\Bm{Q}{\osg{\sigma}}\frac{1}{P(B)^L} \\
  &=\Bm{P_B}{\osg{\sigma}},
\end{split}
\end{equation}
where we use \eqref{Q<1} and \eqref{1-Q=PB} in the forth and fifth equalities, respectively.
We then define $\mathcal{D}$ to be a subset of $\N^+\times(B\cup\{a\})^*$ such that
$\mathcal{D}_n=\bigcup_{\sigma\in \mathcal{C}_n}F(\sigma)$ for every $n\in\N^+$.
Note here that, for each $n\in\N^+$,
$\lambda\notin\mathcal{C}_n$ since $\Bm{P}{\osg{\mathcal{C}_n}}<2^{-n}<1$.
Then, since $\mathcal{C}_n$ is a prefix-free subset of $B^*$ for every $n\in\N^+$,
we see that $\mathcal{D}_n$ is a prefix-free subset of $(B\cup\{a\})^*$ for every $n\in\N^+$.
For each $n\in\N^+$, we see that
\[
  \Bm{Q}{\osg{\mathcal{D}_n}}
  \le\sum_{\sigma\in\mathcal{C}_n}\Bm{Q}{\osg{F(\sigma)}}
  =\sum_{\sigma\in\mathcal{C}_n}\Bm{P_B}{\osg{\sigma}}
  =\Bm{P_B}{\osg{\mathcal{C}_n}}<2^{-n},
\]
where the first equality follows from \eqref{BmQosgFs=BmPBosgs_CPEuC} and
the second equality follows from the prefix-freeness of $\mathcal{C}_n$.
Moreover, since $\mathcal{C}$ is r.e.,
$\mathcal{D}$ is also r.e.
Thus, $\mathcal{D}$ is a Martin-L\"of $Q$-test.

On the other hand,
since $\cond{B}{\alpha}$ is the infinite sequence over $B$ obtained from $\beta$
by eliminating all occurrences of the symbol $a$ in $\beta$,
we see that,
for every $n\in\N^+$, if $\cond{B}{\alpha}\in\osg{\mathcal{C}_n}$ then $\beta\in\osg{\mathcal{D}_n}$.
Thus, it follows from \eqref{fBaC} that $\beta\in\osg{\mathcal{D}_n}$ for every $n\in\N^+$.
Hence, $\beta$ is not Martin-L\"of $Q$-random.
This completes the proof.
\end{proof}

Let $\Omega$ be a countable alphabet, and let $P\in\PS(\Omega)$.
For any events $A,B\subset\Omega$ on the discrete probability space $P$,
we say that $A$ and $B$ are \emph{independent on} $P$ if $P(A\cap B)=P(A)P(B)$.
In the case of $P(B)>0$, it holds that $A$ and $B$ are independent on $P$ if and only if $P(A|B)=P(A)$.

Theorem~\ref{cond-ind-Bs} below gives
operational characterizations of the notion of the independence between two events
in terms of ensembles.

Let $\Omega$ be a finite alphabet.
For any $\alpha,\beta\in\Omega^\infty$,
we say that $\alpha$ and $\beta$ are \emph{equivalent}
if there exists $P\in\PS(\Omega)$ such that $\alpha$ and $\beta$ are both an ensemble for $P$.

\begin{theorem}\label{cond-ind-Bs}
Let $\Omega$ be an r.e.~infinite set, and let $P\in\PS(\Omega)$.
Let $A,B\subset\Omega$ be recursive events on the discrete probability space $P$.
Suppose that $P(B)>0$.
Then the following conditions are equivalent to one another.
\begin{enumerate}
  \item The events $A$ and $B$ are independent on $P$.
  \item For every ensemble $\alpha$ for the discrete probability space $P$,
    it holds that $\chara{A}{\alpha}$ is equivalent to $\chara{A\cap B}{\cond{B}{\alpha}}$.
  \item There exists an ensemble $\alpha$ for the discrete probability space $P$ such that
    $\chara{A}{\alpha}$ is equivalent to $\chara{A\cap B}{\cond{B}{\alpha}}$.
\end{enumerate}
\end{theorem}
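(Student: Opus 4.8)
The plan is to reduce the statement to the two finite probability spaces on $\{0,1\}$ attached to $\chara{A}{\alpha}$ and to $\chara{A\cap B}{\cond{B}{\alpha}}$, together with the law of large numbers. First I record the structural facts I will use. By Theorem~\ref{charaA-Bs}, for every ensemble $\alpha$ for $P$ the binary sequence $\chara{A}{\alpha}$ is an ensemble for $\charaps{P}{A}\in\PS(\{0,1\})$, and $(\charaps{P}{A})(1)=P(A)$. On the other hand, by Theorem~\ref{conditional_probability-Bs}, $\cond{B}{\alpha}$ is an ensemble for $P_B\in\PS(B)$; since $A$ and $B$ are recursive, $A\cap B$ is a recursive event on $P_B$, so applying Theorem~\ref{charaA-Bs} again with $P_B$ in place of $P$ (when $B$ is infinite), or its finite-space analogue from~\cite{T16arXiv} (when $B$ is finite), shows that $\chara{A\cap B}{\cond{B}{\alpha}}$ is an ensemble for $\charaps{P_B}{A\cap B}\in\PS(\{0,1\})$, whose value at $1$ is $P_B(A\cap B)=P(A\cap B)/P(B)=P(A|B)$.

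With these in hand, the implication (i)$\Rightarrow$(ii) is immediate: if $A$ and $B$ are independent on $P$ then $P(A|B)=P(A)$, so $\charaps{P_B}{A\cap B}=\charaps{P}{A}$ as elements of $\PS(\{0,1\})$; hence, for every ensemble $\alpha$ for $P$, both $\chara{A}{\alpha}$ and $\chara{A\cap B}{\cond{B}{\alpha}}$ are ensembles for the single finite probability space $\charaps{P}{A}$, so by definition they are equivalent. The implication (ii)$\Rightarrow$(iii) is trivial once we observe that ensembles for $P$ exist: by Theorem~\ref{Bmae-Baire} the set $\mathrm{ML}_P$ has $\lambda_P$-measure $1$, hence is nonempty.

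For (iii)$\Rightarrow$(i), let $\alpha$ be an ensemble for $P$ with $\chara{A}{\alpha}$ equivalent to $\chara{A\cap B}{\cond{B}{\alpha}}$, so that both are ensembles for some common $R\in\PS(\{0,1\})$. Applying the law of large numbers for finite alphabets (Theorem~\ref{LLN}) to $\chara{A}{\alpha}$, regarded first as an ensemble for $R$ and then as an ensemble for $\charaps{P}{A}$, the relative frequency of $1$ in $\chara{A}{\alpha}$ converges simultaneously to $R(1)$ and to $P(A)$, so $R(1)=P(A)$; the same argument applied to $\chara{A\cap B}{\cond{B}{\alpha}}$ gives $R(1)=P(A|B)$. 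Hence $P(A)=P(A|B)$, i.e.\ $P(A\cap B)=P(A)P(B)$, which is precisely the independence of $A$ and $B$ on $P$. (Alternatively, one could invoke the finite-alphabet analogue of Corollary~\ref{uniquness} to conclude directly that $R=\charaps{P}{A}=\charaps{P_B}{A\cap B}$.)

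The only point requiring care is the bookkeeping around the sample space of $P_B$: when $B$ is a finite subset of $\Omega$, $P_B$ is a finite probability space and the statements about $\chara{A\cap B}{\cdot}$ must be drawn from the finite-space framework of~\cite{T16arXiv} rather than from Theorem~\ref{charaA-Bs}, and one must also note that $A\cap B$ is indeed a recursive event on $P_B$. Beyond that, the argument is a routine combination of Theorems~\ref{charaA-Bs}, \ref{conditional_probability-Bs}, \ref{LLN} and \ref{Bmae-Baire} with the elementary equivalence $P(A|B)=P(A)\iff P(A\cap B)=P(A)P(B)$, valid because $P(B)>0$.
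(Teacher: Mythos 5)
Your proposal is correct and follows essentially the same route as the paper: the same preliminary reductions via Theorems~\ref{charaA-Bs} and \ref{conditional_probability-Bs} (with the same case split on whether $B$ is finite), the same argument for (i)$\Rightarrow$(ii) and (ii)$\Rightarrow$(iii), and for (iii)$\Rightarrow$(i) the paper invokes Corollary~\ref{uniquness} where you apply Theorem~\ref{LLN} directly, which is the same uniqueness argument in substance.
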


\begin{proof}
Suppose that $\alpha$ is an arbitrary ensemble for the discrete probability space $P$.
Then, on the one hand,
it follows from Theorem~\ref{charaA-Bs} that $\chara{A}{\alpha}$ is Martin-L\"of $\charaps{P}{A}$-random.
On the other hand, it follows from $P(B)>0$ and Theorem~\ref{conditional_probability-Bs} that
$\cond{B}{\alpha}$ is an ensemble for
$P_B$.
Therefore,
$\chara{A\cap B}{\cond{B}{\alpha}}$ is Martin-L\"of $\charaps{P_B}{A\cap B}$-random.
This follows from Theorem~\ref{charaA-Bs} if $B$ is an infinite set and
from Theorem~17 of Tadaki~\cite{T16arXiv} otherwise.

Assume that the condition~(i) holds. Then $P_B(A\cap B)=P(A)$.
It follows that $\charaps{P_B}{A\cap B}=\charaps{P}{A}$.
Therefore, for an arbitrary ensemble $\alpha$ for the discrete probability space $P$,
we see that $\chara{A}{\alpha}$ and $\chara{A\cap B}{\cond{B}{\alpha}}$ are equivalent.
Thus, we have the implication~(i) $\Rightarrow$ (ii).

Since there exists an ensemble $\alpha$ for the discrete probability space $P$
by Theorem~\ref{Bmae-Baire}, the implication~(ii) $\Rightarrow$ (iii) is obvious.

Finally, the implication~(iii) $\Rightarrow$ (i) is shown as follows.
Assume that the condition~(iii) holds.
Then there exist an ensemble $\alpha$ for the discrete probability space $P$ and
a finite probability space $Q\in\PS(\{0,1\})$ such that
both $\chara{A}{\alpha}$ and $\chara{A\cap B}{\cond{B}{\alpha}}$ are Martin-L\"of $Q$-random.
It follows from the consideration at the beginning of this proof
that $\chara{A}{\alpha}$ is Martin-L\"of $\charaps{P}{A}$-random,
and $\chara{A\cap B}{\cond{B}{\alpha}}$ is Martin-L\"of $\charaps{P_B}{A\cap B}$-random.
Using Corollary~\ref{uniquness} we see that $\charaps{P}{A}=Q=\charaps{P_B}{A\cap B}$,
and therefore $P(A)=P_B(A\cap B)$.
Thus, the condition~(i) holds, and the proof is completed.
\end{proof}

%%%%%%%%%%%%%%%%%%%%%%%%%%%%%%%%%%%%%%%%%%%%%%%%%%%%%%%%%%%%%%%%%%%%%%%%%%%
\section{The independence of an arbitrary number of events/random variables}
\label{IANERV}

In this section we operationally characterize the notion of the \emph{independence
of an arbitrary number of events/random variables} on a discrete probability space in terms of ensembles.

First, we consider the operational characterizations of
the notion of
the independence of an arbitrary number of random variables,
in terms of ensembles.
Let $\Omega$ be an
arbitrary
countable alphabet, and
let $P$ be an arbitrary discrete probability space on $\Omega$.
A \emph{random variable} on
$\Omega$ is a function
$X\colon\Omega\to\Omega'$ where $\Omega'$ is
a countable alphabet.
Let $X_1\colon\Omega\to\Omega_1,\dots,X_n\colon\Omega\to\Omega_n$ be random variables on $\Omega$.
For any predicate
$F(v_1,\dotsc,v_n)$ with variables $v_1,\dots,v_n$,
we use $F(X_1,\dots,X_n)$ to denote the event
$$\{a\in\Omega\mid F(X_1(a),\dots,X_n(a))\}$$
on $P$.
We say that the random variables $X_1,\dots,X_n$ are
\emph{independent on} $P$
if for every $x_1\in\Omega_1,\dots,x_n\in\Omega_n$ it holds that
$$P(X_1=x_1\;\&\;\dotsc\;\&\;X_n=x_n)=P(X_1=x_n)\dotsm P(X_n=x_n).$$
We use $X_1\times\dots\times X_n$ to denote a random variable
$Y\colon\Omega\to\Omega_1\times\dots\times\Omega_n$ on $\Omega$ such that
$$Y(a)=(X_1(a),\dots,X_n(a))$$ for every $a\in\Omega$.
Note here that $\Omega_1\times\dots\times\Omega_n$ is a countable alphabet,
since $\Omega_1,\dots,\Omega_n$ are all countable alphabets.

For any random variable $X\colon\Omega\to\Omega'$ on $\Omega$,
we use $X(P)$ to denote a discrete probability space $P'\in\PS(\Omega')$ such that
$P'(x)=P(X=x)$ for every $x\in\Omega'$.

Let $\Omega_1,\dots,\Omega_n$ be
countable alphabets.
For any $P_1\in\PS(\Omega_1),\dots,P_n\in\PS(\Omega_n)$,
we use
$$P_1\times\dots\times P_n$$
to denote a discrete probability space
$Q\in\PS(\Omega_1\times\dots\times\Omega_n)$ such that
$$Q(a_1,\dots,a_n)=P_1(a_1)\dotsm P_n(a_n)$$
for every $a_1\in\Omega_1,\dotsc,a_n\in\Omega_n$.
Then the notion of the independence of random variables can be rephrased as follows.

\begin{proposition}\label{X(P)-independent}
Let $\Omega$ be
a countable alphabet,
and let $P\in\PS(\Omega)$.
Let $X_1\colon\Omega\to\Omega_1,\dots,X_n\colon\Omega\to\Omega_n$ be random variables on $\Omega$.
Then the random variables $X_1,\dots,X_n$ are independent on $P$ if and only if
$$(X_1\times\dots\times X_n)(P)=X_1(P)\times\dots\times X_n(P).$$
\end{proposition}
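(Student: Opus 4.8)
The plan is to unfold both sides of the claimed equivalence directly from the definitions and to observe that, term by term, the equality of the two discrete probability spaces is a verbatim restatement of the independence condition. The only conceptual ingredients are: (a) by Definition~\ref{def-DPS}, a discrete probability space on a fixed alphabet is just a function from that alphabet to $\R$, so two discrete probability spaces on $\Omega_1\times\dots\times\Omega_n$ coincide if and only if they assign the same value to each point $(x_1,\dots,x_n)$; and (b) for each $a\in\Omega$, we have $(X_1\times\dots\times X_n)(a)=(x_1,\dots,x_n)$ precisely when $X_1(a)=x_1$, \dots, $X_n(a)=x_n$ hold simultaneously, so that $X_1\times\dots\times X_n=(x_1,\dots,x_n)$ and $X_1=x_1\;\&\;\dots\;\&\;X_n=x_n$ denote the same subset of $\Omega$.

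First I would compute the value taken by the left-hand side. Writing $Y:=X_1\times\dots\times X_n$, the definition of $X(P)$ gives $(Y(P))(x_1,\dots,x_n)=P(Y=(x_1,\dots,x_n))$. By ingredient (b) the event $Y=(x_1,\dots,x_n)$ equals the event $X_1=x_1\;\&\;\dots\;\&\;X_n=x_n$, and hence $(Y(P))(x_1,\dots,x_n)=P(X_1=x_1\;\&\;\dots\;\&\;X_n=x_n)$.

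Next I would compute the value taken by the right-hand side. By the definition of the product $P_1\times\dots\times P_n$ we get $(X_1(P)\times\dots\times X_n(P))(x_1,\dots,x_n)=(X_1(P))(x_1)\dotsm(X_n(P))(x_n)$, and by the definition of $X_i(P)$ each factor $(X_i(P))(x_i)$ equals $P(X_i=x_i)$; hence the right-hand side takes the value $P(X_1=x_1)\dotsm P(X_n=x_n)$ at $(x_1,\dots,x_n)$.

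Combining these two computations with ingredient (a), the identity $(X_1\times\dots\times X_n)(P)=X_1(P)\times\dots\times X_n(P)$ holds if and only if $P(X_1=x_1\;\&\;\dots\;\&\;X_n=x_n)=P(X_1=x_1)\dotsm P(X_n=x_n)$ for every $x_1\in\Omega_1,\dots,x_n\in\Omega_n$, and this last assertion is exactly the definition of $X_1,\dots,X_n$ being independent on $P$. There is essentially no obstacle in this argument; the only point requiring (routine) care is the verification of ingredient (b), i.e.\ the set-theoretic identification of the two events above, together with the remark in ingredient (a) that discrete probability spaces on a common alphabet are equal precisely when they agree pointwise.
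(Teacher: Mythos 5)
Your proposal is correct and follows essentially the same route as the paper's own proof: both compute $((X_1\times\dots\times X_n)(P))(x_1,\dots,x_n)$ and $(X_1(P)\times\dots\times X_n(P))(x_1,\dots,x_n)$ pointwise from the definitions and observe that their equality for all $(x_1,\dots,x_n)$ is exactly the independence condition. No differences worth noting.
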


\begin{proof}
Let $x_1\in\Omega_1,\dots,x_n\in\Omega_n$.
On the one hand, we have
\begin{align*}
  ((X_1\times\dots\times X_n)(P))(x_1,\dots,x_n)
  &=P((X_1\times\dots\times X_n)=(x_1,\dots,x_n)) \\
  &=P(X_1=x_1\;\&\;\dotsc\;\&\;X_n=x_n).
\end{align*}
On the other hand, we have
\begin{align*}
  (X_1(P)\times\dots\times X_n(P))(x_1,\dots,x_n)
  &= (X_1(P))(x_1)\dotsm(X_n(P))(x_n) \\
  &=P(X_1=x_n)\dotsm P(X_n=x_n).
\end{align*}
Thus, the result follows
from the definition of the independence of random variables.
\end{proof}

Let $\Omega$ be
a countable alphabet,
and let $X\colon\Omega\to\Omega'$ be a random variable on $\Omega$.
For any $\alpha\in\Omega^\infty$,
we use $X(\alpha)$ to denote an infinite sequence $\beta$ over $\Omega'$ such that
$\beta(k)=X(\alpha(k))$ for every $k\in\N^+$.
We can then show the following theorem, which states that
ensembles are \emph{closed under the mapping by a random variable}.

\begin{theorem}[Closure property under the mapping by a random variable]\label{X(p)-X(P)}
Let $\Omega$ and $\Omega'$ be r.e.~infinite sets, and let $P\in\PS(\Omega)$.
Let $X\colon\Omega\to\Omega'$ be a random variable on $\Omega$.
Suppose that
$X$ is a partial recursive function.%
\footnote{%
The domain of definition of $X$ is precisely $\Omega$
and not a proper subset of $\Omega$.}
If $\alpha$ is an ensemble for $P$ then $X(\alpha)$ is an ensemble for $X(P)$.
\end{theorem}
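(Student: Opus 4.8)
The plan is to prove the contrapositive, following the same template as the proofs of Theorem~\ref{contraction-Bs}, Theorem~\ref{cpucs-Bs}, and Theorem~\ref{cpuscsf-Bs}: starting from a Martin-L\"of $X(P)$-test that witnesses the failure of Martin-L\"of $X(P)$-randomness of $X(\alpha)$, I would build a Martin-L\"of $P$-test witnessing the failure of Martin-L\"of $P$-randomness of $\alpha$, by pulling the given test back along $X$ coordinate by coordinate. So suppose $X(\alpha)$ is not Martin-L\"of $X(P)$-random, and fix a Martin-L\"of $X(P)$-test $\mathcal{C}\subset\N^+\times(\Omega')^*$ with $X(\alpha)\in\osg{\mathcal{C}_n}$ for every $n\in\N^+$. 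For each $\tau\in(\Omega')^*$ let $F(\tau)$ be the set of all $\sigma\in\Omega^*$ such that $\abs{\sigma}=\abs{\tau}$ and $X(\sigma(k))=\tau(k)$ for every $k$ with $1\le k\le\abs{\tau}$; since the elements of $F(\tau)$ all have the same length, $F(\tau)$ is prefix-free. I would then set $\mathcal{D}_n:=\bigcup_{\tau\in\mathcal{C}_n}F(\tau)$ and $\mathcal{D}:=\{(n,\sigma)\mid n\in\N^+,\ \sigma\in\mathcal{D}_n\}$.

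The three facts to check about $\mathcal{D}$ are: (a) the measure bound, (b) prefix-freeness of each $\mathcal{D}_n$, and (c) recursive enumerability of $\mathcal{D}$. For (a), the key identity is $\Bm{P}{\osg{F(\tau)}}=\Bm{X(P)}{\osg{\tau}}$ for every $\tau\in(\Omega')^*$, which follows by expanding both sides as products over coordinates: $P(F(\tau))=\sum_{\sigma:X(\sigma)=\tau}\prod_{k}P(\sigma(k))=\prod_{k}\sum_{a\in\Omega:\,X(a)=\tau(k)}P(a)=\prod_{k}(X(P))(\tau(k))=(X(P))(\tau)$, using $(X(P))(x)=P(X=x)$. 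Combined with the prefix-freeness of $\mathcal{C}_n$ this gives $\Bm{P}{\osg{\mathcal{D}_n}}\le\sum_{\tau\in\mathcal{C}_n}\Bm{P}{\osg{F(\tau)}}=\sum_{\tau\in\mathcal{C}_n}\Bm{X(P)}{\osg{\tau}}=\Bm{X(P)}{\osg{\mathcal{C}_n}}<2^{-n}$. For (b), if $\sigma_1\in F(\tau_1)$ were a prefix of $\sigma_2\in F(\tau_2)$ with $\tau_1,\tau_2\in\mathcal{C}_n$, then applying $X$ coordinatewise would make $\tau_1$ a prefix of $\tau_2$, forcing $\tau_1=\tau_2$ by prefix-freeness of $\mathcal{C}_n$, and hence $\abs{\sigma_1}=\abs{\sigma_2}$ and $\sigma_1=\sigma_2$. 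For (c), since $\Omega$ and $\Omega'$ are r.e.\ and $\mathcal{C}$ is r.e., and since $X$ is a partial recursive function whose domain is exactly $\Omega$ (so that $X(a)$ halts for every $a\in\Omega$), one can enumerate $\mathcal{D}$ by dovetailing over pairs $(n,\tau)\in\mathcal{C}$ and strings $\sigma\in\Omega^*$ with $\abs{\sigma}=\abs{\tau}$, checking $X(\sigma(k))=\tau(k)$ for each $k$. Thus $\mathcal{D}$ is a Martin-L\"of $P$-test.

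Finally, since $X$ acts coordinatewise we have $X(\rest{\alpha}{m})=\rest{X(\alpha)}{m}$ for every $m$, so whenever $\rest{X(\alpha)}{m}\in\mathcal{C}_n$ we get $\rest{\alpha}{m}\in F(\rest{X(\alpha)}{m})\subset\mathcal{D}_n$; hence $X(\alpha)\in\osg{\mathcal{C}_n}$ implies $\alpha\in\osg{\mathcal{D}_n}$, and therefore $\alpha\in\osg{\mathcal{D}_n}$ for every $n\in\N^+$, so $\alpha$ is not Martin-L\"of $P$-random, as required. I expect the only delicate point to be (c): one must be careful that $X$ is only \emph{partial} recursive and that $\Omega$ is only r.e.\ (not recursive), so that $\mathcal{D}$ must be defined by enumeration rather than by a decision procedure; the footnote's stipulation that $\Dom X=\Omega$ is exactly what makes the enumeration terminate on each relevant input. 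The measure identity in (a) is the routine multiplicativity computation, and the argument for (b) is immediate.
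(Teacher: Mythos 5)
Your proposal is correct and follows essentially the same route as the paper's own proof: the contrapositive, the coordinatewise pullback $F(\tau)$ of each test string, the identity $\Bm{P}{\osg{F(\tau)}}=\Bm{X(P)}{\osg{\tau}}$, and the resulting Martin-L\"of $P$-test $\mathcal{D}_n=\bigcup_{\tau\in\mathcal{C}_n}F(\tau)$ all match the paper's construction. You merely spell out the prefix-freeness and enumerability checks in more detail than the paper does.
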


\begin{proof}
We show the contraposition. Suppose that $X(\alpha)$ is not Martin-L\"of $X(P)$-random.
Then there exists a Martin-L\"of $X(P)$-test $\mathcal{S}\subset\N^+\times(\Omega')^*$ such that
\begin{equation}\label{XainosgmathSn_CPuMbRV}
  X(\alpha)\in\osg{\mathcal{S}_n}
\end{equation}
for every $n\in\N^+$.
For each $\sigma\in(\Omega')^*$,
let $f(\sigma)$ be the set of all
$\tau\in\Omega^*$
such that
(i) $\abs{\tau}=\abs{\sigma}$ and (ii) $X(\tau(k))=\sigma(k)$ for every $k=1,2,\dots,\abs{\sigma}$.
Then, since
$$(X(P))(x)=\sum_{a\in X^{-1}(\{x\})}P(a)$$
for every $x\in\Omega'$, we have that
\begin{equation}\label{BmXP=os=XPs=Pfs=BmPofs_CPuMbRV}
  \Bm{X(P)}{\osg{\sigma}}=(X(P))(\sigma)=P(f(\sigma))=\Bm{P}{\osg{f(\sigma)}}
\end{equation}
for each $\sigma\in(\Omega')^*$.
We then define $\mathcal{T}$ to be a subset of $\N^+\times \Omega^*$ such that
$\mathcal{T}_n=\bigcup_{\sigma\in\mathcal{S}_n} f(\sigma)$ for every $n\in\N^+$.
Since $\mathcal{S}_n$ is a prefix-free subset of $(\Omega')^*$ for every $n\in\N^+$,
we see that $\mathcal{T}_n$ is a prefix-free subset of $\Omega^*$ for every $n\in\N^+$.
For each $n\in\N^+$, we also see that
\[
  \Bm{P}{\osg{\mathcal{T}_n}}
  \le\sum_{\sigma\in\mathcal{S}_n}\Bm{P}{\osg{f(\sigma)}}
  =\sum_{\sigma\in\mathcal{S}_n}\Bm{X(P)}{\osg{\sigma}}
  =\Bm{X(P)}{\osg{\mathcal{S}_n}}<2^{-n},
\]
where the first equality follows from \eqref{BmXP=os=XPs=Pfs=BmPofs_CPuMbRV} and
the second equality follows from the prefix-freeness of $\mathcal{S}_n$.
Moreover, since
$X$ is a partial recursive function with $\Dom X=\Omega$
and $\mathcal{S}$ is r.e.,
it follows that $\mathcal{T}$ is
r.e.
Thus, $\mathcal{T}$ is a Martin-L\"of $P$-test.

On the other hand,
note that, for every $n\in\N^+$, if $X(\alpha)\in\osg{\mathcal{S}_n}$ then $\alpha\in\osg{\mathcal{T}_n}$.
Thus, it follows from \eqref{XainosgmathSn_CPuMbRV}
that $\alpha\in\osg{\mathcal{T}_n}$ for every $n\in\N^+$.
Hence, $\alpha$ is not Martin-L\"of $P$-random.
This completes the proof.
\end{proof}

We introduce the notion of the \emph{independence} of ensembles as follows.
Let $\Omega_1,\dots,\Omega_n$ be
countable
alphabets.
For any $\alpha_1\in\Omega_1^\infty,\dots,\alpha_n\in\Omega_n^\infty$,
we use
$$\alpha_1\times\dots\times\alpha_n$$
to denote an infinite sequence
$\alpha$ over $\Omega_1\times\dots\times\Omega_n$
such that $\alpha(k)=(\alpha_1(k),\dots,\alpha_n(k))$ for every $k\in\N^+$.
Thus, $\alpha_1\times\dots\times\alpha_n\in(\Omega_1\times\dots\times\Omega_n)^\infty$
for every $\alpha_1\in\Omega_1^\infty,\dots,\alpha_n\in\Omega_n^\infty$.
For any $\sigma_1\in\Omega_1^*,\dots,\sigma_n\in\Omega_n^*$
with $\abs{\sigma_1}=\dots =\abs{\sigma_n}$,
we define $\sigma_1\times\dots\times\sigma_n$ in a similar manner,
where we define $\lambda\times\lambda$ as $\lambda$, in particular.
Thus, $\sigma_1\times\dots\times\sigma_n\in(\Omega_1\times\dots\times\Omega_n)^*$
for every $\sigma_1\in\Omega_1^*,\dots,\sigma_n\in\Omega_n^*$
with $\abs{\sigma_1}=\dots =\abs{\sigma_n}$.

\begin{definition}[Independence of ensembles]\label{Independence_of_ensembles}
Let $\Omega_1,\dotsc,\Omega_n$ be r.e.~infinite sets,
and let $P_1\in\PS(\Omega_1),\dots,P_n\in\PS(\Omega_n)$.
Let $\alpha_1,\dots,\alpha_n$ be ensembles for $P_1,\dots,P_n$, respectively.
We say that $\alpha_1,\dots,\alpha_n$ are
\emph{independent}
if $\alpha_1\times\dots\times\alpha_n$ is
an ensemble for
$P_1\times\dots\times P_n$.
\qed
\end{definition}

In Definition~\ref{Independence_of_ensembles}, note that
$\Omega_1\times\dots\times\Omega_n$ is an r.e.~infinite set,
since each of $\Omega_1,\dotsc,\Omega_n$ is an r.e.~infinite set.
Thus, the notion of Martin-L\"of $P_1\times\dots\times P_n$-randomness
given
by Definition~\ref{ML_P-randomness-Bs}
can be
properly
applied to
the infinite sequence $\alpha_1\times\dots\times\alpha_n$ over $\Omega_1\times\dots\times\Omega_n$.
Note that
the notion of the independence of ensembles in our theory corresponds to
the notion of \emph{independence} of collectives in the theory of collectives by von Mises \cite{vM64}.

Theorem~\ref{independence-independence} below gives equivalent characterizations of the notion of
the independence of random variables in terms of that of ensembles.
To prove Theorem~\ref{independence-independence}, we first show the following proposition.

\begin{proposition}\label{X1xn-alpha}
Let $\Omega$ be a countable alphabet.
Let $\alpha\in\Omega^\infty$, and
let $X_1\colon\Omega\to\Omega_1,\dots,X_n\colon\Omega\to\Omega_n$ be
random variables on $\Omega$. Then
$(X_1\times\dots\times X_n)(\alpha)=X_1(\alpha)\times\dots\times X_n(\alpha)$.
\end{proposition}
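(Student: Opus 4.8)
The plan is to verify the claimed equality of the two infinite sequences over $\Omega_1\times\dots\times\Omega_n$ by checking that they agree coordinatewise. Both sides are well-defined objects of the same type: since $\Omega_1,\dots,\Omega_n$ are countable alphabets, so is $\Omega_1\times\dots\times\Omega_n$; the random variable $X_1\times\dots\times X_n$ is by definition the map $Y\colon\Omega\to\Omega_1\times\dots\times\Omega_n$ with $Y(a)=(X_1(a),\dots,X_n(a))$, so $(X_1\times\dots\times X_n)(\alpha)=Y(\alpha)$ is the infinite sequence over $\Omega_1\times\dots\times\Omega_n$ whose $k$th element is $Y(\alpha(k))$; and $X_1(\alpha)\times\dots\times X_n(\alpha)$ is, by the definition of $\times$ on infinite sequences, the infinite sequence over the same alphabet whose $k$th element is $(X_1(\alpha)(k),\dots,X_n(\alpha)(k))$.

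First I would fix an arbitrary $k\in\N^+$ and compute the $k$th element of each side. Unwinding the definition of the action of a random variable on an infinite sequence, the left-hand side gives
\[
  \bigl((X_1\times\dots\times X_n)(\alpha)\bigr)(k)=Y(\alpha(k))=(X_1(\alpha(k)),\dots,X_n(\alpha(k))).
\]
Unwinding the definition of $\times$ on infinite sequences together with the definition of $X_i(\alpha)$, the right-hand side gives
\[
  \bigl(X_1(\alpha)\times\dots\times X_n(\alpha)\bigr)(k)=(X_1(\alpha)(k),\dots,X_n(\alpha)(k))=(X_1(\alpha(k)),\dots,X_n(\alpha(k))),
\]
where the last equality uses $X_i(\alpha)(k)=X_i(\alpha(k))$ for each $i=1,\dots,n$. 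Hence the $k$th elements of the two sequences coincide.

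Since $k$ was arbitrary, the two infinite sequences agree in every coordinate and are therefore equal, which is the assertion of the proposition. There is essentially no obstacle: the statement is a direct consequence of the three defining formulas (for $X_1\times\dots\times X_n$, for $\alpha_1\times\dots\times\alpha_n$, and for $X(\alpha)$), and the only point requiring a moment's attention is that all the objects involved genuinely live over the common countable alphabet $\Omega_1\times\dots\times\Omega_n$, which is immediate.
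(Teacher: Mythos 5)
Your proof is correct and follows essentially the same route as the paper's: both fix an arbitrary $k\in\N^+$ and unwind the three definitions to see that the $k$th elements of the two sequences coincide. Nothing is missing.
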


\begin{proof}
For each $k\in\N^+$, we see that
\begin{align*}
((X_1\times\dots\times X_n)(\alpha))(k)
&=(X_1\times\dots\times X_n)(\alpha(k)) \\
&=(X_1(\alpha(k)),\dots,X_n(\alpha(k))) \\
&=((X_1(\alpha))(k),\dots,(X_n(\alpha))(k)) \\
&=(X_1(\alpha)\times\dots\times X_n(\alpha))(k).
\end{align*}
This completes the proof.
\end{proof}

\begin{theorem}\label{independence-independence}
Let $\Omega$ and $\Omega_1,\dots,\Omega_n$ be r.e.~infinite sets,
and let $P\in\PS(\Omega)$.
Let $X_1\colon\Omega\to\Omega_1,\dots,X_n\colon\Omega\to\Omega_n$ be
random variables on $\Omega$.
Suppose that
all of $X_1,\dots,X_n$ are partial recursive functions.%
\footnote{%
The domain of definition of each $X_i$ is precisely $\Omega$
and not a proper subset of $\Omega$.}
Then the following conditions are equivalent to one another.
\begin{enumerate}
  \item The random variables $X_1,\dots,X_n$ are independent on $P$.
  \item For every ensemble $\alpha$ for
    $P$,
    the ensembles $X_1(\alpha),\dots,X_n(\alpha)$ are independent.
  \item There exists an ensemble $\alpha$ for $P$ such that
    the ensembles $X_1(\alpha),\dots,X_n(\alpha)$ are independent.
\end{enumerate}
\end{theorem}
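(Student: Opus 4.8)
The plan is to establish the cycle of implications (i) $\Rightarrow$ (ii) $\Rightarrow$ (iii) $\Rightarrow$ (i), with the closure property of ensembles under the mapping by a random variable (Theorem~\ref{X(p)-X(P)}) as the main engine, supplemented by the uniqueness of the underlying probability space (Corollary~\ref{uniquness}) and the two elementary rephrasings given by Propositions~\ref{X(P)-independent} and \ref{X1xn-alpha}. First I would record the preliminary observation that makes everything go through: the product random variable $Y := X_1\times\dots\times X_n\colon\Omega\to\Omega_1\times\dots\times\Omega_n$ is itself a partial recursive function with $\Dom Y=\Omega$ --- one runs the machines computing $X_1,\dots,X_n$ on the same input and pairs the outputs --- and $\Omega_1\times\dots\times\Omega_n$ is an r.e.~infinite set, so Theorem~\ref{X(p)-X(P)} applies to $Y$ exactly as it does to each individual $X_i$.

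To prove (i) $\Rightarrow$ (ii), I would assume (i), which by Proposition~\ref{X(P)-independent} is the statement $Y(P)=X_1(P)\times\dots\times X_n(P)$. Given an arbitrary ensemble $\alpha$ for $P$, applying Theorem~\ref{X(p)-X(P)} to $Y$ yields that $Y(\alpha)$ is an ensemble for $Y(P)$, and applying it to each $X_i$ yields that each $X_i(\alpha)$ is an ensemble for $X_i(P)$. By Proposition~\ref{X1xn-alpha}, $Y(\alpha)=X_1(\alpha)\times\dots\times X_n(\alpha)$, so this sequence is an ensemble for $Y(P)=X_1(P)\times\dots\times X_n(P)$; by Definition~\ref{Independence_of_ensembles} this is exactly the independence of $X_1(\alpha),\dots,X_n(\alpha)$.

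The implication (ii) $\Rightarrow$ (iii) is immediate, since Theorem~\ref{Bmae-Baire} guarantees that an ensemble for $P$ exists at all.

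To prove (iii) $\Rightarrow$ (i), I would take an ensemble $\alpha$ for $P$ witnessing (iii), so by Definition~\ref{Independence_of_ensembles} the sequence $X_1(\alpha)\times\dots\times X_n(\alpha)$ is an ensemble for $X_1(P)\times\dots\times X_n(P)$. By Proposition~\ref{X1xn-alpha} this sequence coincides with $Y(\alpha)$, which by Theorem~\ref{X(p)-X(P)} is also an ensemble for $Y(P)$. Hence one and the same infinite sequence over the r.e.~infinite alphabet $\Omega_1\times\dots\times\Omega_n$ is an ensemble both for $Y(P)$ and for $X_1(P)\times\dots\times X_n(P)$, so Corollary~\ref{uniquness} forces $Y(P)=X_1(P)\times\dots\times X_n(P)$, which is condition (i) by Proposition~\ref{X(P)-independent}. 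The only step needing any care is the verification that $Y$ is partial recursive with domain exactly $\Omega$, so that Theorem~\ref{X(p)-X(P)} legitimately applies to $Y$; this is routine, and I do not expect any genuine obstacle, as the body of the argument is simply a bookkeeping chain through results already established.
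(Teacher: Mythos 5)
Your proposal is correct and follows essentially the same route as the paper's own proof: the cycle (i)~$\Rightarrow$~(ii)~$\Rightarrow$~(iii)~$\Rightarrow$~(i), driven by Theorem~\ref{X(p)-X(P)} applied both to each $X_i$ and to the product $X_1\times\dots\times X_n$, with Propositions~\ref{X(P)-independent} and \ref{X1xn-alpha} for the translations, Theorem~\ref{Bmae-Baire} for existence, and Corollary~\ref{uniquness} for the final identification of the two measures. No substantive differences.
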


\begin{proof}
Assume that the condition~(i) holds.
Let $\alpha$ be an arbitrary ensemble for the discrete probability space $P$.
First,
for each $i=1,\dots,n$,
since
$X_i$ is a partial recursive function,
it follows from Theorem~\ref{X(p)-X(P)} that $X_i(\alpha)$ is Martin-L\"of $X_i(P)$-random.
On the other hand,
since $X_i$ is a partial recursive function for every $i=1,\dots,n$,
we see that
$X_1\times\dots\times X_n$
is a partial recursive function with $\Dom f=\Omega$ and
$f(\Dom f)\subset\Omega_1\times\dots\times\Omega_n$.
Note here that $\Omega_1\times\dots\times\Omega_n$ is an r.e.~infinite set.
Thus,
it follows from Theorem~\ref{X(p)-X(P)} that
$(X_1\times\dots\times X_n)(\alpha)$ is Martin-L\"of $(X_1\times\dots\times X_n)(P)$-random.
Therefore, by Proposition~\ref{X1xn-alpha} and Proposition~\ref{X(P)-independent}, we see that
$X_1(\alpha)\times\dots\times X_n(\alpha)$ is Martin-L\"of $X_1(P)\times\dots\times X_n(P)$-random.
Thus, the ensembles $X_1(\alpha),\dots,X_n(\alpha)$ are independent.
Hence, we have the implication~(i) $\Rightarrow$ (ii).

Since there exists an ensemble $\alpha$ for the discrete probability space $P$
by Theorem~\ref{Bmae-Baire}, the implication~(ii) $\Rightarrow$ (iii) is obvious.

Finally, the implication~(iii) $\Rightarrow$ (i) is shown as follows.
Assume that the condition~(iii) holds.
Then there exists an ensemble $\alpha$ for $P$ such that
$X_1(\alpha)\times\dots\times X_n(\alpha)$ is Martin-L\"of $X_1(P)\times\dots\times X_n(P)$-random.
It follows from Proposition~\ref{X1xn-alpha} that
$(X_1\times\dots\times X_n)(\alpha)$ is Martin-L\"of $X_1(P)\times\dots\times X_n(P)$-random.
On the other hand, 
since
$X_i$
is a partial recursive function
for every $i=1,\dots,n$,
it follows from Theorem~\ref{X(p)-X(P)} that
$(X_1\times\dots\times X_n)(\alpha)$ is Martin-L\"of $(X_1\times\dots\times X_n)(P)$-random.
Thus, using Corollary~\ref{uniquness}, we have
$X_1(P)\times\dots\times X_n(P)=(X_1\times\dots\times X_n)(P)$.
Therefore, it follows from Proposition~\ref{X(P)-independent} that
the random variables $X_1,\dots,X_n$ are independent on $P$.
This completes the proof.
\end{proof}

Next, we consider the operational characterizations of the notion of
the independence of an arbitrary number of events,
in terms of ensembles.

Let $\Omega$ be an arbitrary countable alphabet, and
let $P$ be an arbitrary discrete probability space on $\Omega$.
Let $A_1,\dots,A_n$ be
arbitrary
events on the discrete probability space
$P$.
We say that the events $A_1,\dots,A_n$ are \emph{independent on} $P$ if
for every $i_1,\dots,i_k$ with $1\le i_1<\dots <i_k\le n$ it holds that
$$P(A_{i_1}\cap\dots\cap A_{i_k})=P(A_{i_1})\dotsm P(A_{i_k}).$$
For any $A\subset\Omega$, we use $\chi_A$ to denote a function $f\colon\Omega\to\{0,1\}$ such that
$f(a):=1$ if $a\in A$ and $f(a):=0$ otherwise.
Note that
$\chara{A}{\alpha}=\chi_A(\alpha)$ for every $A\subset\Omega$ and $\alpha\in\Omega^\infty$.
It is then easy to show the following proposition.

\begin{proposition}\label{independence-random-variables-events}
Let $\Omega$ be a countable alphabet, and let $P\in\PS(\Omega)$.
Let $A_1,\dots,A_n\subset\Omega$.
Then the events $A_1,\dots,A_n$ are independent on $P$ if and only if
the random variables $\chi_{A_1},\dots,\chi_{A_n}$ are independent on $P$.
\qed
\end{proposition}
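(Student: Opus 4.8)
The plan is to pass through the standard $\{0,1\}$-coordinate reformulation of independence. For $A\subset\Omega$ and $x\in\{0,1\}$ put $A^{(1)}:=A$ and $A^{(0)}:=\Omega\setminus A$; then $\{a\in\Omega\mid\chi_A(a)=x\}=A^{(x)}$, so that $P(\chi_A=x)=P(A^{(x)})$ and $P(A^{(0)})=1-P(A)$, the latter because $\sum_{a\in\Omega}P(a)=1$. With this notation the assertion ``$\chi_{A_1},\dots,\chi_{A_n}$ are independent on $P$'' is, by definition, precisely the statement that
\[
  P\!\left(A_1^{(x_1)}\cap\dots\cap A_n^{(x_n)}\right)
  =P\!\left(A_1^{(x_1)}\right)\dotsm P\!\left(A_n^{(x_n)}\right)
\]
holds for every $(x_1,\dots,x_n)\in\{0,1\}^n$, so the proposition reduces to showing that this is equivalent to the independence of the events $A_1,\dots,A_n$.

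For the implication from events to random variables, I would first record the familiar fact that independence of events is preserved under complementing any number of the sets. This follows by replacing one $A_i$ at a time, using that if a family of events is independent then, for every intersection $D$ of some of the remaining members, $P\!\left((\Omega\setminus A_i)\cap D\right)=P(D)-P(A_i\cap D)=(1-P(A_i))\,P(D)$, which keeps the product rule intact; here one exploits that the definition of independence of events already quantifies over every subfamily $i_1<\dots<i_k$, so every subcollection is independent from the start. Applying this, the family $A_1^{(x_1)},\dots,A_n^{(x_n)}$ is independent for each tuple $(x_1,\dots,x_n)\in\{0,1\}^n$, and in particular its full intersection satisfies the displayed product formula, which is exactly the independence of $\chi_{A_1},\dots,\chi_{A_n}$.

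The converse runs in the opposite direction. Assuming $\chi_{A_1},\dots,\chi_{A_n}$ are independent, fix $i_1<\dots<i_k$; then $A_{i_1}\cap\dots\cap A_{i_k}$ is the disjoint union, over all $(x_1,\dots,x_n)\in\{0,1\}^n$ with $x_{i_1}=\dots=x_{i_k}=1$, of the atoms $A_1^{(x_1)}\cap\dots\cap A_n^{(x_n)}$. Since $P$ is additive over disjoint sets, summing the product formula over these tuples and distributing the sum through the product yields
\[
  P(A_{i_1}\cap\dots\cap A_{i_k})
  =P(A_{i_1})\dotsm P(A_{i_k})\prod_{j\notin\{i_1,\dots,i_k\}}\biggl(\sum_{x_j\in\{0,1\}}P(A_j^{(x_j)})\biggr)
  =P(A_{i_1})\dotsm P(A_{i_k}),
\]
since $P(A_j^{(0)})+P(A_j^{(1)})=1$ for each $j$. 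Hence $A_1,\dots,A_n$ are independent on $P$.

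The only step with any real content is the closure-under-complementation lemma used in the first direction; the remainder is routine bookkeeping with finitely many finite sums, which is why the proposition is elementary.
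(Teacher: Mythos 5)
Your proof is correct. Note that the paper itself offers no proof of this proposition --- it is stated with ``It is then easy to show the following proposition'' and a \emph{qed} mark attached to the statement --- so there is nothing to compare against; your argument (closure of independence of events under complementation for one direction, and marginalization over the unused coordinates using $P(A_j^{(0)})+P(A_j^{(1)})=1$ for the other) is the standard one and fills the omitted routine verification completely, correctly handling the asymmetry between the two definitions (the event definition quantifies over subfamilies, the random-variable definition over all value tuples for the full family).
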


Using Proposition~\ref{independence-random-variables-events},
Theorem~\ref{independence-independence} results in
Theorem~\ref{independence-independence-events} below,
which gives equivalent characterizations of the notion of
the independence of an arbitrary number of events in terms of that of ensembles.

\begin{theorem}\label{independence-independence-events}
Let $\Omega$ be an r.e.~infinite set, and let $P\in\PS(\Omega)$.
Let $A_1,\dots,A_n$ be recursive events on the discrete probability space
$P$.
Then the following conditions are equivalent to one another.
\begin{enumerate}
  \item The events $A_1,\dots,A_n$ are independent on $P$.
  \item For every ensemble $\alpha$ for $P$,
    the ensembles $\chara{A_1}{\alpha},\dots,\chara{A_n}{\alpha}$ are independent.
  \item There exists an ensemble $\alpha$ for $P$ such that
    the ensembles $\chara{A_1}{\alpha},\dots,\chara{A_n}{\alpha}$ are independent.\qed
\end{enumerate}
\end{theorem}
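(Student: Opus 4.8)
The plan is to reduce the statement to Theorem~\ref{independence-independence} by passing from the events $A_1,\dots,A_n$ to their indicator random variables. First I would apply Proposition~\ref{independence-random-variables-events}: the events $A_1,\dots,A_n$ are independent on $P$ if and only if the random variables $\chi_{A_1},\dots,\chi_{A_n}\colon\Omega\to\{0,1\}$ are independent on $P$. Since each $A_i$ is a recursive subset of the r.e.\ infinite set $\Omega$, each $\chi_{A_i}$ is a total recursive function with $\Dom\chi_{A_i}=\Omega$, so the computability hypothesis of the random-variable theorem is met. Finally, for every $A\subset\Omega$ and every $\alpha\in\Omega^\infty$ one has the pointwise identity $\chara{A}{\alpha}=\chi_A(\alpha)$, so the ensembles $\chara{A_1}{\alpha},\dots,\chara{A_n}{\alpha}$ appearing in conditions~(ii) and~(iii) are literally $\chi_{A_1}(\alpha),\dots,\chi_{A_n}(\alpha)$, and their independence is exactly the independence of the ensembles produced by the maps $\chi_{A_i}$. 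Thus conditions~(i)--(iii) of the present theorem translate term by term into conditions~(i)--(iii) of Theorem~\ref{independence-independence} with $X_i=\chi_{A_i}$, and the equivalence follows.

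The one point that is not completely formal is that the common codomain $\{0,1\}$ of the $\chi_{A_i}$ (and $\{0,1\}^n$ for $\chi_{A_1}\times\dots\times\chi_{A_n}$) is \emph{finite}, whereas Theorem~\ref{independence-independence} is stated for codomains that are r.e.\ infinite alphabets. So the proof has to be replayed in the mixed setting ``r.e.\ infinite source, finite target'' rather than quoted verbatim. This is routine: the only places in the argument of Theorem~\ref{independence-independence} where infinitude of the codomains is used are the appeal to the closure property Theorem~\ref{X(p)-X(P)} and the appeal to the uniqueness Corollary~\ref{uniquness}. For the former, the required statement ``if $\alpha$ is an ensemble for $P$ and $X\colon\Omega\to\Theta$ is partial recursive with $\Dom X=\Omega$ and $\Theta$ finite, then $X(\alpha)$ is an ensemble for $X(P)$'' is already available as Theorem~\ref{contraction-Bs} applied to the partition $\{X^{-1}(\{c\})\}_{c\in\Theta}$ of $\Omega$ into r.e.\ pieces (Theorem~\ref{charaA-Bs} being the $\#\Theta=2$ instance). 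For the latter, the finite-alphabet version of Corollary~\ref{uniquness} — an infinite sequence over a finite alphabet cannot be an ensemble for two distinct finite probability spaces — holds by the finite law of large numbers, Theorem~\ref{LLN}, exactly as Corollary~\ref{uniquness} is deduced from Theorem~\ref{LLN-Bs}; it is also part of~\cite{T16arXiv}. The purely algebraic Propositions~\ref{X1xn-alpha} and~\ref{X(P)-independent}, and the notion of independence of ensembles over a finite alphabet, are likewise available from~\cite{T16arXiv} with no cardinality constraint.

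With these finite-codomain analogues in hand, I would simply rerun the three implications: (i)$\Rightarrow$(ii) using closure of ensembles under each $X_i$ and under $X_1\times\dots\times X_n$ (both partial recursive with domain $\Omega$), together with Propositions~\ref{X1xn-alpha} and~\ref{X(P)-independent}; (ii)$\Rightarrow$(iii) via the existence of an ensemble for $P$ (Theorem~\ref{Bmae-Baire}); and (iii)$\Rightarrow$(i) using finite-alphabet uniqueness to conclude $X_1(P)\times\dots\times X_n(P)=(X_1\times\dots\times X_n)(P)$ and then Proposition~\ref{X(P)-independent}. The main obstacle is therefore purely bookkeeping: keeping straight that the source space $\Omega$ is r.e.\ infinite while the target spaces are finite, and verifying that every lemma invoked in the proof of Theorem~\ref{independence-independence} has its corresponding mixed/finite counterpart — all of which are proved in this paper (Theorems~\ref{charaA-Bs}, \ref{contraction-Bs}, \ref{LLN-Bs}, \ref{Bmae-Baire}) or in~\cite{T16arXiv}. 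No genuinely new difficulty arises.
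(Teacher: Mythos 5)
Your proposal is correct and follows the same route as the paper: the paper derives this theorem in a single sentence by combining Proposition~\ref{independence-random-variables-events} with Theorem~\ref{independence-independence} applied to $X_i=\chi_{A_i}$, exactly your first paragraph. You are in fact more careful than the paper on one point: Theorem~\ref{independence-independence} is stated for r.e.\ \emph{infinite} codomains $\Omega_1,\dots,\Omega_n$, whereas each $\chi_{A_i}$ maps into the finite set $\{0,1\}$, a hypothesis mismatch the paper passes over silently; your replay of the argument with the finite-codomain analogues (Theorem~\ref{contraction-Bs} in place of Theorem~\ref{X(p)-X(P)}, and the finite-alphabet uniqueness via Theorem~\ref{LLN} in place of Corollary~\ref{uniquness}) is exactly the right repair and introduces no new difficulty.
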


%%%%%%%%%%%%%%%%%%%%%%%%%%%%%%%%%%%%%%%%%%%%%%%%%%%%%%%%%%%%%%%%%%%%%%%%%%%
\section{Further equivalence of the notions of independence on computable discrete probability spaces}
\label{FENICFPS}

In the preceding section we saw that
the independence of an arbitrary number of events/random variables
and that of ensembles are equivalent to each other on an arbitrary discrete probability space.
In this section we show that
these independence notions are further equivalent to
the notion of the independence in the sense of van Lambalgen's Theorem \cite{vL87}
in the case where the underlying discrete probability space is \emph{computable}.
Thus, \emph{the three independence notions are equivalent to one another} in this case.
To show
the equivalence,
we generalize van Lambalgen's Theorem \cite{vL87} over our framework first.

%%%%%%%%%%%%%%%%%%%%%%%%%%%%%%%%%%%%%%%%%%%%%%%%%%%%%%%
\subsection{A generalization of van Lambalgen's Theorem}
\label{van Lambalgen}

To study a generalization of van Lambalgen's Theorem,
first we generalize the notion of Martin-L\"of $P$-randomness over \emph{relativized computation}
and introduce the notion of \emph{Martin-L\"of $P$-randomness relative to an oracle}.

The \emph{relativized computation} is a generalization of normal computation.
For each $k=1,\dots,\ell$, let $\beta_k$ be an arbitrary infinite sequence over an r.e.~infinite set.
In the relativized computation,
a (deterministic) Turing machine is allowed to refer to $\beta_1,\dots,\beta_\ell$ as an \emph{oracle} during the computation.
Namely,
in the relativized computation,
a Turing machine can query $(k,n)\in\{1,\dots,\ell\}\times\N^+$ at any time and then
obtains the response $\beta_k(n)$ during the computation.
Such a Turing machine is called an \emph{oracle Turing machine}.
The relativized computation is more powerful than normal computation, in general.

Let $\Omega$ be an r.e.~infinite set, and let $P\in\PS(\Omega)$.
We define
the notion of
a
\emph{Martin-L\"of $P$-test relative to $\beta_1,\dots,\beta_\ell$}
as a Martin-L\"{o}f $P$-test
where the Turing machine computing the Martin-L\"{o}f $P$-test is an oracle Turing machine
which can refer to
any elements of each of
the sequences $\beta_1,\dots,\beta_\ell$ during the computation.
Based on this notion,
we define the notion of \emph{Martin-L\"of $P$-randomness relative to $\beta_1,\dots,\beta_\ell$}
in
the same manner as
(ii) and (iii)
of Definition~\ref{ML_P-randomness-Bs}.
Formally, the notion of \emph{Martin-L\"of $P$-randomness relative to infinite sequences}
is defined as follows.

\begin{definition}[Martin-L\"of $P$-randomness relative to infinite sequences]\label{ML_P-randomness_rs}
Let $\Omega$ be an r.e.~infinite set, and let $P\in\PS(\Omega)$.
For each $k=1,\dots,\ell$, let $\beta_k$ be an infinite sequence over an r.e.~infinite set.
A subset $\mathcal{C}$ of $\N^+\times \Omega^*$ is called
a \emph{Martin-L\"{o}f $P$-test relative to $\beta_1,\dots,\beta_\ell$} if the following holds.
\begin{enumerate}
  \item
    There exists an oracle Turing machine $\mathcal{M}$ such that
    \[
       \mathcal{C}
       =\{x\in\N^+\times \Omega^*\mid
       \text{$\mathcal{M}$ accepts $x$ relative to $\beta_1,\dots,\beta_\ell$}\};
    \]
  \item For every $n\in\N^+$ it holds that
    $\mathcal{C}_n$ is a prefix-free subset of $\Omega^*$ and
    $\Bm{P}{\osg{\mathcal{C}_n}}< 2^{-n}$
    where
    $\mathcal{C}_n:=
    \left\{\,
      \sigma\bigm|(n,\sigma)\in\mathcal{C}
    \,\right\}$.
\end{enumerate}

For any $\alpha\in \Omega^\infty$,
we say that $\alpha$ is \emph{Martin-L\"{o}f $P$-random relative to $\beta_1,\dots,\beta_\ell$}
if for every Martin-L\"{o}f $P$-test $\mathcal{C}$ relative to $\beta_1,\dots,\beta_\ell$
there exists $n\in\N^+$ such that $\alpha\notin\osg{\mathcal{C}_n}$.
\qed
\end{definition}

Just like in the definition of a Martin-L\"{o}f $P$-test given in Definition~\ref{ML_P-randomness-Bs},
we require in Definition~\ref{ML_P-randomness_rs} that
the set $\mathcal{C}_n$ is prefix-free in the definition of
a Martin-L\"{o}f $P$-test $\mathcal{C}$ relative to $\beta_1,\dots,\beta_\ell$.
However, as in the case of a Martin-L\"{o}f $P$-test,
we can eliminate this
requirement
while keeping the notion of Martin-L\"of $P$-randomness relative to $\beta_1,\dots,\beta_\ell$
the same.
Namely, we can show the following theorem, corresponding to Theorem~\ref{eliminate-prefix-freeness}.

\begin{theorem}\label{eliminate-prefix-freeness-relative-to-infinite-sequences}
Let $\Omega$ be an r.e.~infinite set, and let $P\in\PS(\Omega)$.
For each $k=1,\dots,\ell$, let $\beta_k$ be an infinite sequence over an r.e.~infinite set.
Suppose that a subset $\mathcal{C}$ of $\N^+\times \Omega^*$ satisfies the following two conditions:
\begin{enumerate}
  \item
    There exists an oracle Turing machine $\mathcal{M}$ such that
    \[
       \mathcal{C}
       =\{x\in\N^+\times \Omega^*\mid
       \text{$\mathcal{M}$ accepts $x$ relative to $\beta_1,\dots,\beta_\ell$}\};
    \]
  \item For every $n\in\N^+$ it holds that
    $\Bm{P}{\osg{\mathcal{C}_n}}< 2^{-n}$
    where
    $\mathcal{C}_n:=
    \left\{\,
      \sigma\bigm|(n,\sigma)\in\mathcal{C}
    \,\right\}$.
\end{enumerate}
Then
there exists a Martin-L\"{o}f $P$-test
$\mathcal{D}$ relative to $\beta_1,\dots,\beta_\ell$
such that
$\osg{\mathcal{C}_n}=\osg{\mathcal{D}_n}$ for every $n\in\N^+$.
\qed
\end{theorem}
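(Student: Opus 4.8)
The plan is to relativize the proof of Theorem~\ref{eliminate-prefix-freeness}: converting an r.e.\ family $\mathcal{C}$ into a prefix‑free one with the same cylinder span is a purely recursive operation on an \emph{enumeration} of $\mathcal{C}$, so running that operation on top of the oracle Turing machine that accepts $\mathcal{C}$ relative to $\beta_1,\dots,\beta_\ell$ produces an oracle Turing machine, using the same oracle, that accepts the desired $\mathcal{D}$.

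Concretely, I would argue as follows. By condition~(i) of the hypothesis there is, uniformly in $n$, an enumeration $\sigma^{(n)}_1,\sigma^{(n)}_2,\dotsc$ of $\mathcal{C}_n$ producible by an oracle Turing machine relative to $\beta_1,\dots,\beta_\ell$. For $s\ge 0$ let $G^{(n)}_s$ be the set of prefix‑minimal strings among $\sigma^{(n)}_1,\dots,\sigma^{(n)}_s$ (so $G^{(n)}_0=\emptyset$); each $G^{(n)}_s$ is finite and prefix‑free, $\osg{G^{(n)}_s}=\osg{\{\sigma^{(n)}_1,\dots,\sigma^{(n)}_s\}}$, and $\osg{G^{(n)}_{s-1}}\subseteq\osg{G^{(n)}_s}$. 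The elementary fact that does here the work that compactness of cylinders would do over a finite alphabet is: for a finite prefix‑free set $G\subseteq\Omega^*$ and $\rho\in\Omega^*$ one has $\osg{\rho}\subseteq\osg{G}$ if and only if some element of $G$ is a prefix of $\rho$ (if no such element exists, then cofinitely many of the ``children'' $\osg{\rho a}$ are untouched by $G$ and cannot be covered). Hence $\osg{\rho}\subseteq\osg{G}$ is decidable given $G$, and I would let $D^{(n)}_s$ be the set of prefix‑minimal $\rho$ such that some element of $G^{(n)}_s$ is a prefix of $\rho$ while $\rho$ is incomparable, in the prefix order, with every element of $G^{(n)}_{s-1}$. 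One checks that every element of $D^{(n)}_s$ has length at most the largest length occurring in $G^{(n)}_{s-1}\cup G^{(n)}_s$, so $D^{(n)}_s$ is a recursive (in general infinite) prefix‑free set obtained uniformly from $G^{(n)}_{s-1},G^{(n)}_s$, and that $\osg{D^{(n)}_s}=\osg{G^{(n)}_s}\setminus\osg{G^{(n)}_{s-1}}$.

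Then I would set $\mathcal{D}_n:=\bigcup_{s\ge 1}D^{(n)}_s$ and $\mathcal{D}:=\{(n,\rho)\mid n\in\N^+\ \&\ \rho\in\mathcal{D}_n\}$. For $s'<s$ one has $\osg{D^{(n)}_{s'}}\subseteq\osg{G^{(n)}_{s'}}\subseteq\osg{G^{(n)}_{s-1}}$, which is disjoint from $\osg{D^{(n)}_s}$, so the layers $D^{(n)}_s$ have pairwise disjoint spans; consequently $\mathcal{D}_n$ is prefix‑free and $\osg{\mathcal{D}_n}=\bigcup_s\osg{D^{(n)}_s}=\bigcup_s\osg{G^{(n)}_s}=\osg{\mathcal{C}_n}$, whence $\Bm{P}{\osg{\mathcal{D}_n}}=\Bm{P}{\osg{\mathcal{C}_n}}<2^{-n}$ for every $n\in\N^+$. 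Finally, computing the $G^{(n)}_s$ from the enumerated strings and enumerating each $D^{(n)}_s$ are plainly recursive, so by dovetailing over $n$ and $s$ the whole process is carried out by an oracle Turing machine $\mathcal{M}'$ that merely simulates the machine accepting $\mathcal{C}$ relative to $\beta_1,\dots,\beta_\ell$ and post‑processes its output; thus there is an oracle Turing machine accepting $\mathcal{D}$ relative to $\beta_1,\dots,\beta_\ell$, and $\mathcal{D}$ meets conditions~(i) and~(ii) of Definition~\ref{ML_P-randomness_rs}.

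The hard part is not a new idea but the bookkeeping in the non‑compact setting: one must be sure that the ``leftover'' region $\osg{G^{(n)}_s}\setminus\osg{G^{(n)}_{s-1}}$, which can be an infinite union of cylinders, is nonetheless covered by a single recursive prefix‑free layer, and that successive layers stay mutually incomparable so that the limiting union is prefix‑free with exactly the span of $\mathcal{C}_n$. Once this is established for Theorem~\ref{eliminate-prefix-freeness}, the only extra observation needed for the relativized statement is that replacing the plain enumerating machine by an oracle machine leaves the post‑processing untouched, so the resulting test is a test relative to the same oracle $\beta_1,\dots,\beta_\ell$; this is why the theorem is stated as ``corresponding to Theorem~\ref{eliminate-prefix-freeness}'' and admits essentially the same proof.
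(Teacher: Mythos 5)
Your proof is correct. There is nothing in the paper to compare it against: both Theorem~\ref{eliminate-prefix-freeness} and its relativization, the statement at hand, are asserted without proof (each ends with a \qed immediately after the statement), so your argument genuinely fills a gap rather than paralleling or diverging from an argument in the text. The two points that carry the weight both check out. First, the identity $\osg{D^{(n)}_s}=\osg{G^{(n)}_s}\setminus\osg{G^{(n)}_{s-1}}$: an $\alpha$ in the right-hand side has a prefix $\tau\in G^{(n)}_s$ and, taking a long enough prefix $\rho$ of $\alpha$ (longer than every string in $G^{(n)}_{s-1}$ and than $\tau$), one gets a witness in your set $S_s$, hence a prefix-minimal one below it; conversely any prefix of $\alpha$ that is incomparable with all of $G^{(n)}_{s-1}$ forbids $\alpha$ from having a prefix in $G^{(n)}_{s-1}$, since two prefixes of the same infinite sequence are always comparable. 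Second, the length bound: a prefix-minimal element of $S_s$ cannot be longer than the longest string in $G^{(n)}_{s-1}\cup G^{(n)}_s$, because truncating to that length preserves both defining conditions; this is what makes each (possibly infinite) layer $D^{(n)}_s$ uniformly enumerable from the r.e.\ set $\Omega$ and the two finite sets. Disjointness of the cylinder spans of distinct layers then forces prefix-freeness of the union because cylinders over a nonempty alphabet are nonempty, the telescoping union of spans recovers $\osg{\mathcal{C}_n}$, and so $\Bm{P}{\osg{\mathcal{D}_n}}=\Bm{P}{\osg{\mathcal{C}_n}}<2^{-n}$. Since the entire post-processing acts only on an enumeration of $\mathcal{C}$, replacing the plain enumerating machine by the oracle machine of hypothesis~(i) yields a machine accepting $\mathcal{D}$ relative to the same $\beta_1,\dots,\beta_\ell$, which is exactly what Definition~\ref{ML_P-randomness_rs} requires. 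One cosmetic remark: when $\Omega$ is r.e.\ but not recursive, $D^{(n)}_s$ should be called uniformly r.e.\ rather than recursive, but that is all the construction needs.
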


From Theorem~\ref{eliminate-prefix-freeness-relative-to-infinite-sequences}
we have the following theorem,
corresponding to Theorem~\ref{ML_P-randomness_eliminated-prefix-freeness}.

\begin{theorem}\label{ML_P-randomness_eliminated-prefix-freeness-relative-to-infinite-sequences}
Let $\Omega$ be an r.e.~infinite set, and let $P\in\PS(\Omega)$.
For each $k=1,\dots,\ell$, let $\beta_k$ be an infinite sequence over an r.e.~infinite set.
Let $\alpha\in\Omega^\infty$.
Then the following conditions are equivalent to each other.
\begin{enumerate} 
  \item The infinite sequence $\alpha$ is Martin-L\"{o}f $P$-random relative to $\beta_1,\dots,\beta_\ell$.
  \item For every subset $\mathcal{C}$ of $\N^+\times \Omega^*$, if
    \begin{enumerate}
      \item
        there exists an oracle Turing machine $\mathcal{M}$ such that
        \[
           \mathcal{C}
           =\{x\in\N^+\times \Omega^*\mid
           \text{$\mathcal{M}$ accepts $x$ relative to $\beta_1,\dots,\beta_\ell$}\}, and
        \]
      \item for every $n\in\N^+$ it holds that
        $\Bm{P}{\osg{\mathcal{C}_n}}< 2^{-n}$,
    \end{enumerate}
    then there exists $n\in\N^+$ such that $\alpha\notin\osg{\mathcal{C}_n}$.\qed
\end{enumerate}
\end{theorem}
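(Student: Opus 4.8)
The plan is to derive this theorem as an immediate consequence of Theorem~\ref{eliminate-prefix-freeness-relative-to-infinite-sequences}, exactly as Theorem~\ref{ML_P-randomness_eliminated-prefix-freeness} was derived from Theorem~\ref{eliminate-prefix-freeness} in the non-relativized setting. Both implications are short.

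For the implication (i) $\Rightarrow$ (ii), I would fix an arbitrary subset $\mathcal{C}$ of $\N^+\times\Omega^*$ satisfying conditions~(a) and~(b) of item~(ii), namely that $\mathcal{C}$ is the acceptance set of an oracle Turing machine relative to $\beta_1,\dots,\beta_\ell$ and that $\Bm{P}{\osg{\mathcal{C}_n}}<2^{-n}$ for every $n\in\N^+$ --- with no prefix-freeness assumed. By Theorem~\ref{eliminate-prefix-freeness-relative-to-infinite-sequences} there is a genuine Martin-L\"of $P$-test $\mathcal{D}$ relative to $\beta_1,\dots,\beta_\ell$ with $\osg{\mathcal{C}_n}=\osg{\mathcal{D}_n}$ for every $n\in\N^+$. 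Since $\alpha$ is Martin-L\"of $P$-random relative to $\beta_1,\dots,\beta_\ell$, there exists $n\in\N^+$ with $\alpha\notin\osg{\mathcal{D}_n}$, and hence $\alpha\notin\osg{\mathcal{C}_n}$ for that same $n$, which is what~(ii) demands.

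For the implication (ii) $\Rightarrow$ (i), I would simply observe that every Martin-L\"of $P$-test $\mathcal{C}$ relative to $\beta_1,\dots,\beta_\ell$ (in the sense of Definition~\ref{ML_P-randomness_rs}) in particular satisfies conditions~(a) and~(b) of item~(ii): condition~(a) is literally part~(i) of Definition~\ref{ML_P-randomness_rs}, and condition~(b) is the measure bound in part~(ii) of that definition, the prefix-freeness being merely an extra property that a genuine test possesses. Thus, assuming~(ii), for every such test $\mathcal{C}$ there exists $n\in\N^+$ with $\alpha\notin\osg{\mathcal{C}_n}$, which is exactly the definition of $\alpha$ being Martin-L\"of $P$-random relative to $\beta_1,\dots,\beta_\ell$.

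There is no real obstacle inside this proof itself; all the substance is hidden in Theorem~\ref{eliminate-prefix-freeness-relative-to-infinite-sequences}, which I would prove (in the paragraph establishing that theorem) by the standard trick: for each $n$ replace $\mathcal{C}_n$ by the set $\mathcal{D}_n$ of all $\sigma\in\Omega^*$ such that $\osg{\sigma}\subset\osg{\mathcal{C}_n}$ while no proper prefix of $\sigma$ has this property. This $\mathcal{D}_n$ is prefix-free and satisfies $\osg{\mathcal{D}_n}=\osg{\mathcal{C}_n}$; it is enumerable by an oracle Turing machine using $\beta_1,\dots,\beta_\ell$ since containment $\osg{\sigma}\subset\osg{\mathcal{C}_n}$ can be recognized by searching the enumeration of $\mathcal{C}_n$; and by Theorem~\ref{oEleoFimprElerF} (applied to the Baire-space measure associated with $\lambda_P$) we get $\Bm{P}{\osg{\mathcal{D}_n}}=\Bm{P}{\osg{\mathcal{C}_n}}<2^{-n}$, so $\mathcal{D}$ is a Martin-L\"of $P$-test relative to $\beta_1,\dots,\beta_\ell$. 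The only point requiring care there is the r.e.\ (relative to the oracle) enumerability of $\mathcal{D}$, which rests on the fact that membership of $\sigma$ in $\mathcal{D}_n$ is a $\Sigma^0_1$ condition relative to the oracle because $\Omega$ and the enumeration of $\mathcal{C}_n$ are r.e.\ (relative to the oracle).
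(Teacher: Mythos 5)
Your derivation of the theorem itself is correct and is exactly the paper's route: the paper presents this result as an immediate consequence of Theorem~\ref{eliminate-prefix-freeness-relative-to-infinite-sequences}, and your two implications --- (i)~$\Rightarrow$~(ii) by replacing $\mathcal{C}$ with the prefix-free test $\mathcal{D}$ satisfying $\osg{\mathcal{C}_n}=\osg{\mathcal{D}_n}$, and (ii)~$\Rightarrow$~(i) by noting that every genuine relativized test satisfies conditions~(a) and~(b) --- are precisely what that reduction amounts to.

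However, the sketch you append for Theorem~\ref{eliminate-prefix-freeness-relative-to-infinite-sequences} contains a genuine gap. You define $\mathcal{D}_n$ as the set of minimal $\sigma$ with $\osg{\sigma}\subset\osg{\mathcal{C}_n}$ and claim this containment ``can be recognized by searching the enumeration of $\mathcal{C}_n$.'' Over a finite alphabet that is true by compactness (K\"onig's lemma): if every infinite extension of $\sigma$ has a prefix in $\mathcal{C}_n$, then finitely many elements of $\mathcal{C}_n$ already cover $\osg{\sigma}$, and this is witnessed at some finite stage. Over the Baire space $\Omega^\infty$ with $\Omega$ infinite this fails: $\osg{\sigma}\subset\osg{\mathcal{C}_n}$ is a statement quantifying over all infinite extensions of $\sigma$, no finite subset of $\mathcal{C}_n$ need cover $\osg{\sigma}$, and the condition is in general not $\Sigma^0_1$ relative to the oracle. (This is exactly the non-compactness the paper warns about in Section~\ref{Measure-theory-on-Baire-space}.) A correct construction must work directly from the enumeration $\sigma_1,\sigma_2,\dotsc$ of $\mathcal{C}_n$: for instance, letting $m_i=\max(\abs{\sigma_1},\dots,\abs{\sigma_i})$ and $E_i$ be the set of all $\tau\in\Omega^{m_i}$ extending $\sigma_i$ but extending no $\sigma_j$ with $j<i$, one checks that $\mathcal{D}_n=\bigcup_i E_i$ is prefix-free, r.e.\ relative to the oracle (each $E_i$ is an r.e.\ subset of $\Omega^{m_i}$ cut out by finitely many decidable conditions), and satisfies $\osg{\mathcal{D}_n}=\osg{\mathcal{C}_n}$, whence the measure bound is preserved. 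With the elimination theorem established this way, your proof of the stated equivalence goes through unchanged.
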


The following holds, obviously.

\begin{proposition}
Let $\Omega$ be an r.e.~infinite set, and let $P\in\PS(\Omega)$.
For each $k=1,\dots,\ell$, let $\beta_k$ be an infinite sequence over an r.e.~infinite set.
For every $\alpha\in\Omega^\infty$,
if $\alpha$ is Martin-L\"of $P$-random relative to $\beta_1,\dots,\beta_\ell$
then $\alpha$ is Martin-L\"of $P$-random.
\qed
\end{proposition}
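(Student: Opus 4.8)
The plan is to observe that the class of Martin-L\"of $P$-tests (Definition~\ref{ML_P-randomness-Bs}) is contained in the class of Martin-L\"of $P$-tests relative to $\beta_1,\dots,\beta_\ell$ (Definition~\ref{ML_P-randomness_rs}), and then read off the statement directly. The single point to verify is that an ordinary deterministic Turing machine may be regarded as an oracle Turing machine: namely, an oracle Turing machine that never issues a query to its oracle. For such a machine, the computation relative to any choice of oracle $\beta_1,\dots,\beta_\ell$ is identical to its ordinary computation, so the set it accepts relative to $\beta_1,\dots,\beta_\ell$ equals the set it accepts in the ordinary sense. This is a routine fact about the formalization of relativized computation, and I would state it as such rather than belabor it.

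On that basis, the first step is: let $\mathcal{C}\subset\N^+\times\Omega^*$ be an arbitrary Martin-L\"of $P$-test. By definition $\mathcal{C}$ is r.e., so there is a deterministic Turing machine $\mathcal{M}$ with $\mathcal{C}=\{x\in\N^+\times\Omega^*\mid \text{$\mathcal{M}$ accepts $x$}\}$, and for every $n\in\N^+$ the set $\mathcal{C}_n$ is a prefix-free subset of $\Omega^*$ with $\Bm{P}{\osg{\mathcal{C}_n}}<2^{-n}$. Reading $\mathcal{M}$ as an oracle Turing machine that ignores its oracle, we get $\mathcal{C}=\{x\in\N^+\times\Omega^*\mid \text{$\mathcal{M}$ accepts $x$ relative to $\beta_1,\dots,\beta_\ell$}\}$, so condition~(i) of Definition~\ref{ML_P-randomness_rs} holds; condition~(ii) is literally the same prefix-freeness and measure requirement already assumed. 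Hence $\mathcal{C}$ is a Martin-L\"of $P$-test relative to $\beta_1,\dots,\beta_\ell$.

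The second step concludes the argument. Suppose $\alpha\in\Omega^\infty$ is Martin-L\"of $P$-random relative to $\beta_1,\dots,\beta_\ell$, and let $\mathcal{C}$ be an arbitrary Martin-L\"of $P$-test. By the first step $\mathcal{C}$ is a Martin-L\"of $P$-test relative to $\beta_1,\dots,\beta_\ell$, so by the hypothesis there exists $n\in\N^+$ with $\alpha\notin\osg{\mathcal{C}_n}$, i.e., $\alpha$ passes $\mathcal{C}$. Since $\mathcal{C}$ was arbitrary, $\alpha$ is Martin-L\"of $P$-random, as desired.

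There is no real obstacle here: the content is entirely in the inclusion of test classes, which in turn rests only on the elementary fact that unrelativized computation is the special case of relativized computation in which the oracle is never consulted. The proof is therefore essentially a definitional unwinding, which is why the statement is asserted to hold obviously.
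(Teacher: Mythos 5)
Your proof is correct and is precisely the definitional unwinding that the paper has in mind: the paper offers no proof at all, stating only that the proposition ``holds, obviously,'' and your observation that every Martin-L\"of $P$-test is a Martin-L\"of $P$-test relative to $\beta_1,\dots,\beta_\ell$ (via an oracle machine that never queries its oracle) is exactly the reason. Nothing further is needed.
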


The converse does not necessarily hold.
In the case where $\alpha$ is Martin-L\"of $P$-random, the converse means that
the Martin-L\"of $P$-randomness of $\alpha$ is \emph{independent} of $\beta_1,\dots,\beta_\ell$
in a certain sense.

We here recall van Lambalgen's Theorem.
Let $\beta$ be an infinite binary sequence.
For any $\alpha\in\XI$,
we say that $\alpha$ is \emph{Martin-L\"{o}f random relative to $\beta$} if
$\alpha$ is Martin-L\"{o}f $U$-random relative to $\beta$ where
$U$ is a discrete probability space on $\N$
such that (i) $U(0)=U(1)=1/2$ and (ii) $U(n)=0$ for every $n\ge 2$.
Based on this notion of \emph{Martin-L\"of randomness relative to an
infinite sequence},
van Lambalgen's Theorem is stated as follows.

\begin{theorem}[van Lambalgen's Theorem, van Lambalgen \cite{vL87}]
Let $\alpha,\beta\in\{0,1\}^\infty$,
and let $\alpha\oplus\beta$ denote the infinite binary sequence
\[
  \alpha(1)\beta(1)\alpha(2)\beta(2)\alpha(3)\beta(3)\dotsc\dotsc.
\]
Then the following conditions are equivalent.
\begin{enumerate}
  \item $\alpha\oplus\beta$ is Martin-L\"of random.
  \item $\alpha$ is Martin-L\"of random relative to $\beta$ and $\beta$ is Martin-L\"of random.\qed
\end{enumerate}
\end{theorem}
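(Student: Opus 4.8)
The plan is to prove the two implications separately. Throughout I would work inside $\XI$ with the measure $\lambda_U$, so that ``Martin-L\"of random'' means Martin-L\"of $U$-random, and I would use Theorems~\ref{eliminate-prefix-freeness} and~\ref{eliminate-prefix-freeness-relative-to-infinite-sequences} in order to argue with plain r.e.\ sets $\mathcal C\subset\N^+\times\{0,1\}^*$ satisfying $\Bm{U}{\osg{\mathcal C_n}}<2^{-n}$ instead of genuine Martin-L\"of tests. For $\gamma\in\XI$ I write $\gamma_{\mathrm{od}}$ and $\gamma_{\mathrm{ev}}$ for the subsequences of its odd- and even-indexed bits, so that $\gamma=\gamma_{\mathrm{od}}\oplus\gamma_{\mathrm{ev}}$; the de-interleaving map $\gamma\mapsto(\gamma_{\mathrm{od}},\gamma_{\mathrm{ev}})$ is a measure isomorphism of $(\XI,\lambda_U)$ onto $(\XI,\lambda_U)\times(\XI,\lambda_U)$, which is what lets me move Fubini's theorem and measurability of sections back and forth between the interleaved and the product picture.

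For the implication (ii)$\Rightarrow$(i) I would argue by contraposition. If $\alpha\oplus\beta$ fails some r.e.\ set $\mathcal C$ with $\Bm{U}{\osg{\mathcal C_n}}<2^{-n}$, consider the sections $\mathrm{sec}_m(Y):=\{X\in\XI\mid X\oplus Y\in\osg{\mathcal C_m}\}$; each is r.e.\ relative to $Y$, uniformly in $m$ and $Y$ (enumerate $\mathcal C_m$ and keep those $\sigma$ whose even part is a prefix of $Y$), and $\int\Bm{U}{\mathrm{sec}_m(Y)}\,d\lambda_U(Y)=\Bm{U}{\osg{\mathcal C_m}}$. Setting $B_n:=\{Y\mid\Bm{U}{\mathrm{sec}_{2n+2}(Y)}>2^{-n-1}\}$ gives effectively open sets with $\lambda_U(B_n)<2^{-n}$, and after replacing $B_n$ by the union of all $B_k$ with $k\ge n$ and reindexing one obtains a Martin-L\"of test; since $\beta$ is Martin-L\"of random there is an $n_1$ with $\Bm{U}{\mathrm{sec}_{2k+2}(\beta)}<2^{-k}$ for all $k>n_1$. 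Then the sections $\mathrm{sec}_{2(k+n_1)+2}(\beta)$, $k\ge1$, are r.e.\ relative to $\beta$ uniformly in $k$ with $\lambda_U$-measure $<2^{-k}$, hence by Theorem~\ref{eliminate-prefix-freeness-relative-to-infinite-sequences} they arise from a Martin-L\"of $U$-test relative to $\beta$ (a hard-coded $n_1$ is a legitimate component of an oracle machine), and $\alpha$ lies in each of its levels because $\alpha\oplus\beta$ lies in every $\osg{\mathcal C_m}$ --- contradicting the relative randomness of $\alpha$.

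For (i)$\Rightarrow$(ii) the randomness of $\beta$ is the easy half: from an r.e.\ set $\mathcal C$ with $\beta\in\osg{\mathcal C_n}$ for all $n$ and $\Bm{U}{\osg{\mathcal C_n}}<2^{-n}$, form $\mathcal D_n:=\{\tau\in\{0,1\}^*\mid|\tau|\text{ even},\ \tau_{\mathrm{ev}}\in\mathcal C_n\}$, which is r.e., has $\Bm{U}{\osg{\mathcal D_n}}\le\Bm{U}{\osg{\mathcal C_n}}<2^{-n}$ (the odd bits are unconstrained), and is witnessed by $\alpha\oplus\beta$ because $(\alpha\oplus\beta)_{\mathrm{ev}}=\beta$, contradicting (i). For the relative randomness of $\alpha$, suppose an oracle Turing machine $\mathcal M$ computes a Martin-L\"of $U$-test $\mathcal C$ relative to $\beta$ with $\alpha\in\osg{\mathcal C_n}$ for all $n$. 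First I would trim $\mathcal M$ so that $\Bm{U}{\osg{\mathcal C^Z_n}}\le2^{-n}$ holds for every oracle $Z\in\XI$, not merely for $Z=\beta$: let $\mathcal M$ also track, for each level $n$, the measure of the cylinders enumerated there so far (computable for finite unions of cylinders), and suppress any enumeration that would push it past $2^{-n}$. Along the branch $\beta$ nothing is suppressed, since the original total was already below $2^{-n}$, so $\mathcal C^\beta$ is unchanged; along every other branch the level measure is now capped. Then, using that $\mathcal M$ reads its oracle as an initial segment, record the r.e.\ set of pairs $(\sigma,\rho)$ such that $\mathcal M$ enumerates $\sigma$ into level $n+1$ after reading exactly the oracle bits $\rho$, and from it build the r.e.\ set $\mathcal D$ whose level $n$ consists of all $\tau$ with $|\tau|=2\max(|\sigma|,|\rho|)$, $\tau_{\mathrm{od}}\succeq\sigma$, $\tau_{\mathrm{ev}}\succeq\rho$. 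One checks $\osg{\mathcal D_n}=\{Z\in\XI\mid Z_{\mathrm{od}}\in\osg{\mathcal C^{Z_{\mathrm{ev}}}_{n+1}}\}$, so Fubini together with the trimming yields $\Bm{U}{\osg{\mathcal D_n}}\le2^{-(n+1)}<2^{-n}$, while $\alpha\oplus\beta$ lies in every $\osg{\mathcal D_n}$ because $(\alpha\oplus\beta)_{\mathrm{od}}=\alpha\in\osg{\mathcal C^\beta_{n+1}}$; this contradicts (i).

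I expect the main obstacle to be precisely the trimming step in (i)$\Rightarrow$(ii): the naive ``integration over the oracle'' produces a Martin-L\"of test only if the relativized test is measure-bounded along every oracle, and the capping procedure must be set up so that one can prove it leaves the distinguished branch $\beta$ untouched (this is where prefix-freeness of the levels, via Theorem~\ref{eliminate-prefix-freeness-relative-to-infinite-sequences}, and computability of measures of finite unions of cylinders come in). The remaining ingredients --- the de-interleaving measure isomorphism, Fubini and measurability of sections on $(\XI,\lambda_U)$, and the oracle-use bookkeeping --- are routine.
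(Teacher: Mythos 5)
Your proposal is correct and follows essentially the same route as the paper's proof of its generalization (Theorems~\ref{only-if-part} and~\ref{if-part}, which elaborate Nies's argument): the same split into the easy marginal half, the Markov-inequality bound on the oracle sections $S_d$ for one direction, and integration over the oracle for the other. The only cosmetic difference is that where you trim an arbitrary relativized test so that it is measure-bounded along every oracle, the paper instead invokes a universal relativized test (Theorem~\ref{ExistsUML_P-test_rs}) that is bounded for all oracles by construction---and your trimming is precisely how that lemma would be proved.
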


We generalize van Lambalgen's Theorem as follows.

\begin{theorem}[Generalization of van Lambalgen's Theorem I]\label{gvL}
Let $\Omega_1$ and $\Omega_2$ be r.e.~infinite sets,
and let $P_1\in\PS(\Omega_1)$ and $P_2\in\PS(\Omega_2)$.
Let $\alpha_1\in\Omega_1^\infty$ and $\alpha_2\in\Omega_2^\infty$.
For each $k=1,\dots,\ell$,
let $\beta_k$ be an infinite sequence over an r.e.~infinite set.
Suppose that $P_1$ is computable.
Then $\alpha_1\times\alpha_2$ is Martin-L\"{o}f $P_1\times P_2$-random relative to
$\beta_1,\dots,\beta_\ell$
if and only if
$\alpha_1$ is Martin-L\"{o}f $P_1$-random relative to
$\alpha_2,\beta_1,\dots,\beta_\ell$
and $\alpha_2$ is Martin-L\"{o}f $P_2$-random relative to $\beta_1,\dots,\beta_\ell$.
\qed
\end{theorem}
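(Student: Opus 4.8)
Throughout, write $\bar\beta$ for the tuple $\beta_1,\dots,\beta_\ell$, let $\pi_1,\pi_2$ be the coordinate projections of $(\Omega_1\times\Omega_2)^\infty$ onto $\Omega_1^\infty$ and $\Omega_2^\infty$, and for $\mathcal{A}\subset(\Omega_1\times\Omega_2)^\infty$ and $Y\in\Omega_2^\infty$ let $\mathcal{A}^{Y}:=\{X\in\Omega_1^\infty\mid X\times Y\in\mathcal{A}\}$ denote the $Y$-section. The plan is to transcribe van Lambalgen's classical argument to this setting, using two preliminaries. First, the product structure $\Bm{P_1\times P_2}{\osg{\sigma_1\times\sigma_2}}=P_1(\sigma_1)P_2(\sigma_2)$ for $\abs{\sigma_1}=\abs{\sigma_2}$, immediate from \eqref{pBm-Bs} and the definition of $P_1\times P_2$; combined with $\sum_{\sigma_1\in\Omega_1^{m}}P_1(\sigma_1)=1$ this yields a Fubini-type identity: for every open $\mathcal{A}\subset(\Omega_1\times\Omega_2)^\infty$, $\int_{\Omega_2^\infty}\Bm{P_1}{\mathcal{A}^{Y}}\,d\Bm{P_2}{Y}=\Bm{P_1\times P_2}{\mathcal{A}}$, proved by writing $\mathcal{A}=\osg{E}$ with $E$ prefix-free, noting that the cylinders $\osg{\sigma_1}$ occurring in $\mathcal{A}^{Y}$ are pairwise disjoint, and interchanging sum and integral. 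Second, by Theorem~\ref{ML_P-randomness_eliminated-prefix-freeness-relative-to-infinite-sequences}, to refute Martin-L\"of $R$-randomness relative to a tuple of oracles it suffices to produce a family $\{\mathcal{C}_n\}$, enumerable relative to those oracles, with $\Bm{R}{\osg{\mathcal{C}_n}}<2^{-n}$ and caught by the given sequence; hence I will never worry about prefix-freeness of the tests I build.

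For the ``only if'' direction, assume $\alpha_1\times\alpha_2$ is Martin-L\"of $P_1\times P_2$-random relative to $\bar\beta$. If $\mathcal{V}$ were a Martin-L\"of $P_2$-test relative to $\bar\beta$ failed by $\alpha_2$, then $\mathcal{D}_n:=\{\sigma_1\times\sigma_2\mid\sigma_2\in\mathcal{V}_n,\ \sigma_1\in\Omega_1^{\abs{\sigma_2}}\}$ is enumerable relative to $\bar\beta$ (as $\Omega_1$ is r.e.), has $\Bm{P_1\times P_2}{\osg{\mathcal{D}_n}}=\Bm{P_2}{\osg{\mathcal{V}_n}}<2^{-n}$ by the two facts above, and is caught by $\alpha_1\times\alpha_2$ --- a contradiction; this sub-case uses no computability of $P_1$. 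For the second assertion, let $\mathcal{V}$ be a test relative to $\alpha_2,\bar\beta$ failed by $\alpha_1$, computed by an oracle machine $\mathcal{M}$. I would simulate $\mathcal{M}$ with every finite $\tau\in\Omega_2^{*}$ substituted for $\alpha_2$, but \emph{truncate}: a string $\sigma_1$ that $\mathcal{M}$ would enumerate at level $m$ is retained only as long as an upper estimate --- computable since $P_1$ is computable --- for the $\lambda_{P_1}$-measure of the cylinders retained so far stays below $2^{-m+1}$. For \emph{every} $\tau$ the retained level $m$ then has $\lambda_{P_1}$-measure below $2^{-m+2}$; and for $\tau$ ranging over prefixes of $\alpha_2$ the threshold never binds (the genuine level has measure below $2^{-m}$), so the initial segment of $\alpha_1$ witnessing $\alpha_1\in\osg{\mathcal{V}_m}$ is still retained, using only a finite prefix of $\alpha_2$. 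Pairing each retained $\sigma_1$ with every initial segment of an $\Omega_2^\infty$-oracle consistent with the queries that produced it gives a family $\mathcal{D}$ enumerable relative to $\bar\beta$ whose $n$th section at $Y$ is exactly the retained level $n+2$ for oracle $Y$; by the Fubini identity $\Bm{P_1\times P_2}{\osg{\mathcal{D}_n}}<2^{-n}$, and $\alpha_1\times\alpha_2$ is caught --- again a contradiction.

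For the ``if'' direction, assume $\alpha_1$ is Martin-L\"of $P_1$-random relative to $\alpha_2,\bar\beta$ and $\alpha_2$ is Martin-L\"of $P_2$-random relative to $\bar\beta$, and suppose a test $\mathcal{U}$ relative to $\bar\beta$ satisfies $\alpha_1\times\alpha_2\in\osg{\mathcal{U}_n}$ for all $n$. By the Fubini identity $\int_{\Omega_2^\infty}\Bm{P_1}{\bigl(\osg{\mathcal{U}_{2j}}\bigr)^{Y}}\,d\Bm{P_2}{Y}<2^{-2j}$, so Markov's inequality gives $\Bm{P_2}{\mathcal{D}_j}<2^{-j+1}$ for $\mathcal{D}_j:=\{Y\mid\Bm{P_1}{\bigl(\osg{\mathcal{U}_{2j}}\bigr)^{Y}}>2^{-j-1}\}$; moreover $\mathcal{D}_j$ is an open set with presentation enumerable relative to $\bar\beta$, since $P_1$ being computable lets one confirm effectively when a finite union of cylinders has $\lambda_{P_1}$-measure exceeding the rational $2^{-j-1}$. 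Then $\bigl\{\bigcup_{j\ge k+2}\mathcal{D}_j\bigr\}_{k}$, presented in $\Omega_2^{*}$, is a Martin-L\"of $P_2$-test relative to $\bar\beta$; since $\alpha_2$ passes it, there is $k_0$ with $\Bm{P_1}{\bigl(\osg{\mathcal{U}_{2j}}\bigr)^{\alpha_2}}\le 2^{-j-1}$ for all $j\ge k_0+2$. Hard-wiring this $k_0$ --- only its existence matters --- the family $\mathcal{E}_i:=\bigl(\osg{\mathcal{U}_{2(i+k_0+2)}}\bigr)^{\alpha_2}$, presented in $\Omega_1^{*}$, is enumerable relative to $\alpha_2,\bar\beta$, has $\lambda_{P_1}$-measure below $2^{-i}$, and contains $\alpha_1$ for every $i\in\N^{+}$ (because $\alpha_1\times\alpha_2\in\osg{\mathcal{U}_{2(i+k_0+2)}}$); so it is a Martin-L\"of $P_1$-test relative to $\alpha_2,\bar\beta$ failed by $\alpha_1$, contradicting the hypothesis.

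The routine ingredients are the combinatorics of sections, Markov's inequality, and the device of hard-wiring the constant $k_0$ whose existence the randomness of $\alpha_2$ supplies. The main obstacle --- and the one place where computability of $P_1$ is indispensable --- is the effectivity bookkeeping in the two non-trivial constructions: truncating a test computed with oracle $\alpha_2$ so that it remains a valid test for \emph{every} finite oracle guess, and extracting a $P_2$-test from $\mathcal{U}$ by deciding semi-effectively when a section's $\lambda_{P_1}$-measure overshoots a rational threshold. Both reduce to the $\lambda_{P_1}$-measure of a finite union of cylinders being a uniformly computable real, which is precisely the hypothesis of the theorem, and the reason it cannot be dropped for $P_1$.
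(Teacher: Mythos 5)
Your proposal is correct in substance and, in both directions, follows the same overall strategy as the paper's proof (Theorems~\ref{only-if-part} and \ref{if-part}). The easy half of the ``only if'' direction lifts a $P_2$-test to a product test exactly as the paper does with the sets $F(\sigma_2)$, and your ``if'' direction is the paper's argument in different clothing: your $\mathcal{D}_j$ and $\mathcal{E}_i$ play the roles of the paper's $S_d$ (built from $F(W,x)$) and $H_d$, your Markov step is the paper's summation over the minimal strings of $S_d$, and your hard-wired $k_0$ is the paper's $d_0$. The Fubini identity you isolate is a clean packaging of the finite computations the paper performs with $F(W,x)$, $[\emptyset\times x]$ and $G_n(k)$; note only that, as in the paper, the enumerability of $\mathcal{D}_j$ rests on \emph{left}-computability of $\lambda_{P_1}$ (one enumerates when a finite union of cylinders exceeds a rational threshold), which is all that is available and all that is needed there.

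The one genuinely different step is the hard half of the ``only if'' direction. The paper invokes a \emph{universal} Martin-L\"of $P_1$-test relative to $\ell+1$ oracles (Theorem~\ref{ExistsUML_P-test_rs}, requiring only right-computability of $P_1$); since a universal test satisfies its measure bound for \emph{every} oracle, the use-restricted sets $\mathcal{U}^\sigma_n$ automatically satisfy $\Bm{P_1}{\osg{\mathcal{U}^\sigma_n}}<2^{-n}$ and no truncation is needed. You instead truncate an arbitrary given test using computable upper bounds on $\lambda_{P_1}$-measures. That route is viable, but it contains a point you gloss over: the $Y$-section of your product test $\mathcal{D}_n$ is the union, over all finite prefixes $\tau$ of $Y$, of the separately truncated sets $R^\tau_{n+2}$, and if the truncation is performed independently for each $\tau$ these sets need not be nested, so their union could exceed the intended measure bound. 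You must make the truncated enumeration monotone in the oracle prefix (for $\tau'\succ\tau$, first replay the retained set for $\tau$, then admit new elements subject to the threshold), after which the sections are increasing unions and the bound survives; the witness for $\alpha_1$ is still retained along prefixes of $\alpha_2$ by your ``threshold never binds'' observation. In the paper's construction this issue does not arise, because the untruncated sets $\mathcal{U}^\sigma_n\cap\Omega_1^{\le k}$ are monotone in $\sigma$ by the use-restriction alone and each already obeys the bound. With that repair your argument is complete; it buys independence from the universal-test machinery at the cost of this extra bookkeeping.
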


The proof of Theorem~\ref{gvL} is obtained by
generalizing and elaborating
the proof of van Lambalgen's Theorem given in Nies~\cite[Section 3.4]{N09}.
The detail
of the proof of Theorem~\ref{gvL}
is given in the subsequent two subsections.
Note that in Theorem~\ref{gvL},
the computability of $P_1$ is
assumed
while that of $P_2$ is not required.

We have Theorem~\ref{cor_gvL} below based on Theorem~\ref{gvL}.
Note that the computability of $P_n$ is not required in Theorem~\ref{cor_gvL}.

\begin{theorem}[Generalization of van Lambalgen's Theorem II]\label{cor_gvL}
Let $n\ge 2$.
Let $\Omega_1,\dots,\Omega_n$ be r.e.~infinite sets,
and let $P_1\in\PS(\Omega_1),\dots,P_n\in\PS(\Omega_n)$.
Let $\alpha_1\in\Omega_1^\infty,\dots,\alpha_n\in\Omega_n^\infty$.
For each $k=1,\dots,\ell$, let $\beta_k$ be an infinite sequence over an r.e.~infinite set.
Suppose that $P_1,\dots,P_{n-1}$ are computable.
Then $\alpha_1\times\dots\times\alpha_n$ is
Martin-L\"of $P_1\times\dots\times P_n$-random relative to $\beta_1,\dots,\beta_\ell$ if and only if
for every $k=1,\dots,n$ it holds that
$\alpha_k$ is Martin-L\"of $P_k$-random relative to
$\alpha_{k+1},\dots, \alpha_n,\beta_1,\dots,\beta_\ell$.
\end{theorem}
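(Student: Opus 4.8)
The plan is to argue by induction on $n\ge 2$, using Theorem~\ref{gvL} both as the base case and as the main tool in the inductive step. For $n=2$ the assertion is precisely Theorem~\ref{gvL} applied with $P_1$ computable and $P_2$ arbitrary: $\alpha_1\times\alpha_2$ is Martin-L\"of $P_1\times P_2$-random relative to $\beta_1,\dots,\beta_\ell$ if and only if $\alpha_1$ is Martin-L\"of $P_1$-random relative to $\alpha_2,\beta_1,\dots,\beta_\ell$ and $\alpha_2$ is Martin-L\"of $P_2$-random relative to $\beta_1,\dots,\beta_\ell$.

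For the inductive step, fix $n\ge 3$ and assume the statement with $n$ replaced by $n-1$. Put $\Omega':=\Omega_2\times\dots\times\Omega_n$ (an r.e.~infinite set), $P':=P_2\times\dots\times P_n\in\PS(\Omega')$, and $\alpha':=\alpha_2\times\dots\times\alpha_n\in(\Omega')^\infty$. Since $P_1$ is computable, Theorem~\ref{gvL} applies to the pair $(P_1,P')$, the sequences $(\alpha_1,\alpha')$, and the oracles $\beta_1,\dots,\beta_\ell$ (no computability of $P'$ is needed), and gives: $\alpha_1\times\alpha'$ is Martin-L\"of $P_1\times P'$-random relative to $\beta_1,\dots,\beta_\ell$ if and only if $\alpha_1$ is Martin-L\"of $P_1$-random relative to $\alpha',\beta_1,\dots,\beta_\ell$ and $\alpha'$ is Martin-L\"of $P'$-random relative to $\beta_1,\dots,\beta_\ell$. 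Now three routine identifications finish the proof. First, the canonical ``flattening'' bijection $\phi\colon\Omega_1\times\Omega'\to\Omega_1\times\dots\times\Omega_n$ is recursive with recursive inverse; applied pointwise it carries $\alpha_1\times\alpha'$ to $\alpha_1\times\dots\times\alpha_n$ and transports $P_1\times P'$ to $P_1\times\dots\times P_n$, so by the relativized form of the closure property in Theorem~\ref{X(p)-X(P)}, applied to $\phi$ and to $\phi^{-1}$, the sequence $\alpha_1\times\alpha'$ is Martin-L\"of $P_1\times P'$-random relative to $\beta_1,\dots,\beta_\ell$ exactly when $\alpha_1\times\dots\times\alpha_n$ is Martin-L\"of $P_1\times\dots\times P_n$-random relative to $\beta_1,\dots,\beta_\ell$. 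Secondly, since each of $\alpha'(k)$ and the tuple $(\alpha_2(k),\dots,\alpha_n(k))$ is computable from the other, an oracle Turing machine can query $\alpha'$ if and only if it can query $\alpha_2,\dots,\alpha_n$; hence, by Definition~\ref{ML_P-randomness_rs}, ``$\alpha_1$ is Martin-L\"of $P_1$-random relative to $\alpha',\beta_1,\dots,\beta_\ell$'' is the same statement as ``$\alpha_1$ is Martin-L\"of $P_1$-random relative to $\alpha_2,\dots,\alpha_n,\beta_1,\dots,\beta_\ell$''. Thirdly, since $P_2,\dots,P_{n-1}$ are computable (being among $P_1,\dots,P_{n-1}$), the induction hypothesis applies to the $n-1$ sequences $\alpha_2,\dots,\alpha_n$ with spaces $P_2,\dots,P_n$ and oracles $\beta_1,\dots,\beta_\ell$, yielding: $\alpha'=\alpha_2\times\dots\times\alpha_n$ is Martin-L\"of $P'$-random relative to $\beta_1,\dots,\beta_\ell$ if and only if for every $k=2,\dots,n$, $\alpha_k$ is Martin-L\"of $P_k$-random relative to $\alpha_{k+1},\dots,\alpha_n,\beta_1,\dots,\beta_\ell$. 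Combining these with the conclusion of Theorem~\ref{gvL} shows that $\alpha_1\times\dots\times\alpha_n$ is Martin-L\"of $P_1\times\dots\times P_n$-random relative to $\beta_1,\dots,\beta_\ell$ if and only if for every $k=1,\dots,n$, $\alpha_k$ is Martin-L\"of $P_k$-random relative to $\alpha_{k+1},\dots,\alpha_n,\beta_1,\dots,\beta_\ell$, which is the claim for $n$.

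Since all the real work is already packaged in Theorem~\ref{gvL}, I do not expect any serious difficulty. The one point demanding care is to peel off the \emph{first} factor $\alpha_1$ rather than the last, so that the computability requirements line up: Theorem~\ref{gvL} needs the first factor computable, and the induction hypothesis (applied to $\alpha_2,\dots,\alpha_n$) needs all but the last of the remaining factors computable, and both are guaranteed by the hypothesis that $P_1,\dots,P_{n-1}$ are computable. The rest of the obstacle is purely expository: the relativized versions of the two elementary closure facts used in identifications (1) and (2) — invariance of Martin-L\"of $P$-randomness relative to oracles under recursive relabelings of the alphabet, and interchangeability of an oracle $\alpha'$ with the list of oracles $\alpha_2,\dots,\alpha_n$ into which it uniformly decomposes — should be recorded as short lemmas (or remarks) alongside their earlier non-relativized counterparts.
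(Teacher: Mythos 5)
Your proof is correct, and it follows the same overall strategy as the paper (induction on $n$ with Theorem~\ref{gvL} doing all the work), but with a genuinely different decomposition in the inductive step: you peel off the \emph{first} factor, writing $\alpha_1\times(\alpha_2\times\dots\times\alpha_n)$ and applying Theorem~\ref{gvL} with the computable $P_1$ in the first slot and the (possibly non-computable) product $P_2\times\dots\times P_n$ in the second, then invoking the induction hypothesis on $\alpha_2,\dots,\alpha_n$ with the unchanged oracle list. The paper instead peels off the \emph{last} factor, writing $(\alpha_1\times\dots\times\alpha_m)\times\alpha_{m+1}$ with the computable block $P_1\times\dots\times P_m$ in the first slot and $P_{m+1}$ in the second, and then applies the induction hypothesis to $\alpha_1,\dots,\alpha_m$ relative to the \emph{augmented} oracle list $\alpha_{m+1},\beta_1,\dots,\beta_\ell$ (this is why the theorem is stated for arbitrary $\ell$). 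The trade-offs are minor but real: the paper's route implicitly uses that a finite product of computable discrete probability spaces is computable, and it never needs your second identification, since $\alpha_{m+1}$ enters the oracle list directly as a single sequence; your route avoids the computability-of-products point but requires the extra (true, and correctly flagged) observation that Martin-L\"of $P_1$-randomness relative to the single oracle $\alpha'=\alpha_2\times\dots\times\alpha_n$ coincides with randomness relative to the list $\alpha_2,\dots,\alpha_n$. Both routes rely on identifying nested products $(\,\cdot\times\cdot\,)\times\cdot$ with flat products; you make this flattening explicit via a recursive bijection and a relativized form of Theorem~\ref{X(p)-X(P)}, whereas the paper treats it as a silent identification. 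Your suggestion to record the two relativized closure facts as short lemmas is sensible, though strictly speaking the paper's own route only needs the flattening one.
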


\begin{proof}
We show the result by induction on $n\ge 2$.
In the case of $n=2$, the result holds since it is precisely Theorem~\ref{gvL}.

For an arbitrary $m\ge 2$, assume that the result holds for $n=m$.
Let $\Omega_1,\dots,\Omega_{m+1}$ be r.e.~infinite sets,
and let $P_1\in\PS(\Omega_1),\dots,P_{m+1}\in\PS(\Omega_{m+1})$.
Let $\alpha_1\in\Omega_1^\infty,\dots,\alpha_{m+1}\in\Omega_{m+1}^\infty$.
For each $k=1,\dots,\ell$, let $\beta_k$ be an infinite sequence over an r.e.~infinite set.
Suppose that $P_1,\dots,P_m$ are computable.
Then, by applying Theorem~\ref{gvL} with $P_1\times\dots\times P_m$ as $P_1$, $P_{m+1}$ as $P_2$,
$\alpha_1\times\dots\times \alpha_m$ as $\alpha_1$, and $\alpha_{m+1}$ as $\alpha_2$
in Theorem~\ref{gvL},
we have that $(\alpha_1\times\dots\times\alpha_m)\times\alpha_{m+1}$ is
Martin-L\"{o}f $(P_1\times\dots\times P_m)\times P_{m+1}$-random relative to $\beta_1,\dots,\beta_\ell$
if and only if
$\alpha_1\times\dots\times\alpha_m$ is Martin-L\"{o}f $P_1\times\dots\times P_m$-random
relative to $\alpha_{m+1},\beta_1,\dots,\beta_\ell$ and
$\alpha_{m+1}$ is Martin-L\"{o}f $P_{m+1}$-random relative to $\beta_1,\dots,\beta_\ell$.
Thus, by applying the result for $n=m$ we have the result for $n=m+1$.
This completes the proof.
\end{proof}

%%%%%%%%%%%%%%%%%%%%%%%%%%%%%%%%%%%%%%%%%%%%%%%%%%%%%%%
\subsection{The proof of the ``only if'' part of Theorem~\ref{gvL}}
\label{section-only-if-part}

We prove the following theorem, from which the ``only if'' part of Theorem~\ref{gvL} follows.

\begin{theorem}\label{only-if-part}
Let $\Omega_1$ and $\Omega_2$ be r.e.~infinite sets,
and let $P_1\in\PS(\Omega_1)$ and $P_2\in\PS(\Omega_2)$.
Let $\alpha_1\in\Omega_1^\infty$ and $\alpha_2\in\Omega_2^\infty$.
For each $k=1,\dots,\ell$, let $\beta_k$ be an infinite sequence over an r.e.~infinite set.
Suppose that $P_1$ is right-computable.
If $\alpha_1\times\alpha_2$ is Martin-L\"{o}f $P_1\times P_2$-random relative to
$\beta_1,\dots,\beta_\ell$ then $\alpha_1$ is Martin-L\"of $P_1$-random relative to
$\alpha_2,\beta_1,\dots,\beta_\ell$
and $\alpha_2$ is Martin-L\"{o}f $P_2$-random relative to $\beta_1,\dots,\beta_\ell$.
\qed
\end{theorem}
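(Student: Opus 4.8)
The statement is the conjunction of two assertions, and the plan is to prove each by contraposition: from a test witnessing that $\alpha_2$ is not Martin-L\"{o}f $P_2$-random relative to $\beta_1,\dots,\beta_\ell$ (respectively, that $\alpha_1$ is not Martin-L\"{o}f $P_1$-random relative to $\alpha_2,\beta_1,\dots,\beta_\ell$) I would construct a Martin-L\"{o}f $P_1\times P_2$-test relative to $\beta_1,\dots,\beta_\ell$ that $\alpha_1\times\alpha_2$ fails. Throughout I would invoke Theorem~\ref{ML_P-randomness_eliminated-prefix-freeness-relative-to-infinite-sequences} so that I may work with, and produce, r.e.\ sets with the right measure bounds without having to maintain prefix-freeness, and I would repeatedly use the identity $\sum_{\sigma\in\Omega_1^{m}}P_1(\sigma)=1$ (immediate by induction from $\sum_{a\in\Omega_1}P_1(a)=1$) together with $\Bm{P_1\times P_2}{\osg{\sigma\times\tau}}=P_1(\sigma)P_2(\tau)$ for $\abs{\sigma}=\abs{\tau}$.

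For the first assertion, suppose $\mathcal{C}\subseteq\N^+\times\Omega_2^*$ is r.e.\ relative to $\beta_1,\dots,\beta_\ell$ with $\Bm{P_2}{\osg{\mathcal{C}_n}}<2^{-n}$ and $\alpha_2\in\osg{\mathcal{C}_n}$ for every $n$. I would put $\mathcal{D}_n:=\{\,\sigma\times\tau\mid\tau\in\mathcal{C}_n,\ \sigma\in\Omega_1^{\abs{\tau}}\,\}$. Since $\Omega_1$ is r.e., $\mathcal{D}$ is r.e.\ relative to $\beta_1,\dots,\beta_\ell$; each $\mathcal{D}_n$ is prefix-free because $\mathcal{C}_n$ is; $\Bm{P_1\times P_2}{\osg{\mathcal{D}_n}}=\sum_{\tau\in\mathcal{C}_n}(\sum_{\sigma\in\Omega_1^{\abs{\tau}}}P_1(\sigma))P_2(\tau)=\Bm{P_2}{\osg{\mathcal{C}_n}}<2^{-n}$; and $\alpha_2\in\osg{\mathcal{C}_n}$ forces $\alpha_1\times\alpha_2\in\osg{\mathcal{D}_n}$ for every $n$, contradicting the hypothesis. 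Note this step uses neither computability nor right-computability of $P_1$.

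The second assertion is the substantial one; I would model the argument on the proof of the ``only if'' direction of van Lambalgen's Theorem in Nies~\cite[Section~3.4]{N09}, generalized to a countable alphabet, to generalized Bernoulli measures, and to a merely right-computable $P_1$. Suppose $\alpha_1$ is not Martin-L\"{o}f $P_1$-random relative to $\alpha_2,\beta_1,\dots,\beta_\ell$. Passing to the level-$2n$ components of a witnessing test and using Theorem~\ref{ML_P-randomness_eliminated-prefix-freeness-relative-to-infinite-sequences}, I may fix an oracle machine $\mathcal{M}$ whose set $\mathcal{C}\subseteq\N^+\times\Omega_1^*$ of pairs accepted with oracle $(\alpha_2,\beta_1,\dots,\beta_\ell)$ satisfies $\Bm{P_1}{\osg{\mathcal{C}_n}}<2^{-2n}$ and $\alpha_1\in\osg{\mathcal{C}_n}$ for every $n$. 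For $\tau\in\Omega_2^*$ let $\mathcal{C}^\tau$ be the set of pairs $\mathcal{M}$ accepts when $\tau$ is supplied in place of $\alpha_2$ and every query to the $\alpha_2$-oracle stays at a position $\le\abs{\tau}$; then $\{(\tau,n,\sigma)\mid(n,\sigma)\in\mathcal{C}^\tau\}$ is r.e.\ relative to $\beta_1,\dots,\beta_\ell$, one has $\mathcal{C}^\tau\subseteq\mathcal{C}^{\tau'}$ whenever $\tau\preceq\tau'$, and $\mathcal{C}=\bigcup_{m}\mathcal{C}^{\rest{\alpha_2}{m}}$. Because $P_1$ is right-computable, any finite union of basic cylinders can be \emph{certified} in finitely many steps to have $\lambda_{P_1}$-measure strictly below a prescribed dyadic value; using this I would trim the $(\mathcal{C}^\tau)_n$ to r.e.\ (relative to $\beta_1,\dots,\beta_\ell$, uniformly in $\tau,n$) sets $\widehat{\mathcal{C}}^\tau_n\subseteq\Omega_1^*$ that are monotone in $\tau$, satisfy $\Bm{P_1}{\osg{\widehat{\mathcal{C}}^\tau_n}}\le2^{-2n}$ for \emph{all} $\tau$, and coincide with $(\mathcal{C}^\tau)_n$ whenever $\tau\preceq\alpha_2$ (the last because then every finite subset of $(\mathcal{C}^\tau)_n$ lies in $\mathcal{C}_n$, whose measure is strictly below $2^{-2n}$, so each certification eventually succeeds). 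I would then set $\mathcal{D}_n:=\bigcup_{\tau\in\Omega_2^*}\bigcup_{\sigma\in\widehat{\mathcal{C}}^\tau_n}\{\,\sigma'\times\tau'\mid\abs{\sigma'}=\abs{\tau'}=\max(\abs{\sigma},\abs{\tau}),\ \sigma\preceq\sigma',\ \tau\preceq\tau'\,\}$, which is r.e.\ relative to $\beta_1,\dots,\beta_\ell$. Membership $\alpha_1\times\alpha_2\in\osg{\mathcal{D}_n}$ for every $n$ follows from $\alpha_1\in\osg{\mathcal{C}_n}=\osg{\bigcup_m(\mathcal{C}^{\rest{\alpha_2}{m}})_n}$ and $\widehat{\mathcal{C}}^{\rest{\alpha_2}{m}}_n=(\mathcal{C}^{\rest{\alpha_2}{m}})_n$. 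For the measure bound I would split $\osg{\mathcal{D}_n}$ according to the shortest prefix $\tau$ of the second coordinate at which the first coordinate is captured; monotonicity of $\tau\mapsto\widehat{\mathcal{C}}^\tau_n$ makes $\bigcup_m\widehat{\mathcal{C}}^{\rest{\gamma}{m}}_n$ an increasing union of $\lambda_{P_1}$-measure $\le2^{-2n}$ for every $\gamma\in\Omega_2^\infty$, and a telescoping sum over the tree $\Omega_2^*$ combined with countable additivity of $\lambda_{P_2}$ (a Tonelli-type estimate carried out directly on $\Omega_2^\infty$) gives $\Bm{P_1\times P_2}{\osg{\mathcal{D}_n}}\le2^{-2n}<2^{-n}$. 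Thus $\mathcal{D}$ is, after Theorem~\ref{ML_P-randomness_eliminated-prefix-freeness-relative-to-infinite-sequences}, a test relative to $\beta_1,\dots,\beta_\ell$ that $\alpha_1\times\alpha_2$ fails, a contradiction.

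The routine content is the verification of enumerability of the sets built and of the two cylinder-measure identities. The step I expect to be the main obstacle is the trimming in the second assertion: producing the sets $\widehat{\mathcal{C}}^\tau_n$ that are simultaneously monotone in $\tau$, bounded by $2^{-2n}$, and unchanged along $\alpha_2$, while $\lambda_{P_1}$ is only right-computable, so that ``measure below a threshold'' can be confirmed in finitely many steps but ``measure above a threshold'' cannot. This is exactly the point at which the hypothesis that $P_1$ is right-computable is used, and it explains why Theorem~\ref{gvL} demands computability of $P_1$ but not of $P_2$.
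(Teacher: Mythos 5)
Your proof is correct, and the easy half (transferring a test for $\alpha_2$ up to the product via $\mathcal{D}_n=\{\sigma\times\tau\mid\tau\in\mathcal{C}_n,\ \sigma\in\Omega_1^{\abs{\tau}}\}$) is word-for-word the paper's argument. For the substantial half the paper takes a different route to the same key point: instead of trimming the particular witnessing test by hand, it invokes Theorem~\ref{ExistsUML_P-test_rs} to fix a \emph{universal} Martin-L\"of $P_1$-test $\mathcal{M}$ relative to $\ell+1$ oracles and defines $\mathcal{U}^\sigma$ as the set accepted by $\mathcal{M}$ relative to $\sigma a^\infty,\beta_1,\dots,\beta_\ell$ with oracle queries confined to the first $\abs{\sigma}$ positions of the first oracle. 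Universality then delivers for free the two properties you have to engineer: the bound $\Bm{P_1}{\osg{\mathcal{U}^\sigma_n}}<2^{-n}$ holds for \emph{every} $\sigma\in\Omega_2^*$ because $\mathcal{M}$ yields a valid test for every oracle, and monotonicity in $\sigma$ holds because $\sigma a^\infty$ and $\sigma' a^\infty$ agree on the queried positions when $\sigma\preceq\sigma'$. The right-computability of $P_1$ is thereby quarantined inside the (omitted) proof of Theorem~\ref{ExistsUML_P-test_rs}, where it plays exactly the certifying role you describe; your inline trimming is a legitimate substitute that avoids introducing universal relativized tests. If you pursue your version, two points still need to be nailed down: (a) arranging monotonicity of $\tau\mapsto\widehat{\mathcal{C}}^\tau_n$ compatibly with the trimming, e.g.\ by forcing $\widehat{\mathcal{C}}^\tau_n$ to absorb $\widehat{\mathcal{C}}^\rho_n$ for all $\rho\preceq\tau$ before certifying new elements (this does not disturb the coincidence along $\alpha_2$, since everything enumerated along prefixes of $\alpha_2$ lies in $\mathcal{C}_n$, whose measure is strictly below the threshold); and (b) replacing the appeal to a Tonelli-type estimate --- which the paper has not developed for these product measures on the Baire space --- by the finite-level computation the paper uses: bound $\Bm{P_1\times P_2}{\osg{G_n(k)}}$ by summing $\Bm{P_2}{\osg{\sigma}}$ against the $P_1$-measure of the captured $\Omega_1$-cylinders over $\sigma\in\Omega_2^k$, note $\osg{G_n(k)}\subset\osg{G_n(k+1)}$, and take the increasing union over $k$. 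Your $\mathcal{D}_n$ is already defined level-by-level, so this adaptation is routine.
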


In order to prove Theorem~\ref{only-if-part},
we use the notion of \emph{universal Martin-L\"of $P$-test relative to infinite sequences}.

\begin{definition}[Universal Martin-L\"of $P$-test relative to infinite sequences]\label{UML_P-test_rs}
Let $\Omega$ be an r.e.~infinite set, and let $P\in\PS(\Omega)$.
Let $\ell\in\N^+$, and let $\Theta_1,\dots,\Theta_{\ell}$ be r.e.~infinite sets.
An oracle Turing machine $\mathcal{M}$ is called a
\emph{universal Martin-L\"of $P$-test relative to
$\ell$ infinite sequences over $\Theta_1,\dots,\Theta_{\ell}$}
if for every
$\beta_1\in\Theta_1^\infty,\dots,\beta_\ell\in\Theta_{\ell}^\infty$
there exists
$\mathcal{C}$
such that
\begin{enumerate}
  \item 
    $\mathcal{C}=\{x\in\N^+\times \Omega^*\mid
    \text{$\mathcal{M}$ accepts $x$ relative to $\beta_1,\dots,\beta_\ell$}\}$,
  \item for every $n\in\N^+$ it holds that $\mathcal{C}_n$ is a prefix-free subset of $\Omega^*$ and
    $\Bm{P}{\osg{\mathcal{C}_n}}<2^{-n}$
    where
    $\mathcal{C}_n:=
    \left\{\,
      \sigma\bigm|(n,\sigma)\in\mathcal{C}
    \,\right\}$, and
  \item for every Martin-L\"{o}f $P$-test $\mathcal{D}$ relative to $\beta_1,\dots,\beta_\ell$,
    $$\bigcap_{n=1}^\infty \osg{\mathcal{D}_n}\subset \bigcap_{n=1}^\infty \osg{\mathcal{C}_n}.$$\qed
\end{enumerate}
\end{definition}

It is then easy to show the following theorem.

\begin{theorem}\label{ExistsUML_P-test_rs}
Let $\Omega$ be an r.e.~infinite set, and let $P\in\PS(\Omega)$.
Let $\ell\in\N^+$, and let $\Theta_1,\dots,\Theta_{\ell}$ be r.e.~infinite sets.
Suppose that $P$ is right-computable.
Then there exists a universal Martin-L\"of $P$-test relative to
$\ell$ infinite sequences over $\Theta_1,\dots,\Theta_{\ell}$.
\qed
\end{theorem}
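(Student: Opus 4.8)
The plan is to carry out the classical construction of a universal Martin-L\"of test, but performed \emph{uniformly in the oracle} $(\beta_1,\dots,\beta_\ell)$ and with the measure bounds enforced \emph{effectively from above} --- which is exactly what the right-computability of $P$ provides (equivalently computability, by Proposition~\ref{computable-left-computable-right-computable-equivalent}, but only upper approximations are needed, matching Theorem~\ref{only-if-part}).

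First I would fix an effective enumeration $\mathcal{M}_1,\mathcal{M}_2,\dots$ of all oracle Turing machines with an input tape ranging over $\N^+\times\Omega^*$ and $\ell$ oracle tapes for sequences over $\Theta_1,\dots,\Theta_{\ell}$. For a fixed oracle $\bar\beta=(\beta_1,\dots,\beta_\ell)$, each $\mathcal{M}_i$ enumerates the set $\mathcal{A}^{i,\bar\beta}\subseteq\N^+\times\Omega^*$ of accepted inputs; by Theorem~\ref{ML_P-randomness_eliminated-prefix-freeness-relative-to-infinite-sequences} it is harmless that the sections of $\mathcal{A}^{i,\bar\beta}$ need not be prefix-free. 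Using a partial recursive $f$ witnessing right-computability of $P$, I would set $\widehat{P}_s(\sigma):=\prod_{j=1}^{m}f(\sigma_j,s)$ for $\sigma=\sigma_1\cdots\sigma_m\in\Omega^*$, a rational with $\widehat{P}_s(\sigma)\ge P(\sigma)=\Bm{P}{\osg{\sigma}}$ and $\widehat{P}_s(\sigma)\downarrow P(\sigma)$ as $s\to\infty$; thus, given a finite $F\subseteq\Omega^*$, one can compute rational upper bounds of $\sum_{\tau\in F}P(\tau)$ that decrease to that sum.

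Next I would \emph{trim} each $\mathcal{M}_i$ to a machine that, relative to $\bar\beta$, enumerates a set $\mathcal{B}^{i,\bar\beta}$ with two properties: (a) for \emph{every} oracle, $\Bm{P}{\osg{\mathcal{B}^{i,\bar\beta}_m}}\le 2^{-(m+i+2)}$ for all $m\in\N^+$; and (b) whenever $\mathcal{A}^{i,\bar\beta}$ happens to be a Martin-L\"of $P$-test relative to $\bar\beta$, then $\bigcap_{n}\osg{\mathcal{A}^{i,\bar\beta}_n}\subseteq\bigcap_{m}\osg{\mathcal{B}^{i,\bar\beta}_m}$. The trimming puts into section $m$ of $\mathcal{B}^{i,\bar\beta}$ copies of the strings that $\mathcal{M}_i$ enumerates into section $m+i+2$, admitting a string only while the current rational over-estimate $\widehat{P}_s$ of the measure of the strings already copied into that section, together with the candidate, stays $\le 2^{-(m+i+2)}$ (dovetailing over the enumeration stages of $\mathcal{M}_i$ and the precision $s$, re-examining not-yet-copied strings with improving precision). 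Since the over-estimates decrease to the true measure, property~(a) holds unconditionally; and if $\mathcal{A}^{i,\bar\beta}$ is a genuine test then $\Bm{P}{\osg{\mathcal{A}^{i,\bar\beta}_{m+i+2}}}<2^{-(m+i+2)}$, so the estimate eventually falls below the threshold, every string is copied, whence $\osg{\mathcal{B}^{i,\bar\beta}_m}\supseteq\osg{\mathcal{A}^{i,\bar\beta}_{m+i+2}}\supseteq\bigcap_n\osg{\mathcal{A}^{i,\bar\beta}_n}$ for all $m$, giving~(b).

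Finally I would combine: let $\mathcal{M}$ be the oracle machine which, relative to $\bar\beta$, accepts $(m,\sigma)$ iff $(m,\sigma)\in\mathcal{B}^{i,\bar\beta}$ for some $i$, so that $\mathcal{C}_m=\bigcup_{i\ge 1}\mathcal{B}^{i,\bar\beta}_m$. Then $\Bm{P}{\osg{\mathcal{C}_m}}\le\sum_{i\ge1}2^{-(m+i+2)}=2^{-m}/4<2^{-m}$, and by~(b) the set $\bigcap_m\osg{\mathcal{C}_m}$ contains the null set covered by every Martin-L\"of $P$-test relative to $\bar\beta$ (each such test is $\mathcal{A}^{i,\bar\beta}$ for some $i$ and is genuine, so~(b) applies). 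To meet the prefix-freeness clause of Definition~\ref{UML_P-test_rs} I would then replace, uniformly in $\bar\beta$, each $\osg{\mathcal{C}_m}$ by a prefix-free presentation supplied by the (uniform) construction behind Theorem~\ref{eliminate-prefix-freeness-relative-to-infinite-sequences}, which alters neither the measures nor $\bigcap_m\osg{\mathcal{C}_m}$. I expect the main obstacle to be exactly this interplay between effectivity and the measure conditions: one cannot decide the equality $\Bm{P}{\osg{\mathcal{A}^{i,\bar\beta}_n}}=2^{-n}$, only semidecide strict inequalities, so the geometric weights $2^{-(i+2)}$ and the ``admit only while the rational over-estimate stays under threshold'' rule must be balanced so that genuine tests are copied in full while non-genuine ones are harmlessly truncated --- and the entire bookkeeping must be realized by a single oracle Turing machine that works for all oracles $\bar\beta$ simultaneously.
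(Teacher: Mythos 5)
Your construction is correct: the paper itself omits the proof of this theorem (it merely remarks beforehand that it ``is easy to show''), and what you describe is precisely the standard universal-test construction that is intended --- enumerate all oracle Turing machines, trim each candidate test using the rational upper bounds on $\Bm{P}{\osg{\cdot}}$ supplied by right-computability so that the measure bounds hold unconditionally for every oracle while genuine tests are copied in full, and combine the trimmed tests with geometric weights. The details you single out (re-examining pending strings with improving precision so that genuine tests are exhausted, the final uniform prefix-free conversion, and uniformity of the whole bookkeeping in the oracle $\beta_1,\dots,\beta_\ell$) are exactly the points that need care, and your handling of them is sound.
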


Then, using Theorems~\ref{ExistsUML_P-test_rs}, we can prove Theorem~\ref{only-if-part} as follows.

\begin{proof}[Proof of Theorem~\ref{only-if-part}]
Let $\Omega_1$ and $\Omega_2$ be r.e.~infinite sets,
and let $P_1\in\PS(\Omega_1)$ and $P_2\in\PS(\Omega_2)$.
Let $\alpha_1\in\Omega_1^\infty$ and $\alpha_2\in\Omega_2^\infty$.
Let $\ell\in\N^+$, and let $\Theta_1,\dots,\Theta_{\ell}$ be r.e.~infinite sets.
Let $\beta_1\in\Theta_1^\infty,\dots,\beta_\ell\in\Theta_{\ell}^\infty$.

First, we show that
if $\alpha_1\times\alpha_2$ is Martin-L\"{o}f $P_1\times P_2$-random relative to
$\beta_1\dots,\beta_\ell$
then $\alpha_2$ is Martin-L\"of $P_2$-random relative to $\beta_1,\dots,\beta_\ell$.
Actually, we prove the contraposition.
Thus, let us assume that
$\alpha_2$ is not Martin-L\"of $P_2$-random relative to $\beta_1,\dots,\beta_\ell$.
Then there exists a Martin-L\"of $P_2$-test $\mathcal{S}$
relative to $\beta_1,\dots,\beta_\ell$
such that
\begin{equation}\label{a2inosgmathCn_MarginalProb}
  \alpha_2\in\osg{\mathcal{S}_n}
\end{equation}
for every $n\in\N^+$.
For each $\sigma_2\in\Omega_2^*$,
we use $F(\sigma_2)$ to denote the set
$$\{\sigma_1\times\sigma_2\mid \sigma_1\in\Omega_1^*\;\&\;\abs{\sigma_1}=\abs{\sigma_2}\}.$$
Then, since $P_2(a_2)=\sum_{a_1\in\Omega_1}(P_1\times P_2)(a_1,a_2)$ for every $a_2\in\Omega_2$,
we have that
\begin{equation}\label{BP2os2=P2s2=P1tP2Fs2=BP1tP2oFs2_MarginalProb}
  \Bm{P_2}{\osg{\sigma_2}}=P_2(\sigma_2)
  =(P_1\times P_2)(F(\sigma_2))=\Bm{P_1\times P_2}{\osg{F(\sigma_2)}}
\end{equation}
for each $\sigma_2\in\Omega_2^*$.
We then define $\mathcal{T}$ to be a subset of $\N^+\times (\Omega_1\times\Omega_2)^*$ such that
$\mathcal{T}_n=\bigcup_{\sigma_2\in\mathcal{S}_n} F(\sigma_2)$ for every $n\in\N^+$.
Since $\mathcal{S}_n$ is a prefix-free subset of $\Omega_2^*$ for every $n\in\N^+$,
we see that $\mathcal{T}_n$ is a prefix-free subset of $(\Omega_1\times\Omega_2)^*$
for every $n\in\N^+$.
For each $n\in\N^+$, we also see that
\[
  \Bm{P_1\times P_2}{\osg{\mathcal{T}_n}}
  \le\sum_{\sigma_2\in\mathcal{S}_n}\Bm{P_1\times P_2}{\osg{F(\sigma_2)}}
  =\sum_{\sigma_2\in\mathcal{S}_n}\Bm{P_2}{\osg{\sigma_2}}
  =\Bm{P_2}{\osg{\mathcal{S}_n}}<2^{-n},
\]
where the first equality follows from \eqref{BP2os2=P2s2=P1tP2Fs2=BP1tP2oFs2_MarginalProb} and
the second equality follows from the prefix-freeness of $\mathcal{S}_n$.
Moreover, since
$\mathcal{S}$ is r.e.~relative to $\beta_1,\dots,\beta_\ell$
and $\Omega_1$ is r.e.,
we see that $\mathcal{T}$ is
r.e.~relative to $\beta_1,\dots,\beta_\ell$.
Thus, $\mathcal{T}$ is a Martin-L\"of $P_1\times P_2$-test relative to $\beta_1,\dots,\beta_\ell$.
On the other hand, note that, for every $n\in\N^+$,
if $\alpha_2\in\osg{\mathcal{S}_n}$ then $\alpha_1\times\alpha_2\in\osg{\mathcal{T}_n}$.
Thus, it follows from \eqref{a2inosgmathCn_MarginalProb}
that $\alpha_1\times\alpha_2\in\osg{\mathcal{T}_n}$ for every $n\in\N^+$.
Hence, $\alpha_1\times\alpha_2$ is not Martin-L\"of $P_1\times P_2$-random
relative to $\beta_1,\dots,\beta_\ell$.

Next, we show that if $\alpha_1\times\alpha_2$ is Martin-L\"{o}f $P_1\times P_2$-random relative to
$\beta_1\dots,\beta_\ell$
then $\alpha_1$ is Martin-L\"of $P_1$-random relative to $\alpha_2,\beta_1,\dots,\beta_\ell$.
Since $P_1$ is right-computable, it follows from Theorem~\ref{ExistsUML_P-test_rs} that
there exists a universal Martin-L\"of $P_1$-test
relative to $\ell+1$ infinite sequences over $\Omega_2,\Theta_1,\dots,\Theta_{\ell}$.
Thus, there exists an oracle Turing machine $\mathcal{M}$ such that
for every
$\gamma\in\Omega_2^\infty$
there exists $\mathcal{C}$
such that
\begin{enumerate}
  \item
    $\mathcal{C}
    =\{x\in\N^+\times \Omega_1^*\mid
    \text{$\mathcal{M}$ accepts $x$ relative to $\gamma,\beta_1,\dots,\beta_\ell$}\}$,
  \item for every $n\in\N^+$ it holds that $\mathcal{C}_n$ is a prefix-free subset of $\Omega_1^*$ and
    $\Bm{P_1}{\osg{\mathcal{C}_n}}<2^{-n}$, and
  \item for every Martin-L\"{o}f $P_1$-test $\mathcal{D}$ relative to $\gamma,\beta_1,\dots,\beta_\ell$,
    \begin{equation*}%
      \bigcap_{n=1}^\infty \osg{\mathcal{D}_n}\subset \bigcap_{n=1}^\infty \osg{\mathcal{C}_n}.
    \end{equation*}
\end{enumerate}
We choose any particular $a\in\Omega_2^*$.
Then,
for each $\sigma\in\Omega_2^*$,
let $\mathcal{U}^\sigma$ be the set of all $x\in\N^+\times \Omega_1^*$ such that
$\mathcal{M}$ accepts $x$ relative to $\sigma a^\infty,\beta_1,\dots,\beta_\ell$
with oracle access only to the prefix of $\sigma a^\infty$ of length $\abs{\sigma}$
in the first infinite sequence.
Here, $\sigma a^\infty$ denotes the infinite sequence over $\Omega_2$
which is the concatenation of the finite string $\sigma$ and the infinite sequence consisting only of $a$.
It follows that
\begin{equation}\label{BmP1osgmUsn<2-n_RtIS}
  \Bm{P_1}{\osg{\mathcal{U}^\sigma_n}}<2^{-n}
\end{equation}
for every $\sigma\in\Omega_2^*$ and every $n\in\N^+$,
where
$\mathcal{U}^\sigma_n:=
    \left\{\,
      \tau\bigm|(n,\tau)\in\mathcal{U}^\sigma
    \,\right\}$.
For each $k,n\in\N^+$, let
$$G_n(k)=\{u\times\sigma\mid u\in\Omega_1^k\text{ \& }\sigma\in\Omega_2^k\text{ \& Some prefix of $u$ is in $\mathcal{U}^\sigma_n$}\}.$$
Then,
it is easy to see that
$G_n(k)$ is r.e.~relative to $\beta_1,\dots,\beta_\ell$ uniformly in $n$ and $k$.
Note that
\[
  \osg{G_n(k)}=\bigcup_{\sigma\in\Omega_2^k}\bigcup_{u\in S_n(k,\sigma)}\osg{u\times\sigma},
\]
for every $n,k\in\N^+$,
where $S_n(k,\sigma):=\{u\in\Omega_1^k\mid\text{Some prefix of $u$ is in $\mathcal{U}^\sigma_n$}\}$.
Therefore, for each $n,k\in\N^+$,
we see that
\begin{align*}
  \Bm{P_1\times P_2}{\osg{G_n(k)}}
  &=\sum_{\sigma\in\Omega_2^k}\sum_{u\in S_n(k,\sigma)}\Bm{P_1\times P_2}{\osg{u\times\sigma}}
  =\sum_{\sigma\in\Omega_2^k}\Bm{P_2}{\osg{\sigma}}\sum_{u\in S_n(k,\sigma)}\Bm{P_1}{\osg{u}} \\
  &=\sum_{\sigma\in\Omega_2^k}\Bm{P_2}{\osg{\sigma}}
      \Bm{P_1}{\osg{\mathcal{U}^\sigma_n\cap\Omega_1^{\le k}}}
  \le \sum_{\sigma\in\Omega_2^k}\Bm{P_1}{\osg{\mathcal{U}^\sigma_n}}\Bm{P_2}{\osg{\sigma}} \\
  &<\sum_{\sigma\in\Omega_2^k}2^{-n}\Bm{P_2}{\osg{\sigma}}
  = 2^{-n},
\end{align*}
where the last inequality follows from \eqref{BmP1osgmUsn<2-n_RtIS} (and
the fact that $\Bm{P_2}{\osg{\sigma}}>0$ for some $\sigma\in\Omega_2^k$). 
On the other hand, it follows that $\osg{G_n(k)}\subset\osg{G_n(k+1)}$
for every $n,k\in\N^+$.
For each $n\in\N^+$, let $G_n=\bigcup_{k=1}^\infty G_n(k)$.
Then $G_n$ is r.e.~relative to $\beta_1,\dots,\beta_\ell$ uniformly in $n$, and
\[
  \Bm{P_1\times P_2}{\osg{G_n}}\le 2^{-n}
\]
for every $n\in\N^+$.
We define $\mathcal{A}$ to be a subset of $\N^+\times (\Omega_1\times\Omega_2)^*$ such that
$\mathcal{A}_n=G_{n+1}$ for every $n\in\N^+$.
Then  $\mathcal{A}$ is r.e.~relative to $\beta_1,\dots,\beta_\ell$ and
\[
  \Bm{P_1\times P_2}{\osg{\mathcal{A}_n}}< 2^{-n}
\]
for every $n\in\N^+$.

Now, assume that $\alpha_1$ is not Martin-L\"of $P_1$-random relative to $\alpha_2,\beta_1,\dots,\beta_\ell$.
Then there exists
$\mathcal{C}$
such that
\begin{enumerate}
  \item
    $\mathcal{C}
    =\{x\in\N^+\times \Omega_1^*\mid
    \text{$\mathcal{M}$ accepts $x$ relative to $\alpha_2,\beta_1,\dots,\beta_\ell$}\}$, and
  \item $$\alpha_1\in\bigcap_{n=1}^\infty \osg{\mathcal{C}_n}.$$
\end{enumerate}
Let $n\in\N^+$.
Then there exists $m\in\N^+$ such that $\rest{\alpha_1}{m}\in\mathcal{C}_{n+1}$.
Then, there exists $k\ge m$ such that
$\mathcal{M}$ accepts $(n+1,\rest{\alpha_1}{m})$ relative to $\alpha_2,\beta_1,\dots,\beta_\ell$
with oracle access only to the prefix of $\alpha_2$ of length $k$ in the first infinite sequence $\alpha_2$.
It follows that $\rest{\alpha_1}{m}\in\mathcal{U}^{\reste{\alpha_2}{k}}_{n+1}$.
Thus, $\rest{\alpha_1}{k}\times\rest{\alpha_2}{k}\in G_{n+1}(k)$,
and therefore $\alpha_1\times\alpha_2\in\osg{G_{n+1}(k)}\subset\osg{G_{n+1}}=\osg{\mathcal{A}_n}$.
Hence,
it follows from
Theorem~\ref{ML_P-randomness_eliminated-prefix-freeness-relative-to-infinite-sequences} that
$\alpha_1\times\alpha_2$ is not Martin-L\"{o}f $P_1\times P_2$-random relative to
$\beta_1,\dots,\beta_\ell$.
This completes the proof.
\end{proof}

%%%%%%%%%%%%%%%%%%%%%%%%%%%%%%%%%%%%%%%%%%%%%%%%%%%%%%%
\subsection{The proof of the ``if'' part of Theorem~\ref{gvL}}
\label{section-if-part}

Next, we prove the following theorem, from which the ``if'' part of Theorem~\ref{gvL} follows.

\begin{theorem}\label{if-part}
Let $\Omega_1$ and $\Omega_2$ be r.e.~infinite sets,
and let $P_1\in\PS(\Omega_1)$ and $P_2\in\PS(\Omega_2)$.
Let $\alpha_1\in\Omega_1^\infty$ and $\alpha_2\in\Omega_2^\infty$.
For each $k=1,\dots,\ell$, let $\beta_k$ be an infinite sequence over an r.e.~infinite set.
Suppose that $P_1$ is left-computable.
If $\alpha_1$ is Martin-L\"of $P_1$-random relative to $\alpha_2,\beta_1,\dots,\beta_\ell$
and $\alpha_2$ is Martin-L\"{o}f $P_2$-random relative to $\beta_1,\dots,\beta_\ell$,
then
$\alpha_1\times\alpha_2$ is Martin-L\"{o}f $P_1\times P_2$-random relative to
$\beta_1,\dots,\beta_\ell$. 
\end{theorem}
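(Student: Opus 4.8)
The plan is to argue by contraposition, running the section/Fubini construction from the proof of Theorem~\ref{only-if-part} in the opposite direction. So suppose $\alpha_1\times\alpha_2$ is \emph{not} Martin-L\"of $P_1\times P_2$-random relative to $\beta_1,\dots,\beta_\ell$. Then there is a Martin-L\"of $P_1\times P_2$-test $\mathcal{A}$ relative to $\beta_1,\dots,\beta_\ell$ with $\alpha_1\times\alpha_2\in\osg{\mathcal{A}_n}$ for every $n\in\N^+$; recall that $\mathcal{A}$ is r.e.\ relative to $\beta_1,\dots,\beta_\ell$, that each $\mathcal{A}_n$ is prefix-free, and that $\Bm{P_1\times P_2}{\osg{\mathcal{A}_n}}<2^{-n}$. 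The argument rests on the product identity $\Bm{P_1\times P_2}{\osg{\tau_1\times\tau_2}}=P_1(\tau_1)P_2(\tau_2)$ for $\tau_1\in\Omega_1^*$, $\tau_2\in\Omega_2^*$ with $\abs{\tau_1}=\abs{\tau_2}$, which follows from \eqref{pBm-Bs} and the definition of $P_1\times P_2$.

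For $n\in\N^+$ and $\sigma_2\in\Omega_2^k$ define $S_n(\sigma_2):=\{\tau_1\in\Omega_1^k\mid\exists\,j\le k\;(\rest{\tau_1}{j})\times(\rest{\sigma_2}{j})\in\mathcal{A}_n\}$, a prefix-free subset of $\Omega_1^k$ that is r.e.\ relative to $\beta_1,\dots,\beta_\ell$ uniformly in $n$ and $\sigma_2$, and put $g_n(\sigma_2):=\Bm{P_1}{\osg{S_n(\sigma_2)}}=\sum_{\tau_1\in S_n(\sigma_2)}P_1(\tau_1)\le 1$. Using \eqref{pmr-Bs} for the probability measure representation $\sigma\mapsto P_1(\sigma)$ one checks that $g_n(\sigma_2)\le g_n(\sigma_2 a)$ for every $a\in\Omega_1$, so $g_n$ is nondecreasing along extensions and $g_n(\gamma):=\sup_k g_n(\rest{\gamma}{k})$ is well defined for $\gamma\in\Omega_2^\infty$; moreover, along each $\gamma$ one has $\delta\times\gamma\in\osg{\mathcal{A}_n}$ iff $\rest{\delta}{k}\in S_n(\rest{\gamma}{k})$ for some $k$, so $g_n(\gamma)$ is exactly the section measure $\Bm{P_1}{\{\delta\in\Omega_1^\infty\mid\delta\times\gamma\in\osg{\mathcal{A}_n}\}}$. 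Assembling sections at a common length yields the prefix-free sets $G_n(k):=\{\tau_1\times\sigma_2\mid\sigma_2\in\Omega_2^k,\ \tau_1\in S_n(\sigma_2)\}$, which are r.e.\ relative to $\beta_1,\dots,\beta_\ell$, satisfy $\osg{G_n(k)}\subset\osg{\mathcal{A}_n}$, and, by the product identity, have $\Bm{P_1\times P_2}{\osg{G_n(k)}}=\sum_{\sigma_2\in\Omega_2^k}P_2(\sigma_2)\,g_n(\sigma_2)$; hence, by monotonicity of $\lambda_{P_1\times P_2}$, the Fubini-type bound $\sum_{\sigma_2\in\Omega_2^k}P_2(\sigma_2)\,g_n(\sigma_2)<2^{-n}$ holds for all $k$. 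Left-computability of $P_1$ enters precisely here: it makes each $P_1(\tau_1)$ a uniformly left-computable real, hence $g_n(\sigma_2)$ a real that is left-computable relative to $\beta_1,\dots,\beta_\ell$ uniformly in $n,\sigma_2$, so that ``$g_n(\sigma_2)>2^{-m}$'' is a condition semidecidable relative to $\beta_1,\dots,\beta_\ell$ uniformly.

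Now split into two cases according to the open sets $B_m:=\{\gamma\in\Omega_2^\infty\mid\exists\,k\;g_{2m}(\rest{\gamma}{k})>2^{-m}\}$, which by the last remark are presented by sets r.e.\ relative to $\beta_1,\dots,\beta_\ell$ uniformly in $m$, and which satisfy $\Bm{P_2}{B_m}\le 2^{-m}$ by Markov's inequality applied to the Fubini bound with $n=2m$. If $\alpha_2\in B_m$ for infinitely many $m$, then the open sets $\bigcup_{m\ge j+2}B_m$, $j\in\N^+$, constitute a Martin-L\"of $P_2$-test relative to $\beta_1,\dots,\beta_\ell$ (each has measure $<2^{-j}$) that $\alpha_2$ fails; by Theorem~\ref{ML_P-randomness_eliminated-prefix-freeness-relative-to-infinite-sequences} this contradicts the hypothesis that $\alpha_2$ is Martin-L\"of $P_2$-random relative to $\beta_1,\dots,\beta_\ell$. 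Otherwise fix $m_0\ge 1$ with $g_{2m}(\alpha_2)\le 2^{-m}$ for all $m\ge m_0$, and set $\mathcal{R}_j:=\bigcup_k S_{2(j+m_0)}(\rest{\alpha_2}{k})$; this family is r.e.\ relative to $\alpha_2,\beta_1,\dots,\beta_\ell$ uniformly in $j$, has $\Bm{P_1}{\osg{\mathcal{R}_j}}=g_{2(j+m_0)}(\alpha_2)\le 2^{-(j+m_0)}<2^{-j}$, and satisfies $\alpha_1\in\osg{\mathcal{R}_j}$ for every $j$ because $\alpha_1\times\alpha_2\in\osg{\mathcal{A}_{2(j+m_0)}}$. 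Applying Theorem~\ref{ML_P-randomness_eliminated-prefix-freeness-relative-to-infinite-sequences} with the oracles $\alpha_2,\beta_1,\dots,\beta_\ell$, this shows $\alpha_1$ is not Martin-L\"of $P_1$-random relative to $\alpha_2,\beta_1,\dots,\beta_\ell$, again a contradiction. In either case the assumption is impossible, so $\alpha_1\times\alpha_2$ is Martin-L\"of $P_1\times P_2$-random relative to $\beta_1,\dots,\beta_\ell$.

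The step I expect to be the main obstacle is the middle one: making the section and Fubini identities rigorous on the non-compact Baire space through the finite approximants $S_n(\sigma_2)$ and $G_n(k)$ — in particular the monotonicity of $g_n$ and the equality $\Bm{P_1\times P_2}{\osg{G_n(k)}}=\sum_{\sigma_2\in\Omega_2^k}P_2(\sigma_2)g_n(\sigma_2)$ — together with the verification that left-computability of $P_1$, with no computability assumption on $P_2$, is exactly what keeps the sets $B_m$ and $\mathcal{R}_j$ enumerable relative to the respective oracles. Once that is in place, the two-case dichotomy is the standard van Lambalgen argument.
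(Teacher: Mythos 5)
Your proposal is correct and follows essentially the same route as the paper's proof: sectioning the given test over $\Omega_2^*$, bounding the $P_2$-measure of the set of ``bad'' strings where the $P_1$-section measure exceeds the threshold via a Markov/Fubini argument (with left-computability of $P_1$ used exactly to make that set enumerable relative to the oracles), using the randomness of $\alpha_2$ to escape the bad set from some level on, and then turning the sections along $\alpha_2$ into a test relative to $\alpha_2,\beta_1,\dots,\beta_\ell$ that captures $\alpha_1$. The only differences are cosmetic bookkeeping (your $n=2m$ reindexing versus the paper's $2^{-2d}$ bound, and full-length sections $S_n(\sigma_2)$, which incidentally let you avoid the paper's small side remark that $\Bm{P_2}{\osg{\rest{\alpha_2}{n}}}>0$).
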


\begin{proof}
Suppose that $\alpha_1\times\alpha_2$ is not Martin-L\"{o}f $P_1\times P_2$-random relative to
$\beta_1,\dots,\beta_\ell$. 
Then there exists a Martin-L\"of $P$-test $\mathcal{V}$ relative to $\beta_1,\dots,\beta_\ell$
such that
\begin{enumerate}
  \item $\mathcal{V}_d$ is prefix-free for every $d\in\N^+$,
  \item $\Bm{P_1\times P_2}{\osg{\mathcal{V}_d}}<2^{-2d}$ for every $d\in\N^+$, and
  \item $\alpha_1\times\alpha_2\in\osg{\mathcal{V}_d}$ for every $d\in\N^+$.
\end{enumerate}

On the one hand, for each $x\in\Omega_2^*$, we use $[\emptyset\times x]$ to denote the set
\[
  \{\gamma_1\times\gamma_2\mid
  \text{$\gamma_1\in\Omega_1^\infty$, $\gamma_2\in\Omega_2^\infty$,
  and $x$ is a prefix of $\gamma_2$}\}.
\]
On the other hand, for each $x\in\Omega_2^*$ and $W\subset(\Omega_1\times\Omega_2)^*$,
we use $F(W,x)$ to denote the set of all
$\sigma_1\in\Omega_1^*$ such that there exists $\sigma_2\in\Omega_2^*$ for which
(i) $\abs{\sigma_1}=\abs{\sigma_2}$,
(ii) $\sigma_1\times\sigma_2\in W$, and
(iii) $\sigma_2$ is a prefix of $x$.
It is then easy to see that
\begin{equation}\label{FWx}
  P_1(F(W,x))P_2(x)=\Bm{P_1\times P_2}{\osg{W}\cap[\emptyset\times x]}
\end{equation}
for every $x\in\Omega_2^*$ and every prefix-free subset $W$ of $(\Omega_1\times\Omega_2)^{\le\abs{x}}$.
For each $d\in\N^+$, let
\[
  S_d=\bigl\{x\in\Omega_2^*\bigm|
  2^{-d}<P_1(F(\mathcal{V}_d\cap (\Omega_1\times\Omega_2)^{\le\abs{x}},x))\bigr\}.
\]
Since $P_1$ is left-computable, $S_d$ is r.e.~relative to $\beta_1,\dots,\beta_\ell$ uniformly in $d$.

Let $d\in\N^+$.
Let $\{x_i\}$ be a listing of the minimal strings in $S_d$.
It follows
from \eqref{FWx}
that
\begin{align*}
  2^{-d}\Bm{P_2}{\osg{x_i}}&=2^{-d}P_2(x_i)
  \le P_1(F(\mathcal{V}_d\cap (\Omega_1\times\Omega_2)^{\le\abs{x_i}},x_i))P_2(x_i) \\
  &=\Bm{P_1\times P_2}{\osg{\mathcal{V}_d\cap (\Omega_1\times\Omega_2)^{\le\abs{x_i}}}\cap[\emptyset\times x_i]} \\
  &\le\Bm{P_1\times P_2}{\osg{\mathcal{V}_d}\cap[\emptyset\times x_i]}.
\end{align*}
Since the sets
$\{\osg{\mathcal{V}_d}\cap[\emptyset\times x_i]\}_i$
are pairwise disjoint, we have
\[
  \sum_{i}2^{-d}\Bm{P_2}{\osg{x_i}}
  \le\sum_{i}\Bm{P_1\times P_2}{\osg{\mathcal{V}_d}\cap[\emptyset\times x_i]}
  \le\Bm{P_1\times P_2}{\osg{\mathcal{V}_d}}<2^{-2d}.
\]
Thus, since $\Bm{P_2}{\osg{S_d}}=\sum_{i}\Bm{P_2}{\osg{x_i}}$ and
$d$ is an arbitrary positive integer,
we have that
$$\Bm{P_2}{\osg{S_d}}<2^{-d}$$
for every $d\in\N^+$.
For each $d\in\N^+$, let $T_d=\bigcup_{c=d}^\infty S_{c+1}$.
It follows that
\[
  \Bm{P_2}{\osg{T_d}}\le\sum_{c=d}^\infty\Bm{P_2}{\osg{S_{c+1}}}<2^{-d}
\]
for each $d\in\N^+$, and
$T_d$
is r.e.~relative to $\beta_1,\dots,\beta_\ell$
uniformly in $d$.

Now, let us assume that
$\alpha_2$ is Martin-L\"of $P_2$-random relative to $\beta_1,\dots,\beta_\ell$.
We will then show that
$\alpha_1$ is not Martin-L\"of $P_1$-random relative to $\alpha_2,\beta_1,\dots,\beta_\ell$,
in what follows.
If $\alpha_2\in\osg{S_d}$ for infinitely many $d$,
then we have that
$\alpha_2\in\osg{T_d}$
for every $d$,
and therefore
using Theorem~\ref{ML_P-randomness_eliminated-prefix-freeness-relative-to-infinite-sequences}
we have that
$\alpha_2$ is not Martin-L\"of $P_2$-random relative to $\beta_1,\dots,\beta_\ell$.
This contradicts the assumption.
Thus, there must exists $d_0\in\N^+$ such that $\alpha_2\notin\osg{S_d}$ for every $d>d_0$.

For each $d,n\in\N^+$, let
\[
  H_d(n)=\{w\in\Omega_1^n\mid
  \osg{w\times\rest{\alpha_2}{n}}\subset\osg{\mathcal{V}_d\cap(\Omega_1\times\Omega_2)^{\le n}}\}.
\]
Let $d,n\in\N^+$, and let ${w_1,\dots,w_m}$ be a listing of all elements of $H_d(n)$.
Since
\[
  \osg{w_i\times\rest{\alpha_2}{n}}\subset
  \osg{\mathcal{V}_d\cap(\Omega_1\times\Omega_2)^{\le n}}\cap[\emptyset\times\rest{\alpha_2}{n}]
\]
for every $i=1,\dots,m$,
and the sets
$\{\osg{w_i\times\rest{\alpha_2}{n}}\}_i$
are pairwise disjoint,
we see that
\begin{equation}\label{postFWx}
\begin{split}
  \Bm{P_1}{\osg{H_d(n)}}\Bm{P_2}{\osg{\rest{\alpha_2}{n}}}
  &=\left(\sum_{i=1}^m\Bm{P_1}{\osg{w_i}}\right)\Bm{P_2}{\osg{\rest{\alpha_2}{n}}} \\
  &=\sum_{i=1}^m\Bm{P_1}{\osg{w_i}}\Bm{P_2}{\osg{\rest{\alpha_2}{n}}} \\
  &=\sum_{i=1}^m\Bm{P_1\times P_2}{\osg{w_i\times\rest{\alpha_2}{n}}} \\
  &=\Bm{P_1\times P_2}{\bigcup_{i=1}^m\osg{w_i\times\rest{\alpha_2}{n}}} \\
  &\le\Bm{P_1\times P_2}{\osg{\mathcal{V}_d\cap(\Omega_1\times\Omega_2)^{\le n}}\cap[\emptyset\times\rest{\alpha_2}{n}]}.
\end{split}
\end{equation}
Assume that $d>d_0$.
Then, since $\alpha_2\notin\osg{S_d}$, we have $\rest{\alpha_2}{n}\notin S_d$.
It follows from \eqref{FWx} that
\begin{align*}
  \Bm{P_1\times P_2}{\osg{\mathcal{V}_d\cap(\Omega_1\times\Omega_2)^{\le n}}\cap[\emptyset\times\rest{\alpha_2}{n}]}
  &=P_1(F(\mathcal{V}_d\cap(\Omega_1\times\Omega_2)^{\le n},\rest{\alpha_2}{n}))P_2(\rest{\alpha_2}{n}) \\
  &\le 2^{-d}\Bm{P_2}{\osg{\rest{\alpha_2}{n}}}.
\end{align*}
Therefore, using \eqref{postFWx} we have
\[
  \Bm{P_1}{\osg{H_d(n)}}\Bm{P_2}{\osg{\rest{\alpha_2}{n}}}\le 2^{-d}\Bm{P_2}{\osg{\rest{\alpha_2}{n}}}.
\]
Since $\alpha_2$ is Martin-L\"of $P_2$-random relative to $\beta_1,\dots,\beta_\ell$,
we can show that $\Bm{P_2}{\osg{\rest{\alpha_2}{n}}}>0$,
in a similar manner to the proof of Theorem~\ref{zero_probability-Bs}.
Hence, we see that
\begin{equation}\label{PHdn}
  \Bm{P_1}{\osg{H_d(n)}}\le 2^{-d}
\end{equation}
for every $d>d_0$ and $n$.

On the other hand, we see that $\osg{H_d(n)}\subset\osg{H_d(n+1)}$ for every $d$ and $n$.
For each $d\in\N^+$, let $H_d=\bigcup_{n=1}^\infty H_{d+d_0}(n)$.
It follows from \eqref{PHdn} that $\Bm{P_1}{\osg{H_d}}<2^{-d}$ for every $d\in\N^+$.
It is easy to show that
\[
  H_d(n)=\{w\in\Omega_1^n\mid
  \text{Some prefix of $w\times\rest{\alpha_2}{n}$ is in $\mathcal{V}_d$}\}
\]
for every $d,n\in\N^+$.
It follows that $H_d$ is r.e.~relative to $\alpha_2,\beta_1,\dots,\beta_\ell$ uniformly in $d$.

Let $d\in\N^+$.
Since $\alpha_1\times\alpha_2\in\osg{\mathcal{V}_{d+d_0}}$, there exists $n\in\N^+$ such that
$\rest{(\alpha_1\times\alpha_2)}{n}\in\mathcal{V}_{d+d_0}$.
It follows that
$\rest{\alpha_1}{n}\times\rest{\alpha_2}{n}\in\mathcal{V}_{d+d_0}\cap(\Omega_1\times\Omega_2)^{\le n}$,
and therefore $\rest{\alpha_1}{n}\in H_{d+d_0}(n)$.
It follows that $\alpha_1\in\osg{H_{d+d_0}(n)}\subset\osg{H_d}$.
Therefore, $\alpha_1\in\osg{H_d}$ for every $d\in\N^+$.
Hence,
using Theorem~\ref{ML_P-randomness_eliminated-prefix-freeness-relative-to-infinite-sequences}
we have that
$\alpha_1$ is not Martin-L\"of $P_1$-random relative to $\alpha_2,\beta_1,\dots,\beta_\ell$,
as desired.
This completes the proof.
\end{proof}

%%%%%%%%%%%%%%%%%%%%%%%%%%%%%%%%%%%%%%%%%%%%%%%%%%%%%%%
\subsection{Equivalence between the three independence notions on computable discrete probability spaces}

Theorem~\ref{mgvL} below
gives an equivalent characterization of the notion of the independence of
ensembles in terms of Martin-L\"of $P$-randomness relative to an oracle.

\begin{theorem}[Generalization of van Lambalgen's Theorem III]\label{mgvL}
Let $n\ge 2$.
Let $\Omega_1,\dots,\Omega_n$ be r.e.~infinite sets,
and let $P_1\in\PS(\Omega_1),\dots,P_n\in\PS(\Omega_n)$.
Let $\alpha_1,\dots,\alpha_n$ be ensembles for $P_1,\dots,P_n$, respectively.
Suppose that $P_1,\dots,P_{n-1}$ are computable.%
\footnote{The computability of $P_n$ is not required in the theorem.}
Then the ensembles $\alpha_1,\dots,\alpha_n$ are independent if and only if
for every $k=1,\dots,n-1$ it holds that
$\alpha_k$ is Martin-L\"of $P_k$-random relative to $\alpha_{k+1},\dots,\alpha_n$.\qed
\end{theorem}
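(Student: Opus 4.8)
The plan is to read Theorem~\ref{mgvL} off from Theorem~\ref{cor_gvL} (Generalization of van Lambalgen's Theorem II) in the special case $\ell=0$, i.e.\ with no auxiliary oracle sequences $\beta_1,\dots,\beta_\ell$, and then to discard the one clause that is automatically satisfied under the hypotheses.

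First I would record the harmless observation that relativized computation relative to the empty list of oracle sequences is just ordinary computation, so that, for every r.e.~infinite set $\Omega$, every $P\in\PS(\Omega)$, and every $\gamma\in\Omega^\infty$, ``$\gamma$ is Martin-L\"of $P$-random relative to the empty list of infinite sequences'' in the sense of Definition~\ref{ML_P-randomness_rs} means exactly ``$\gamma$ is Martin-L\"of $P$-random'' in the sense of Definition~\ref{ML_P-randomness-Bs}, hence exactly ``$\gamma$ is an ensemble for $P$''. With this identification the $\ell=0$ instance of Theorem~\ref{cor_gvL} is available: the relativization arguments developed above go through unchanged when $\ell=0$, the only step requiring a positive number of oracle sequences --- the universal-test construction of Theorem~\ref{ExistsUML_P-test_rs} --- being already invoked there with $\ell+1\ge1$ sequences.

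Next I would unfold the relevant definitions and chain three equivalences. By Definition~\ref{Independence_of_ensembles}, the ensembles $\alpha_1,\dots,\alpha_n$ are independent if and only if $\alpha_1\times\dots\times\alpha_n$ is an ensemble for $P_1\times\dots\times P_n$, that is, Martin-L\"of $P_1\times\dots\times P_n$-random; this is meaningful because $\Omega_1\times\dots\times\Omega_n$ is again an r.e.~infinite set, as noted after Definition~\ref{Independence_of_ensembles}. Applying Theorem~\ref{cor_gvL} with $\ell=0$ to the present data --- its hypothesis that $P_1,\dots,P_{n-1}$ are computable is exactly the hypothesis of Theorem~\ref{mgvL} --- this is equivalent to: for every $k=1,\dots,n$ the sequence $\alpha_k$ is Martin-L\"of $P_k$-random relative to $\alpha_{k+1},\dots,\alpha_n$. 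Finally, for $k=n$ the list $\alpha_{k+1},\dots,\alpha_n$ is empty, so the last clause asserts only that $\alpha_n$ is Martin-L\"of $P_n$-random, which holds since $\alpha_n$ is by assumption an ensemble for $P_n$ (Definition~\ref{ensemble}); hence the conjunction over $k=1,\dots,n$ is equivalent to the conjunction over $k=1,\dots,n-1$, and composing the equivalences gives the theorem. I anticipate no real obstacle: the entire content is already contained in Theorem~\ref{cor_gvL}, and the only points demanding care are the $\ell=0$ specialization of the relativization machinery and the reading of ``relative to $\alpha_{k+1},\dots,\alpha_n$'' as unrelativized when $k=n$, both pure bookkeeping.
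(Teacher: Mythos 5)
Your proposal is correct and follows exactly the route the paper takes: its proof of Theorem~\ref{mgvL} is the one-line remark that it ``follows immediately from Theorem~\ref{cor_gvL}.'' You have merely made explicit the bookkeeping the paper leaves implicit, namely the $\ell=0$ specialization and the observation that the $k=n$ clause is automatic because $\alpha_n$ is assumed to be an ensemble for $P_n$.
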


\begin{proof}
Theorem~\ref{mgvL} follows immediately from Theorem~\ref{cor_gvL}.
\end{proof}

Combining Theorem~\ref{independence-independence} with Theorem~\ref{mgvL},
we obtain the following theorem.

\begin{theorem}\label{ind-vL}
Let $\Omega$ and $\Omega_1\dots,\Omega_n$ be r.e.~infinite sets,
and let $P\in\PS(\Omega)$.
Let $X_1\colon\Omega\to\Omega_1,\dots,X_n\colon\Omega\to\Omega_n$ be
random variables on $\Omega$.
Suppose that (i) all of $X_1,\dots,X_n$ are partial recursive functions
and (ii) $X_1(P),\dots,X_{n-1}(P)$ are computable.
Then the following conditions are equivalent to one another.
\begin{enumerate}
  \item The random variables $X_1,\dots,X_n$ are independent on $P$.
  \item For every ensemble $\alpha$ for $P$ and every $k=1,\dots,n-1$ it holds that
    $X_k(\alpha)$ is Martin-L\"of $X_k(P)$-random relative to $X_{k+1}(\alpha),\dots,X_n(\alpha)$.
  \item There exists an ensemble $\alpha$ for $P$ such that for every $k=1,\dots,n-1$ it holds that
    $X_k(\alpha)$ is Martin-L\"of $X_k(P)$-random relative to $X_{k+1}(\alpha),\dots,X_n(\alpha)$
\end{enumerate}
\end{theorem}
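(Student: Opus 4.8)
The plan is to derive Theorem~\ref{ind-vL} by chaining Theorem~\ref{independence-independence} with Theorem~\ref{mgvL}, using Theorem~\ref{X(p)-X(P)} to pass between an ensemble $\alpha$ for $P$ and the ensembles $X_k(\alpha)$ for $X_k(P)$. First I would record the preliminary observations that make the cited results applicable. Since each $X_k$ is a partial recursive function with $\Dom X_k=\Omega$ and both $\Omega$ and $\Omega_k$ are r.e.\ infinite sets, Theorem~\ref{X(p)-X(P)} tells us that $X_k(\alpha)$ is an ensemble for $X_k(P)$ whenever $\alpha$ is an ensemble for $P$; moreover, an ensemble for $P$ exists by Theorem~\ref{Bmae-Baire}. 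Also, $\Omega_1,\dots,\Omega_n$ being r.e.\ infinite, the notion of independence of the ensembles $X_1(\alpha),\dots,X_n(\alpha)$ (Definition~\ref{Independence_of_ensembles}) is well-posed, and hypothesis~(ii) supplies the computability of $X_1(P),\dots,X_{n-1}(P)$ required by Theorem~\ref{mgvL}.

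For the implication (i)$\Rightarrow$(ii): assuming $X_1,\dots,X_n$ are independent on $P$, Theorem~\ref{independence-independence}, (i)$\Rightarrow$(ii), gives that for every ensemble $\alpha$ for $P$ the ensembles $X_1(\alpha),\dots,X_n(\alpha)$ are independent. Applying Theorem~\ref{mgvL} with $\Omega_k$, $P_k:=X_k(P)$, and $\alpha_k:=X_k(\alpha)$ --- legitimate by the preliminary observations --- we obtain that for every $k=1,\dots,n-1$, $X_k(\alpha)$ is Martin-L\"of $X_k(P)$-random relative to $X_{k+1}(\alpha),\dots,X_n(\alpha)$, which is exactly condition~(ii). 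The implication (ii)$\Rightarrow$(iii) is immediate since an ensemble for $P$ exists by Theorem~\ref{Bmae-Baire}.

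For (iii)$\Rightarrow$(i): let $\alpha$ be an ensemble for $P$ witnessing (iii). Each $X_k(\alpha)$ is an ensemble for $X_k(P)$, and by hypothesis $X_k(\alpha)$ is Martin-L\"of $X_k(P)$-random relative to $X_{k+1}(\alpha),\dots,X_n(\alpha)$ for $k=1,\dots,n-1$; hence, by the ``if'' direction of Theorem~\ref{mgvL}, the ensembles $X_1(\alpha),\dots,X_n(\alpha)$ are independent. Then Theorem~\ref{independence-independence}, (iii)$\Rightarrow$(i), yields that $X_1,\dots,X_n$ are independent on $P$, completing the cycle (i)$\Rightarrow$(ii)$\Rightarrow$(iii)$\Rightarrow$(i).

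The argument has essentially no computational content of its own; the only thing requiring care is checking that the hypotheses of Theorems~\ref{X(p)-X(P)}, \ref{independence-independence}, and \ref{mgvL} are met simultaneously --- in particular that it is $X_k(P)$, the distribution of $X_k$, and not $P$ itself, whose computability is needed for Theorem~\ref{mgvL}, which is precisely what hypothesis~(ii) provides. So I would not expect any genuine obstacle; the proof is a short composition of the three results.
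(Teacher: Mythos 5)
Your proposal is correct and follows essentially the same route as the paper: the paper's proof likewise applies Theorem~\ref{X(p)-X(P)} to see that $X_1(\alpha),\dots,X_n(\alpha)$ are ensembles for $X_1(P),\dots,X_n(P)$, invokes Theorem~\ref{mgvL} (with hypothesis~(ii) supplying the needed computability of $X_1(P),\dots,X_{n-1}(P)$) to translate independence of these ensembles into the relativized randomness conditions, and then concludes via Theorem~\ref{independence-independence}. Your more explicit cycle (i)$\Rightarrow$(ii)$\Rightarrow$(iii)$\Rightarrow$(i) is just an unpacking of the same argument.
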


\begin{proof}
Let $\alpha$ be an arbitrary ensemble for $P$.
Then it follows from Theorem~\ref{X(p)-X(P)} that
$X_1(\alpha),\dots,X_n(\alpha)$ are ensembles for $X_1(P),\dots,X_n(P)$, respectively.
Therefore, in the case where $X_1(P),\dots,X_{n-1}(P)$ are computable,
using Theorem~\ref{mgvL} we have that
the ensembles $X_1(\alpha),\dots,X_n(\alpha)$ are independent if and only if
for every $k=1,\dots,n-1$ it holds that
$X_k(\alpha)$ is Martin-L\"of $X_k(P)$-random relative to $X_{k+1}(\alpha),\dots,X_n(\alpha)$.
Thus, Theorem~\ref{ind-vL} follows from Theorem~\ref{independence-independence}.
\end{proof}

In Theorem~\ref{ind-vL}, the computability of $X_1(P),\dots,X_{n-1}(P)$ is required. 
The computability of $X_1(P),\dots,X_{n-1}(P)$
follows
from
the computability of $P$ together with the partial recursiveness of $X_1,\dots,X_{n-1}$,
as the following theorem states.

\begin{theorem}\label{computable-P-and-recursive-X-implies-computable-X(P)}
Let $\Omega$ and $\Omega'$ be r.e.~infinite sets, and
let $X\colon\Omega\to\Omega'$ be random variables on $\Omega$.
Suppose that $X$ is a partial recursive function. 
For every $P\in\PS(\Omega)$, if $P$ is computable then $X(P)$ is computable.
\end{theorem}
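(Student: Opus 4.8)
The plan is to deduce the result from Proposition~\ref{computable-left-computable-right-computable-equivalent}: it suffices to show that $X(P)$ is \emph{left-computable} as a discrete probability space on the r.e.~infinite set $\Omega'$, since that proposition then immediately upgrades left-computability to computability. Passing through left-computability is the key simplification, because $X(P)(x)=P(X=x)=\sum_{a\in X^{-1}(\{x\})}P(a)$ is in general an \emph{infinite} sum, and left-computability only demands approximations from below together with convergence to the true value; the bound on the tail $\sum_{a\in\Omega\setminus F}P(a)$ that would be needed for a two-sided approximation is exactly what is already packaged inside Proposition~\ref{computable-left-computable-right-computable-equivalent} (via condition~(ii) of Definition~\ref{def-DPS}).

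To construct the left-computable approximation, fix a total recursive enumeration $e\colon\N\to\Omega$ of the r.e.~infinite set $\Omega$, and let $f$ be a partial recursive function witnessing that $P$ is computable, so $\Dom f=\Omega\times\N$, $f(\Dom f)\subset\Q$, and $\abs{P(a)-f(a,k)}\le 2^{-k}$ for all $a\in\Omega$ and $k\in\N$. I would define a partial recursive $h$ on input $(x,k)$ as follows: first enumerate $\Omega'$ until $x$ appears, which forces $\Dom h=\Omega'\times\N$; then for $i=0,1,\dots,k$ compute $a_i:=e(i)$ and $X(a_i)$ --- which halts since $X$ is a total function on $\Omega$ and is partial recursive --- and output
\[
  h(x,k):=\sum\bigl\{\,\max\!\bigl(0,\,f(a_i,k)-2^{-k}\bigr)\bigm| 0\le i\le k\ \text{and}\ X(a_i)=x\,\bigr\}.
\]
Then $h(\Dom h)\subset\Q$ is clear, and since $\max(0,f(a_i,k)-2^{-k})\le P(a_i)$ for each $i$ (using $f(a_i,k)\le P(a_i)+2^{-k}$ and $P(a_i)\ge 0$), we get $h(x,k)\le\sum_{i\le k,\,X(a_i)=x}P(a_i)\le X(P)(x)$, which is condition~(iii) of left-computability.

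It remains to check condition~(iv), namely $\lim_{k\to\infty}h(x,k)=X(P)(x)$. Given $\varepsilon>0$, choose a finite set $S\subseteq X^{-1}(\{x\})$ with $\sum_{a\in S}P(a)>X(P)(x)-\varepsilon/2$ (possible since $X(P)(x)$ is the supremum of such finite partial sums, all terms being nonnegative). For all sufficiently large $k$ every element of $S$ occurs among $a_0,\dots,a_k$, and moreover $\#S\cdot 2^{-k+1}<\varepsilon/2$; then $\max(0,f(a,k)-2^{-k})\ge f(a,k)-2^{-k}\ge P(a)-2^{-k+1}$ for each $a\in S$, so $h(x,k)\ge\sum_{a\in S}\bigl(P(a)-2^{-k+1}\bigr)>X(P)(x)-\varepsilon$. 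Combined with $h(x,k)\le X(P)(x)$ this yields the limit. Hence $h$ witnesses that $X(P)\in\PS(\Omega')$ is left-computable, and Proposition~\ref{computable-left-computable-right-computable-equivalent} gives that $X(P)$ is computable, completing the proof.

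The only delicate point is the convergence argument for~(iv), which rests on the countable additivity of the defining sum together with the effectiveness of the fiber test ``$X(a_i)=x$'' --- this is precisely where the partial recursiveness of $X$, and its totality on $\Omega$, are used. I do not expect any obstacle beyond this bookkeeping, since the genuinely subtle issue of the possibly infinite fibers $X^{-1}(\{x\})$ is deliberately absorbed into Proposition~\ref{computable-left-computable-right-computable-equivalent}.
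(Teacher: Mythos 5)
Your proposal is correct and follows essentially the same route as the paper: the paper's own proof consists precisely of observing that $X(P)$ is left-computable (stated there without detail as ``easy to see'') and then invoking Proposition~\ref{computable-left-computable-right-computable-equivalent} to upgrade left-computability to computability, which is exactly your plan, with the left-computability argument fully written out. One minor point: you should take the enumeration $e\colon\N\to\Omega$ to be \emph{injective} (possible since every infinite r.e.~set is the range of an injective total recursive function), as otherwise the sum defining $h(x,k)$ could count the same sample point twice and the upper bound $h(x,k)\le X(P)(x)$ would fail.
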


\begin{proof}
Since $X$ is a partial recursive function and $\Omega'$ is r.e.,
it is easy to see that $X(P)$ is left-computable for every computable $P\in\PS(\Omega)$.
Thus, the result follows from Proposition~\ref{computable-left-computable-right-computable-equivalent}.
\end{proof}

Note that the converse of Theorem~\ref{computable-P-and-recursive-X-implies-computable-X(P)}
does not holds.
Namely, even under the partial recursiveness of $X$, the computability of $X(P)$ does not necessarily
imply the computability of $P$.

Theorem~\ref{ind-vL} results in the following theorem,
using Theorem~\ref{computable-P-and-recursive-X-implies-computable-X(P)}.

\begin{theorem}\label{ind-vL2}
Let $\Omega$ and $\Omega_1\dots,\Omega_n$ be r.e.~infinite sets,
and let $P\in\PS(\Omega)$.
Let $X_1\colon\Omega\to\Omega_1,\dots,X_n\colon\Omega\to\Omega_n$ be
random variables on $\Omega$.
Suppose that (i) all of $X_1,\dots,X_n$ are partial recursive functions
and (ii) $P$ is computable.
Then the following conditions are equivalent to one another.
\begin{enumerate}
  \item The random variables $X_1,\dots,X_n$ are independent on $P$.
  \item For every ensemble $\alpha$ for $P$ and every $k=1,\dots,n-1$ it holds that
    $X_k(\alpha)$ is Martin-L\"of $X_k(P)$-random relative to $X_{k+1}(\alpha),\dots,X_n(\alpha)$.
  \item There exists an ensemble $\alpha$ for $P$ such that for every $k=1,\dots,n-1$ it holds that
    $X_k(\alpha)$ is Martin-L\"of $X_k(P)$-random relative to $X_{k+1}(\alpha),\dots,X_n(\alpha)$
\end{enumerate}
\end{theorem}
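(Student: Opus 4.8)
The plan is to derive Theorem~\ref{ind-vL2} directly from Theorem~\ref{ind-vL} by upgrading its hypothesis~(ii). Recall that Theorem~\ref{ind-vL} requires that $X_1(P),\dots,X_{n-1}(P)$ be computable, whereas in Theorem~\ref{ind-vL2} we are only given that $P$ itself is computable, together with the partial recursiveness of $X_1,\dots,X_n$, which is hypothesis~(i) in both statements. So the first — and essentially only — step is to observe that the weaker-looking hypothesis of Theorem~\ref{ind-vL2} in fact implies the hypothesis of Theorem~\ref{ind-vL}.

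Concretely, I would argue as follows. Fix $k\in\{1,\dots,n-1\}$. By assumption $X_k\colon\Omega\to\Omega_k$ is a partial recursive function whose domain of definition is exactly $\Omega$, and $\Omega_k$ is an r.e.~infinite set, and $P\in\PS(\Omega)$ is computable. Hence Theorem~\ref{computable-P-and-recursive-X-implies-computable-X(P)} applies and yields that $X_k(P)$ is computable. Since $k\in\{1,\dots,n-1\}$ was arbitrary, $X_1(P),\dots,X_{n-1}(P)$ are all computable.

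Now all the hypotheses of Theorem~\ref{ind-vL} are met: $\Omega$ and $\Omega_1,\dots,\Omega_n$ are r.e.~infinite sets, $P\in\PS(\Omega)$, all of $X_1,\dots,X_n$ are partial recursive functions, and $X_1(P),\dots,X_{n-1}(P)$ are computable. Applying Theorem~\ref{ind-vL} then gives that conditions~(i), (ii), (iii) of Theorem~\ref{ind-vL2} — which are verbatim the three conditions appearing in Theorem~\ref{ind-vL} — are equivalent to one another, which completes the proof.

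I do not anticipate a genuine obstacle: the substantive work has already been packaged into Theorem~\ref{ind-vL} and Theorem~\ref{computable-P-and-recursive-X-implies-computable-X(P)}, and the present statement is a clean corollary combining the two. The only point requiring a trivial amount of care is to invoke Theorem~\ref{computable-P-and-recursive-X-implies-computable-X(P)} with its hypotheses in the correct form, in particular that $\Dom X_k$ is precisely $\Omega$ rather than a proper subset — which is already built into the notion of \emph{random variable} used throughout Section~\ref{IANERV}.
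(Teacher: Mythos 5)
Your proposal is correct and matches the paper's own derivation exactly: the paper introduces Theorem~\ref{ind-vL2} with the remark that it ``results from'' Theorem~\ref{ind-vL} using Theorem~\ref{computable-P-and-recursive-X-implies-computable-X(P)}, which is precisely your two-step argument of first upgrading the computability of $P$ to that of $X_1(P),\dots,X_{n-1}(P)$ and then invoking Theorem~\ref{ind-vL}. No gaps.
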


Theorem~\ref{independence-independence} and Theorem~\ref{ind-vL2}
together show that
\emph{the three independence notions we have considered so far:
the independence of random variables, the independence of
ensembles, and the independence in the sense of van Lambalgen's Theorem,
are equivalent to one another}
on an arbitrary \emph{computable} discrete probability space.

Now, Theorem~\ref{ind-events-vL} below follows from Theorem~\ref{ind-vL}.

\begin{theorem}\label{ind-events-vL}
Let $\Omega$ be an r.e.~infinite set, and let $P\in\PS(\Omega)$.
Let $A_1,\dots,A_n$ be recursive events on the discrete probability space $P$.
Suppose that the finite probability space $\charaps{P}{A_k}$ is computable for every $k=1,\dots,n-1$.
Then the following conditions are equivalent to one another.
\begin{enumerate}
  \item The events $A_1,\dots,A_n$ are independent on $P$.
  \item For every ensemble $\alpha$ for $P$ and every $k=1,\dots,n-1$ it holds that
    $\chara{A_k}{\alpha}$ is Martin-L\"of $\charaps{P}{A_k}$-random relative to $\chara{A_{k+1}}{\alpha},\dots,\chara{A_n}{\alpha}$.
  \item There exists an ensemble $\alpha$ for $P$ such that for every $k=1,\dots,n-1$ it holds that
    $\chara{A_k}{\alpha}$ is Martin-L\"of $\charaps{P}{A_k}$-random relative to $\chara{A_{k+1}}{\alpha},\dots,\chara{A_n}{\alpha}$.
\end{enumerate}
\end{theorem}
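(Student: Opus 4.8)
The plan is to derive Theorem~\ref{ind-events-vL} from Theorem~\ref{ind-vL} by taking the random variables to be the indicator functions of the events. Concretely, I would set $X_k:=\chi_{A_k}\colon\Omega\to\{0,1\}$ for each $k=1,\dots,n$. The first thing to check is that each $X_k$ is a partial recursive function with domain exactly $\Omega$: this is immediate from the hypothesis that $A_k$ is a recursive event, since then membership of an element of $\Omega$ in $A_k$ is decidable, so $\chi_{A_k}$ can be computed on every input from $\Omega$.

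Next I would match up the relevant objects. By the remark just before Proposition~\ref{independence-random-variables-events}, $\chara{A_k}{\alpha}=\chi_{A_k}(\alpha)=X_k(\alpha)$ for every $\alpha\in\Omega^\infty$, and directly from the definition of $X_k(P)$ one has $(X_k(P))(1)=P(X_k=1)=P(A_k)$ and $(X_k(P))(0)=1-P(A_k)$, so that $X_k(P)=\charaps{P}{A_k}$. Consequently the standing hypothesis ``$\charaps{P}{A_k}$ is computable for every $k=1,\dots,n-1$'' is exactly hypothesis~(ii) of Theorem~\ref{ind-vL} for the family $X_1,\dots,X_n$, while Proposition~\ref{independence-random-variables-events} identifies condition~(i) of the present theorem (independence of $A_1,\dots,A_n$ on $P$) with independence of $X_1,\dots,X_n$ on $P$, i.e.\ condition~(i) of Theorem~\ref{ind-vL}. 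Under the identifications $\chara{A_k}{\alpha}=X_k(\alpha)$ and $\charaps{P}{A_k}=X_k(P)$, conditions~(ii) and (iii) of the present theorem are literally conditions~(ii) and (iii) of Theorem~\ref{ind-vL}. Thus Theorem~\ref{ind-vL} applied to $X_1,\dots,X_n$ would yield the desired equivalence of (i)--(iii).

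The only point requiring care is that Theorem~\ref{ind-vL} --- and the chain of results behind it, namely Theorem~\ref{independence-independence} and Theorem~\ref{X(p)-X(P)} --- is phrased for random variables whose codomains $\Omega_k$ are r.e.\ \emph{infinite} sets, whereas here the common codomain $\{0,1\}$ is finite. This is the same gap that was crossed in deducing Theorem~\ref{independence-independence-events} from Theorem~\ref{independence-independence}, and earlier in the proof of Theorem~\ref{cond-ind-Bs}, where the finite-alphabet case was handled by invoking the corresponding results of Tadaki~\cite{T16arXiv}. I would close it in the same way: the finite-codomain analogues of the closure-under-mapping property and of the generalized van Lambalgen theorem used in the argument of Theorem~\ref{ind-vL} are available from \cite{T16arXiv}, so the proof of Theorem~\ref{ind-vL} goes through verbatim with $\Omega_k=\{0,1\}$. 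I expect this bookkeeping about the finite alphabet to be the main (and essentially only) obstacle; once it is dispatched, the remaining verifications are routine and I would not spell them out in detail.
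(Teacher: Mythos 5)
Your proposal is correct and follows essentially the same route as the paper, whose proof consists precisely of applying Theorem~\ref{ind-vL} to the random variables $\chi_{A_1},\dots,\chi_{A_n}$ and then invoking Proposition~\ref{independence-random-variables-events}. Your additional attention to the mismatch between the finite codomain $\{0,1\}$ and the r.e.~infinite codomains assumed in Theorem~\ref{ind-vL} is a point the paper silently glosses over, and your resolution via the finite-alphabet results of \cite{T16arXiv} is consistent with how the paper handles the same issue elsewhere (e.g.\ in the proof of Theorem~\ref{cond-ind-Bs}).
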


\begin{proof}
The result is obtained by applying Theorem~\ref{ind-vL}
to the random variables
$\chi_{A_1},\dots,\chi_{A_n}$
as $X_1,\dots,X_n$, respectively,
and then using Proposition~\ref{independence-random-variables-events}.
\end{proof}

Theorem~\ref{ind-events-vL} results in the following theorem,
using Theorem~\ref{computable-P-and-recursive-X-implies-computable-X(P)}.

\begin{theorem}\label{ind-events-vL2}
Let $\Omega$ be an r.e.~infinite set, and let $P\in\PS(\Omega)$.
Let $A_1,\dots,A_n$ be recursive events on the discrete probability space $P$.
Suppose that
$P$ is computable.
Then the following conditions are equivalent to one another.
\begin{enumerate}
  \item The events $A_1,\dots,A_n$ are independent on $P$.
  \item For every ensemble $\alpha$ for $P$ and every $k=1,\dots,n-1$ it holds that
    $\chara{A_k}{\alpha}$ is Martin-L\"of $\charaps{P}{A_k}$-random relative to $\chara{A_{k+1}}{\alpha},\dots,\chara{A_n}{\alpha}$.
  \item There exists an ensemble $\alpha$ for $P$ such that for every $k=1,\dots,n-1$ it holds that
    $\chara{A_k}{\alpha}$ is Martin-L\"of $\charaps{P}{A_k}$-random relative to $\chara{A_{k+1}}{\alpha},\dots,\chara{A_n}{\alpha}$.
\end{enumerate}
\end{theorem}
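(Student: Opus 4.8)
The plan is to deduce Theorem~\ref{ind-events-vL2} directly from Theorem~\ref{ind-events-vL} by verifying the single additional hypothesis of the latter under the (stronger) assumptions made here. Recall that Theorem~\ref{ind-events-vL} has exactly the same three conditions (i)--(iii) as Theorem~\ref{ind-events-vL2}, and differs only in that it assumes each finite probability space $\charaps{P}{A_k}$ ($k=1,\dots,n-1$) to be computable, rather than assuming $P$ itself computable. So the whole task reduces to showing: \emph{if $P$ is computable and $A_1,\dots,A_n$ are recursive events on $P$, then $\charaps{P}{A_k}$ is a computable finite probability space for each $k=1,\dots,n-1$.}

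To carry this out I would first observe, using the remark recorded before Proposition~\ref{independence-random-variables-events} that $\chara{A}{\alpha}=\chi_A(\alpha)$, together with the definition of $X(P)$, that $\charaps{P}{A_k}$ coincides with $\chi_{A_k}(P)$ as an element of $\PS(\{0,1\})$: indeed $(\chi_{A_k}(P))(1)=P(\chi_{A_k}=1)=P(A_k)=(\charaps{P}{A_k})(1)$, and similarly for the value at $0$. Next, since $A_k$ is a recursive event on $P$, i.e.\ a recursive subset of the r.e.\ set $\Omega$, the characteristic function $\chi_{A_k}\colon\Omega\to\{0,1\}$ is a total recursive function with $\Dom\chi_{A_k}=\Omega$, hence in particular partial recursive in the sense required. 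I would then invoke Theorem~\ref{computable-P-and-recursive-X-implies-computable-X(P)}, applied with $X=\chi_{A_k}$, to conclude that $\chi_{A_k}(P)=\charaps{P}{A_k}$ is computable. Once this is established for $k=1,\dots,n-1$, Theorem~\ref{ind-events-vL} applies and yields the equivalence of (i), (ii), and (iii) verbatim, completing the proof.

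The one point needing care --- and the only place I would be explicit rather than terse --- is that Theorem~\ref{computable-P-and-recursive-X-implies-computable-X(P)} is stated for an r.e.\ \emph{infinite} codomain $\Omega'$, whereas here the codomain $\{0,1\}$ is finite, so it is not literally applicable. Its proof, however, goes through unchanged: it amounts to noting that $P(A_k)=\sum_{a\in A_k}P(a)$ is a left-computable real (enumerate $A_k$, a recursive hence r.e.\ set, and accumulate lower rational approximations to the $P(a)$ supplied by the computability of $P$), and that likewise $P(\Omega\setminus A_k)=1-P(A_k)$ is left-computable, whence $P(A_k)$ is right-computable and therefore computable; this is exactly computability of the finite probability space $\charaps{P}{A_k}$. (Alternatively one may cite the corresponding statement for finite probability spaces from the former work~\cite{T16arXiv}.) Apart from this bookkeeping there is no real obstacle: all the substantive work --- the generalized van Lambalgen Theorem and its consequences --- has already been done in Theorem~\ref{ind-events-vL} and the results it rests on.
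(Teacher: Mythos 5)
Your proposal matches the paper's own (one-line) derivation exactly: the paper likewise obtains Theorem~\ref{ind-events-vL2} from Theorem~\ref{ind-events-vL} by invoking Theorem~\ref{computable-P-and-recursive-X-implies-computable-X(P)} with $X=\chi_{A_k}$ to conclude that each $\charaps{P}{A_k}$ is computable. Your observation that the cited theorem is literally stated only for r.e.\ \emph{infinite} codomains, so that the case $\Omega'=\{0,1\}$ requires the (easy) finite-alphabet analogue, is a point the paper silently glosses over, and your patch for it is correct.
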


Theorem~\ref{independence-independence-events} and Theorem~\ref{ind-events-vL2} together
show that the three independence notions are equivalent
to one another
for recursive events
on
an arbitrary \emph{computable}
discrete probability space.

%%%%%%%%%%%%%%%%%%%%%%%%%%%%%%%%%%%%%%%%%%%%%%%%%%%%%%%%%%%%%%%%%%%%%%%%%%
\section{Concluding remarks}
\label{Concluding}

In this paper we have developed an operational characterization of the notion of probability
for a \emph{discrete probability space}.

In our former work~\cite{T14,T15,T16arXiv},
as the first step
for developing a framework for an operational characterization of the notion of probability,
we considered
the case of
a
\emph{finite probability space}, where the sample space is \emph{finite}.
In this paper, as the next step of the research
of this line,
we
have considered the case of \emph{discrete probability space},
where the sample space is \emph{countably infinite}.
Actually,
in this case
we have been able to develop
a framework for
an
operational characterization of the notion of probability,
in
the same manner as the case of
a
finite probability space.

The
major application of our framework is \emph{to quantum mechanics}.
The notion of probability plays a crucial role in quantum mechanics.
It appears in quantum mechanics as the so-called \emph{Born rule}, i.e.,
the \emph{probability interpretation of the wave function}.
In modern mathematics which describes quantum mechanics, however,
probability theory means nothing other than measure theory,
and therefore any operational characterization of the notion of probability is still missing
in quantum mechanics.
In this sense, the current form of quantum mechanics is considered to be \emph{imperfect}
as a physical theory which must stand on operational means.

In a series of works~\cite{T15Kokyuroku,T15WiNF-Tadaki_rule,T16QIT35,T18arXiv},
as a \emph{major application} of
the framework
introduced and
developed by our former work~\cite{T14,T15,T16arXiv},
we presented
a \emph{refinement of the Born rule},
based on the notion of ensemble
for a finite probability space,
for the purpose of making quantum mechanics \emph{perfect},
in the case where the number of possible measurement outcomes is \emph{finite}.
Specifically,
we
used the notion of ensemble
for a finite probability space,
in order
to state
the refined rule of the Born rule,
for specifying the property of the results of quantum measurements \emph{in an operational way}.
We
then presented
a refinement of the Born rule
for mixed states,
based on the notion of ensemble
for a finite probability space.
In particular, we gave a
\emph{precise}
definition for the notion of mixed state.
Finally, we
showed that
\emph{all} of the refined rules of the Born rule for both pure states and mixed states
can be derived
from a
\emph{single}
postulate, called the \emph{principle of typicality}, in a unified manner.
We did this from the point of view of the \emph{many-worlds interpretation of quantum mechanics}
\cite{E57}.

In the works~\cite{T15Kokyuroku,T15WiNF-Tadaki_rule,T16QIT35,T18arXiv} above,
for simplicity,
we considered only the case of finite-dimensional quantum systems and measurements over them.
As the next step of the research,
it is natural to consider the case of infinite-dimensional quantum systems,
and measurements over them where the set of possible measurement outcomes is
\emph{countably infinite}.
Actually,
in this case,
based on the framework developed by this paper
we can
certainly
develop
a framework for an operational refinement of the Born rule and the principle of typicality,
using
the notion of ensemble for a discrete probability space.
We can do this
in \emph{almost the same manner} as the finite case developed
through the works~\cite{T15Kokyuroku,T15WiNF-Tadaki_rule,T16QIT35,T18arXiv}.
A full paper which describes the detail of
the application
of our framework
to infinite-dimensional quantum systems
is in preparation.

%%%%%%%%%%%%%%%%%%%%%%%%%%%%%%%%%%%%%%%%%%%%%%%%%%%%%%%%%%%%%%%%%%%%%%%%%%%
\section*{Acknowledgments}

This work was supported by JSPS KAKENHI Grant Numbers 15K04981, 18K03405.

%%%%%%%%%%%%%%%%%%%%%%%%%%%%%%%%%%%%%%%%%%%%%%%%%%%%%%%%%%%%%%%%%%%%%%%%%%%

%%%%%%%%%%%%%%%%%%%%%%%%%%%%%%%%%%%%%%%%%%%%%%%%%%%%%%%%%%%%%%%%%%%%%%%%%%%
\end{document}